\def\MRnum#1\empty{#1}
\renewcommand{\MRhref}[2]{%
  \href{http://www.ams.org/mathscinet-getitem?mr=#1}{#2}
}
\renewcommand{\MR}[1]{
  \relax\ifhmode\unskip\space\fi
  \MRhref{\MRnum#1\empty}{\texttt{\Tiny[MR\MRnum#1\empty]}}
}
\newtheorem{theorem}{Theorem}[section]
\newtheorem{lemma}[theorem]{Lemma}
\newtheorem{proposition}[theorem]{Proposition}
\newtheorem{corollary}[theorem]{Corollary}
\theoremstyle{definition}
\newtheorem{definition}[theorem]{Definition}
\theoremstyle{remark}
\newtheorem{remark}[theorem]{Remark}
\numberwithin{equation}{section}
\DeclareMathOperator{\supp}{supp}
\DeclareMathOperator{\Law}{Law}
\DeclareMathOperator{\Cov}{Cov}
\DeclareMathOperator{\Var}{Var}
\newcommand{\N}{\mathbb{N}}
\newcommand{\R}{\mathbb{R}}
\newcommand{\C}{\mathbb{C}}
\newcommand{\Sb}{\mathbb{S}}
\newcommand{\Dc}{\mathcal{D}}
\newcommand{\E}{\mathbb{E}}
\newcommand{\Prob}{\mathbb{P}}
\newcommand{\1}{\mathbb 1}
\newcommand{\eps}{\epsilon}
\newcommand{\les}{\lesssim}
\newcommand{\ph}{\varphi}
\newcommand{\sbe}{{\normalfont\textsc{S\kern-0.12em\raise-0.3ex\hbox{B}\kern-0.12em\hbox{E}}}\xspace}
\newcommand{\SBE}{\sbe_{q}^{\alpha,p}}
\newcommand{\PM}{\textsc{P\kern-0.12em\raise-0.3ex\hbox{M}}}
\newcommand{\var}[1]{{#1-\text{var}}}
\newcommand{\Is}{\mathscr{I}}
\newcommand{\Js}{\mathscr{J}}
\newcommand{\Tc}{\mathcal{T}}
\newcommand{\cerchiato}[1]{\text{\textcircled{\small #1}}}
\begin{document}
\title[Yet another notion of irregularity]{Yet another notion of irregularity through small ball estimates}
  \author[M. Romito]{Marco Romito}
    \address{Dipartimento di Matematica, Universit\`a di Pisa, Largo Bruno Pontecorvo 5, I--56127 Pisa, Italia }
    \email{\href{mailto:marco.romito@unipi.it}{marco.romito@unipi.it}}
    \urladdr{\url{http://people.dm.unipi.it/romito}}
  \author[L. Tolomeo]{Leonardo Tolomeo}
    \address{Mathematisches Institut, Universit\"at Bonn, Endenicher Allee 60, D--53115 Bonn, Nordrhein-Westfalen, Deutschland}
    \email{\href{mailto:tolomeo@math.uni-bonn.de}{tolomeo@math.uni-bonn.de}}
    \urladdr{\url{https://www.math.uni-bonn.de/~tolomeo}}
  \date{July 6, 2022}
  \begin{abstract}
    We introduce a new notion of irregularity of paths, in terms
    of control of growth of the size of small balls by means of
    the occupation measure of the path. This notion ensures
    Besov regularity of the occupation measure and thus
    extends the analysis of Catellier and Gubinelli
    \cite{CatGub2016} to general Besov spaces.
    On stochastic processes this notion is granted by suitable
    properties of local non-determinism.
  \end{abstract}
\maketitle
\tableofcontents

\section{Introduction}

The seminal paper \cite{CatGub2016} introduces the
idea of \emph{noiseless regularization by noise},
further developed in \cite{GalGub2020a,GalGub2020b},
that considers regularization by noise from the point
of view of generic perturbations, without a prescribed
probabilistic distribution on the driving function.
The motivating problem is to solve the stochastic differential
equation,
\begin{equation}\label{e:eq}
  \begin{cases}
    \dot X_t
      = b(t,X_t) + \dot\omega_t,\\
    X_0
      = x,
  \end{cases}
\end{equation}
in the case of singular drift $b$. Regularization
by noise phenomena concern the effects
of perturbation, mainly due to
irregularity, to improve results
of existence/uniqueness/stability
of solutions of the above \eqref{e:eq}.

Regularization by noise has a long history, starting from
\cite{Zvo1974,Ver1981}, and is mainly
devoted to Brownian perturbation, in finite
(see for instance \cite{KryRoc2005,FlaGubPri2010,FlaIssRus2017,Cha2017,
Cha2018,BecFlaGubMau2019,ChoGes2019}) and infinite dimension
(for instance \cite{DapFla2010,DapFlaRocVer2016,ButMyt2019}).
A recent review of probabilistic
regularization by noise is \cite{Fla2011}.
Regularization by noise has then found
application for a wider class of driving
processes (mainly fractional Brownian motion,
or Lévy stable processes), see for
instance \cite{NuaOuk2002,NuaOuk2003,HuNua2007,Pri2012,
AmiBanPro2020,AthButMyt2020,HarLin2020,GalHar2020,KrePer2022,
Ger2022,BecHof2022} and the forthcoming
\cite{GalGer2022}, or for distribution dependent
SDEs \cite{GalHarMay2021,GalHarMay2022}.

Davie in \cite{Dav2007} (see also \cite{Fla2011,Sha2016})
introduced the notion of ``path-by-path'' uniqueness,
and with it a path-wise strategy to regularization
by noise. It is \cite{CatGub2016} that has
initiated an approach to regularization by noise
based on Young's integral and on a notion of
irregularity of paths that does not depend
on an underlying probabilistic structure.
The approach has been further developed
in \cite{GalGub2020a,GalGub2020b}
(see also \cite{GalHar2020,HarPer2020}).
\cite{CatGub2016} introduces
the notion of $(\rho,\gamma)$-irregularity,
which corresponds to space-time regularity
of the Fourier transform of the occupation
measure
\begin{equation}\label{e:om}
  \mu^\omega_{s,t}(\cdot)
    = \int_s^t \1_{\omega_r}(\cdot)\,dr,
\end{equation}
of the path $\omega$. Indeed,
with the position $\theta_t=X_t-\omega_t$,
\eqref{e:eq} reads as
$\dot\theta_t=b(t,\omega_t+\theta_t)$,
or more precisely as
\[
  \theta_t
    = x + \int_0^t b(s,\omega_s+\theta_s)\,ds.
\]
The above integral can be understood in terms
of the occupation measure
by means of a suitable Young's integral.
In this respect, \cite{CatGub2016}
proves that the occupation measure of
a $(\rho,\gamma)$-irregular path
is H\"older in time with values
in \emph{Fourier-Lebesgue} spaces.

Noiseless regularization by noise
can be developed from different points
of view. In the perspective of \eqref{e:eq},
regularization by noise can be
derived for a given drift $b$ and
\emph{generic} perturbations $\omega$,
or for generic drifts and a given
perturbation $\omega$.
A key feature of this second approach
is that, for a given irregular perturbation
with regular occupation measure,
regularization results hold for large
classes of singular drifts,
and is particularly suited for application
to PDEs \cite{GalGub2021}.
The basic tool of this analysis
is the \emph{averaging} operator
$T^\omega$,
that can be recast as a convolution
with the occupation measure,
\[
  T^\omega_{s,t}b(x)
    = \int_s^t b(x+\omega_r)\,dr
    = b\star\mu_{s,t}^{-\omega}.
\]
Mapping properties of the averaging
operator can be achieved by
means pf regularity of the
occupation measure.

A fundamental review on occupation
measure is \cite{GemHor1980}.
A recent review on nonlinear Young integration
can be found in \cite{Gal2021}.

\medskip

In this paper we introduce a new notion
of irregularity, based on control
of the growth of the size of
small balls measured through the
occupation measure.
Our new notion (briefly denoted by \sbe,
a contraction for \emph{small ball
estimate}) is comparable
with the $(\rho,\gamma)$-irregularity
of \cite{CatGub2016}, and presents
a series of features: 
the \sbe property is invariant by
re-parametrization by suitable
bi-Lipschitz maps, moreover
is stable by regular
perturbation. Above all,
\sbe regularity is essentially equivalent to 
Besov regularity of the
occupation measure,
and this allows us to extend
the validity of \cite{CatGub2016}
for \eqref{e:eq} to drift
in Besov spaces, instead of
Fourier-Lebesgue spaces as in
\cite{CatGub2016}.
When we turn to stochastic
processes, the \sbe property
is granted by simple
conditions on the law of
increments of the perturbation
$\omega$, which are known
in the literature as
\emph{local non-determinism}
\cite{Ber1973,GalGub2020b}.
The connection of small ball
estimates with irregularity
of paths is not unexpected.
For instance, small ball
estimates of stochastic processes
yield their $\theta$-roughness in the
sense of \cite{HaiPil2013,FriHai2014} (and
in some way also vice versa, see \cite{HocRomTol2022}).
Likewise, Ess\'een concentration inequality
(see for instance \cite{TaoVu2006}) provides
a bound of mall balls by means of the Fourier
transform of the occupation measure,
a quantity of interest in the
$(\rho,\gamma)$-irregularity of
\cite{CatGub2016}.
\medskip

The paper is organised as follows. In \autoref{s:sbe} we introduce our
new notion, by means of a regularity property on measures.
We then prove that, at least on compactly supported measures, this new notion is
comparable to the scale of Besov spaces. \autoref{s:sbeprops} contains the
proofs that the new notion, when applied to occupation measures, is
invariant by re-parametrization by suitable bi-Lipschitz maps, and
stable by perturbations. In \autoref{s:sbestochproc} we turn to
stochastic processes, and prove \sbe regularity of the occupation
measure of a process under a very simple and slick condition
on the density of increments of the process. This condition,
once read on Gaussian processes, becomes even simpler
and very elementary.
In \autoref{s:catgub} we obtain a higher time regularity, possibly
at the price of a lower space regularity, under a stronger assumption
on the joint laws of time increments. This condition is very close
to Berman's definition of local non-determinism for general
stochastic processes \cite{Ber1983}.
Even though we could rely on the general theory of non-linear
Young's integration developed in \cite{CatGub2016} (see also
\cite{Gal2021}) for H\"older continuous occupation
measures, we give a limited version of the theory,
targeted at equations with additive perturbation,
that works in spaces of variations.
Finally in \autoref{s:examples} we apply the theory
developed so far to solutions to one-dimensional
equations driven by Brownian motion, and to
equations driven by fractional Brownian motion.

\subsection{Notation}\label{s:notation}

If $E$ is a normed space, we shall always denote by $\|\cdot\|_E$ its norm.
We denote by $S(\R^d)$ the Schwarz space, by $B^s_{p,q}=B^s_{p,q}(\R^d)$
the standard Besov spaces on $\R^d$, by $L^p(A,\mu)$ the space of
functions on $A$ whose $p$-power is  integrable with respect to
the measure $\mu$. If $E$ is  normed space, the space $V^p(a,b;E)$
is the set of functions $f:[a,b]\to E$ such that
\begin{equation}\label{e:vp}
  \|f\|_{V^p(a,b;E)}
    := \sup_{a= t_0 < t_1 < \dotsb < t_n = b} \sum_{h = 1}^n \| f(t_n) - f(t_{n-1}) \|_E^p
    < + \infty,
\end{equation}
where the supremum is extended over all finite partitions of $[a,b]$.
The space $C^\var{p}$ is the space of all continuous functions
that are in $V^p(a,b;E)$, with semi-norm
$\|\cdot\|_{C^\var{p}(a,b;E)}=\|\cdot\|_{V^p(a,b;E)}$.
Notice that $\|\cdot\|_{C^\var{p}(a,b;E)}$ is a semi-norm,
and it can be made a norm by adding $\|\cdot\|_\infty$
or the value at one single point. It becomes
a norm on the subspace of $C^\var{p}(a,b;E)$
of functions with given value at one single point.

We denote by $M(\R^d)$ the space of finite measures on $\R^d$,
and by $\delta_x$ the delta measure on $x$.

Finally, $\star$ denotes convolution, $\Sb^d$ is the $d$-sphere,
$|\cdot|$ is the $\R^d$-norm or the Lebesgue measure (use follows
obviously from the context), $\text{Law}(X)$ is the law of the
random object $X$. We will sometimes write $x_{1:n}$ to denote
the vector $(x_1,x_2,\dots,x_n)$. Likewise, in integrals,
$dx_{1:n}$ is the differential $dx_1\,dx_2\dots dx_n$.
We shall use the symbol $\lesssim$ for inequalities
up to numerical factors, which may change from line to line
and that do not depend on the main quantities of the problem.

\subsection{Acknowledgments}

The first author wishes to dedicate this
work to Emilio, he has arrived whilst
the paper was being completed.

The second author was supported by the Deutsche
Forschungsgemeinschaft (DFG, German Research Foundation) under Germany's Excellence
Strategy-EXC-2047/1-390685813, through the Collaborative Research Centre (CRC) 1060. 

\section{Yet another notion of irregularity}\label{s:sbe}

For a measure $\mu \in M(\R^d)$, let $F_\mu : \R^+ \times \R^d \to \R$ be given by 
\begin{equation}\label{Fmudef}
F_\mu(r, y) := \mu(\{ x\in\R^d : |x-y| \le r \}).
\end{equation}
Notice that $F_\mu(r,y)$ is simply the measure of the ball with radius $r$ and centre in $y$.
For a function $F:\R\to\R$, define recursively the operators
$\Delta_k$ by the formula 
\begin{equation} \label{deltakdef}
\begin{cases}
\Delta_0 F(r) = F(r) - F(r/2)\\
\Delta_{k+1} F(r) = \Delta_k F(r) - 2^{k+1}\Delta_k F(r/2).
\end{cases}
\end{equation}
It is easy to check that, if $p_k(r)$ is a polynomial of degree $\le k$, then 
\begin{equation}\label{deltakernel}
\Delta_k p_k(r) = 0.
\end{equation}
Define finally, for $y \in \R^d$, the left translation $\tau_y$ by
\[
\tau_y F(x) = F(x-y).
\]
\begin{definition}
Given $\alpha>0$, $1\leq p,q\leq\infty$, we say that a compactly
supported measure $\mu\in M(\R^d)$ satisfies a \emph{small ball estimate
of regularity $\alpha$, integrability $p$, and values in $L^q$},
in short $\SBE$, if for $k = \lceil \alpha + d -1 \rceil$,
the quantity
\[
  \| \mu \|_{\SBE}
    \vcentcolon=\| r^{-\alpha-d} (\Delta_k)_r F_\mu(r, y)
      \|_{L^q_yL^p_r(\R^d\times\R^+, dy\otimes\frac{dr}{r})}
    < + \infty
\]
is finite.
\end{definition}
The \emph{small ball estimate} condition we have introduced
is comparable to the scale of Besov spaces. The
proof will be given at page \pageref{pf:besovregularity}.
\begin{theorem} \label{besovregularity}
Let $\alpha>0$, $1\leq p<\infty$, and $1\leq q\leq\infty$.
If $\mu\in\SBE$, then $\mu$ belongs to the Besov space $B^{\alpha}_{q,\infty}$. More precisely, 
\begin{equation} \label{besovestimate}
\|\mu\|_{B^{\alpha}_{q,\infty}} \les \| \mu \|_{\SBE}.
\end{equation}
\end{theorem}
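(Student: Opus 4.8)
\emph{Strategy and set-up.} The plan is to rewrite $(\Delta_k)_rF_\mu(r,y)$ as a convolution of $\mu$ with a fixed dilated kernel, and then to read off $B^\alpha_{q,\infty}$-regularity by means of a Calder\'on-type reproducing formula adapted to a (merely bounded) kernel. Since $F_\mu(r,\cdot)=\mu\star\1_{\{|\cdot|\le r\}}$ and $(\Delta_k)_r$ is a fixed linear combination of the evaluations $F(r),F(r/2),\dots,F(r/2^{k+1})$, linearity of convolution gives
\[
  (\Delta_k)_rF_\mu(r,\cdot)=\mu\star\Bigl(\textstyle\sum_j c_j\1_{\{|\cdot|\le r2^{-j}\}}\Bigr)=r^d(\mu\star\psi_r),\qquad
  \psi:=\textstyle\sum_j c_j\1_{\{|\cdot|\le 2^{-j}\}},
\]
where $\psi_r(x)=r^{-d}\psi(x/r)$ is the $L^1$-normalised dilation; $\psi$ is bounded, radial, supported in $\{2^{-k-1}<|x|\le1\}$ (because $\sum_jc_j=0$), hence $\psi\in L^1\cap L^\infty$. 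Therefore $r^{-\alpha-d}(\Delta_k)_rF_\mu(r,y)=r^{-\alpha}(\mu\star\psi_r)(y)$ and $\|\mu\|_{\SBE}=\bigl\|\,\|r^{-\alpha}(\mu\star\psi_r)(y)\|_{L^p_r(dr/r)}\,\bigr\|_{L^q_y}$. Moreover \eqref{deltakernel} gives $\sum_j c_j2^{-jm}=0$ for $0\le m\le k$, which together with the radial symmetry of $\psi$ implies $\int\psi(x)x^\beta\,dx=0$ for every multi-index $\beta$ of odd order and for $|\beta|\le k-d=\lceil\alpha\rceil-1$; in particular $\int\psi=0$.

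\emph{Reconstruction.} The Fourier transform $\widehat\psi$ is radial, real-analytic and not identically $0$, so its zero set is a locally finite union of spheres, and there is an annulus $\mathcal A=\{a\le|\xi|\le b\}$ on which $\widehat\psi$ never vanishes. Choose a radial Schwartz function $\widetilde\psi$ with $\widehat{\widetilde\psi}\in C^\infty_c(\mathcal A)$ and, after a normalisation (the integral is a finite nonzero constant, independent of $\xi$ by radial scaling), $\int_0^\infty\widehat\psi(s\xi)\widehat{\widetilde\psi}(s\xi)\,\tfrac{ds}{s}=1$ for all $\xi\ne0$. Then $\mu=\int_0^\infty(\mu\star\psi_r)\star\widetilde\psi_r\,\tfrac{dr}{r}$, and applying the $j$-th Littlewood--Paley block $P_j$ (with convolution kernel $\ph_j$ of unit $L^1$-mass and frequency support $|\xi|\sim2^j$), frequency localisation forces $r\sim2^{-j}$:
\[
  P_j\mu=\int_{r\sim2^{-j}}(\mu\star\psi_r)\star(\widetilde\psi_r\star\ph_j)\,\frac{dr}{r},\qquad
  \|\widetilde\psi_r\star\ph_j\|_{L^1}\lesssim1\ \text{uniformly.}
\]

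\emph{The two regimes.} By Young's inequality and H\"older in $r$ over $\{r\sim2^{-j}\}$ one gets, for $j\ge0$, $2^{j\alpha}\|P_j\mu\|_{L^q}\lesssim\bigl(\int_{r\sim2^{-j}}(r^{-\alpha}\|\mu\star\psi_r\|_{L^q})^p\,\tfrac{dr}{r}\bigr)^{1/p}$. If $p\ge q$: raising to the $p$-th power, summing over $j$ (the ranges $\{r\sim2^{-j}\}$ have bounded overlap) and invoking Minkowski's inequality for mixed norms — which goes the right way exactly when $p\ge q$ — yields $\|\mu\|_{B^\alpha_{q,p}}\lesssim\|r^{-\alpha}(\mu\star\psi_r)(y)\|_{L^p_rL^q_y}\le\|\mu\|_{\SBE}$, hence a fortiori $\|\mu\|_{B^\alpha_{q,\infty}}\lesssim\|\mu\|_{\SBE}$. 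If $p<q$: since $\widetilde\psi_r\star\ph_j$ is, up to rapidly decaying tails, an $L^1$-normalised bump at scale $2^{-j}$, one bounds $|P_j\mu(y)|$ pointwise by an average over $\{r\sim2^{-j}\}\times\{|z|\lesssim2^{-j}\}$, and H\"older in $(r,z)$ gives, uniformly in $j$,
\[
  2^{j\alpha p}|P_j\mu(y)|^p\lesssim(\mathcal M H)(y),\qquad H(y):=\int_0^\infty\bigl(r^{-\alpha}|(\mu\star\psi_r)(y)|\bigr)^p\,\tfrac{dr}{r},
\]
with $\mathcal M$ the Hardy--Littlewood maximal operator; applying the maximal inequality in $L^{q/p}$ (legitimate since $q/p>1$) gives $\sup_j2^{j\alpha}\|P_j\mu\|_{L^q}\lesssim\|\mathcal M H\|_{L^{q/p}}^{1/p}\lesssim\|H\|_{L^{q/p}}^{1/p}=\|\mu\|_{\SBE}$. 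Finally, the low-frequency block $P_{-1}\mu=\mu\star\chi$ is handled separately: only $r\gtrsim1$ contributes in the reproducing formula, and $\int\psi=0$ together with the compact support of $\mu$ produces enough decay of $\|\mu\star\psi_r\|_{L^q}$ as $r\to\infty$ to control $\|\mu\star\chi\|_{L^q}$ by $\|\mu\|_{\SBE}$ (with a constant possibly depending on $\diam(\supp\mu)$; this is the only place where compact support is used).

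\emph{Main difficulty.} The heart of the matter is the reconstruction step combined with the mixed-norm bookkeeping. Because $\psi=\Delta_k\1_{\{|\cdot|\le1\}}$ is only bounded, $\widehat\psi$ is not compactly supported, so one cannot simply quote the textbook dyadic Littlewood--Paley characterisation and must set up the reproducing formula by hand; for that one has to verify the non-degeneracy of $\widehat\psi$ on an annulus — this is where real-analyticity, together with the precise value $k=\lceil\alpha+d-1\rceil$ (supplying the $\lceil\alpha\rceil-1$ vanishing moments that make the hypothesis $\|\mu\|_{\SBE}<\infty$ non-vacuous), come into play. The second subtlety is that the mixed norm $L^q_yL^p_r$ defining $\|\mu\|_{\SBE}$ does not interchange freely with the natural estimates, which forces the dichotomy $p\ge q$ (handled by Minkowski) versus $p<q$ (handled by the Hardy--Littlewood maximal function).
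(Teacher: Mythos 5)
Your argument is correct in its essentials, but it takes a genuinely different route from the paper's. You recast $(\Delta_k)_rF_\mu(r,y)=r^d(\mu\star\psi_r)(y)$ for the fixed annulus-supported kernel $\psi=\sum_jc_j\1_{\{|\cdot|\le 2^{-j}\}}$ and then run a Calder\'on reproducing-formula argument, which convolves in $y$ as well as integrating in $r$; that is precisely why you are forced into the mixed-norm dichotomy ($p\ge q$ via Minkowski, $p<q$ via the Hardy--Littlewood maximal function, the latter also covering $q=\infty$). The paper instead writes the Littlewood--Paley piece as $-\int_0^\infty\psi_N(r)\bigl(F_\mu(r,y)-\sum_j a_j(y)r^j\bigr)\frac{dr}{r}$, expands $\psi_N=\sum_h c_{h,k}\Delta_k^\ast\psi_N(2^h\cdot)$, moves $\Delta_k^\ast$ onto $F_\mu$ by duality in $L^2(\frac{dr}{r})$, and applies H\"older in $r$ alone; since the resulting bound $\les N^{-\alpha}\|r^{-\alpha-d}\Delta_kF_\mu(\cdot,y)\|_{L^p_r}$ is pointwise in $y$, the outer $L^q_y$ norm comes out for free and no interchange of $L^q_y$ and $L^p_r$ is ever needed. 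What your approach buys: you avoid the Taylor subtraction at $r=0$, the adjoint expansion and the mollification step needed to justify them for non-smooth $\mu$, because your reconstruction is assembled scale by scale with $r$ bounded away from $0$ on each block. What it costs: the nondegeneracy of $\widehat\psi$ on some annulus must be verified (your real-analyticity argument is fine), and the two-case bookkeeping, where the paper's duality trick gives all $p,q$ uniformly.

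One point to tighten is the low-frequency block. For $q>1$ your decay claim $\|\mu\star\psi_r\|_{L^q}\les\|\mu\|_{TV}\,r^{-d(1-1/q)}$ (which comes from the annular support of $\psi_r$ together with the compact support of $\mu$, not really from $\int\psi=0$) does close the estimate; but for $q=1$ there is no such decay and the sketch stalls. The clean fix is to bound the low-frequency piece by $\|\mu\|_{TV}$ and observe that $\mu(\R^d)\les\|\mu\|_{\SBE}$ with a constant depending on $R=\diam(\supp\mu)$: for $y$ at distance about $10R$ from $\supp\mu$ and $r$ in the dyadic window where $F_\mu(r,y)=\mu(\R^d)$ but $F_\mu(r/2,y)=0$, one has $\Delta_kF_\mu(r,y)=\mu(\R^d)$ exactly. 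A support-dependent constant here is unavoidable (a scaling argument shows the support-free inequality fails for the inhomogeneous Besov norm when $q=1$), and the paper's own proof silently omits this block as well, so this is a shared and fixable loose end rather than a defect of your approach.
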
 
A converse is also available (the proof is detailed
at page \pageref{pf:BiffSBE}).
\begin{proposition} \label{BiffSBE}
Let $\mu = f dx$ be a compactly supported finite measure such that for some $\alpha > 0$, $\alpha \not \in \N$,
\[
f \in B^{\alpha}_{q,1}.
\]
Then, for every $1 \le p \le \infty$, we have that 
\[
\|\mu\|_{\sbe^{\alpha,p}_{q}} \les \| \mu \|_{B^{\alpha}_{q,1}}.
\]
\end{proposition}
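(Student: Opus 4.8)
The plan is to rewrite $(\Delta_k)_r F_\mu(r,y)$ as a rescaled convolution of $f$ against one fixed, highly cancellative kernel, and then to estimate it --- via the modulus of smoothness of $f$ when the mixed norm can be rearranged conveniently, and via Littlewood--Paley pieces otherwise. First I would unwind the operator. By \eqref{deltakdef} the operator $(\Delta_k)_r$ has the form $G\mapsto\sum_{i=0}^{k+1}a_i\,G(2^{-i}\,\cdot\,)$, and \eqref{deltakernel} is precisely the identity $\sum_i a_i 2^{-im}=0$ for $0\le m\le k$. Since $F_\mu(s,y)=\int_{|z|\le s}f(y+z)\,dz$, putting $K(w):=\sum_i a_i\1_{\{|w|\le 2^{-i}\}}(w)$ and $K^{[r]}(z):=r^{-d}K(-z/r)$ gives the exact identity
\[
  (\Delta_k)_r F_\mu(r,y)=r^d\int_{\R^d}K(w)\,f(y+rw)\,dw=r^d\,(K^{[r]}\star f)(y),
\]
where $K$ is bounded, supported in the unit ball, and $\|K^{[r]}\|_{L^1}=\|K\|_{L^1}$ for every $r$. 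Testing this identity on a monomial $f(x)=x^\beta$ --- for which $F_\mu(s,y)$ is a polynomial in $s$ of degree $d+|\beta|$ --- and using \eqref{deltakernel}, one reads off that $\int_{\R^d}K(w)w^\beta\,dw=0$ for all $|\beta|\le N:=k-d=\lceil\alpha-1\rceil$. Since $\alpha\notin\N$ we have $N=\lfloor\alpha\rfloor$ and $N+1>\alpha$; this is the only place where the hypothesis $\alpha\notin\N$ (and the shift $d-1$ in $k$) is used. We must bound $\|\mu\|_{\SBE}=\|r^{-\alpha}(K^{[r]}\star f)(y)\|_{L^q_yL^p_r(\R^d\times\R^+,\,dy\otimes\frac{dr}{r})}$.

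If $p\le q$, Minkowski's integral inequality gives $\|\mu\|_{\SBE}\le\|\,r^{-\alpha}\|K^{[r]}\star f\|_{L^q_y}\,\|_{L^p_r(\frac{dr}{r})}$; since $K$ is a bounded, compactly supported kernel with vanishing moments through order $N$ (and $N+1>\alpha$), the standard cancellation estimate $\|K^{[r]}\star f\|_{L^q}\les\omega_{N+1}(f,r)_{L^q}$ reduces this to $\|r^{-\alpha}\omega_{N+1}(f,r)_{L^q}\|_{L^p_r(\R^+,\frac{dr}{r})}$, which is $\les\|f\|_{B^\alpha_{q,1}}$ by the modulus--of--smoothness characterization $\|f\|_{B^\alpha_{q,1}}\sim\|f\|_{L^q}+\int_0^1 t^{-\alpha}\omega_{N+1}(f,t)_{L^q}\frac{dt}{t}$ (using $N+1>\alpha$, the contribution of $r\gtrsim1$ being absorbed via $\omega_{N+1}\les\|f\|_{L^q}$ and $\alpha>0$). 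If $p>q$ --- which forces $q<\infty$ --- the mixed norm is in the wrong order for this step, and for $1<q<\infty$ I would instead use Littlewood--Paley: $f=\sum_{j\ge-1}f_j$, $\|f\|_{B^\alpha_{q,1}}\sim\sum_j2^{j\alpha}\|f_j\|_{L^q}$, with the pointwise bound $|(K^{[r]}\star f_j)(y)|\les\min(1,(r2^j)^{N+1})\,\mathcal Mf_j(y)$ ($\mathcal M$ the Hardy--Littlewood maximal operator, the first factor coming from subtracting the degree--$N$ Taylor polynomial of $f_j$ when $r2^j\les1$ and the trivial bound when $r2^j\gtrsim1$). Since $\|r^{-\alpha}\min(1,(r2^j)^{N+1})\|_{L^p_r(\frac{dr}{r})}\les2^{j\alpha}$ (substitute $s=r2^j$; the integrability at $r=0$ is exactly $N+1>\alpha$, at $r=\infty$ it is $\alpha>0$), this gives $\|r^{-\alpha}(K^{[r]}\star f_j)(\cdot)\|_{L^p_r}\les 2^{j\alpha}\mathcal Mf_j(\cdot)$ pointwise; taking $L^q_y$, using boundedness of $\mathcal M$ on $L^q$, and summing in $j$ yields $\|\mu\|_{\SBE}\les\sum_j2^{j\alpha}\|f_j\|_{L^q}\sim\|f\|_{B^\alpha_{q,1}}$.

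The remaining endpoint, $q=1$ with $1<p\le\infty$, is the genuine difficulty: the Littlewood--Paley blocks destroy the compact support of $f$, $\mathcal M$ is not bounded on $L^1$, and the mixed norm cannot be rearranged. Here I would split the $y$--integral at $\operatorname{dist}(y,\supp f)=1$. On $\{\operatorname{dist}(y,\supp f)\ge1\}$, $K$ being supported in the unit ball forces $(K^{[r]}\star f)(y)=0$ unless $r\ge\operatorname{dist}(y,\supp f)$, so the crude bound $|(K^{[r]}\star f)(y)|\le r^{-d}\|K\|_\infty\|f\|_{L^1}$ gives $\|r^{-\alpha}(K^{[r]}\star f)(y)\|_{L^p_r}\les\operatorname{dist}(y,\supp f)^{-\alpha-d}\|f\|_{L^1}$, which is integrable over that region because $\alpha>0$ (and $\supp f$ is bounded), contributing $\les\|f\|_{L^1}\les\|f\|_{B^\alpha_{1,1}}$. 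On the bounded near--support region one must produce a genuine $L^1_y$ bound for $\|r^{-\alpha}(K^{[r]}\star f)(y)\|_{L^p_r}$: I expect this to come from the modulus--of--smoothness characterization of $B^\alpha_{1,1}$ together with a representation of $K$ as a (localized) superposition of $(N+1)$st order finite differences, carefully summed in $r$. Pinning this down cleanly is the main obstacle I anticipate, together with the bookkeeping of the first step --- namely that the kernel $K$ built from $\Delta_k$ carries exactly $\lfloor\alpha\rfloor$ vanishing moments and that this is the count required to see $B^\alpha$, which is precisely what forces $\alpha\notin\N$ and motivates the $d-1$ in $k=\lceil\alpha+d-1\rceil$ --- and the fact that the order of the mixed norm $L^q_yL^p_r$ obstructs a uniform use of Minkowski's inequality when $p>q$.
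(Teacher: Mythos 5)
Your reduction of $(\Delta_k)_r F_\mu(r,y)$ to $r^d(K^{[r]}\star f)(y)$ with $K$ bounded, supported in the unit ball, and with vanishing moments up to order $\lfloor\alpha\rfloor$ is correct, and so is the moment count explaining the role of $\alpha\notin\N$ and of the shift $d-1$ in $k$. The cases $p\le q$ (Minkowski plus modulus of smoothness) and $p>q$, $1<q\le\infty$ (Littlewood--Paley plus the maximal function) are sound. But the case $q=1$, $p>1$ is a genuine gap, and you have flagged it yourself: the near-support region is exactly where the whole content of the estimate lives, and the sketch you give (a representation of $K$ as a superposition of finite differences ``carefully summed in $r$'') is not an argument. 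Since the proposition is claimed for every $1\le p\le\infty$ with $q$ fixed by the hypothesis $f\in B^\alpha_{q,1}$, the statement is not proved as it stands.

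The obstruction you identify --- ``the mixed norm $L^q_yL^p_r$ is in the wrong order'' --- is an artifact of trying to estimate $\|K^{[r]}\star f\|_{L^q_y}$ first. The paper's proof never commutes the two norms: it regards $y\mapsto r^{-\alpha-d}(\Delta_k)_rF_{\ph_j}(r,y)$ as an $L^p_r$-valued function of $y$ and applies the Banach-space-valued Young inequality
\[
  \bigl\|\,g\star_y G\,\bigr\|_{L^q_y(L^p_r)}
    \le \|g\|_{L^q}\,\|G\|_{L^1_y(L^p_r)},
\]
which is valid for all $1\le p,q\le\infty$, including $q=1$. Writing $f=\sum_j f_j\star\ph_j$ with $\sum_j 2^{j\alpha}\|f_j\|_{L^q}\les\|f\|_{B^\alpha_{q,1}}$ and using that $\star_y$ commutes with $(\Delta_k)_r$, everything reduces to $\|\ph_j\|_{\sbe^{\alpha,p}_1}$, which by the exact scaling $F_{\ph_j}(r,y)=F_\ph(2^jr,2^jy)$ equals $2^{j\alpha}\|\ph\|_{\sbe^{\alpha,p}_1}$ for a single Schwartz function $\ph$; finiteness of the latter is an elementary Taylor-expansion estimate (the paper's \autoref{lem:schwartzinsbe}, which is in substance your near-field cancellation bound applied once, to $\ph$ rather than to $f$). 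In your notation this amounts to proving $\|r^{-\alpha}K^{[r]}\star\ph_j\|_{L^1_y L^p_r}\les 2^{j\alpha}$ and then convolving with $f_j$ in $L^q$; this closes your $q=1$ case and in fact removes the need for any case distinction in $p$ and $q$.
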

\subsection{Proofs of \autoref{besovregularity} and \autoref{BiffSBE}}

Before moving to the proofs, we will need some preparatory lemmas.
Let $\Delta_k^\ast$ be the adjoint of $\Delta_k$ on $L^2(\R^+, \frac{dr}{r})$,
then
\[
\begin{cases}
\Delta_0^\ast F(r) = F(x) - F(2r), \\
\Delta_{k+1}^\ast F(r) = \Delta_k^\ast F(r) - 2^{k+1} \Delta_k^\ast F(2r).
\end{cases}
\]
\begin{lemma}
  Let $\ph: \R^+ \to \C$ be a Schwartz function. Then for every
  $k\geq0$ and $ r\in\R^+$, 
  \begin{equation} \label{eqn:deltaksum}
    \ph(r)
      = \sum_{h=0}^{+\infty} c_{h,k} \Delta_k^\ast \ph(2^hr),
  \end{equation}
  where
  \begin{equation}\label{eqn:chkdef}
    c_{h,k}
      = \sum_{h_0+\dots+h_k=h}\prod_{j=1}^k 2^{j h_j}
      \leq 2^k\cdot 2^{hk}.
  \end{equation}
\end{lemma}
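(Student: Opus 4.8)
The plan is to argue by induction on $k$, handling the identity \eqref{eqn:deltaksum} and the bound on $c_{h,k}$ in parallel, after first recording two elementary reformulations. Writing $Sf(r):=f(2r)$ for the dyadic dilation, the recursion defining $\Delta_k^\ast$ reads $\Delta_0^\ast=\mathrm{id}-S$ and $\Delta_{k+1}^\ast=(\mathrm{id}-2^{k+1}S)\Delta_k^\ast$, so that $\Delta_k^\ast=\prod_{j=0}^k(\mathrm{id}-2^j S)$, all these operators commute, and $\Delta_k^\ast\ph(2^h r)=S^h\Delta_k^\ast\ph(r)$. On the combinatorial side, splitting off the last summation index $h_{k+1}=m$ in \eqref{eqn:chkdef} yields the recursion $c_{h,k+1}=\sum_{m=0}^h 2^{(k+1)m}c_{h-m,k}$, with $c_{h,0}=1$ and $c_{0,k}=1$.

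First I would dispose of convergence and of the bound on $c_{h,k}$. Since $\ph$ is Schwartz, for every $N$ there is $C_N$ with $|\ph(\rho)|\les_N(1+\rho)^{-N}$; as $\Delta_k^\ast\ph$ is a fixed finite linear combination of dyadic dilates of $\ph$, the same type of estimate holds for $\Delta_k^\ast\ph$ with a constant depending also on $k$. Hence, for fixed $r>0$, $|\Delta_k^\ast\ph(2^h r)|$ decays faster than any exponential in $h$, so, using $c_{h,k}\le 2^k 2^{hk}$, the series in \eqref{eqn:deltaksum} converges absolutely; this absolute convergence is precisely what will license the rearrangements below. The bound $c_{h,k}\le 2^k2^{hk}$ is itself an easy induction on $k$ from the recursion: it is an equality for $k=0$, and if it holds for $k$ then
\[
  c_{h,k+1}=\sum_{m=0}^h 2^{(k+1)m}c_{h-m,k}\le 2^k 2^{hk}\sum_{m=0}^h 2^m\le 2^{k+1}2^{h(k+1)}.
\]

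For the identity itself, the base case $k=0$ is the telescoping sum $\sum_{h\ge0}\bigl(\ph(2^h r)-\ph(2^{h+1}r)\bigr)=\ph(r)$, since the tail $\ph(2^N r)\to0$ as $N\to\infty$. For the inductive step, assume \eqref{eqn:deltaksum} holds for $k$. From $\Delta_{k+1}^\ast=\Delta_k^\ast-2^{k+1}S\Delta_k^\ast$ and the telescoping argument applied to $g(m):=2^{(k+1)m}\Delta_k^\ast\ph(2^m r)$ (whose limit as $m\to\infty$ vanishes by the decay estimate above) one obtains
\[
  \Delta_k^\ast\ph(r)=\sum_{m\ge0}2^{(k+1)m}\Delta_{k+1}^\ast\ph(2^m r).
\]
Substituting this for each $\Delta_k^\ast\ph(2^h r)$ in the identity for $k$ and interchanging the two sums, the coefficient of $\Delta_{k+1}^\ast\ph(2^{h'}r)$ becomes $\sum_{m=0}^{h'}2^{(k+1)m}c_{h'-m,k}$, which equals $c_{h',k+1}$ by the recursion recorded above; this closes the induction.

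The only genuinely delicate point is the justification of the double-sum interchange in the inductive step, and this is immediate from the absolute convergence established at the outset; everything else is telescoping and bookkeeping of the $c_{h,k}$. One could equivalently phrase the whole proof operator-theoretically: on Schwartz functions $\mathrm{id}-2^jS$ is invertible with inverse given by the Neumann series $\sum_{m\ge0}2^{jm}S^m$, so $\Delta_k^\ast=\prod_{j=0}^k(\mathrm{id}-2^j S)$ is invertible with inverse $\prod_{j=0}^k\sum_{m\ge0}2^{jm}S^m=\sum_{h\ge0}c_{h,k}S^h$, obtained by expanding the product; applying this to $\ph$ gives \eqref{eqn:deltaksum} directly.
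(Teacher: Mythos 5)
Your proof is correct and follows essentially the same route as the paper's: induction on $k$ via the one-step telescoping identity $\Delta_k^\ast\ph(r)=\sum_{m\ge0}2^{(k+1)m}\Delta_{k+1}^\ast\ph(2^m r)$, a sum interchange justified by the bound on $c_{h,k}$ and the Schwartz decay, and the same recursion identifying the resulting coefficients (you split off the last index $h_{k+1}$ where the paper splits off $h_0$, but this is immaterial). The closing operator-theoretic remark, inverting $\Delta_k^\ast=\prod_{j=0}^k(\mathrm{id}-2^jS)$ by Neumann series, is a pleasant repackaging of the same computation.
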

\begin{proof}
  We start by proving the inequality $c_{h,k}\leq C(k)2^{hk}$.
  We clearly have that $c_{h,0} = 1$, so $C(0) = 1$.
  By proceeding inductively, we have 
  \begin{align*}
    c_{h,k+1}
      &= \sum_{h_0 + \dots + h_{k+1} = h} \prod_{j=0}^{k+1} 2^{jh_j} \\
      &= \sum_{h_0=0}^h\Bigl(2^{h-h_0}\sum_{h_1 + \dotsb + h_{k+1} = h - h_0}
        \prod_{j=0}^{k+1} 2^{(j-1)h_j}\Bigr)\\
      &=\sum_{h_0=0}^h 2^{h-h_0}c_{h-h_0,k}\\
      \intertext{thus, by the induction assumption,}
      &\leq C(k) \sum_{h_0=0}^h 2^{h- h_0}2^{(h-h_0)k}\\
      &\leq C(k+1) 2^{h(k+1)},
  \end{align*}
  if we set
  \[
    C(k+1)
      = C(k)\sum_{h_0=0}^h 2^{-h_0(k+1)}.
  \]
  Since the latter sum is bounded from above by $2$, the
  inequality follows.
Therefore, in order to obtain \eqref{eqn:chkdef}, it is enough to take 
\[
C(k+1) = C(k) \sup_{h \in \N} \sum_{h_0 = 0}^h  2^{-h_0(k+1)} \le 2 C(k) < +\infty.
\]
We now move to the proof of formula \eqref{eqn:deltaksum}.
We first notice that since $\ph$ is a Schwartz function,
then $\Delta_k^\ast \ph$ is a Schwartz function as well
for every $k$. Hence the summation in \eqref{eqn:deltaksum}
converges absolutely, in view of the estimate \eqref{eqn:chkdef}.
Moreover, $2^{(k+1)h} \Delta_k^\ast \ph(2^h r) \to 0$ as $h\to\infty$,
so by a telescoping sum argument,
\[
\ph(r) = \sum_{h=0}^{+\infty} \Delta_0^\ast \ph(2^h r),
\]
which is \eqref{eqn:deltaksum} in the case $k = 0$, and similarly
\[
\Delta_k^\ast \ph(r) = \sum_{h=0}^{+\infty}  2^{(k+1)h} \Delta_{k+1}^\ast \ph(2^h r).
\]
Therefore, proceeding inductively, 
\begin{align*}
  \ph(r)
    &= \sum_{h=0}^{+\infty} c_{h,k} \Delta_k^\ast \ph(2^hr) \\
    &= \sum_{h=0}^{+\infty} c_{h,k} \sum_{h_{k+1} = 0}^{+\infty}
      2^{(k+1)h_{k+1}} \Delta_{k+1}^\ast \ph(2^{h+h_{k+1}} r) \\
    &= \sum_{h=0}^{+\infty} \sum_{h_0 + \dotsb + h_k = h} \sum_{h_{k+1} = 0}^{+\infty}
       \prod_{j=0}^k 2^{jh_j} 2^{(k+1)h_{k+1}} \Delta_{k+1}^\ast \ph(2^{h+h_{k+1}} r) \\
    &= \sum_{h'=0}^{+\infty} \sum_{h_{k+1} = 0}^{+\infty} \sum_{h_0 + \dotsb + h_k = h' - h_{k+1}}
     \prod_{j=0}^k
        2^{jh_j} 2^{(k+1)h_{k+1}} \Delta_{k+1}^\ast \ph(2^{h'} r)\\
    &= \sum_{h'=0}^{+\infty} \sum_{h_0 + \dotsb + h_{k+1} = h' }
      \prod_{j=0}^{k+1} 2^{jh_j} \Delta_{k+1}^\ast \ph(2^{h'} r)\\
    &= \sum_{h'=0}^{+\infty} c_{h',k+1} \Delta_{k+1}^\ast \ph(2^{h'} r),
\end{align*}
which shows \eqref{eqn:deltaksum}. 
\end{proof}

\begin{lemma}
  Let $\ph$ be a Schwartz function, $N\geq 1$, $k\geq0$,
  and let $c_{h,k}$ be as in \eqref{eqn:chkdef}. Then
  for every $\alpha > \min(k,1)$ and $1\leq p'\leq\infty$, 
  \begin{equation} \label{eqn:0div}
    \sum_{h=0}^{+\infty} c_{h,k} N \| \ph(2^h N r) |r|^\alpha  \|_{L^{p'}(\frac{dr}{r})} 
      \les N^{-\alpha}.
  \end{equation}
\end{lemma}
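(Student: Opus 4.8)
The plan is to collapse the left-hand side of \eqref{eqn:0div} to an explicit dyadic series by scaling, and then to sum that series with the bound $c_{h,k}\le 2^k 2^{hk}$ recorded in \eqref{eqn:chkdef}. The only analytic ingredient is a change of variables: substituting $u=2^hNr$ — which preserves the multiplicative Haar measure $\frac{dr}{r}$ and rescales the weight as $|r|^\alpha=(2^hN)^{-\alpha}|u|^\alpha$ — one obtains
\[
  \bigl\| \ph(2^h N r)\,|r|^\alpha \bigr\|_{L^{p'}(\frac{dr}{r})}
    = (2^h N)^{-\alpha}\,K,
  \qquad
  K := \bigl\| |u|^\alpha\,\ph(u) \bigr\|_{L^{p'}(\R^+,\,\frac{du}{u})},
\]
with $K$ depending only on $\ph$, $\alpha$, $p'$.

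Next I would check $K<+\infty$, which is where one uses that $\ph$ is Schwartz: near $u=0$ the integrand defining $K$ is $\les|u|^{\alpha p'-1}$, which is integrable since $\alpha p'>0$ (with the evident modification when $p'=\infty$), and near $u=+\infty$ the rapid decay of $\ph$ dominates the polynomial factor $|u|^\alpha$. Everything past this point is bookkeeping with geometric series: the $N$-dependence factors off as a power of $N$, and \eqref{eqn:0div} is reduced to the finiteness of the dyadic sum $\sum_{h\ge0}c_{h,k}\,2^{-\alpha h}$.

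For $\alpha>k$ this finiteness is immediate from \eqref{eqn:chkdef}:
\[
  \sum_{h=0}^{+\infty} c_{h,k}\,2^{-\alpha h}
    \le 2^k \sum_{h=0}^{+\infty} 2^{-(\alpha-k)h}
    = \frac{2^k}{1-2^{-(\alpha-k)}}
    < +\infty,
\]
so \eqref{eqn:0div} follows, with implied constant depending only on $\ph$, $\alpha$, $p'$, $k$. This already disposes of $k=0$ (where the hypothesis reads $\alpha>0$) and $k=1$ (where it reads $\alpha>1$), i.e.\ exactly the cases with $\min(k,1)=k$.

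The delicate point, which I expect to be the main obstacle, is the remaining window $\min(k,1)<\alpha\le k$ — non-empty only when $k\ge2$, where it becomes $1<\alpha\le k$ — since there the bare geometric series diverges. In this range one must not bound $\|\ph(2^hNr)|r|^\alpha\|$ by one fixed negative power of $2^hN$ uniformly in $h$; instead I would split the sum over $h$ at the dyadic scale at which $2^hN$ reaches the reference length supplied by the context, using $|\ph|\le\|\ph\|_\infty$ below that scale and the rapid decay of $\ph$ — with as many derivatives as are needed to beat the growth $c_{h,k}\sim 2^{hk}$ — above it; an interpolation between the convergent case $\alpha>k$ and an elementary endpoint is an alternative route. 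Balancing the exponential growth of $c_{h,k}$ against the decay a generic Schwartz $\ph$ affords in this window is where the genuine care is required, the range $\alpha>k$ dropping out of the scaling identity above with no effort.
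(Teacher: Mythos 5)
Your change of variables and the geometric--series summation are exactly the paper's proof, and for $\alpha>k$ the two arguments coincide (this is the only case the paper actually treats). The ``delicate window'' $\min(k,1)<\alpha\le k$ that you flag is \emph{not} covered by the paper's proof either: its final display bounds the series by $\sum_h 2^{-h(\alpha-k)}$, which converges only when $\alpha>k$, so the hypothesis $\alpha>\min(k,1)$ is not what the argument uses (it is almost certainly a typo for $\alpha>\max(k,1)$, consistent with the proof's opening sentence ``since $\ph$ is a Schwartz function and $\alpha>1$''). More importantly, you should not look for a repair in that window: your own scaling identity computes the norm \emph{exactly} as $(2^hN)^{-\alpha}K$ with $K>0$ for a generic Schwartz function (e.g.\ a Gaussian), while $c_{h,k}\ge 2^{hk}$ (the single summand $h_k=h$, $h_0=\dots=h_{k-1}=0$ already contributes this), so the left-hand side of \eqref{eqn:0div} genuinely diverges whenever $\alpha\le k$. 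No splitting of the sum, extra decay of $\ph$, or interpolation can improve an exact identity; the statement is false as written in that range. Nothing downstream is affected, because the lemma is only invoked in the proof of \autoref{besovregularity} with weight $r^{\alpha+d}$ and $k=\lceil\alpha+d-1\rceil<\alpha+d$, i.e.\ with effective exponent strictly larger than $k$.

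One further bookkeeping point that both you and the paper gloss over: the prefactor $N$ combined with $(2^hN)^{-\alpha}$ yields $N^{1-\alpha}$, not $N^{-\alpha}$, and since $N\ge1$ the former does not imply the latter. Again this is harmless where the lemma is used, since there the prefactor is $N^d$ and the weight is $r^{\alpha+d}$, so the powers combine to exactly $N^{-\alpha}$; but if you are proving the lemma as literally stated you should either record the conclusion as $\les N^{1-\alpha}$ or renormalise the statement to match the application.
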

\begin{proof}
Since $\ph$ is a Schwartz function and $\alpha > 1$, then
$|r|^\alpha\ph \in L^{p'}(\tfrac{dr}{r})$. Hence, by a simple change of variable,
\[
\| \ph(2^h N r) |r|^\alpha \|_{L^{p'}(\frac{dr}{r})} \sim 2^{-h\alpha} N^{-\alpha}.
\]
Therefore,
\begin{multline*}
  \big \| \sum_{h=0}^{+\infty} c_{h,k} N \ph(2^h N r) |r|^\alpha \big \|_{L^{p'}} 
    \leq \sum_{h=0}^{+\infty} c_{h,k} N \| \ph(2^h N r) |r|^\alpha \|_{L^{p'}}\\
    \les  \sum_{h=0}^{+\infty}   N^{-\alpha} 2^{-h(\alpha - k)}
    \les N^{-\alpha},
\end{multline*}
 by \eqref{eqn:chkdef}.
\end{proof}

\begin{proof}[Proof of \autoref{besovregularity}]\phantomsection\label{pf:besovregularity}
Let $\ph_N = N^d \ph(N \cdot) $ be the Littlewood-Paley projector on frequencies $\sim N$.
The estimate \eqref{besovestimate} is equivalent to 
\begin{equation}\label{besov2}
\Big \| \int_{\R^d} \ph_N(x) \mu(x-y) dx  \Big\|_{L^q_y}
  \les N^{-\alpha}\| \mu \|_{\SBE}. 
\end{equation}
We will first show the above inequality in the case $\mu \in C^\infty(\R^d)$, with
a pre-factor independent from $\mu$.

Recall that $\ph$ is a radial function. Let $\partial_r \ph$ be the
derivative of $\ph$ in the radial direction. With a slight abuse of
notation, we denote $\partial_r \ph(r) := \partial_r \ph(x)$, for any
$x$ with $|x| = r$. Since $\ph$ is a Schwartz function, we have that 
\[
\ph(x) = - \int_0^\infty \partial_r \ph(r) \1_{[0,r]}(|x|)\,dr.
\]
Therefore,
\begin{align*}
\int_{\R^d} \ph_N(x) \mu(x-y) dx
  &= \int_{\R^d} \ph_N(x-y) \mu(x) dx \\
  &= - \int_0^\infty \int_{\R^d} \partial_r \ph_N(r) \1_{[0,r]}(|x-y|)\mu(x)\,dx\,dr\\
  &= - \int_0^\infty \partial_r \ph_N(r) F_\mu(r,y) d r\\
  &= - \int_0^\infty N^d \cdot Nr (\partial_r \ph)(Nr)F_\mu(r,y) \frac{d r}{r}
\end{align*}
In order to compensate the divergence at $r=0$ that we will find when
estimating formula \eqref{eqn:deltaksum} (see \eqref{e:divergence} below),
we notice that, since $\int \ph_N(r) r^n\,dr = 0$
for every $n \in \N$,  by similar computations,
\begin{align*}
 \int_0^\infty N^d \cdot Nr (\partial_r \ph)(Nr)r^{n+1} \frac{d r}{r} = 0
\end{align*}
for every $n \in \N$. Therefore,
\begin{equation}\label{bes1}
  \int \ph_N(x) \mu(x-y) dx
    = -\int_0^\infty N^d \cdot Nr (\partial_r \ph)(Nr)\Bigl(F_\mu(r,y) - \sum_{j=d}^{k} a_j r^j\Bigr)\,\frac{d r}{r},
\end{equation}
and we choose $a_j = a_j(y) = \frac{1}{j!} F_\mu(\cdot,y)^{(j)}(0) $, so that 
\begin{equation} 
\Big| F_\mu(r,y) - \sum_{j=d}^{k} a_j r^j\Big| \les r^{k+1} \label{taylor}. 
\end{equation}
Set $\psi(r)=r \partial_r \ph$, so that $N^d \cdot Nr (\partial_r \ph)(Nr) = N^d \psi(Nr)=: \psi_N(r)$.
By \eqref{eqn:deltaksum}, 
\[
  \psi_N(r) = \sum_{h=0}^{+\infty} c_{h,k} \Delta_k^\ast \psi_N(2^hr).
\]
Since $\psi_N$ is a Schwartz function, and thus so is
$\Delta_k^\ast \psi_N$, from \eqref{eqn:chkdef} we have that 
\begin{equation}\label{e:divergence}
  \sum_{h=0}^{+\infty} c_{h,k} |\Delta_k^\ast \psi_N(2^hr)|
    \les \frac{r^{-k}}{1 + r^2}.
\end{equation}
Therefore, by dominated convergence, \eqref{bes1}, \eqref{taylor},
\eqref{deltakernel}, H\"older's inequality (we denote by $p'$
the H\"older conjugate exponent of $p$) and \eqref{eqn:0div},
\begin{align*}
  \Big|\int\ph_N(x)\mu(x-y)\,dx\Big|
    &= \Big|\sum_{h=0}^{+\infty} c_{h,k}\int\Delta_k^\ast\psi_N(2^hr)
      \Bigl(F_\mu(r,y) - \sum_{j=d}^{k} a_j r^j\Bigr)\,\frac{dr}{r}\Big|\\
    &=\Big| \sum_{h=0}^{+\infty} c_{h,k} \int \psi_N(2^h r)
      \Delta_k F_\mu(r,y)\,\frac{dr}{r}\Big|\\
    &= \Big|\sum_{h=0}^{+\infty} c_{h,k}\int r^{\alpha+d}\psi_N(2^h r)
      r^{-\alpha-d}\Delta_k F_\mu(r,y)\,\frac{dr}{r}\Big| \\
    &\leq \sum_{h=0}^{+\infty} c_{h,k} \| r^{\alpha+d}\psi_N(2^h r)\|_{L^{p'}_r}
      \| r^{-\alpha-d}\Delta_kF_\mu(r, y) \|_{L^p_r} \\
&\les N^{-\alpha} \| r^{-\alpha - d} \Delta_k F_\mu(r,y) \|_{L^p_r},
\end{align*}
from which \eqref{besov2} follows easily.

We now move to the case of $\mu$ being not necessarily smooth.
Let $\rho \ge 0$ be a smooth function with compact support
and $\int \rho = 1$, and for $\eps > 0$, let 
\[
  \rho_\eps := \eps^{-1} \rho(\eps^{-1} x).
\]
Fix $N \gg 1$, and let $\mu_\eps := \mu \star \rho_\eps$. Recalling that $\mu$ has compact support and $\ph_N$ is a Schwartz function, we have that 
\begin{align*}
  \int_{\R^d} \ph_N(x-y) \bigl(d \mu(x) - \mu_{\eps}(x)dx\bigr)
    &=  \int_{\R^d} (\ph_N - \ph_N \star \rho_\eps)(x-y) d \mu(x)\\
    &\les C(N, \mu) \eps (1 + |y|)^{-(d+1)},
\end{align*}
so $\|\int \ph_N(x-y) (d \mu(x) - \mu_{\eps}(x)dx) \|_{L^q_{y}} \to 0$ as $\eps \to 0$. 
Moreover,
\begin{align*}
\| \mu_\eps \|_{\SBE} & =  \| r^{-\alpha-d} \Delta_k  F_{\mu_\eps} (r,y) \|_{L^q_yL^p_r} \\
& = \| r^{-\alpha-d} (\Delta_k)_r (F_{\mu}\star_y \rho_\eps) (r,y) \|_{L^q_yL^p_r} \\
& \le \Big \| \int   \| r^{-\alpha-d} \Delta_k F_{\mu}(r,y-y_0) \|_{L^p_r} \rho_\eps(y_0) d y_0 \Big \|_{L^q_{y}} \\
& \le  \int   \| r^{-\alpha-d} \Delta_k F_{\mu}(r,y-y_0) \|_{L^q_y L^p_r}\rho_\eps(y_0) d y_0  \\
& = \int   \| r^{-\alpha-d} \Delta_kF_{\mu}(r,y) \|_{L^q_y L^p_r}\rho_\eps(y_0) d y_0  \\
& = \| \mu \|_{\SBE}.
\end{align*}
Therefore, by choosing $\eps$ small enough, 
\begin{align*}
  \lefteqn{\Big \| \int \ph_N(x-y) d \mu(x)  \Big\|_{L^q_y}}\\
    &\le\Big \|  \int \ph_N(x-y) (d \mu(x) - \mu_{\eps}(x)dx) \Big \|_{L^q_{y}}
      +  \Big \| \int \ph_N(x-y) \,d \mu_\eps (x)  \Big\|_{L^q_y} \\
    &\les N^{-\alpha}\| \mu \|_{\SBE},
\end{align*}
which yields \eqref{besov2}.
\end{proof}
\begin{lemma}\label{lem:schwartzinsbe}
Let $\ph \in S(\R^d)$. Then for every $\alpha > 0$, $\alpha \not \in \N$, and for every $1 \le p, q \le \infty$,
$$ \| \ph \|_{\SBE} < \infty. $$
\end{lemma}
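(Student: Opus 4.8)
The plan is to show directly that $\| \ph \|_{\SBE} < \infty$ by estimating the integrand $r^{-\alpha-d}\Delta_k F_\ph(r,y)$ in two regimes, small $r$ and large $r$, exploiting only the smoothness and rapid decay of $\ph$. Write $k = \lceil \alpha + d - 1\rceil$ as in the definition. The starting point is the observation that, since $\ph \in C^\infty$, the map $r \mapsto F_\ph(r,y) = \int_{|x-y|\le r}\ph(x)\,dx$ is smooth in $r$ on $[0,\infty)$ for each fixed $y$: indeed $F_\ph(r,y) = \int_0^r \big(\int_{\Sb^{d-1}} \ph(y + s\theta)\,d\sigma(\theta)\big) s^{d-1}\,ds$, so $F_\ph(\cdot,y)$ is $C^\infty$ and its derivatives up to any order are controlled, locally uniformly in $r$, by $\sup$-norms of derivatives of $\ph$ near $y$.

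For the small-$r$ regime I would use the key algebraic fact \eqref{deltakernel}: $\Delta_k$ annihilates polynomials of degree $\le k$. Expanding $F_\ph(\cdot,y)$ in a Taylor polynomial of degree $k$ around $r=0$, the remainder is $O(r^{k+1})$, and $\Delta_k$ applied to $F_\ph$ equals $\Delta_k$ applied to that remainder (plus the contributions of $F_\ph$ evaluated at dyadic rescalings $r/2, r/4, \dots$, which are handled the same way). A direct bound on $\Delta_k$ — it is a finite linear combination of values $F(r/2^j)$, $0 \le j \le k$, with fixed coefficients — gives $|\Delta_k F_\ph(r,y)| \lesssim r^{k+1} \sup_{|z-y|\le r}(\text{derivatives of }\ph)$, uniformly for $r \le 1$, say. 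Since $k+1 > \alpha + d - 1$, i.e. $k+1 - \alpha - d > -1$, the factor $r^{-\alpha-d}|\Delta_k F_\ph(r,y)| \lesssim r^{k+1-\alpha-d}$ is integrable against $\tfrac{dr}{r}$ near $0$, and taking the $L^p_r$ norm over $(0,1]$ and then the $L^q_y$ norm over $\R^d$ is finite because the $y$-dependent constant is bounded by a Schwartz-type quantity in $y$. (Here $\alpha \notin \N$ is used to guarantee $k+1-\alpha-d$ is not exactly $-1$, so the weight $\tfrac{dr}{r}$ does not produce a logarithmic divergence; more precisely with $\alpha\notin\N$ one has $k = \lceil\alpha+d-1\rceil > \alpha+d-1$ strictly.)

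For the large-$r$ regime I would use decay rather than cancellation: for $r \ge 1$, simply bound $|\Delta_k F_\ph(r,y)| \lesssim \sum_{j=0}^k F_{|\ph|}(r/2^j,y) \lesssim F_{|\ph|}(r,y) \le \|\ph\|_{L^1}$, and more sharply $F_{|\ph|}(r,y) \lesssim_M (1+\max(0,|y|-r))^{-M}$ for any $M$ from the rapid decay of $\ph$; in particular it is bounded by a constant. Then $r^{-\alpha-d}|\Delta_k F_\ph(r,y)| \lesssim r^{-\alpha-d}$, which is integrable against $\tfrac{dr}{r}$ on $[1,\infty)$ since $\alpha + d > 0$, giving a finite $L^p_r([1,\infty))$ norm; for the $L^q_y$ norm, when $|y|$ is large one gains decay from the refined bound on $F_{|\ph|}$ (splitting into $r \le |y|/2$ and $r > |y|/2$), ensuring $L^q$-integrability in $y$, and when $q = \infty$ the uniform bound suffices. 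Combining the two regimes by the triangle inequality in $L^p_r(\tfrac{dr}{r})$ yields $\|\ph\|_{\SBE} < \infty$.

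The main obstacle, and the only place requiring genuine care, is the small-$r$ estimate: one must check that $\Delta_k$, defined by the recursion \eqref{deltakdef}, genuinely extracts the $O(r^{k+1})$ smoothness of $F_\ph(\cdot,y)$ uniformly in $y$ — that is, that the Taylor remainder estimate survives application of $\Delta_k$ with the correct power $r^{k+1}$ and with a $y$-constant that decays (Schwartz-like) in $y$. This follows from writing $\Delta_k$ explicitly as a divided-difference-type operator and invoking \eqref{deltakernel} together with the integral representation of $F_\ph$ above, but it is the step where the hypotheses $\alpha\notin\N$ and $k=\lceil\alpha+d-1\rceil$ are essential. Everything else is routine integrability bookkeeping against the measure $dy\otimes\tfrac{dr}{r}$.
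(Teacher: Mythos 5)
Your proof is correct and follows essentially the same route as the paper's: a Taylor expansion of $F_\ph(\cdot,y)$ at $r=0$ combined with \eqref{deltakernel} to get the $O(r^{k+1})$ bound for small $r$, the trivial $\|\ph\|_{L^1}$ bound for large $r$, and the rapid decay of $\ph$ to control the $L^q_y$ norm after splitting the $r$-domain. One small slip in wording: integrability of $r^{k+1-\alpha-d}$ against $\tfrac{dr}{r}$ near $0$ requires the exponent to be strictly positive (not merely $>-1$), but this is exactly the strict inequality $k=\lceil\alpha+d-1\rceil>\alpha+d-1$ that you correctly invoke from $\alpha\notin\N$, so the argument stands.
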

\begin{proof}
We begin by estimating $\Delta_k F_\ph(r,y)$. For some coefficients $\{a_n= a_n(y)\}_{n \le k}$ to be chosen later, define 
\[
  g(r,y) :=  F_\ph(r,y) - \sum_{n \le k} a_n r^{n+d}.
\]
From \eqref{deltakernel}, we have that $(\Delta_{k})_r g(r,y) = \Delta_{k} F_\ph(r,y)$.
Moreover, since $\ph \in C^\infty(\R^d)$, from the formula 
\[
F_\ph(r,y) = \int_{|x-y| \le r} \ph(x) d x =  \int_{0}^r \int_{\Sb^{d-1}} \mu(r \cdot \theta + y) d \sigma(\theta) r^{d-1} d r,
\]
we have that $F_\ph(\cdot,y) \in C^\infty([0,+\infty))$ as well, and moreover, for every $h \in \N$,
\begin{equation}\label{e:derestimate}
| F_\ph^{(h)}(\cdot,y)(r)| \lesssim \| \ph \|_{C^{h}(B(y,r))},
\end{equation}
and for every $h \le d$,
\begin{equation}\label{e:dernull}
| F_\ph^{(h)}(\cdot,y)(0)| = 0.
\end{equation}
Therefore, choosing 
\[
a_n(y) := \frac{F_\ph^{(n+d)}(\cdot,y)(0)}{(n+d)!}, 
\]
by \eqref{e:derestimate} and \eqref{e:dernull}, we obtain that 
\[
|g(r,y)| \lesssim r^{d+k+1} \| \ph \|_{C^{k+1}(B(y,r))},
\]
and so we also have the same estimate for $(\Delta_{k})_r g(r,y) = \Delta_{k} F_\ph(r,y)$,
\[
|\Delta_{k} F_\ph(r,y)| \les \| \ph \|_{C^{k+1}(B(y,r))}.
\]
Together with the simple estimate 
\[
|\Delta_{k} F_\ph(r,y)| \les \| \ph \|_{L^1},
\]
by splitting the domain of integration in $\{ r \le \frac {|y| + 1}2\}$ and $\{ r > \frac {|y| + 1}2\}$, we obtain that 
\[
\| r^{-\alpha-d} (\Delta_k)_r F_\mu(r, y)
      \|_{L^p_r(\frac{dr}{r})} \les \big(|y| + 1\big)^{k+1-\alpha}\| \ph \|_{C^{k+1}(B(y,\frac {|y| + 1}2))} + \big(|y| + 1\big)^{-d-\alpha}
\]
Recalling that $\ph \in S(\R^d)$, it follows easily that $\|\ph\|_{\SBE} < \infty$.
\end{proof}

\begin{proof}[Proof of \autoref{BiffSBE}]\phantomsection\label{pf:BiffSBE}
For $N \ge 1$, let $\ph_N(x) := N^d \ph(Nx)$ be the Littlewood-Paley projector, and let $\ph_0 = \sum_{N \le 0} N^{d} \ph(N \cdot)$. Since $f \in B^\alpha_{q,1}$, we can write 
\[
f = \sum_{N\ge 0 \text{ dyadic}} f_N \star \ph_N,
\]
with 
\[
\sum_N N^{\alpha} \| f_N\|_{L^q} \lesssim \|f\|_{B^\alpha_{q,1}}.
\]
By change of variables,
\begin{align*}
F_{\ph_N}(r, y) = \int_{B_1} r^d \ph_N(y+rx) dx 
 = \int_{B_1} r^d N^d \ph(Ny+Nrx) dx 
 = F_{\ph}(Nr, Ny).
\end{align*}
Therefore, we also have $(\Delta_k)_r F_{\ph_N}(r, y) = \big((\Delta_k)_r F_{\ph}\big)(Nr, Ny)$. Thus,
\begin{align*}
\|\ph_N\|_{\sbe^{\alpha,p}_1} &= \| r^{-\alpha-d} (\Delta_k)_r F_{\ph_N}(r, y) \|_{L^1_yL^p_r(\R^d\times\R^+, dy\otimes\frac{dr}{r})} \\
&= \| r^{-\alpha-d} \big((\Delta_k)_r F_{\ph}\big)(Nr, Ny) \|_{L^1_yL^p_r(\R^d\times\R^+, dy\otimes\frac{dr}{r})}\\
&= \big\| N^{\alpha + d} \| \big((\Delta_k)_r F_{\ph}\big)( \cdot, Ny) \|_{L^p_r(\frac{dr}{r})}\big\|_{L^1_y}\\
&= N^{\alpha} \|\ph\|_{\sbe^{\alpha,p}_1},
\end{align*}
which is finite by \autoref{lem:schwartzinsbe}. Similarly, again by
\autoref{lem:schwartzinsbe}, we have that $\|\ph_0\|_{\sbe^{\alpha,p}_1} \les 1$.
Therefore, by Young's inequality,
\begin{align*}
\| f \|_{\SBE} &\le \sum_N \| f_N \star \ph_N \|_{\SBE} \\
& = \sum_N  \| r^{-\alpha-d} (\Delta_k)_r F_{f_N \star \ph_N}(r, y) \|_{L^q_yL^p_r(\R^d\times\R^+, dy\otimes\frac{dr}{r})} \\
& = \sum_N  \| r^{-\alpha-d} \big(f_N \star_y (\Delta_k)_r F_{ \ph_N}\big)(r, y) \|_{L^q_yL^p_r(\R^d\times\R^+, dy\otimes\frac{dr}{r})} \\
& \le \sum_N  \| f_N \|_{L^q} \| r^{-\alpha-d} (\Delta_k)_r F_{ \ph_N}(r, y) \|_{L^1_yL^p_r(\R^d\times\R^+, dy\otimes\frac{dr}{r})} \\
& \les \sum_N \| f_N \|_{L^q} N^{\alpha} \les \| f \|_{B^{\alpha}_{q,1}}.\qedhere
\end{align*}
\end{proof}
\section{\texorpdfstring{\sbe}{SBE} regularity for occupation measures}\label{s:sbeprops}

We wish to use this definition for the occupation measure
(defined in \eqref{e:om}), to measure the irregularity
of paths. First, we give a connection with $(\rho,\gamma)$
irregularity.
\begin{proposition}
  Let $\alpha>0$, $1\leq p,q\leq\infty$, and $\gamma\in(0,1)$.
  Given a path $\omega:[a,b]\to\R^d$, if the occupation measure
  $(\mu^\omega_{a,t})_{t\in[a,b]}$ is $\gamma$-H\"older
  with values in $\SBE$, then $\omega$ is
  $(\alpha,\gamma)$-irregular.  
\end{proposition}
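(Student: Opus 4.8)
The plan is to read $(\alpha,\gamma)$-irregularity as a quantitative decay property of the Fourier transform of the occupation measure and to obtain it from \autoref{besovregularity}. Recall that $\omega$ is $(\alpha,\gamma)$-irregular precisely when
\[
  \sup_{\xi\in\R^d}(1+|\xi|)^{\alpha}\sup_{a\le s<t\le b}\frac{|\widehat{\mu^\omega_{s,t}}(\xi)|}{|t-s|^\gamma}<+\infty,
  \qquad\widehat{\mu^\omega_{s,t}}(\xi):=\int_s^t\e^{\im\xi\cdot\omega_r}\,dr.
\]
First I would use that the occupation measure is additive, $\mu^\omega_{s,t}=\mu^\omega_{a,t}-\mu^\omega_{a,s}$, and that the map $\mu\mapsto F_\mu$ from \eqref{Fmudef} and the operators $\Delta_k$ are linear, so that $F_{\mu^\omega_{s,t}}=F_{\mu^\omega_{a,t}}-F_{\mu^\omega_{a,s}}$ and hence
\[
  \|\mu^\omega_{s,t}\|_{\SBE}=\|\mu^\omega_{a,t}-\mu^\omega_{a,s}\|_{\SBE}\les|t-s|^\gamma
\]
for all $a\le s<t\le b$, the implicit constant being the $\gamma$-H\"older seminorm of $t\mapsto\mu^\omega_{a,t}$ with values in $\SBE$ furnished by the hypothesis. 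Applying \autoref{besovregularity} then upgrades this to $\|\mu^\omega_{s,t}\|_{B^\alpha_{q,\infty}}\les|t-s|^\gamma$, uniformly in $s<t$.

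The second step passes from $B^\alpha_{q,\infty}$-regularity of $\mu^\omega_{s,t}$ to the pointwise bound $|\widehat{\mu^\omega_{s,t}}(\xi)|\les(1+|\xi|)^{-\alpha}|t-s|^\gamma$, and here it is essential that all these measures are supported in one fixed compact set: the hypothesis forces each $\mu^\omega_{a,t}$ into $\SBE$, hence to be compactly supported, and the supports increase with $t$, so $\supp\mu^\omega_{s,t}\subseteq\supp\mu^\omega_{a,b}=:B$ for every $s<t$. I would then establish the elementary embedding that, for any distribution $g$ with $\supp g\subseteq B$ (constants allowed to depend on $B$),
\[
  \sup_{\xi}(1+|\xi|)^\alpha|\widehat g(\xi)|\les_B\|g\|_{B^\alpha_{q,\infty}}.
\]
Writing $g=\sum_N\Delta_Ng$ in Littlewood--Paley pieces, at a frequency $|\xi|\sim N$ only the $O(1)$ blocks with $N'\sim N$ see $\xi$, so $|\widehat g(\xi)|\les\sum_{N'\sim N}\|\Delta_{N'}g\|_{L^1}$; since $g$ lives in $B$ and the Littlewood--Paley kernels decay rapidly, $\|\Delta_{N'}g\|_{L^1}\les_B\|\Delta_{N'}g\|_{L^q}\les N'^{-\alpha}\|g\|_{B^\alpha_{q,\infty}}$, while the low-frequency part is controlled by $\|g\|_{B^\alpha_{q,\infty}}$ directly. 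Taking $g=\mu^\omega_{s,t}$, whose total mass $t-s$ is $\les|t-s|^\gamma$ on the bounded interval $[a,b]$, and combining with the first step gives exactly the Fourier estimate that defines $(\alpha,\gamma)$-irregularity.

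The step I expect to be most delicate is keeping the exponent \emph{sharp} in this last embedding: a soft route, factoring $B^\alpha_{q,\infty}\hookrightarrow B^{\alpha-\eps}_{1,1}$ on compactly supported functions and then into $\{g:(1+|\xi|)^{\alpha-\eps}\widehat g\in L^\infty\}$, would only give $(\alpha-\eps,\gamma)$-irregularity, so one must bound a \emph{single} Littlewood--Paley block pointwise in $\xi$ rather than summing a series, which is precisely where the compact-support inequality $\|\Delta_Ng\|_{L^1}\les_B\|\Delta_Ng\|_{L^q}$ — together with the control of the convolution tails of $\Delta_Ng$ outside $B$ — is needed. The one remaining point is the endpoint $p=\infty$, which is not literally included in the statement of \autoref{besovregularity}; its proof, however, applies H\"older's inequality in the radial variable with a conjugate pair $(p,p')$, and the choice $(\infty,1)$ works verbatim, so the bound $\|\mu^\omega_{s,t}\|_{B^\alpha_{q,\infty}}\les|t-s|^\gamma$, and with it the proposition, also holds for $p=\infty$.
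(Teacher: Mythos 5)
Your argument is correct and follows essentially the same route as the paper: H\"older regularity in $\SBE$ is upgraded to H\"older regularity of $\mu^\omega_{s,t}$ in $B^{\alpha}_{q,\infty}$ via \autoref{besovregularity}, the fixed compact support is used to pass from $L^q$ to $L^1$ spatial integrability, and the blockwise Hausdorff--Young bound $\|\widehat{g\star\ph_N}\|_{L^\infty}\le\|g\star\ph_N\|_{L^1}$ then delivers the Fourier decay that defines $(\alpha,\gamma)$-irregularity. The only difference is presentational: the paper outsources the last step to the characterization of $(\rho,\gamma)$-irregularity as $\gamma$-H\"older continuity with values in the Besov scale over Kahane's pseudo-measures (citing \cite{HocRomTol2022}) together with the embedding $L^1\subset\PM$, whereas you prove the Fourier estimate directly, and you additionally handle the endpoint $p=\infty$, which the statement of \autoref{besovregularity} formally excludes.
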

\begin{proof}
  We recall that by a simple remark given in \cite{HocRomTol2022}, a path $\omega$
  is $(\rho,\gamma)$-irregular if and only if $t\mapsto\mu^\omega_{a,\cdot}$
  is $\gamma$-H\"older in time with values in $B^{\rho,\infty}_{\PM}$,
  where $\PM$ is the space of Kahane's pseudo-measures. Since
  $L^1\subset\PM$, we have that $B^\alpha_{1,\infty}\subset B^{\alpha,\infty}_\PM$.
  Since $(\mu^\omega_{a,t})_{t\in[a,b]}$ is compactly supported,
  we have that $\mu^\omega_{a,t} = \rho \mu^\omega_{a,t}$ for every
  $\rho \in C^\infty_c$ such that $\rho \equiv 1 $
  on $\supp(\mu^\omega_{a,t})$. Therefore, 
  \[
    \|\mu^\omega_{a,t}\|_{B^{\alpha}_{1,\infty}}
      = \|\rho \mu^\omega_{a,t}\|_{B^{\alpha}_{1,\infty}}
      \les \|\mu^\omega_{a,t}\|_{B^{\alpha}_{q,\infty}}.
  \]
  Hence by \autoref{besovregularity},
  \[
    \|\mu^\omega_{a,t}\|_{B^{\alpha}_{1,\infty}}
      \les \|\mu^\omega_{a,t}\|_{B^{\alpha}_{q,\infty}}
      \les \|\mu^\omega_{a,t}\|_{\SBE}.\qedhere
  \]
\end{proof}
In view of the negative result presented in \cite{HocRomTol2022}
about the non-existence for H\"older continuous maps
of a condition that ensures $(\rho,\gamma)$
irregularity and is invariant by bi-Lipschitz
reparametrisation, we prove that our notion
of irregularity of paths is invariant by bi-Lipschitz reparametrisations, under an additional $V^p$ condition on the derivative of the reparametrisation.
To this end we recall the definition of spaces
of variations $V^p(a,b;E)$ defined in \autoref{s:notation}.
\begin{proposition}\label{prop:reparametrisation}
  Let $\omega: [a,b] \to \R^d$, and let $r > 1$. Suppose that
  the occupation measure $\mu$ of $\omega$ satisfies 
  \[
    \mu_{a,\cdot}
      \in V^r(\SBE).
  \]
  Let $\ph: [a',b'] \to [a,b]$ be a bi-Lipschitz map with
  ${\ph'} \in V^{r'-\eps}$ for some $\eps > 0$, where $r'$
  is the H\"older conjugate of $r$.
  Then the process $\omega \circ \ph$ has occupation
  measure $\mu^{\ph}$ that satisfies 
  \[
    \mu^\ph_{a',\cdot}
      \in V^r(\SBE),
  \]
  as well.
\end{proposition}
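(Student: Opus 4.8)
The strategy is to express the occupation measure $\mu^\ph$ of $\omega\circ\ph$ as a \emph{weighted} occupation measure of $\omega$ read along the image interval, and then to recognise this weighted integral as a Young integral of a scalar function of bounded $(r'-\eps)$-variation against the $\SBE$-valued path $v\mapsto\mu_{a,v}$, to which the classical Young--Loève estimate applies. Concretely, assume first that $\ph$ is increasing (the decreasing case is entirely analogous, with $[a,b]$ traversed backwards), so that $\ph(a')=a$ and $\ph(b')=b$. Since $\ph$ is bi-Lipschitz, $\ph^{-1}$ is Lipschitz, hence absolutely continuous, and the substitution $v=\ph(u)$ gives, for $a'\le s\le t\le b'$,
\[
  \mu^\ph_{s,t}
    = \int_{\ph(s)}^{\ph(t)} g(v)\,\1_{\omega(v)}(\cdot)\,dv,
  \qquad
  g:=(\ph^{-1})'=\frac{1}{\ph'\circ\ph^{-1}}.
\]
Bi-Lipschitzness forces $0<c\le\ph'\le C$ a.e., so $g$ is bounded above and below; writing $1/\ph'=h\circ\ph'$ with $h(x)=1/x$ Lipschitz on $[c,C]$ we get $1/\ph'\in V^{r'-\eps}(a',b')$, and precomposing with the monotone bijection $\ph^{-1}$ leaves the $(r'-\eps)$-variation unchanged, so $\|g\|_\infty+\|g\|_{V^{r'-\eps}(a,b)}\les 1+\|\ph'\|_{V^{r'-\eps}(a',b')}<\infty$.

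Set $\sigma:=\ph(s)$, $\tau:=\ph(t)$, and $G(v):=\mu_{\sigma,v}$ (valued in $\SBE$). The displayed weighted integral is exactly the Young integral $\int_\sigma^\tau g\,dG$, since its Riemann--Stieltjes sums $\sum_i g(v_i)\bigl(G(v_{i+1})-G(v_i)\bigr)=\sum_i g(v_i)\,\mu_{v_i,v_{i+1}}$ converge to $\int_\sigma^\tau g\,\1_\omega\,dv$. As $\tfrac1{r'-\eps}+\tfrac1r>\tfrac1{r'}+\tfrac1r=1$, the Young--Loève inequality applies; using $G(\tau)-G(\sigma)=\mu_{\sigma,\tau}$ and that the $\SBE$-increments of $G$ agree with those of $\mu_{a,\cdot}$ (both equal $\mu_{v,v'}$), it yields
\[
  \|\mu^\ph_{s,t}\|_{\SBE}
    \le |g(\sigma)|\,\|\mu_{\sigma,\tau}\|_{\SBE}
      + C\,\|g\|_{V^{r'-\eps}(\sigma,\tau)}\,\|\mu_{a,\cdot}\|_{V^r(\sigma,\tau;\SBE)}.
\]

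To conclude, fix a partition $a'=t_0<\dots<t_n=b'$ and put $\sigma_h:=\ph(t_h)$, so that $a=\sigma_0<\dots<\sigma_n=b$ and $\mu^\ph_{t_{h-1},t_h}=\mu^\ph_{a',t_h}-\mu^\ph_{a',t_{h-1}}$. Raising the previous bound to the power $r$, summing over $h$, and using the uniform bounds on $\|g\|_\infty$ and $\|g\|_{V^{r'-\eps}}$,
\[
  \sum_{h=1}^n\|\mu^\ph_{t_{h-1},t_h}\|_{\SBE}^r
    \les \|g\|_\infty^r\sum_h\|\mu_{\sigma_{h-1},\sigma_h}\|_{\SBE}^r
      + \|g\|_{V^{r'-\eps}(a,b)}^r\sum_h\|\mu_{a,\cdot}\|_{V^r(\sigma_{h-1},\sigma_h;\SBE)}^r.
\]
The first sum is $\le\|\mu_{a,\cdot}\|_{V^r(a,b;\SBE)}^r$ because $\{\sigma_h\}$ is one admissible partition of $[a,b]$, and the second is bounded by the same quantity thanks to superadditivity of $I\mapsto\|\mu_{a,\cdot}\|_{V^r(I;\SBE)}^r$ over consecutive subintervals. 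Taking the supremum over all partitions of $[a',b']$ gives
\[
  \|\mu^\ph_{a',\cdot}\|_{V^r(a',b';\SBE)}
    \les \bigl(\|g\|_\infty+\|g\|_{V^{r'-\eps}(a,b)}\bigr)\,\|\mu_{a,\cdot}\|_{V^r(a,b;\SBE)}<\infty,
\]
which is the assertion.

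The main obstacle is the Young-integral step: $\SBE$ is merely a seminormed space, and neither $g$ (only of bounded variation) nor $v\mapsto\mu_{a,v}$ need a priori be continuous, so the standard ``no common discontinuities'' hypothesis of Young's theorem has to be dealt with. I would handle this either by mollifying $\omega$ --- exploiting that the $\SBE$-norm is controlled under convolution, as already used in the proof of \autoref{besovregularity} --- so as to reduce to a smooth $\omega$ and then pass to the limit, or by deriving the per-increment estimate above directly from the sewing lemma, whose hypotheses only involve the seminorm triangle inequality applied to the defect $\bigl(g(\sigma)-g(u)\bigr)\mu_{u,\tau}$. The remaining ingredients --- the change of variables, the composition estimates for $g$, and the superadditivity of the $V^r$-seminorm --- are routine.
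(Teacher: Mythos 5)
Your proposal is correct and follows essentially the same route as the paper: the change of variables producing the weight $g=1/(\ph'\circ\ph^{-1})\in V^{r'-\eps}$, and a Young-type pairing of $V^{r'-\eps}$ against $V^r(\SBE)$ yielding the per-increment bound, which is then summed using superadditivity of $\|\cdot\|_{V^r}^r$. The paper does not cite Young--Lo\`eve as a black box but reproves the estimate in this setting by comparing Riemann sums along dyadic partitions that equidistribute the $V^{r'-\eps}$-variation of $g$, precisely to sidestep the continuity and seminorm issues you flag; it then passes from smooth to merely bounded $g\in V^{r'-\eps}$ by mollifying $g$ (not $\omega$), using $\|F_{g\cdot\mu_{s,t}}-F_{g_\delta\cdot\mu_{s,t}}\|_\infty\le\|g-g_\delta\|_{L^1}$ and Fatou. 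Note that your first fallback --- mollifying $\omega$ --- would not work as stated, since smoothing the path in time is not a spatial convolution of the occupation measure and does not obviously control the $\SBE$ norm; the mollification should be applied to the scalar weight $g$ instead.
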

We will prove the above proposition at page \pageref{pf:reparametrisation}.
We additionally give a property of stability with respect to (suitable)
smooth perturbations of paths, proved at page \pageref{pf:shift}.
\begin{proposition}\label{prop:shift}
Let $\omega: [a,b] \to \R^d$. Suppose that for some $r>1$, $1<p<\infty$,
$1 \le q \le \infty$, the occupation measure $\mu$ of $\omega$ satisfies 
\[
  \mu_{a,\cdot} \in V^r(B^{\alpha}_{p,q}).
\]
Let $f:[a,b]\to\R^d$ be a function with $f\in V^{r_1}$
for some $r_1 \ge 1$, let $\omega^f_s := \omega_s + f_s$,
and let $\mu^f$ be its occupation measure. 
Then, for every $\frac{r_1}{r'} < \gamma \le 1$, 
\[
  \mu^f_{a,\cdot} \in V^r(B^{\alpha-\gamma}_{p,q}),
\]
where $r'$ is the H\"older conjugate of $r$. Moreover,
if $f,g \in V^{r_1}$, then for every $\frac{r_1}{r'} < \gamma \le 1$,
\begin{equation}\label{Vrdiffestimate}
\| \mu^f_{a,\cdot} - \mu^g_{a,\cdot} \|_{V^r(B^{\alpha-\gamma-1}_{p,q})} \les \| \mu_{a,\cdot}\|_{V^r(B^{\alpha}_{p,q})} \big( \| f - g \|_{V^{r_1}} + (1 +  \|g\|_{V^{r_1}}) \| f - g \|_{L^\infty} \big)
\end{equation}

\end{proposition}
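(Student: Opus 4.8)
The plan is to compare the occupation measures $\mu^f$ and $\mu$ (and more generally $\mu^f$ and $\mu^g$) time-increment by time-increment, and to express the difference as a kind of ``nonlinear Young'' correction involving the shift. For a single time increment $[s,t]$, the occupation measure of $\omega^f$ tested against a smooth $\psi$ is $\int_s^t \psi(\omega_r + f_r)\,dr$, whereas for $\omega$ it is $\int_s^t \psi(\omega_r)\,dr$. The first step is to Taylor-expand $\psi(\omega_r+f_r) - \psi(\omega_r)$ to first order, which, when dualised, produces the identity $\mu^f_{s,t} = \tau_{-f} \# \mu_{s,t}$ plus remainders; at the level of Fourier variables this is exactly the statement that $\widehat{\mu^f_{s,t}}(\xi) = \int_s^t e^{\im\xi\cdot(\omega_r+f_r)}\,dr$, and the gain of a derivative on $\mu$ is paid by the factor $\xi$ coming from the phase $e^{\im\xi\cdot f_r}$. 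This is where the $\alpha \to \alpha-\gamma$ (or $\alpha - \gamma - 1$) loss of spatial regularity comes from: one derivative of smoothing of $\mu$ is traded against the (at most Lipschitz, here $V^{r_1}$) roughness of $f$ in time.

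Concretely, I would fix a Littlewood--Paley block at frequency $N$, write
\[
  \int_s^t \ph_N \star (d\mu^f_{s,t} - d\mu_{s,t})(\cdot)
    = \int_s^t \bigl(\ph_N(\cdot - \omega_r - f_r) - \ph_N(\cdot - \omega_r)\bigr)\,dr,
\]
and then use a sewing/telescoping argument over a partition of $[s,t]$: on each sub-interval $[u,v]$ the displacement $f_v - f_u$ is controlled in terms of the $V^{r_1}$ norm of $f$, and the contribution is estimated by $N \cdot \|\ph_N\|_{L^1\text{-type}} \cdot |v-u|$-type bounds against $\|\mu_{u,v}\|_{B^\alpha_{p,q}}$, which in turn is $V^r$-summable by hypothesis. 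Summing the resulting bilinear estimate over the partition and optimising requires the H\"older relation $\tfrac{1}{r} + \tfrac{1}{r'} = 1$ together with the constraint $\gamma > \tfrac{r_1}{r'}$, exactly so that $\sum |v-u|^{\gamma r'/r_1} < \infty$ converges and closes the $V^r$ bound; the threshold $\gamma \le 1$ reflects that one cannot gain more than the single derivative available from the phase. Taking the supremum over partitions and over $N$ (with the $N^{-(\alpha-\gamma)}$ decay) yields $\mu^f_{a,\cdot} \in V^r(B^{\alpha-\gamma}_{p,q})$.

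For the difference estimate \eqref{Vrdiffestimate}, the same scheme applies to $\ph_N(\cdot - \omega_r - f_r) - \ph_N(\cdot - \omega_r - g_r)$; now one Taylor-expands in the variable $f_r - g_r$, which costs a further derivative (hence $\alpha - \gamma - 1$ rather than $\alpha - \gamma$), and the linear-in-$(f-g)$ structure produces the two terms on the right-hand side: the genuinely ``rough'' part $\|f-g\|_{V^{r_1}}$ (from increments of $f-g$ over sub-intervals) and the ``zeroth order'' part $(1+\|g\|_{V^{r_1}})\|f-g\|_{L^\infty}$ (from the second-order term in the expansion, where one displacement is bounded in $L^\infty$ and the other, an increment of $g$, in $V^{r_1}$), with the factor $\|\mu_{a,\cdot}\|_{V^r(B^\alpha_{p,q})}$ carried along as the common multiplier. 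The main obstacle I anticipate is organising the sewing estimate so that the bilinear term $\sum_{[u,v]} \|\mu_{u,v}\|_{B^\alpha_{p,q}} \cdot \|f\|_{V^{r_1}([u,v])}^{\text{(power)}}$ is summed correctly in $V^r$: one must be careful that the two factors are measured in conjugate scales and that the exponent bookkeeping ($r$, $r'$, $r_1$, $\gamma$) genuinely forces convergence, rather than merely looking plausible — this is the heart of the argument and the rest is routine Littlewood--Paley and Taylor estimates.
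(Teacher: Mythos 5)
Your plan is essentially the paper's proof: freeze the shift on a partition, use the Bernstein-type translation estimate $\|\tau_y u - \tau_x u\|_{B^{\alpha-\gamma}_{p,q}} \les |x-y|^\gamma\|u\|_{B^\alpha_{p,q}}$ on frequency-localized blocks (your ``phase factor $\xi$'' heuristic), apply H\"older with exponents $r,r'$, and let $\gamma r'>r_1$ drive convergence; the difference bound likewise comes from the second-order translation commutator, losing one more derivative. The one step you flag but do not execute --- making the bilinear sum actually converge in $V^r$ --- is resolved in the paper by choosing the partitions so that the $V^{r_1}$-variation of $f$ is equidistributed among the subintervals and then telescoping over dyadic refinements, which yields the geometric factor $N^{-(\gamma r'-r_1)/(r'r_1)}$; this is exactly the bookkeeping your constraint $\gamma>r_1/r'$ anticipates.
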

\subsection{Proofs of reparametrisation and perturbation}

\begin{proof}[Proof of \autoref{prop:reparametrisation}]\phantomsection\label{pf:reparametrisation}
We preliminarily state the following estimate. Let $g: [a,b] \to \R$
be a bounded function in $V^{r'-\eps}$, and set
\[
g\cdot\mu_{s,t} := \int_s^t g(\tau)\delta_{\omega_{\tau}}d \tau.
\]
Then
\begin{equation} \label{rep1}
  \| g\cdot\mu_{a,\cdot} \|_{V^{r}(\SBE)}
    \les \| \mu_{a,\cdot} \|_{V^{r}(\SBE)}  \big(\| g \|_{V^{r'-\eps}} + \| g \|_{L^\infty}\big).
\end{equation}

We postpone the proof of the estimate and use it to prove the proposition.
By a change of variables, we have that 
\[
\mu^\ph_{s,t} = \int_s^{t} \delta_{\omega_{\ph(r)}}\,dr = \int_{\ph(s)}^{\ph(t)} \frac{\delta_{\omega_\tau}}{\ph'(\ph^{-1}\tau)}d \tau. 
\]
Let $g(\tau) := \frac{1}{\ph'(\ph^{-1}\tau)}$. Since $\ph' \in V^{r'-\eps}$, and $\ph$ is bi-Lipschitz, then we also have $g \in V^{r-\eps}$. Therefore, by \eqref{rep1}, 
\[
  \| \mu^\ph_{a,\cdot} \|_{V^{r}(\SBE)}
    \les \| \mu_{a,\cdot} \|_{V^{r}(\SBE)} 
      \Big(\Big\| \frac 1 {\ph' \circ \ph^{-1}} \Big\|_{V^{r'-\eps}}
      + \Big\| \frac {1}{\ph'} \Big\|_{L^\infty}\Big) < +\infty,
\]
which concludes the proof of the main statement.

We turn to the proof of \eqref{rep1}.
We first assume that $g$ is smooth.
If $g$ is constant, this inequality is trivial.
Therefore, we can assume that $g$ is not constant.
Let $N$ be a dyadic number. Define the partitions
$s = t_0^N < t_1^N < \dotsb < t_N^N = t$ so that 
\begin{equation}\label{e:equipartition0}
\begin{cases}
\|g\|_{V^{r'-\eps}([t_{2j}^N,t_{2j+1}^N])}^{r'-\eps} = \|g\|_{V^{r'-\eps}([t_{2j+1}^N,t_{2j+2}^N])}^{r'-\eps},\\
t_{2j}^{2N} = t_j^N.
\end{cases}
\end{equation}
Since 
\begin{multline*}
\|g\|_{V^{r'-\eps}([t_{2j}^N,t_{2j+1}^N])}^{r'-\eps} + \|g\|_{V^{r'-\eps}([t_{2j+1}^N,t_{2j+2}^N])}^{r'-\eps} \\
\le \|g\|_{V^{r'-\eps}([t_{2j}^N,t_{2j+2}^N])}^{r'-\eps} = \|g\|_{V^{r'-\eps}([t_{j}^{N/2},t_{j+1}^{N/2}])}^{r'-\eps},
\end{multline*}
proceeding inductively we have that
\begin{equation}\label{e:equipartition1}
\|g\|_{V^{r'-\eps}([t_j^N,t_{j+1}^N])}^{r'-\eps} \le \frac 1 N \|g\|_{V^{r'-\eps}([s,t])}^{r'-\eps} 
\end{equation}
Moreover, we have that 
\[
g\cdot\mu_{s,t} = \lim_{N \to \infty} (g\cdot\mu_{s,t})^{(N)}
\]
in total variation norm, where
\[
(g\cdot\mu_{s,t})^{(N)} := \sum_{j=0}^{N-1} \mu_{t_j^N, t_{j+1}^N} g(t_j^N).
\]
By \eqref{e:equipartition1}, we have that
\[
  \begin{aligned}
    \sum_{j=0}^{N-1}\big|g(t_{2j+1}^{2N}) - g(t_{2j}^{2N})\big|^{r'}
      &\leq \sum_{j=0}^{N-1} \|g\|_{V^{r'-\eps}(t_{2j}^{2N},t_{2j+1}^{2N})}^\eps
        \big|g(t_{2j+1}^{2N}) - g(t_{2j}^{2N})\big|^{r'-\eps}\\
      &\leq N^{-\frac\eps{r'-\eps}}\|g\|_{V^{r'-\eps}(s,t)}^\eps\sum_{j=0}^{N-1}
        \big|g(t_{2j+1}^{2N}) - g(t_{2j}^{2N})\big|^{r'-\eps}\\
      &\leq N^{-\frac\eps{r'-\eps}}\|g\|_{V^{r'-\eps}(s,t)}^{r'}
  \end{aligned}
\]
thus,
\[
  \begin{aligned}
    \lefteqn{\|(g\cdot\mu_{s,t})^{(2N)} - (g\cdot\mu_{s,t})^{(N)}\|_{\SBE}}\\
      &= \Big\| \sum_{j=0}^{N-1} \mu_{t_{2j+1}^{2N}, t_{2(j+1)}^{2N}}
        \big(g(t_{2j+1}^{2N}) - g(t_{2j}^{2N})\big) \Big\|_{\SBE} \\
      &\leq\sum_{j=0}^{N-1} \|\mu_{t_{2j+1}^{2N}, t_{2(j+1)}^{2N}}\|_{\SBE}
        \big|g(t_{2j+1}^{2N}) - g(t_{2j}^{2N})\big|\\
      &\leq\Big(\sum_{j=0}^{N-1} \| \mu_{t_{2j+1}^{2N}, t_{2(j+1)}^{2N}}\|_{\SBE}^r\Big)^\frac 1r
        \Big(\sum_{j=0}^{N-1}\big|g(t_{2j+1}^{2N})- g(t_{2j}^{2N})\big|^{r'}\Big)^\frac 1{r'}\\
      &\leq N^{-\frac \eps{r'(r'-\eps)}}\|\mu\|_{V^r([s,t],\SBE)} \|g\|_{V^{r'-\eps}}.
\end{aligned}
\]
Therefore, by telescopic summation,
\begin{align*}
\|g\cdot\mu_{s,t} \|_{\SBE}
&\lesssim \|(g\cdot\mu_{s,t})^{(1)} \|_{\SBE} + \| \mu \|_{V^r([s,t],\SBE)} \| g \|_{V^{r'-\eps}}  \\
&\lesssim \|\mu_{s,t}\|_{\SBE} \|g\|_{L^\infty} + \| \mu \|_{V^r([s,t],\SBE)} \| g \|_{V^{r'-\eps}},
\end{align*}
from which \eqref{rep1} follows when $g$ is smooth.

Assume now that $g\in V^{r'-\epsilon}$ is bounded.
It is well known that smooth functions are not dense
in spaces of variations, see for instance
\cite[Theorem 5.31]{FriVic2010}, and moreover our
$g$ is not continuous. So we consider
the following regularization $g_\delta$
of $g$ defined as
\[
  g*\rho_\delta(t)
    = \int_\R g(t-s)\rho_\delta(s)\,ds,
\]
with $g$ continuously extended as constant outside $[a,b]$,
and where $\rho_\delta(t)=\delta^{-1}\rho(\delta^{-1}t)$
is a smooth mollifier.
It is elementary to see that
$\|g_\delta\|_{V^{r'-\eps}}\leq \|g\|_{V^{r'-\eps}}$
and $\|g_\delta\|_\infty\leq \|g\|_\infty$.
Moreover,
\[
  \|F_{g\cdot\mu_{s,t}} - F_{g_\delta\cdot\mu_{s,t}}\|_\infty
    \leq \|g - g_\delta\|_{L^1},
\]
therefore by Fatou's lemma,
\[
  \|g\cdot\mu_{a,\cdot}\|_{V^r(\SBE)}
    \leq\liminf_{\delta\to0}\|g_\delta\cdot\mu_{a,\cdot}\|_{V^r(\SBE)},
\]
and from this \eqref{rep1} follows.
\end{proof}
\begin{proof}[Proof of \autoref{prop:shift}]\phantomsection\label{pf:shift}
The proof follows closely the proof of \autoref{prop:reparametrisation}. We want to show that 
\begin{equation} \label{shift1}
\| \mu^f_{a,\cdot} \|_{V^{r}(B^{\alpha- \gamma}_{p,q})} \les \| \mu_{a,\cdot} \|_{V^r(B^\alpha_{p,q})} \|f\|_{V^{r_1}}^\gamma + \| \mu_{a,\cdot} \|_{V^{r}(B^{\alpha- \gamma}_{p,q})}.
\end{equation}
If $f$ is constant, the inequality is trivial, so we can assume that $f$ is not constant.  Define the partitions $s = t_0^N < t_1^N < \dotsb < t_N^N = t$ as in \eqref{e:equipartition0} (with $r'-\eps$ replaced by $r_1$), so that for every $x, y \in [t_j^N,t_{j+1}^N]$, 
\begin{equation}\label{e:equipartition2}
|f(x) - f(y)|^{r_1} \le \frac 1 N \|f\|_{V^{r_1}([s,t])}^{r_1}.
\end{equation}
Recall that $t_{2j}^{2N} = t_{j}^{N}$. Moreover, we have that 
\[
\mu^f_{s,t} = \lim_{N \to \infty} \mu^{f,N}_{s,t}
\]
in (any) Wasserstein norm, where
\[
\mu^{f,N}_{s,t} := \sum_{j=0}^{N-1} \tau_{f_{t_j}}\mu_{t_j^N, t_{j+1}^N}.
\]
For any function $u$ such that its Fourier transform has support
in $[-10N, 10N]$, we have the inequality 
\[
\| \tau_y u - u \|_{L^p} \les |y| N \|u \|_{L^p}.
\]
By interpolation with $\| \tau_y u - u \|_{L^p} \les \| u \|_{L^p}$, this implies
\[
\| \tau_y u - u \|_{L^p} \les |y|^{\gamma} N^{\gamma} \|u \|_{L^p}.
\]
Therefore, for any $x, y \in \R^d$, we have 
\begin{equation}\label{e:holdertranslation}
\| \tau_y u - \tau_x u \|_{B^{\alpha-\gamma}_{p,q}} \les |x-y|^{\gamma} \|u\|_{B^{\alpha}_{p,q}}.
\end{equation}
Set $\epsilon=\gamma r'-r_1$, then by \eqref{e:equipartition2},
\begin{equation} \label{e:gammarvar}
  \begin{aligned}
    \sum_{j=0}^{N-1}\big|f(t_{2j+1}^{2N}) - f(t_{2j}^{2N})\big|^{\gamma r'}
      &\leq \sum_{j=0}^{N-1} (N^{-\frac 1 {r_1}} \|f\|_{V^{r_1}(s,t)})^\eps
        \big|f(t_{2j+1}^{2N}) - f(t_{2j}^{2N})\big|^{r_1}\\
      &\leq N^{-\frac\eps{r_1}}\|f\|_{V^{r_1}(s,t)}^{\gamma r'}.
  \end{aligned}
\end{equation}
The above estimate and \eqref{e:holdertranslation} yield,
\[
  \begin{aligned}
    \|\mu^{f,2N}_{s,t} - \mu^{f,N}_{s,t}\|_{B^{\alpha-\gamma}_{p,q}}
      &=\Big\| \sum_{j=0}^{N-1} (\tau_{f(t_{2j+1}^{2N})} - \tau_{f(t_{2j}^{2N})})
        \mu_{t_{2j+1}^{2N}, t_{2(j+1)}^{2N}} \Big\|_{B^{\alpha-\gamma}_{p,q}} \\
      &\leq\sum_{j=0}^{N-1} \| \mu_{t_{2j+1}^{2N}, t_{2(j+1)}^{2N}}\|_{B^\alpha_{p,q}}
        \big|f(t_{2j+1}^{2N}) - f(t_{2j}^{2N})\big|^\gamma\\
      &\leq\Big(\sum_{j=0}^{N-1}\|\mu_{t_{2j+1}^{2N}, t_{2(j+1)}^{2N}}\|_{B^\alpha_{p,q}}^r \Big)^\frac1r
        \Big(\sum_{j=0}^{N-1}\big|f(t_{2j+1}^{2N}) - f(t_{2j}^{2N})\big|^{\gamma r'}\Big)^\frac 1{r'}\\
      &\leq N^{-\frac{\gamma r' - r_1}{r'r_1}}\|\mu \|_{V^r([s,t],B^\alpha_{p,q})}
        \|f\|_{V^{r_1}}^\gamma.
  \end{aligned}
\]
By telescopic summation, we obtain 
\begin{align*}
\|\mu^{f}_{s,t} \|_{B^{\alpha-\gamma}_{p,q}} 
&\le \|\mu_{s,t} \|_{B^{\alpha-\gamma}_{p,q}} + \sum_{N} \|\mu^{f,2N}_{s,t} - \mu^{f,N}_{s,t}\|_{B^{\alpha-\gamma}_{p,q}} \\
&\les \|\mu_{s,t} \|_{B^{\alpha-\gamma}_{p,q}}  + \| \mu \|_{V^r([s,t],B^\alpha_{p,q})} \| f \|_{V^{r_1}}^\gamma,
\end{align*}
from which \eqref{shift1} follows easily.

Now we move to proving \eqref{Vrdiffestimate}. We proceed analogously, and fix partitions $s = t_0^N \le t_1^N \le \dotsb \le t_N^N = t$ as in \eqref{e:equipartition0} (with $r'-\eps$ replaced by $r_1$), so that for every $x, y \in [t_j^N, t_{j+1}^N]$,
\begin{equation}\label{e:equipartition3}
\begin{gathered}
|f(x) - f(y)|^{r_1} \le \frac 2 N \|f\|_{V^{r_1}([s,t])}^{r_1}, \\
|(f-g)(x) - (f-g)(y)|^{r_1} \le \frac 2 N \|f-g\|_{V^{r_1}([s,t])}^{r_1}.
\end{gathered}
\end{equation}
In order to realise such a partition, we can construct $s = t_0^N(f) < t_1^N(f) < \dotsb < t_N^N(f) = t$ and $s = t_0^N(f-g) < t_1^N(f-g) < \dotsb < t_N^N(f-g) = t$ as in \eqref{e:equipartition0}, and then take $\{ t_0^{2N}, \dotsc, t_{2N}^N \} = \{ t_j^N(f) \}_{j \le N} \cup \{ t_j^N(f-g) \}_{j \le N}$, appropriately rearranged so they are in increasing order.

\noindent
For any function $u$ such that its Fourier transform has support
in $[-10N, 10N]$, we have the inequality 
\[
  \begin{aligned}
    \lefteqn{\|(\tau_{y_2} - \tau_{x_2})u - (\tau_{y_1} - \tau_{x_1})u\|_{L^p}}\qquad&\\
       &\le\|(\tau_{y_2} - \tau_{x_2})u - (\tau_{y_2 - (x_2-x_1)} - \tau_{x_2 - (x_2 - x_1)})u \|_{L^p}
         + \| (\tau_{y_2 - (x_2-x_1)} - \tau_{y_1}) u \| \\
       &\les N^2 \|u\|_{L^p}\,|x_2 - x_1|\, |y_2 - x_2| 
         + N \| u \|_{L^p}|(y_2 - x_2) - (y_1 - x_1)|,
  \end{aligned}
\]
and similarly, 
\[
  \| (\tau_{y_2} - \tau_{x_2}) u - (\tau_{y_1} - \tau_{x_1}) u \|_{L^p} 
    \les N \|u\|_{L^p}\bigl(|y_2 - x_2|  + |(y_2 - x_2) - (y_1 - x_1)|\bigr).
\]
Interpolating between the two, we obtain 
\[
  \begin{multlined}[.9\linewidth]
    \| (\tau_{y_2} - \tau_{x_2}) u - (\tau_{y_1} - \tau_{x_1}) u \|_{L^p} \\
      \les |x_2 - x_1|^\gamma |y_2 - x_2| N^{1+ \gamma} \|u\|_{L^p}
        + |(y_2 - x_2) - (y_1 - x_1)| N \| u \|_{L^p}.
  \end{multlined}
\]
Therefore, for any $x_1, y_1, x_2, y_2 \in \R^d$, we have
\begin{equation} \label{e:holdertranslation2}
  \begin{multlined}[.9\linewidth]
    \|(\tau_{y_2} - \tau_{x_2})u - (\tau_{y_1} - \tau_{x_1})u\|_{B^{\alpha - \gamma - 1}_{p,q}}\\
      \les \big(|x_2 - x_1|^\gamma |y_2 - x_2| + |(y_2 - x_2) - (y_1 - x_1)|\big) \| u \|_{B^{\alpha}_{p,q}}.
  \end{multlined}
\end{equation}
Notice that \eqref{e:gammarvar} still holds for $g$ and $f-g$, due to the choice of the partition \eqref{e:equipartition3}. Therefore, by \eqref{e:holdertranslation2} and \eqref{e:gammarvar}, we obtain that 
\[
  \begin{aligned}
    \lefteqn{\|(\mu^{f,2N}_{s,t} - \mu^{g,2N}_{s,t})
        - (\mu^{f,N}_{s,t} - \mu^{g,2N}_{s,t})\|_{B^{\alpha-\gamma - 1}_{p,q}}}\qquad&\\
      &=\Big\| \sum_{j=0}^{N-1} \big((\tau_{f(t_{2j+1}^{2N})} - \tau_{g(t_{2j+1}^{2N})}
        - (\tau_{f(t_{2j}^{2N})} - \tau_{g(t_{2j}^{2N})})\big)
        \mu_{t_{2j+1}^{2N}, t_{2(j+1)}^{2N}} \Big\|_{B^{\alpha-\gamma-1}_{p,q}}\\
      &\leq\sum_{j=0}^{N-1} \| \mu_{t_{2j+1}^{2N}, t_{2(j+1)}^{2N}}\|_{B^\alpha_{p,q}}\cdot\\
      &\qquad \cdot\Big(|g(t_{2j+1}^{2N}) - g(t_{2j}^{2N})|^\gamma \|f -g \|_{L^\infty([s,t])}
        + |(f-g)(t_{2j+1}^{2N}) - (f-g)(t_{2j}^{2N})|\Big)\\
      &\leq\Big(\sum_{j=0}^{N-1}
        \|\mu_{t_{2j+1}^{2N}, t_{2(j+1)}^{2N}}\|_{B^\alpha_{p,q}}^r \Big)^\frac1r
        \Big[\Big(\sum_{j=0}^{N-1}
          \big|g(t_{2j+1}^{2N}) - g(t_{2j}^{2N})\big|^{\gamma r'}\Big)^\frac 1{r'}
        \| f -g \|_{L^\infty([s,t])} + {}\\
      &\qquad  + \Big(\sum_{j=0}^{N-1}\big|(f-g)(t_{2j+1}^{2N})
          - (f-g)(t_{2j}^{2N})\big|^{r'}\Big)^\frac 1{r'}\Big]\\
      &\leq \|\mu \|_{V^r([s,t],B^\alpha_{p,q})}
        \Big(N^{-\frac{\gamma r' - r_1}{r'r_1}} \|g\|_{V^{r_1}}^\gamma \| f - g\|_{L^\infty}
        + N^{-\frac{r' - r_1}{r'r_1}} \|f-g\|_{V^{r_1}}\Big)
  \end{aligned}
\]
By telescopic summation, we obtain that 
\[
  \begin{aligned}
    \| \mu_{s,t}^f - \mu_{s,t}^g \|_{B^{\alpha - \gamma - 1}_{p,q}}
      &\le \| f - g \|_{L^\infty} \| \mu_{s,t}\|_{B^{\alpha - \gamma - 1}_{p,q}}\\
      &\quad + \|\mu \|_{V^r([s,t],B^\alpha_{p,q})}
        \Big( \|g\|_{V^{r_1}}^\gamma \| f - g\|_{L^\infty} + \|f-g\|_{V^{r_1}}\Big),
  \end{aligned}
\]
from which \eqref{Vrdiffestimate} follows easily.
\end{proof}
\section{\texorpdfstring{\sbe}{SBE} regularity for stochastic processes}\label{s:sbestochproc}

We wish to discuss here $\sbe$ regularity for paths of stochastic processes.
It turns out that it is sufficient to control the density of the increments
of paths to get \sbe regularity.
\begin{theorem} \label{thm:spSBE}
  Let $\beta >0$ and let $\omega:[a,b]\to\R^d$ be a stochastic process
  with continuous paths. For $a \le s,t \le b$, let 
  $\nu_{st} = \Law(\omega_{st})$, and suppose that for every
  interval $J \subseteq [a,b]$,
  \begin{equation} \label{eqn:Cnu}
    \iint_{J \times J} \| \nu_{st} \|_{C^\beta}\,ds\,dt
      \leq C_{\nu}|J|.
  \end{equation} 
  Then for every $\alpha < \frac\beta2$ with
  $2 \alpha \not \in \N$, the occupation measure satisfies
  \[
    \mu_{a,\cdot} \in V^2(\sbe^{\alpha,2}_{2}).
  \]
  More precisely, for every $\delta\in(0,1)$,
  and for every $1 \le q < 2$, we have that 
  \begin{equation}\label{e:moment}
    \E \Big[{\| \mu_{a,\cdot} \|_{V^2(\sbe^{\alpha, 2}_{2})}^q}
        (1 + \sup_{s} | \omega_{s} |)^{-d}\Big]
      \les 1 + C_\nu^{(1-\delta)\frac q2}.
  \end{equation}
\end{theorem}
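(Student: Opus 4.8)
Here is how I would approach the bound \eqref{e:moment}.

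The plan is to exploit that, for $p=q=2$, the defining expression of $\|\cdot\|_{\sbe^{\alpha,2}_2}$ is a \emph{Hilbert seminorm}: the map $\mu\mapsto\Phi_\mu$, $\Phi_\mu(y,r):=r^{-\alpha-d}(\Delta_k)_rF_\mu(r,y)$, is linear with values in $L^2(\R^d\times\R^+,dy\otimes\tfrac{dr}{r})$, so $t\mapsto\mu_{a,t}$ is an \emph{additive} path in this Hilbert space. For such paths the $V^2$-norm is governed by a dyadic Garsia--Rodemich--Rumsey-type estimate (a ``Besov-in-time $\hookrightarrow V^2$'' embedding): it suffices to establish, on every dyadic subinterval $[s,t]\subseteq[a,b]$, a bound of the form $\E\|\mu_{s,t}\|_{\sbe^{\alpha,2}_2}^2\les K\,|t-s|^{1+\eps}$ with $\eps>0$, since then $\sum_{I\in\mathcal{D}_n}\E\|\mu_I\|_{\sbe^{\alpha,2}_2}^2\les K\,2^{-n\eps}$ sums over scales and yields $\E\|\mu_{a,\cdot}\|_{V^2(\sbe^{\alpha,2}_2)}^2\les_\eps K$. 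It is essential to use the Hilbert structure here: the exponent $1+\eps$ we will obtain has $\eps<1$, so $t\mapsto\mu_{a,t}$ is \emph{not} H\"older of order $\tfrac12$, and a direct Kolmogorov--Chentsov argument is unavailable.

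The heart of the matter is thus the second moment. Set $H_r(z):=(\Delta_k)_r\big[\rho\mapsto\1_{\{\rho\le r\}}\big]\big|_{\rho=|z|}$; by \eqref{deltakdef} this is a bounded, radial function with $H_r(z)=H_1(z/r)$, supported in $\{|z|\le r\}$, and — here is where $k=\lceil\alpha+d-1\rceil\ge d$ and the kernel property \eqref{deltakernel} enter — $\int_{\R^d}H_r(z)\,dz=(\Delta_k)_r\int_{\R^d}\1_{\{|z|\le r\}}\,dz=(\Delta_k)_r\big(|B(0,1)|\,r^{d}\big)=0$, the integrand being a degree-$d$ polynomial in $r$. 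Since $(\Delta_k)_rF_{\mu_{s,t}}(r,y)=\int_s^tH_r(\omega_\sigma-y)\,d\sigma$, Tonelli together with the radial identity $\int_{\R^d}H_r(\omega_{\sigma_1}-y)H_r(\omega_{\sigma_2}-y)\,dy=(H_r\star H_r)(\omega_{\sigma_1}-\omega_{\sigma_2})$ give
\[
  \E\|\mu_{s,t}\|_{\sbe^{\alpha,2}_2}^2
    =\int_0^\infty r^{-2\alpha-2d}\,\frac{dr}{r}\int_s^t\!\!\int_s^t\int_{\R^d}(H_r\star H_r)(z)\;\nu_{\sigma_1\sigma_2}(dz)\,d\sigma_1\,d\sigma_2.
\]
It is precisely the integration in $y$ that collapses the dependence onto the \emph{increment} law $\nu_{\sigma_1\sigma_2}$, which is why only hypothesis \eqref{eqn:Cnu} is needed.

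Next I would estimate the inner quantity. Since $\int_{\R^d}H_r\star H_r=(\int_{\R^d}H_r)^2=0$ — and, using radial symmetry together with further instances of \eqref{deltakernel} applied to $r\mapsto\int_{|z|\le r}|z|^m\,dz=c_m r^{d+m}$, several more moments of $H_r\star H_r$ vanish (the hypothesis $2\alpha\notin\N$ keeping this bookkeeping away from borderline degrees) — a Taylor expansion at the origin of the density of $\nu_{\sigma_1\sigma_2}$, combined with $\supp(H_r\star H_r)\subseteq\{|z|\le2r\}$ and $\|H_r\star H_r\|_{L^1}\les r^{2d}$, yields for $r\le1$
\[
  \Big|\int_{\R^d}(H_r\star H_r)\,d\nu_{\sigma_1\sigma_2}\Big|\les\|\nu_{\sigma_1\sigma_2}\|_{C^\beta}\,r^{\beta+2d},
\]
while always $|\int_{\R^d}(H_r\star H_r)\,d\nu_{\sigma_1\sigma_2}|\le\|H_r\star H_r\|_{L^\infty}\le\|H_r\|_{L^2}^2\les r^{d}$ since $\nu_{\sigma_1\sigma_2}$ is a probability measure. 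Interpolating the two, for any $\lambda\in\big(\tfrac{2\alpha+d}{\beta+d},1\big)$ — a nonempty interval precisely because $\alpha<\tfrac\beta2$ — one gets $|\int_{\R^d}(H_r\star H_r)\,d\nu_{\sigma_1\sigma_2}|\les\|\nu_{\sigma_1\sigma_2}\|_{C^\beta}^{\lambda}\,r^{d+\lambda(\beta+d)}$ for $r\le1$, which makes $\int_0^1r^{-2\alpha-2d-1}(\cdots)\,dr$ convergent (this is where $\alpha<\tfrac\beta2$ is used), whereas the tail $r\ge1$ contributes $\les(t-s)^2$ after integrating in $\sigma_1,\sigma_2$. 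Hence $\E\|\mu_{s,t}\|_{\sbe^{\alpha,2}_2}^2\les_\lambda\iint_{[s,t]^2}\big(1+\|\nu_{\sigma_1\sigma_2}\|_{C^\beta}^{\lambda}\big)\,d\sigma_1\,d\sigma_2$, and H\"older's inequality with exponents $\tfrac1\lambda,\tfrac1{1-\lambda}$ together with \eqref{eqn:Cnu} applied to $J=[s,t]$ gives
\[
  \E\|\mu_{s,t}\|_{\sbe^{\alpha,2}_2}^2\les_\lambda(t-s)^2+C_\nu^{\lambda}(t-s)^{2-\lambda}\les_\lambda\big(1+C_\nu^{\lambda}\big)(t-s)^{2-\lambda}.
\]

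Finally, feeding $\eps=1-\lambda>0$ and $K=1+C_\nu^{\lambda}$ into the variation estimate of the first paragraph gives $\E\|\mu_{a,\cdot}\|_{V^2(\sbe^{\alpha,2}_2)}^2\les_\lambda 1+C_\nu^{\lambda}$ (in particular $\mu_{a,\cdot}\in V^2(\sbe^{\alpha,2}_2)$ almost surely), and for $1\le q<2$ Jensen's inequality for the concave map $x\mapsto x^{q/2}$ upgrades this to $\E\|\mu_{a,\cdot}\|_{V^2(\sbe^{\alpha,2}_2)}^q\les_\lambda 1+C_\nu^{\lambda q/2}$; since the weight $(1+\sup_s|\omega_s|)^{-d}$ is $\le1$ this already implies \eqref{e:moment} with $\delta=1-\lambda$, for every $\delta$ in the range $\big(0,\tfrac{\beta-2\alpha}{\beta+d}\big)$, in particular for all small $\delta$. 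Pushing to the full range $\delta\in(0,1)$, and the precise role of the weight (which lets one localise the $y$-integration to the range of the path), is a further and more technical step. I expect the main obstacle to be exactly the first paragraph: obtaining a genuine $V^2$-variation bound from a second-moment estimate with time-exponent strictly below $2$, which cannot be done by Kolmogorov--Chentsov and relies crucially on the Hilbert-space structure of $\sbe^{\alpha,2}_2$; the secondary fussy point is the moment bookkeeping for $H_r\star H_r$ that produces the gain $r^{\beta}$.
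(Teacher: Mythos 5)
Your argument is essentially correct, and it takes a genuinely different -- and in one respect stronger -- route than the paper. The paper first proves the theorem under a hypothesis on the \emph{joint} two-point density $\rho_{s',t'}=\Law(\omega_{s'},\omega_{t'})$ (\autoref{prop:spSBE}), estimating $\E|\Delta_kF_{\mu_{s,t}}(\cdot,y)(r)|^2$ pointwise in $y$ and then integrating $y$ over the ball $\{|y|\le M+1\}$ on the event $E(M)$; this crude $y$-localisation is the origin of the factor $M^d$ and hence of the weight $(1+\sup_s|\omega_s|)^{-d}$ in \eqref{e:moment}. To pass from the increment-law hypothesis \eqref{eqn:Cnu} to the joint-density hypothesis, the paper then adds a large independent Gaussian $\lambda X$ (which leaves $\|\mu_{s,t}\|_{\SBE}$ invariant), truncates and mollifies the path, and concludes by semicontinuity. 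Your observation that integrating $y$ over \emph{all} of $\R^d$ produces the exact identity
\begin{equation*}
  \E\int_{\R^d}|(\Delta_k)_rF_{\mu_{s,t}}(r,y)|^2\,dy
    =\iint_{[s,t]^2}\int_{\R^d}(H_r\star H_r)\,d\nu_{\sigma_1\sigma_2}\,d\sigma_1\,d\sigma_2
\end{equation*}
short-circuits all of this: the second moment depends only on the increment laws, so \eqref{eqn:Cnu} is usable directly, no Gaussian shift or truncation is needed, and you in fact obtain \eqref{e:moment} \emph{without} the weight, which is a cleaner and slightly stronger statement. The final passage from $\E\|\mu_{s,t}\|^2\les K|t-s|^{1+\eps}$ to $\E\|\mu_{a,\cdot}\|_{V^2}^2\les K$ is exactly the paper's \autoref{l:magic} and \autoref{p:tolomeov}; note, though, that this step does not actually use the Hilbert structure of $\sbe^{\alpha,2}_2$ -- it is a triangle-inequality-plus-H\"older argument over dyadic scales valid in any normed space, the relevant arithmetic being $p/(1+a)<q\le p$ with $p=q=2$.

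Two repairs are needed, both minor and both anticipated in your own caveats. First, the estimate $|\int(H_r\star H_r)\,d\nu|\les\|\nu\|_{C^\beta}r^{\beta+2d}$ as literally stated requires the moments of $H_r\star H_r$ up to order $\lfloor\beta\rfloor$ to vanish. The even moments of $H_r$ of order $2i$ equal $(\Delta_k)_r(c\,r^{d+2i})$ and vanish only for $2i\le k-d=\lceil\alpha\rceil-1$, so the moments of $H_r\star H_r$ vanish up to order $2\lceil\alpha\rceil-1$ but not in general up to $\lfloor\beta\rfloor$ when $\beta$ is large relative to $\alpha$. The fix is to work with $\beta''\in\bigl(2\alpha,\min(\beta,\lfloor2\alpha\rfloor+1)\bigr)$ (nonempty since $2\alpha<\beta$ and $2\alpha\notin\N$), for which $\lfloor\beta''\rfloor=\lfloor2\alpha\rfloor\le2\lceil\alpha\rceil-1$ and $C^\beta\hookrightarrow C^{\beta''}$; this is the same reduction the paper performs at the start of \autoref{prop:spSBE} (``up to making $\beta$ smaller''). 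Second, your range $\delta\in\bigl(0,\frac{\beta-2\alpha}{\beta+d}\bigr)$ is actually slightly larger than the paper's $\delta_0=\frac{\beta-2\alpha}{\beta+2d}$ from \eqref{e:deltazero}; the paper's proof likewise does not reach all $\delta\in(0,1)$, so nothing further is required of you on that point.
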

The proof of the theorem can be found at page \pageref{pf:spSBE}.
A straightforward application of Kolmogorov's continuity theorem
yields the following result (see page \pageref{pf:holder} for the proof).
\begin{corollary}\label{c:holder}
  Given a path $\omega:[a,b]\to\R^d$, under the same assumptions
  of \autoref{thm:spSBE}, the occupation measure $\mu_{a,}$
  of $\omega$ is $\gamma$-H\"older
  continuous with values in $\sbe^{\alpha,2}_2$, for
  every $\gamma<\frac{\beta-2\alpha}{2(\beta+2d)}$.
\end{corollary}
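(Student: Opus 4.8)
The plan is to apply Kolmogorov's continuity criterion to the $\sbe^{\alpha,2}_2$-valued curve $t\mapsto\mu_{a,t}$, exploiting $\mu_{a,t}-\mu_{a,s}=\mu_{s,t}$. The crucial step is to convert the interval-length-agnostic bound \eqref{e:moment} of \autoref{thm:spSBE} into a genuine decay in $|t-s|$ on subintervals. Fix $[s,t]\subseteq[a,b]$, $\delta\in(0,1)$, $1\le q_0<2$, and set $\hat\omega_u:=\omega_{s+(t-s)u}-\omega_s$ for $u\in[0,1]$, with occupation measure $\mu^{\hat\omega}$. Then $\mu_{s,t}$ is $(t-s)$ times a translate of $\mu^{\hat\omega}_{0,1}$, so, the $\sbe^{\alpha,2}_2$ seminorm being homogeneous of degree one in the measure and translation invariant (both immediate from its definition), $\|\mu_{s,t}\|_{\sbe^{\alpha,2}_2}=(t-s)\,\|\mu^{\hat\omega}_{0,1}\|_{\sbe^{\alpha,2}_2}$. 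The increments of $\hat\omega$ have laws $\Law(\hat\omega_v-\hat\omega_u)=\nu_{s+(t-s)u,\,s+(t-s)v}$, so a linear change of variables turns \eqref{eqn:Cnu} for $\omega$ into \eqref{eqn:Cnu} for $\hat\omega$ on $[0,1]$ with constant $C_\nu/(t-s)$ — hence \autoref{thm:spSBE} applies to $\hat\omega$ — while $\sup_{[0,1]}|\hat\omega|\le2\sup_{[a,b]}|\omega|$.

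Apply \eqref{e:moment} to $\hat\omega$ with exponent $q_0$, bound the endpoint increment by the $V^2$-seminorm through the two-point partition $\{0,1\}$ in \eqref{e:vp} (so $\|\mu^{\hat\omega}_{0,1}\|_{\sbe^{\alpha,2}_2}^2\le\|\mu^{\hat\omega}_{0,\cdot}\|_{V^2(\sbe^{\alpha,2}_2)}$, using $\mu^{\hat\omega}_{0,0}=0$), and compare the two weights via $\sup_{[0,1]}|\hat\omega|\le2\sup_{[a,b]}|\omega|$: one gets, for $|t-s|\le1$,
\[
  \E\Big[\|\mu_{s,t}\|_{\sbe^{\alpha,2}_2}^{2q_0}\big(1+\sup_{[a,b]}|\omega|\big)^{-d}\Big]
    \les (t-s)^{2q_0}\Big(1+\big(C_\nu/(t-s)\big)^{(1-\delta)q_0/2}\Big)
    \les |t-s|^{\,1+\eta},
\]
where $1+\eta:=2q_0-\tfrac{(1-\delta)q_0}{2}$, and $\eta>0$ since $q_0\ge1$ and $\delta<1$; the implied constant depends on $q_0,\delta,C_\nu$ but not on $s,t$.

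It remains to peel off the weight and invoke Kolmogorov. For $R>0$ set $\Omega_R:=\{\sup_{[a,b]}|\omega|\le R\}$; on $\Omega_R$ the weight is $\ge(1+R)^{-d}$, whence $\E[\|\mu_{s,t}\|_{\sbe^{\alpha,2}_2}^{2q_0}\1_{\Omega_R}]\les(1+R)^{d}|t-s|^{1+\eta}$ for all $s<t$ in $[a,b]$ (for $|t-s|>1$ this follows instead from the trivial bound $\|\mu_{s,t}\|_{\sbe^{\alpha,2}_2}^2\le\|\mu_{a,\cdot}\|_{V^2(\sbe^{\alpha,2}_2)}$ together with \eqref{e:moment}). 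Kolmogorov's criterion — in its pathwise Garsia--Rodemich--Rumsey form, whose moment hypothesis is met a.s.\ on $\Omega_R$ — then shows that, a.s.\ on $\Omega_R$, the curve $t\mapsto\mu_{a,t}$ is $\gamma$-H\"older with values in $\sbe^{\alpha,2}_2$ for every $\gamma<\eta/(2q_0)$. Since $\omega$ has continuous paths, $\bigcup_R\Omega_R$ has full probability, so this holds almost surely. Finally $\eta/(2q_0)=\tfrac{3+\delta}{4}-\tfrac1{2q_0}\to\tfrac12$ as $q_0\uparrow2$ and $\delta\downarrow0$; since $\tfrac{\beta-2\alpha}{2(\beta+2d)}<\tfrac12$ (because $-2\alpha<2d$), every $\gamma<\tfrac{\beta-2\alpha}{2(\beta+2d)}$ is reached, which is the corollary.

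The main obstacle is the first step: \eqref{e:moment} carries no information on $|t-s|$, and the rescaling is precisely what inflates the constant $C_\nu$ of \eqref{eqn:Cnu} to $C_\nu/(t-s)$ and thereby produces the factor $|t-s|^{1+\eta}$ — the total mass $t-s$ of $\mu_{s,t}$ furnishing $(t-s)^{2q_0}$ and the inflated constant costing back $(t-s)^{-(1-\delta)q_0/2}$. Everything afterwards — peeling off the weight by localization and path continuity, and extracting H\"older regularity from a moment bound — is routine. (A more careful reading of the proof of \autoref{thm:spSBE}, tracking the power of $|t-s|$ in the second-moment estimate that underlies \eqref{e:moment} rather than passing through the $V^2$-seminorm, yields instead $\E[\|\mu_{s,t}\|_{\sbe^{\alpha,2}_2}^{2}(1+\sup_{[a,b]}|\omega|)^{-d}]\les|t-s|^{1+\frac{\beta-2\alpha}{\beta+2d}}$, and Kolmogorov's criterion with second moments then recovers the stated exponent with no loss.)
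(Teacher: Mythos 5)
Your rescaling idea --- pulling $[s,t]$ back to $[0,1]$, noting $\|\mu_{s,t}\|_{\sbe^{\alpha,2}_2}=(t-s)\,\|\mu^{\hat\omega}_{0,1}\|_{\sbe^{\alpha,2}_2}$ and that \eqref{eqn:Cnu} rescales to the constant $C_\nu/(t-s)$ --- is correct and is a genuinely different, rather elegant way to extract decay in $|t-s|$ from the interval-agnostic bound \eqref{e:moment}. But the exponent bookkeeping that follows is wrong, and the error shows in the conclusion: you obtain H\"older regularity for every $\gamma<\frac{3+\delta}{4}-\frac{1}{2q_0}$, which exceeds $\frac12$ for $q_0$ close to $2$; this contradicts \autoref{r:preextend}, where the authors state that the exponent in this corollary is always below $\frac12$ (the whole of \autoref{s:catgub} exists to work around that). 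The mistake is in the step $\|\mu^{\hat\omega}_{0,1}\|_{\sbe^{\alpha,2}_2}^2\le\|\mu^{\hat\omega}_{0,\cdot}\|_{V^2(\sbe^{\alpha,2}_2)}$, which reads \eqref{e:vp} literally (no $1/p$-th root) and therefore interprets \eqref{e:moment} as a bound on the $2q_0$-th moment of $\|\mu^{\hat\omega}_{0,1}\|$. What the proof of \autoref{thm:spSBE} actually establishes (via \eqref{spSBE6}, \autoref{p:tolomeov} with $p=q=2$, and Cauchy--Schwarz) is a bound on the $q$-th moment, $q<2$, of the \emph{ordinary} $2$-variation $\bigl(\sup_\pi\sum_k\|\mu_{t_k t_{k+1}}\|^2\bigr)^{1/2}$: under the hypotheses of \autoref{thm:spSBE} only two-point increment densities are controlled, so only second moments of $\|\mu_{s,t}\|$ are available, and no fourth-moment bound on the variation can hold in this generality. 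With the correct reading the two-point partition gives $\|\mu^{\hat\omega}_{0,1}\|\le\|\mu^{\hat\omega}_{0,\cdot}\|_{V^2}$ to the \emph{first} power, your estimate becomes $\E[\|\mu_{s,t}\|^{q_0}w]\les C_\nu^{(1-\delta)q_0/2}|t-s|^{q_0(1+\delta)/2}$, and Kolmogorov yields $\gamma<\frac{1+\delta}{2}-\frac1{q_0}\to\frac{\delta}{2}$ as $q_0\uparrow2$, i.e.\ exactly $\gamma<\frac{\beta-2\alpha}{2(\beta+2d)}$ after letting $\delta\uparrow\delta_0$. So the scheme is salvageable, but it then recovers the stated exponent with no slack, rather than proving something stronger.

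For comparison, the paper does not rescale: it goes back inside the proof of \autoref{thm:spSBE} and uses the increment estimate \eqref{spSBE6} directly, $\E[\|\mu_{s,t}\|^2_{\sbe^{\alpha,2}_2}\1_{E(M)}]\le C(M)|t-s|^{1+\delta}$ for $\delta<\delta_0=\frac{\beta-2\alpha}{\beta+2d}$, runs a dyadic Kolmogorov argument on $E(M)$, and then uses the lower semicontinuity of \autoref{l:nothard} both to pass from dyadic to all times and to transfer the estimate from the regularized paths $\omega^\lambda$, $\omega^{\eps,M_1}$ back to $\omega$; your closing parenthesis is essentially this route. Your localization on $\Omega_R=\{\sup|\omega|\le R\}$ plays the role of $E(M)$ and is fine, but note that invoking Garsia--Rodemich--Rumsey for the $\sbe^{\alpha,2}_2$-valued path still requires the (semi)continuity input of \autoref{l:nothard} to conclude for the path itself rather than for a modification.
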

\begin{remark}\label{r:preextend}
  Notice that the H\"older exponent in the above corollary
  is, no matter what is the value of parameters, smaller
  than $\frac12$. This apparently prevents the use of the
  Young integration theory developed in \cite{CatGub2016}.
  We will overcome the issue in two ways. On the one hand
  we shall directly develop Young's integration theory
  in spaces of finite variation in \autoref{s:young}.
  On the other hand the H\"older exponent can be
  improved in two ways. First, \eqref{eqn:Cnu} is apparently
  a bit restrictive and we could assume instead, without
  changing significantly our proofs, that
  \[
    \iint_{J\times J}\|\nu_{st}\|_{C^\beta}
      \leq C_\nu|J|^{2\eta},
  \]
  with $\eta\in[0,1]$. Under this assumption,
  \autoref{c:holder} yields $\gamma$-H\"older
  continuity with values in $\sbe^{\alpha,2}_2$
  for $\gamma<\frac12 - (1-\eta)(1-\delta_0)$,
  and $\delta_0$ is as in \eqref{e:deltazero}.
  Unfortunately, the H\"older exponent is
  still too small for our purposes.
  A second possibility is a trade-off
  between time and space regularity.
  Unfortunately this will require
  much stronger assumptions and will
  not follow from the slick condition
  \eqref{eqn:Cnu} of \autoref{thm:spSBE}.
  We postpone to \autoref{s:catgub}
  this discussion.
\end{remark}
The condition of \autoref{thm:spSBE}
simplifies dramatically when dealing with a Gaussian
process.
\begin{corollary} \label{cor:fbHSBE}  
Let $0 < H < 1$, let $\omega_t : [a,b] \to\R^d$ be a Gaussian process
with continuous paths. Suppose that for $a \le s,t \le b$,
\begin{equation}\label{e:simpleLND}
  \Cov(\omega_{st}) \gtrsim |t-s|^{2H}\mathrm{Id}.
\end{equation}
Then for every $\alpha < \frac 1 {2H}-\frac d2$,
\[
  \mu_{a,\cdot}^\omega
    \in V^2(\sbe^{\alpha,2}_{2}).
\]
\end{corollary}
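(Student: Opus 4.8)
The plan is to deduce \autoref{cor:fbHSBE} directly from \autoref{thm:spSBE} by checking its single hypothesis \eqref{eqn:Cnu} for a Gaussian process whose increments satisfy \eqref{e:simpleLND}. First I would compute the density of the increment law. Since $\omega$ is Gaussian with continuous paths, for each $s<t$ the increment $\omega_{st}=\omega_t-\omega_s$ is a centred (or at worst with a bounded mean, which one reduces away or absorbs) Gaussian vector on $\R^d$ with covariance matrix $\Sigma_{st}:=\Cov(\omega_{st})$, and \eqref{e:simpleLND} gives $\Sigma_{st}\succeq |t-s|^{2H}\mathrm{Id}$. Hence $\nu_{st}$ has a smooth density $p_{st}(x)=(2\pi)^{-d/2}(\det\Sigma_{st})^{-1/2}\exp(-\tfrac12 x^\top\Sigma_{st}^{-1}x)$, and I want to bound $\|\nu_{st}\|_{C^\beta}$ for a suitable integer (or just large enough) $\beta$; here $\beta$ should be chosen so that $\frac\beta2$ exceeds the target regularity threshold $\frac1{2H}-\frac d2$, i.e.\ $\beta$ slightly larger than $\frac1H-d$, with $2\alpha\notin\N$ ensured by taking $\alpha$ in the admissible range.

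Next I would estimate $\|p_{st}\|_{C^\beta}$ in terms of the smallest eigenvalue $\lambda_{\min}(\Sigma_{st})\ge|t-s|^{2H}$. The key elementary fact is that if a Gaussian density has covariance $\Sigma$ with $\lambda_{\min}(\Sigma)=\lambda$, then each derivative of order $\le \beta$ is bounded, uniformly in the other eigenvalues, by $C_{d,\beta}\,\lambda^{-(d+\beta)/2}$: one writes $p=\prod$ of one-dimensional Gaussians in the eigenbasis, differentiates, and uses $\sup_r |H_m(r/\sigma)e^{-r^2/2\sigma^2}|/\sigma \lesssim \sigma^{-(m+1)}$ together with $\sup_r \sigma^{-1}e^{-r^2/2\sigma^2}\lesssim\sigma^{-1}$, so the worst contribution is $\lambda^{-(d+\beta)/2}$ while the large-eigenvalue directions only help. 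Thus $\|\nu_{st}\|_{C^\beta}\lesssim_{d,\beta}|t-s|^{-H(d+\beta)}$.

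Then I would integrate: for any interval $J\subseteq[a,b]$,
\[
  \iint_{J\times J}\|\nu_{st}\|_{C^\beta}\,ds\,dt
    \lesssim \iint_{J\times J}|t-s|^{-H(d+\beta)}\,ds\,dt,
\]
and this double integral is $\lesssim |J|^{2-H(d+\beta)}$ provided $H(d+\beta)<1$, i.e.\ $\beta<\frac1H-d$. Since we only need $\alpha<\frac1{2H}-\frac d2=\frac12(\frac1H-d)$ with $2\alpha\notin\N$, we may pick $\beta$ with $2\alpha<\beta<\frac1H-d$; then $2-H(d+\beta)>1$, so in particular $\iint_{J\times J}\|\nu_{st}\|\,ds\,dt\lesssim |J|^{2-H(d+\beta)}\lesssim |J|$ for $|J|\le b-a$, which is exactly \eqref{eqn:Cnu} with $C_\nu=C_\nu(d,\beta,H,b-a)<\infty$. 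Applying \autoref{thm:spSBE} with this $\beta$ and any admissible $\alpha<\frac\beta2$ (hence any $\alpha<\frac1{2H}-\frac d2$, $2\alpha\notin\N$) gives $\mu^\omega_{a,\cdot}\in V^2(\sbe^{\alpha,2}_2)$, as claimed; the case $2\alpha\in\N$ follows by monotonicity since the $\SBE$ norm only increases with $\alpha$ and we may enlarge $\alpha$ slightly.

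The only genuinely nontrivial point — the ``hard part,'' though it is really just a careful computation — is the uniform Gaussian derivative bound $\|p_\Sigma\|_{C^\beta}\lesssim_{d,\beta}\lambda_{\min}(\Sigma)^{-(d+\beta)/2}$ that depends on $\Sigma$ \emph{only} through its smallest eigenvalue, with no spurious dependence on the determinant or on the spread of the spectrum; everything else (existence of the density, the change of variables, the one-dimensional $\beta$-integrability of $|r|^{-H(d+\beta)}$ near the diagonal) is routine. One should also make sure, if $\omega$ is not assumed centred, that a possibly nonzero mean $\E[\omega_{st}]$ does not affect the $C^\beta$ norm of the density (it does not, translation being an isometry on $C^\beta$), and that continuity of paths gives $\sup_s|\omega_s|<\infty$ a.s.\ so that the weight in \eqref{e:moment} is a.s.\ positive and the conclusion $\mu^\omega_{a,\cdot}\in V^2(\sbe^{\alpha,2}_2)$ holds almost surely.
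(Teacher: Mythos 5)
Your proposal is correct and follows essentially the same route as the paper: verify hypothesis \eqref{eqn:Cnu} of \autoref{thm:spSBE} by writing the explicit Gaussian density of $\omega_{st}$, bounding $\|\nu_{st}\|_{C^\beta}\lesssim|t-s|^{-H(\beta+d)}$ via the lower bound on the covariance, and integrating $|t-s|^{-H(\beta+d)}$ over $J\times J$ for $\beta<\frac1H-d$. The only difference is that you spell out the eigenvalue computation behind the $C^\beta$ bound (which the paper asserts without proof) and comment on the $2\alpha\in\N$ edge case; neither changes the substance of the argument.
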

\begin{proof}
  It is enough to check that $\omega$ satisfies the assumptions
  of \autoref{thm:spSBE} for any $\beta < \frac 1 H - d$.
  Since $\omega$ is Gaussian, we have that 
  \[
    \nu_{st}(x)
      = \frac{1}{(2 \pi)^\frac d2 \sqrt{\det(\Cov(\omega_{st}))}}
        \exp\Big(-\frac12(x-\E[\omega_{st}])^T\Cov(\omega_{st})^{-1}(x-\E[\omega_{st}])\Big),
  \]
  therefore, we have that 
  \[
    \|\nu_{st}\|_{C^\beta}
      \les |t-s|^{-H(\beta+d)}.
  \]
  Let $J=[a',b']\subseteq [a,b]$ be an interval.
  For $\beta < \frac 1 H - d$, we have 
  \[
    \iint_{J \times J} \|\nu_{st}\|_{C^{\beta}} ds dt
      \les  \iint_{J \times J} |t-s|^{-(\beta +d)H} ds dt \\
      \les \int_{2a'}^{2b'} \int_{-|J|}^{|J|} |s'|^{-(\beta +d)H}  ds' dt' \\
      \les |J|.\qedhere
  \]
\end{proof}
\begin{remark}
  The condition above in \autoref{cor:fbHSBE} can be
  reinterpreted as a property of local non-determinism.
  Indeed, if $\mathscr{F}_t=\sigma(\omega_s:a\leq s\leq t)$,
  for $t\in[a,b]$, is the filtration generated by the process
  $(\omega_t)_{t\in[a,b]}$, and if the assumption
  \[
    \Cov(\omega_t\mid\mathscr{F}_s)
      \gtrsim |t-s|^{2H}\mathrm{Id},
  \]
  then \eqref{e:simpleLND} holds. More on this will
  be considered in \autoref{s:catgub}, see
  \autoref{r:lnd}.
\end{remark}
\subsection{Proof of \autoref{thm:spSBE}}

We start with some preliminary results.
\begin{lemma} \label{lem:deltakmove}
  Let $k \ge 0$. We have for $x,y>0$ that
  \[
    \Delta_k \1_{[y,\infty)}(x)
      = \Delta_k^\ast \1_{[0,x]}(y).
  \]
\end{lemma}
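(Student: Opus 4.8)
The plan is to prove the identity $\Delta_k \1_{[y,\infty)}(x) = \Delta_k^\ast \1_{[0,x]}(y)$ by induction on $k$, exploiting the recursive definitions of $\Delta_k$ and $\Delta_k^\ast$. First I would check the base case $k=0$. By definition, $\Delta_0 F(r) = F(r) - F(r/2)$, so $\Delta_0 \1_{[y,\infty)}(x) = \1_{[y,\infty)}(x) - \1_{[y,\infty)}(x/2) = \1_{\{x \ge y\}} - \1_{\{x \ge 2y\}}$. On the other side, $\Delta_0^\ast F(r) = F(r) - F(2r)$, so $\Delta_0^\ast \1_{[0,x]}(y) = \1_{[0,x]}(y) - \1_{[0,x]}(2y) = \1_{\{y \le x\}} - \1_{\{2y \le x\}}$. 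For $x,y>0$ these two expressions are visibly equal (each equals $1$ when $y \le x < 2y$ and $0$ otherwise), which settles $k=0$.

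For the inductive step, suppose the identity holds at level $k$ (for all $x,y>0$). Using the recursion $\Delta_{k+1}F(r) = \Delta_k F(r) - 2^{k+1}\Delta_k F(r/2)$ applied in the variable $x$, I would write
\[
  \Delta_{k+1}\1_{[y,\infty)}(x)
    = \Delta_k \1_{[y,\infty)}(x) - 2^{k+1}\,\Delta_k\1_{[y,\infty)}(x/2).
\]
The subtle point is that in the second term the operator $\Delta_k$ acts on the function $r \mapsto \1_{[y,\infty)}(r)$ and is then evaluated at $x/2$; by the induction hypothesis (applied with the pair $(x/2, y)$) this equals $\Delta_k^\ast \1_{[0,x/2]}(y)$. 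Now the key observation is that $\1_{[0,x/2]}(y) = \1_{[0,x]}(2y)$, so that $\Delta_k^\ast \1_{[0,x/2]}(y) = \big(\Delta_k^\ast \1_{[0,x]}\big)(2y)$ — that is, rescaling the upper endpoint $x$ by $\tfrac12$ corresponds exactly to dilating the argument $y$ by $2$. Therefore
\[
  \Delta_{k+1}\1_{[y,\infty)}(x)
    = \big(\Delta_k^\ast \1_{[0,x]}\big)(y) - 2^{k+1}\big(\Delta_k^\ast \1_{[0,x]}\big)(2y)
    = \Delta_{k+1}^\ast \1_{[0,x]}(y),
\]
where the last equality is precisely the recursive definition of $\Delta_{k+1}^\ast$ in the variable $y$. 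This closes the induction.

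I expect the main (minor) obstacle to be bookkeeping the interplay between the two recursions: one must be careful that $\Delta_k$ always refers to the operator acting in the radial variable $r$ (which on the left-hand side is instantiated at $x$, $x/2$, etc.) while $\Delta_k^\ast$ acts in the $y$ variable, and that the dilation $x \mapsto x/2$ on one side matches $y \mapsto 2y$ on the other. An alternative, cleaner route would be to verify directly that $\Delta_k$ and $\Delta_k^\ast$ are genuinely adjoint on $L^2(\R^+, \tfrac{dr}{r})$ — which the text already asserts — and then observe that for the integral kernels, $\langle \Delta_k \1_{[y,\infty)}, g\rangle = \langle \1_{[y,\infty)}, \Delta_k^\ast g\rangle$ applied to approximate identities $g$ concentrating near $x$ recovers the pointwise statement; but the direct induction above is the most transparent and self-contained, so that is the one I would write out.
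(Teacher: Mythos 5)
Your proof is correct and follows essentially the same route as the paper's: induction on $k$, with the base case $k=0$ checked directly and the inductive step resting on the identity $\1_{[0,x/2]}(y)=\1_{[0,x]}(2y)$ together with the (easily verified) fact that $\Delta_k^\ast$ commutes with the dilation $y\mapsto 2y$. The bookkeeping you flag is exactly the point the paper's proof also handles, and your handling of it is fine.
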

\begin{proof}
For $k= 0$, we have that 
\begin{multline*}
  \Delta_0\1_{[y,\infty)}(x)
    = \1_{[y,\infty)}(x) - \1_{[y,\infty)}(x/2)\\
    = \1_{[0,x]}(y) - \1_{[0,x/2]}(y)
    = \1_{[0,x]}(y) - \1_{[0,x]}(2y)
    = \Delta_0^\ast \1_{[0,x]}(y).
\end{multline*}
Therefore, by proceeding inductively, for $k\ge1$, 
\begin{align*}
  \Delta_k\1_{[y,\infty)}(x)
    &= \Delta_{k-1}\1_{[y,\infty)}(x) -2^k \Delta_{k-1}\1_{[y,\infty)}(x/2)\\
    &= \Delta_{k-1}^\ast \1_{[0,x]}(y) - 2^k \Delta_{k-1}^\ast \1_{[0,x/2]}(y)\\
    &= \Delta_{k-1}^\ast \1_{[0,x]}(y) - 2^k \Delta_{k-1}^\ast \1_{[0,x]}(2y)\\
    &= \Delta_{k}^\ast \1_{[0,x]}(y).\qedhere
\end{align*}
\end{proof}

\begin{lemma} \label{lem:singledeltak}
Fix $\beta \ge 0$, $\alpha_0 \ge 0$, and let $k = \lceil \beta -1 \rceil$. Suppose that for some $R < \infty$, $f: \R \to \R$ satisfies $\| f \|_{C^{\beta+\alpha_0-k}(B_R)} < \infty$. Then 
\[
\| \Delta_k f \|_{C^{\alpha_0}(B_R)} \lesssim R^\beta \| f^{(k)}\|_{C^{\beta+\alpha_0-k}(B_R)}. 
\]
\end{lemma}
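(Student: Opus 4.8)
\noindent
The plan rests on two elementary consequences of the recursion \eqref{deltakdef}. First, $\Delta_k$ is a fixed finite combination of dyadic dilations: there are constants $c_{0,k},\dots,c_{k+1,k}$, independent of $F$, with $\Delta_k F(r)=\sum_{j=0}^{k+1}c_{j,k}F(2^{-j}r)$ for all $r$ (immediate by induction on $k$ from \eqref{deltakdef}). Second, applying \eqref{deltakernel} to the monomials $p_k(r)=r^i$ with $0\le i\le k$ shows that these constants obey the moment conditions $\sum_{j=0}^{k+1}c_{j,k}2^{-ij}=0$ for $0\le i\le k$. Finally, since $k=\lceil\beta-1\rceil$ one has $k<\beta\le k+1$, so, writing $\beta=k+s$, the exponent $s=\beta-k$ lies in $(0,1]$ and the target is $\|f^{(k)}\|_{C^{s+\alpha_0}(B_R)}$. (For the conclusion to make sense the hypothesis must be read as $f\in C^k$ with $f^{(k)}\in C^{\beta+\alpha_0-k}(B_R)$, equivalently $f\in C^{\beta+\alpha_0}(B_R)$.)

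\noindent
For the case $\alpha_0=0$ I would Taylor-expand each $f(2^{-j}r)$ to order $k$ about the origin, using the integral form of the remainder so that only $f^{(k)}$ appears: $f(2^{-j}r)=P(2^{-j}r)+\tfrac1{(k-1)!}\int_0^{2^{-j}r}(2^{-j}r-t)^{k-1}f^{(k)}(t)\,dt$ with $\deg P\le k-1$ (for $k=0$ there is no polynomial part). By the moment conditions the contribution of $P$ to $\sum_j c_{j,k}f(2^{-j}r)$ vanishes; substituting $t=2^{-j}r\tau$ in the remainders and using once more the order-$k$ moment condition, one may replace $f^{(k)}(t)$ by $f^{(k)}(t)-f^{(k)}(0)$. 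Since $|f^{(k)}(2^{-j}r\tau)-f^{(k)}(0)|\le\|f^{(k)}\|_{\dot C^{s}(B_{|r|})}|r|^{s}$ and $|2^{-j}r|^k\le|r|^k$, summing the finitely many terms gives $|\Delta_k f(r)|\lesssim|r|^{k+s}\|f^{(k)}\|_{\dot C^{s}(B_{|r|})}\le R^{\beta}\|f^{(k)}\|_{C^{s}(B_R)}$ for $|r|\le R$, which is the claim.

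\noindent
For general $\alpha_0$ I would differentiate the dilation formula: for $0\le m\le\lfloor\alpha_0\rfloor$,
\[
\partial_r^m\Delta_k f(r)=\sum_{j=0}^{k+1}c_{j,k}2^{-mj}f^{(m)}(2^{-j}r),
\]
which has the same structure, with weights $c_{j,k}2^{-mj}$ obeying moment conditions up to order $k-m$. When $m\le k$, the Taylor argument above applied to $f^{(m)}$ (expanded to order $k-m$, remainder involving $(f^{(m)})^{(k-m)}=f^{(k)}$, with Hölder exponent $s$, resp.\ $s+\alpha_0-\lfloor\alpha_0\rfloor$ for the top derivative $m=\lfloor\alpha_0\rfloor$) gives $|\partial_r^m\Delta_k f(r)|\lesssim|r|^{\beta-m}\|f^{(k)}\|_{C^{s+\alpha_0}(B_{|r|})}$ and the matching Hölder bound on $\partial_r^{\lfloor\alpha_0\rfloor}\Delta_k f$; when $m>k$ (possible only if $\alpha_0>k$) one uses the crude bound $|\partial_r^m\Delta_k f(r)|\lesssim\|f^{(m)}\|_{L^\infty(B_{|r|})}\lesssim R^{k-m}\|f^{(k)}\|_{C^{s+\alpha_0}(B_R)}$, which holds because $m-k\le s+\alpha_0$ and $k\le\beta$. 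Assembling these estimates according to the homogeneous $C^{\alpha_0}(B_R)$-norm — the derivative terms contribute $R^m\cdot R^{\beta-m}=R^{\beta}$ and the top Hölder term $R^{\alpha_0}\cdot R^{\beta-\alpha_0}=R^{\beta}$ — yields $\|\Delta_k f\|_{C^{\alpha_0}(B_R)}\lesssim R^{\beta}\|f^{(k)}\|_{C^{s+\alpha_0}(B_R)}$. An essentially equivalent, perhaps cleaner route is to first prove, by induction from \eqref{deltakdef} and the substitution $u\mapsto u/2$, the identity $\Delta_k f(r)=\int_{r/2}^{r}\Delta_{k-1}[f'](u)\,du=\int_{1/2}^1 r\,\Delta_{k-1}[f'](tr)\,dt$ for $k\ge1$, and then induct on $k$: combine the inductive hypothesis for $f'$ (with $\beta-1$ in place of $\beta$, so that $k-1=\lceil(\beta-1)-1\rceil$ is the right order) with the elementary bound $\big\|\int_{1/2}^1 r\,h(tr)\,dt\big\|_{C^{\alpha_0}(B_R)}\lesssim R\,\|h\|_{C^{\alpha_0}(B_R)}$, which follows from the near-invariance of the $C^{\alpha_0}(B_R)$-norm under $h\mapsto h(t\cdot)$, $t\in[1/2,1]$, plus the factor $R$ from the prefactor $r$.

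\noindent
The main obstacle is the final bookkeeping: one must keep honest powers of $|r|$ (not merely $\lesssim1$) in all the derivative and Hölder estimates so that they scale correctly against the $C^{\alpha_0}(B_R)$- and $C^{s+\alpha_0}(B_R)$-norms — this essentially forces the homogeneous normalization $\|g\|_{C^{\gamma}(B_R)}=\sum_{m\le\lfloor\gamma\rfloor}R^{m}\|g^{(m)}\|_{L^\infty(B_R)}+R^{\gamma}\|g^{(\lfloor\gamma\rfloor)}\|_{\dot C^{\gamma-\lfloor\gamma\rfloor}(B_R)}$ (equivalently, the estimate is used in the regime $R\gtrsim1$) — and one must handle with care the case in which differentiation in $r$ exhausts the available derivatives of $f^{(k)}$, where the top-order contribution has to be controlled through the Hölder seminorm rather than by differentiating once more. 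Everything else is a routine Taylor-remainder computation together with the two structural facts isolated in the first paragraph.
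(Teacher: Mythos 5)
Your proposal is correct and follows essentially the same route as the paper: the paper subtracts the degree-$k$ Taylor polynomial of $f$ at the origin (which $\Delta_k$ annihilates by \eqref{deltakernel}), bounds the remainder $g$ in $C^{\alpha_0}(B_R)$ by $R^\beta\|f^{(k)}\|_{C^{\beta+\alpha_0-k}}$ via Taylor's theorem, and concludes using the boundedness of $\Delta_k$ on $C^{\alpha_0}(B_R)$ — which is exactly your dilation-plus-moment-conditions argument written in a slightly more condensed form. Your extra care about the homogeneous normalization of the $C^{\gamma}(B_R)$ norms and the case $\lfloor\alpha_0\rfloor>k$ addresses bookkeeping the paper leaves implicit, but does not change the argument.
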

\begin{proof}
By \eqref{deltakernel}, we have that  $\Delta_k x^{h} = 0 $ as long as $h \le k$. Consider the function 
\[
  g(x):= f(x) - \sum_{h \le k} \frac{f^{(h)}(0)}{h!} x^{h}.
\]
We then have that 
\[
  \Delta_k g =  \Delta_k f.
\]
Moreover, by Taylor's remainder theorem, 
\[
  \|g\|_{C^\alpha_0(B_R)} \les R^\beta \| f^{(k)} \|_{C^{\beta+\alpha_0-k}}.
\]
By the definition of $\Delta_k$, we then have that 
\[
\|\Delta_k g\|_{C^{\alpha_0}(B_R)} \les \|g\|_{C^{\alpha_0}(B_R)} \les R^\beta \| f^{(k)} \|_{C^{\beta+\alpha_0-k}}.\qedhere
\]
\end{proof}

\begin{corollary}\label{cor:multideltak}
Let $m \in \N$, $\beta >0$, $\beta \not \in \N$, and let
$k = \lfloor \beta \rfloor$. Suppose that $f: \R^m \to \R$ satisfies
\[
  \| |\partial_{x_1}|^\beta \dots |\partial_{x_m}|^\beta f\|_{L^\infty}
    < \infty,
\]
where $|\partial_{x}|$ is defined as $|\partial_{x}| = \sqrt{-\partial_x^2}$.
Then for every $R > 0$, 
\begin{equation}\label{e:multideltak}
  \| (\Delta_k)_{x_1} \dots (\Delta_k)_{x_m} f\|_{L^\infty(B_R^m)}
    \les R^{m\beta}\|
      |\partial_{x_1}|^\beta\dots|\partial_{x_m}|^\beta f\|_{L^\infty}.
\end{equation}
Moreover, if $f \in C^{m\beta}(B_R^m)$, 
\begin{equation}\label{e:multideltakweak}
\| (\Delta_k)_{x_1} \dots (\Delta_k)_{x_m} f\|_{L^\infty(B_R^m)} \les R^{m\beta} \| f \|_{C^{m\beta}(B_R^m)}
\end{equation}
\end{corollary}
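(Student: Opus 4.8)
The plan is to reduce the multi-variable statement to an $m$-fold iteration of the one-dimensional Lemma \ref{lem:singledeltak}, applying it in one coordinate at a time while keeping the other coordinates frozen. The key observation is that the operators $(\Delta_k)_{x_1}, \dots, (\Delta_k)_{x_m}$ act in distinct variables, hence commute, so I am free to peel them off one at a time.

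First I would establish \eqref{e:multideltak}. Fix the coordinates $x_2, \dots, x_m$ with $|x_j| \le R$, and view $f$ as a function of $x_1$ alone on $B_R$. The hypothesis $\||\partial_{x_1}|^\beta \dots |\partial_{x_m}|^\beta f\|_{L^\infty} < \infty$ implies, for each fixed $(x_2,\dots,x_m)$, that $\||\partial_{x_1}|^\beta f(\cdot, x_2, \dots, x_m)\|_{L^\infty}$ is controlled — more precisely, that the function $g := |\partial_{x_2}|^\beta \dots |\partial_{x_m}|^\beta f$ has $\||\partial_{x_1}|^\beta g\|_{L^\infty} < \infty$, so in particular $g(\cdot, x_2, \dots, x_m) \in C^\beta(B_R)$ with $C^\beta$ norm bounded (after subtracting the Taylor polynomial of degree $k = \lfloor\beta\rfloor$, whose $\Delta_k$ vanishes by \eqref{deltakernel}) by $\||\partial_{x_1}|^\beta g\|_{L^\infty}$. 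Applying Lemma \ref{lem:singledeltak} in the variable $x_1$ (with $\alpha_0$ chosen so that $\beta + \alpha_0 - k$ matches the regularity being tracked) gives $\|(\Delta_k)_{x_1} g\|_{L^\infty_{x_1}(B_R)} \les R^\beta \||\partial_{x_1}|^\beta g\|_{L^\infty}$ uniformly in the other variables. Iterating: pick up a factor $R^\beta$ and strip one $|\partial_{x_j}|^\beta$ at each of the $m$ steps, applying Lemma \ref{lem:singledeltak} in the variable $x_j$ to the function obtained so far. After $m$ steps one arrives at the bound $R^{m\beta}\||\partial_{x_1}|^\beta \dots |\partial_{x_m}|^\beta f\|_{L^\infty}$, which is \eqref{e:multideltak}.

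For \eqref{e:multideltakweak} the argument is the same iteration, but now at each step I invoke the plain form of Lemma \ref{lem:singledeltak} with the $C^\beta$-type control coming directly from the hypothesis $f \in C^{m\beta}(B_R^m)$: restricting $f$ to a line parallel to the $x_j$-axis gives a function whose relevant Hölder norm on $B_R$ is bounded by $\|f\|_{C^{m\beta}(B_R^m)}$ (the full $C^{m\beta}$ regularity is more than enough to carry out each one-dimensional step, since $m\beta \ge \beta$), and each application of the lemma contributes a factor $R^\beta$, for a total of $R^{m\beta}$.

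The only genuine subtlety — the step I expect to require the most care — is the bookkeeping of the Hölder exponents through the iteration: after applying $(\Delta_k)_{x_1}$ one must make sure the resulting function still has enough regularity in $x_2, \dots, x_m$ to feed into the next application of Lemma \ref{lem:singledeltak}, and that the $\alpha_0$ parameter is threaded consistently so that the fractional derivatives $|\partial_{x_j}|^\beta$ are exactly what survives at each stage. For \eqref{e:multideltak} this is where the mixed-derivative hypothesis $\||\partial_{x_1}|^\beta \dots |\partial_{x_m}|^\beta f\|_{L^\infty}$ is used in its full strength — it is precisely the statement that applying $|\partial_{x_j}|^\beta$ successively in every variable lands in $L^\infty$, so that the one-dimensional lemma can be applied in each coordinate with the correct right-hand side. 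For \eqref{e:multideltakweak} this bookkeeping is trivial because $C^{m\beta}$ regularity dominates everything needed at every step.
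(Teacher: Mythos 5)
Your proof of \eqref{e:multideltak} is correct and is essentially the paper's argument: the operators $(\Delta_k)_{x_j}$ and $|\partial_{x_i}|^\beta$ act in distinct variables and commute, so one peels off one coordinate at a time, applying Lemma~\ref{lem:singledeltak} (via the equivalence of $C^\beta$ and $B^\beta_{\infty,\infty}$ for $\beta\notin\N$) in that coordinate uniformly over the remaining ones; the paper phrases this as an induction on $m$ but the content is identical. Here the anisotropic hypothesis $\||\partial_{x_1}|^\beta\cdots|\partial_{x_m}|^\beta f\|_{L^\infty}<\infty$ is exactly tensorized, so each step hands the next step precisely the right quantity, and the bookkeeping you flag really is harmless.

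For \eqref{e:multideltakweak}, however, the step you declare ``trivial'' is the actual crux, and as written the iteration does not close. After the first step you have $\|(\Delta_k)_{x_1}f\|_{L^\infty_{x_1}(B_R)}\les R^\beta\|f(\cdot,x_{2:m})\|_{C^\beta_{x_1}(B_R)}$, but to apply Lemma~\ref{lem:singledeltak} in $x_2$ to the function $(\Delta_k)_{x_1}f$ you need to control $\|(\Delta_k)_{x_1}f\|_{C^\beta_{x_2}}$ \emph{while retaining the factor $R^\beta$ gained at the first step}. The crude bound $\|(\Delta_k)_{x_1}f\|_{C^\beta_{x_2}}\les\|f\|_{C^\beta_{x_2}}$ (valid since $\Delta_k$ is a finite combination of dilations) loses that factor, so ``each application contributes $R^\beta$'' is not automatic; what is needed is a mixed-norm version of the one-dimensional estimate, i.e.\ $\|(\Delta_k)_{x_1}f\|_{C^{(m-1)\beta}_{x_{2:m}}}\les R^\beta\,(\cdots)$, together with the fact that the \emph{isotropic} space $C^{m\beta}(B_R^m)$ controls the relevant tensorized Hölder norms ($\beta$ derivatives per variable). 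This embedding of isotropic into mixed Hölder regularity is exactly what ``restricting $f$ to a line'' does not give you, and it is why the paper's proof of \eqref{e:multideltakweak} splits the $C^{\beta-k}_{x_m}$ norm produced by Lemma~\ref{lem:singledeltak} into its sup part and its Hölder-quotient part, and applies the inductive hypothesis in the remaining $m-1$ variables to \emph{both} $\partial_{x_m}^k f(\cdot,x_m)$ and the difference quotient $|x_m-x_m'|^{-(\beta-k)}\bigl(\partial_{x_m}^k f(\cdot,x_m)-\partial_{x_m}^k f(\cdot,x_m')\bigr)$, each measured in $C^{(m-1)\beta}_{x_{1:m-1}}$ uniformly in $x_m,x_m'$. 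You should either reproduce that splitting or supply the embedding $C^{m\beta}\hookrightarrow C^\beta_{x_1}\cdots C^\beta_{x_m}$ explicitly; without one of these, the second estimate is not proved.
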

\begin{proof}
We proceed inductively. For $m=1$, \eqref{e:multideltak} follows directly from Lemma \ref{lem:singledeltak}, 
together with the usual equivalence (for $\beta \not \in \N$) of $C^\beta$ and $B^{\beta}_{\infty,\infty}$. If $m > 1$, by induction, 
\[
  \begin{aligned}
    \|(\Delta_k)_{x_1} \dots (\Delta_k)_{x_m} f\|_{L^\infty(B_R^m)}
      &\les R^{(m-1)\beta}\| |\partial_{x_1}|^\beta \dots
        |\partial_{x_{m-1}}|^\beta (\Delta_k)_{x_m}  f\|_{L^\infty(\R^{m-1} \times B_R)} \\
      &= R^{(m-1)\beta} \| (\Delta_k)_{x_m} |\partial_{x_1}|^\beta \dots
        |\partial_{x_{m-1}}|^\beta   f\|_{L^\infty(\R^{m-1} \times B_R)} \\
      &\les R^{m\beta} \||\partial_{x_{m}}|^\beta |\partial_{x_1}|^\beta \dots
        |\partial_{x_{m-1}}|^\beta   f\|_{L^\infty(\R^m)} \\
      &= R^{m\beta}\| |\partial_{x_1}|^\beta \dots |\partial_{x_m}|^\beta f\|_{L^\infty},
  \end{aligned}
\]
which is \eqref{e:multideltak}. To obtain \eqref{e:multideltakweak}, we proceed similarly. 
If $m = 1$, \eqref{e:multideltakweak} follows directly from Lemma \ref{lem:singledeltak}. For $m > 1$, proceeding inductively and using Lemma \ref{lem:singledeltak}, for $k:= \lfloor \beta \rfloor$,
\[
  \begin{aligned}
  \lefteqn{\| (\Delta_k)_{x_1} \dots (\Delta_k)_{x_m} f\|_{L^\infty(B_R^m)}}\qquad&\\
    &\les R^\beta \| (\Delta_k)_{x_1} \dots (\Delta_k)_{x_{m-1}}\partial_{x_m}^kf
      \|_{L^\infty_{x_1,\dotsc,x_{m-1}}(B_R^{m-1}) C^{\beta-k}_{x_m}(B_R)}\\
    &\les R^\beta \Big(\| (\Delta_k)_{x_1} \dots
      (\Delta_k)_{x_{m-1}}\partial_{x_m}^kf\|_{L^\infty(B_R^m)}+{} \\
    &\quad + \sup_{x_m, x_m' \in B_R} \Big\| \frac{(\Delta_k)_{x_1} \dots
      (\Delta_k)_{x_{m-1}}[(\partial_{x_m}^k f)(\cdot, x_m) - (\partial_{x_m}^k f)(\cdot,x_m')]}
      {|x_m-x_m'|^{\beta - k}} \Big\|_{L^\infty_{x_1,\dotsc,x_{m-1}}(B_R^{m-1})}\Big)\\
    &\les R^{m\beta} \Big(\|\partial_{x_m}^kf
      \|_{L^{\infty}_{x_m}(B_R)C^{(m-1)\beta}_{x_1,\dotsc,x_{m-1}}(B_R^{m-1})} + {}\\
    &\quad + \sup_{x_m, x_m' \in B_R} \Big\| \frac{(\partial_{x_m}^k f)(\cdot, x_m)
      - (\partial_{x_m}^k f)(\cdot, x_m')}{|x_m-x_m'|^{\beta - k}} \Big
      \|_{C^{(m-1)\beta}_{x_1,\dotsc,x_{m-1}}(B_R^{m-1})}\\
    &\les R^{m \beta} \| f \|_{C^{m\beta}(B_R^m)}.
  \end{aligned}\qedhere
\]
\end{proof}
\begin{lemma}
Let $F: \R \to \R$, $d \in \N$, $d \ge 1$. There exist coefficients $c_{h,d} \in \R$ such that for every $k \ge d$,
\begin{equation} \label{deltakleib}
\Delta_k (r^d F(r)) = \sum_{h=0}^{d} c_{h,d} r^d \Delta_{k-d} F\big(\frac{r}{2^h}\big).
\end{equation}
\end{lemma}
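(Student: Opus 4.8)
The plan is to encode everything with two operators and deduce \eqref{deltakleib} from a single commutation relation. On functions of the variable $r$, let $S$ be the dilation $(SG)(r)=G(r/2)$ and $M$ the multiplication $(MG)(r)=rG(r)$, so that $M^dG(r)=r^dG(r)$. The recursion \eqref{deltakdef} reads $\Delta_0=I-S$ and $\Delta_{k+1}=(I-2^{k+1}S)\Delta_k$, hence by induction
\[
  \Delta_k=\prod_{j=0}^{k}(I-2^{j}S),
\]
which is a polynomial in $S$; in particular all the factors $(I-2^jS)$ commute with one another and $S$ commutes with every $\Delta_j$. In these terms, \eqref{deltakleib} amounts to the operator identity $\Delta_kM^d=M^dP_d(S)\Delta_{k-d}$, where $P_d(S)=\sum_{h=0}^{d}c_{h,d}S^h$ is a polynomial of degree $d$ depending only on $d$, since $(S^hG)(r)=G(r/2^h)$.

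The only computation needed is $SM=\tfrac12 MS$, which is immediate from $(SMG)(r)=(r/2)G(r/2)=\tfrac12(MSG)(r)$. It gives $(I-2^jS)M=M(I-2^{j-1}S)$ and, iterating $d$ times, $(I-2^jS)M^d=M^d(I-2^{j-d}S)$. Pushing $M^d$ to the left through the $k+1$ factors of $\Delta_k$ therefore yields
\begin{align*}
  \Delta_kM^d
    &=M^d\prod_{j=0}^{k}(I-2^{j-d}S)
      =M^d\Bigl(\prod_{m=1}^{d}(I-2^{-m}S)\Bigr)\Bigl(\prod_{j=0}^{k-d}(I-2^{j}S)\Bigr)\\
    &=M^d\,P_d(S)\,\Delta_{k-d},
\end{align*}
where I set $P_d(S):=\prod_{m=1}^{d}(I-2^{-m}S)$, used commutativity of the factors, and used the hypothesis $k\ge d$ precisely to split off the factors with nonnegative exponent and recognise them as $\Delta_{k-d}$. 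Writing $P_d(S)=\sum_{h=0}^{d}c_{h,d}S^{h}$ with $c_{h,d}\in\R$ and applying the identity to $F$ gives $\Delta_k(r^dF(r))=r^d\sum_{h=0}^{d}c_{h,d}(\Delta_{k-d}F)(r/2^h)$, which is exactly \eqref{deltakleib}.

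There is no real obstacle here beyond the bookkeeping of exponents: each factor $(I-2^jS)$ of $\Delta_k$ becomes $(I-2^{j-d}S)$ after $M^d$ is pushed past it, the $d$ factors with $j<d$ combine into the fixed polynomial $P_d(S)$, and the remaining $k-d+1$ factors reassemble into $\Delta_{k-d}$ — this last point, and only it, uses $k\ge d$. One should also note at the outset that $S$ commutes with all the $\Delta_j$, which legitimises both the product representation of $\Delta_k$ and the reordering of $P_d(S)$ and $\Delta_{k-d}$. The same identity can be obtained by a direct double induction on $d$ and $k$, starting from $\Delta_1(rF(r))=r\bigl(\Delta_0F(r)-\tfrac12\Delta_0F(r/2)\bigr)$, but the operator computation is shorter and produces the $c_{h,d}$ in closed form.
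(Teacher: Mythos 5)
Your proof is correct. The factorization $\Delta_k=\prod_{j=0}^{k}(I-2^{j}S)$ follows immediately from the recursion $\Delta_{k+1}=(I-2^{k+1}S)\Delta_k$, the commutation relation $SM=\tfrac12 MS$ is verified correctly, and the bookkeeping $(I-2^{j}S)M^{d}=M^{d}(I-2^{j-d}S)$ together with the splitting of the shifted product into $P_d(S)=\prod_{m=1}^{d}(I-2^{-m}S)$ and $\Delta_{k-d}=\prod_{j'=0}^{k-d}(I-2^{j'}S)$ (which is exactly where $k\ge d$ is used) gives the identity with explicit coefficients. The paper argues differently in presentation, though the underlying computation is the same: it first asserts, ``by expanding the definition,'' that $\Delta_{d-1}(r^dF(r))=\sum_{h=0}^{d}c_{h,d}\,r^dF(r/2^h)$ for some unspecified coefficients, checks by hand that one more application of the recursion turns each $F(r/2^h)-F(r/2^{h+1})$ into $\Delta_0F(r/2^h)$ (the factor $2^d$ being exactly cancelled by $(r/2)^d$), and then inducts on $k$. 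Your operator formulation buys two things over the paper's version: the base case is no longer a black box (the coefficients are read off from $\prod_{m=1}^{d}(I-2^{-m}S)$ in closed form), and the commutativity issues that the paper leaves implicit (e.g.\ that $\Delta_{k-d}F(r/2^h)$ is unambiguous) are settled once and for all by the observation that everything in sight is a polynomial in $S$. Both arguments are complete; yours is tighter.
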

\begin{proof}
We proceed inductively. By expanding the definition \eqref{deltakdef} of $\Delta_k$, we have that 
\begin{equation*}
\Delta_{d-1} (r^d F(r)) = \sum_{h=0}^{d} c_{h,d}r^d F\big(\frac{r}{2^h}\big)
\end{equation*}
for some coefficients $c_{h,d}$. Therefore, by definition of $\Delta_k$,
\begin{align*}
\Delta_{d} (r^d F(r)) &= \sum_{h=0}^d c_{h,d} \big(r^d F\big(\frac{r}{2^h}\big) - 2^{d} \frac{r^d}{2^d} F\big(\frac{r}{2^{h+1}}\big) \big) \\
&= \sum_{h=0}^d c_{h,d} r^d \big( F\big(\frac{r}{2^h}\big) - F\big(\frac{r}{2^{h+1}}\big) \big) \\
&= \sum_{h=0}^{d} c_{h,d} r^d \Delta_{0} F\big(\frac{r}{2^h}\big),
\end{align*}
so \eqref{deltakleib} holds for $k = d$. Moreover, if \eqref{deltakleib} holds for some $k \ge d$, 
\begin{align*}
\Delta_{k+1} (r^d F(r)) &= \sum_{h=0}^d c_{h,d} \big(r^d \Delta_{k-d} F\big(\frac{r}{2^h}\big) - 2^{k+1} \frac{r^d}{2^d} \Delta_{k-d} F\big(\frac{r}{2^{h+1}}\big) \big) \\
&= \sum_{h=0}^d c_{h,d} r^d \big( \Delta_{k-d} F\big(\frac{r}{2^h}\big) - 2^{k+1-d}\Delta_{k-d} F\big(\frac{r}{2^{h+1}}\big) \big) \\
&= \sum_{h=0}^{d} c_{h,d} r^d \Delta_{k+1-d} F\big(\frac{r}{2^h}\big),
\end{align*}
which is \eqref{deltakleib} for $k+1$.
\end{proof}
We finally give a finite $p$-variation criterion for stochastic
processes under assumptions similar to those of Kolmogorov's
continuity theorem.
\begin{lemma}\label{l:magic}
  Let $E$ be a normed space, and
  let $f:[0,1]\to E$. Then for $q\geq 1$ and $\epsilon>0$,
  \[
    \|f\|_{V^q(0,1;E)}^q
      \lesssim \sum_{N\text{ dyadic}} \Bigl(
        N^\epsilon \sum_{k=0}^{N-1}\|f(t_{k+1}^N) - f(t_k^N)\|_E^q\Bigr),
  \]
  with a constant that depends only on $q$, $\epsilon$.
\end{lemma}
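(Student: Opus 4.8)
The plan is to bound the full $p$-variation by a dyadic sum over uniform partitions. Recall that
\[
  \|f\|_{V^q(0,1;E)}^q
    = \sup_{0 = s_0 < s_1 < \dots < s_n = 1} \sum_{i=1}^n \|f(s_i) - f(s_{i-1})\|_E^q.
\]
Fix an arbitrary partition $0 = s_0 < \dots < s_n = 1$. For each dyadic level $N = 2^m$, let $t_k^N = k/N$ be the uniform grid, and for each $i$ let $\ell(i)$ be the smallest dyadic level such that the interval $(s_{i-1}, s_i]$ contains at least one grid point $t_k^N$ of level $N = 2^{\ell(i)}$; equivalently, $2^{-\ell(i)} \le s_i - s_{i-1} < 2^{-\ell(i)+1}$ roughly. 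The idea is that each increment $f(s_i) - f(s_{i-1})$ can be ``charged'' to a bounded number of dyadic grid increments at levels $\ge \ell(i)$: writing $s_{i-1}$ and $s_i$ in terms of the nearest grid points and telescoping, $\|f(s_i)-f(s_{i-1})\|_E^q$ is controlled (up to a constant depending on $q$) by a sum of $\|f(t_{k+1}^N)-f(t_k^N)\|_E^q$ over dyadic $N \ge 2^{\ell(i)}$, with only $O(1)$ terms at each level, in the standard manner of decomposing an interval into dyadic building blocks (as in the proof of the Kolmogorov continuity / Garsia–Rodemich–Rumsey type estimates).

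Concretely, first I would reduce to the case where the $s_i$ are dyadic rationals (by density of dyadics and, if $f$ is assumed continuous, continuity; otherwise one works directly with the supremum over finite partitions and approximates). Then, for a partition of dyadic points, I would use the decomposition: any dyadic sub-interval $(s_{i-1}, s_i]$ can be written as a disjoint union $\bigsqcup_j I_j$ of at most $2$ dyadic intervals per scale, over scales finer than $s_i - s_{i-1}$, and then $f(s_i) - f(s_{i-1}) = \sum_j (f(\text{right endpoint of } I_j) - f(\text{left endpoint of } I_j))$. Applying the elementary inequality $\|\sum_j a_j\|^q \le (\#\{j\})^{q-1}\sum_j \|a_j\|^q$ and summing over $i$, each dyadic increment $f(t_{k+1}^N) - f(t_k^N)$ at level $N$ gets counted at most twice (since the $(s_{i-1},s_i]$ are disjoint), and the number-of-terms prefactor is bounded by $C(q) \cdot (\text{number of scales})^{q-1} \lesssim C(q,\epsilon) N^\epsilon$ after summing $\sum_{N' \ge N} (N'/N)^{-\epsilon/(q-1)} \cdot (\dots)$ — that is, one inserts a convergent geometric factor $N^{-\epsilon}$ so that the scales at which a given increment is used are summable. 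This produces exactly
\[
  \sum_{i=1}^n \|f(s_i) - f(s_{i-1})\|_E^q
    \lesssim \sum_{N \text{ dyadic}} N^\epsilon \sum_{k=0}^{N-1} \|f(t_{k+1}^N) - f(t_k^N)\|_E^q,
\]
and taking the supremum over partitions gives the claim.

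The main obstacle is the bookkeeping that turns ``$O(1)$ dyadic blocks per scale'' into a clean bound with the $N^\epsilon$ weight: one must be careful that when a single partition increment spans many dyadic scales, the factor $(\#\{j\})^{q-1}$ from the $\ell^q$-triangle inequality does not blow up. The fix is to not telescope through all scales at once but to group geometrically — use $\|\sum_{j \ge 1} a_j\|^q \le \big(\sum_j 2^{j\epsilon'}\big)^{q-1} \sum_j 2^{-j\epsilon'(q-1)}\|a_j\|^q$ (a weighted Hölder / Jensen step) so the scale index $j$ gets a summable weight $2^{-j\epsilon'(q-1)}$, at the cost of the harmless constant $\sum_j 2^{j\epsilon' } $ which diverges — hence instead one keeps the weight on the fine side, paying $N^\epsilon$ at level $N$ and absorbing the geometric sum over finer levels. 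Making this weighting precise, and checking that after reindexing each dyadic increment is used $O(1)$ times per partition, is the only delicate point; everything else is the routine dyadic decomposition of an interval.
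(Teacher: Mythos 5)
Your proposal is correct and follows essentially the same route as the paper: decompose each partition interval into maximal dyadic blocks (at most two per scale), telescope, and apply H\"older with the summable weight $\sum_N N^{-\epsilon q'/q}$ on one side to produce the $N^\epsilon$ factor, using disjointness of the partition intervals to bound the multiplicity of each dyadic increment. The bookkeeping you flag as delicate is exactly what the paper's sets $J_h^N$ formalize, so there is no gap in substance.
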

\begin{proof}
  Let $\pi=\{0=s_0<s_1<\dots<s_n=1\}$ be a partition of $[0,1]$, and for a dyadic number $N$ and an integer $0 \le k \le N$, let $t_k^N := \tfrac{k}{N}$. Define
  recursively the set of intervals $J_h^N$ by 
  \[
    \begin{cases}
      J_h^1
        = \{I = [t_k^1,t_{k+1}^1]: I\subseteq [s_h, s_{h+1}]\},\\
      J_h^N
        = \{I = [t_k^N,t_{k+1}^N]: I\subseteq [s_h, s_{h+1}],\ 
          I\not\subseteq J\ \forall J\in\cup_{M < N} J_h^M\},
    \end{cases}
  \]
  and let $J_h = \cup_{M} J_h^M$. 
  Informally, $J_h^N$ is the collection of all dyadic intervals of size
  $\sim N^{-1}$ that are not contained into any interval chosen previously
  (for a smaller value of $N$). Note that, by definition,
  $J_h^N$ has at most two elements: if not, there would be contiguous intervals
  that should have been chosen before. Therefore, by the
  H\"older inequality,
  \[
    \begin{aligned}
      \|f(s_{h+1}) - f(s_h)\|_E^q
        &= \Big\|\sum_{I\in J_h}f(\max I) - f(\min I)\Big\|_E^q\\
        &= \Big\|\sum_{N\text{ dyadic}}\sum_{I\in J_h^N}f(\max I) - f(\min I)\Big\|_E^q\\
        &\leq \Bigl(\sum_{N\text{ dyadic}} N^{-\frac\epsilon{q}}N^{\frac\epsilon{q}}
          \sum_{I\in J_h^N}\|f(\max I) - f(\min I)\|_E\Bigr)^q\\
        &\leq \Bigl(\sum_{N\text{ dyadic}} N^{-\frac{\epsilon q'}{q}}\Bigr)^{q-1}
          \Bigl(\sum_{N\text{ dyadic}} N^\epsilon
          \Bigl(\sum_{I\in J_h^N}\|f(\max I) - f(\min I)\|_E\Bigr)^q\Bigr)\\
        &\lesssim \sum_{N\text{ dyadic}} N^\epsilon
          \sum_{I\in J_h^N}\|f(\max I) - f(\min I)\|_E^q.
    \end{aligned}
  \]
  So, by summing up over the partition,
  \[
    \begin{aligned}
      \sum_{h=0}^{n-1}\|f(s_{h+1}) - f(s_h)\|_E^q
        &\lesssim \sum_{N\text{ dyadic}} N^\epsilon
          \sum_{h=0}^{n-1}\sum_{I\in J_h^N}\|f(\max I) - f(\min I)\|_E^q\\
        &\leq \sum_{N\text{ dyadic}} N^\epsilon
          \sum_{k=0}^{N-1}\|f(t_{k+1}^N) - f(t_k^N)\|_E^q.
    \end{aligned}
  \]
  Since the right-hand side is independent from the partition,
  the same inequality holds for the $p$-variation.
\end{proof}
\begin{proposition}\label{p:tolomeov}
  Let $(X_t)_{t\in[0,1]}$ be a stochastic process with values in
  a normed space $E$. Assume that given $M>0$ there are $p>1$, $a>0$
  and $C_M>0$ such that
  \[
    \E[\|X_t-X_s\|_E^p\1_{G(M)}]
      \leq C_M |t-s|^{1+a},
        \qquad s,t\in[0,1],
  \]
  for any event $G(M)\subset\{\sup_{s\in[0,1]}\|X_s\|_E\leq M\}$. Then for every $q$ such that
  $\frac{p}{1+a}<q\leq p$,
  \[
    \E\bigl[\|X\|_{V^q(0,1;E)}^p\1_{G(M)}\bigr]
      \lesssim C_M.
  \]
\end{proposition}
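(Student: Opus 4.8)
The plan is to combine \autoref{l:magic} with Minkowski's inequality in $L^{p/q}$. Set $r := p/q$, so that $q\le p$ gives $r\ge 1$, while the hypothesis $q>\tfrac{p}{1+a}$ becomes $r<1+a$; this gap is exactly what will make the final series converge. Fix an event $G(M)\subset\{\sup_s\|X_s\|_E\le M\}$. By \autoref{l:magic}, for any $\epsilon>0$ to be chosen,
\[
  \|X\|_{V^q(0,1;E)}^q
    \lesssim \sum_{N\text{ dyadic}}\;N^\epsilon\sum_{k=0}^{N-1}\bigl\|X_{t_{k+1}^N}-X_{t_k^N}\bigr\|_E^q,
\]
with a constant depending only on $q,\epsilon$. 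Since $r\ge1$, raising to the power $r$ is monotone, so after multiplying by $\1_{G(M)}$ (and using $\1_{G(M)}^r=\1_{G(M)}$) and taking expectations it suffices to bound the $L^r$ norm of the double sum $\sum_{N,k} N^\epsilon\|X_{t_{k+1}^N}-X_{t_k^N}\|_E^q\,\1_{G(M)}$.

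Next I would apply Minkowski's inequality in $L^r$ directly to that double sum — this is the one step where care is needed — bringing the estimate to $\sum_{N\text{ dyadic}}N^\epsilon\sum_{k=0}^{N-1}\bigl(\E[\|X_{t_{k+1}^N}-X_{t_k^N}\|_E^{p}\1_{G(M)}]\bigr)^{1/r}$, using that $qr=p$. The hypothesis bounds each expectation by $C_M|t_{k+1}^N-t_k^N|^{1+a}=C_M N^{-(1+a)}$; since the inner sum has $N$ terms this gives $C_M^{1/r}\sum_{N\text{ dyadic}}N^{1+\epsilon-(1+a)/r}$. Because $r<1+a$ we have $(1+a)/r>1$, so choosing $\epsilon$ small enough (e.g. $\epsilon<(1+a)/r-1$) makes the dyadic geometric series converge, and raising back to the power $r$ yields $\E[\|X\|_{V^q(0,1;E)}^p\1_{G(M)}]\lesssim C_M$ with a constant depending only on $p,a,q$.

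The point to be careful about is precisely the handling of the exponent $r=p/q$: one should use the triangle inequality in $L^r$ (Minkowski) on the sum over dyadic scales, rather than first expanding $\bigl(\sum_N(\cdot)_N\bigr)^r$ by a weighted H\"older inequality and then integrating term by term. That alternative route does go through, but it is lossy and only reaches the suboptimal range $q>\tfrac{2p}{2+a}$, whereas Minkowski loses nothing and delivers the full range $q>\tfrac{p}{1+a}$. Everything else — the geometric summation over scales and the bookkeeping of constants — is routine.
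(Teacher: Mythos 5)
Your argument is correct and attains the full range $q>\tfrac{p}{1+a}$, but it is not the route the paper takes. You both start from \autoref{l:magic}; the difference is in how the expectation is handled. You set $r=p/q\ge1$ and apply the triangle inequality in $L^r(\Prob)$ (Minkowski) to the whole double sum $\sum_{N}N^\epsilon\sum_k\|X_{t_{k+1}^N}-X_{t_k^N}\|_E^q\1_{G(M)}$, which reduces everything to the moment hypothesis term by term and to the convergence of $\sum_N N^{1+\epsilon-(1+a)/r}$; this is clean and loses nothing. The paper instead (i) applies H\"older in $k$ to pass from the $\ell^q_k$-sum to $\bigl(N^{a-\epsilon}\sum_k\|\cdot\|^p\bigr)^{q/p}$ at the cost of a factor $N^{1-q/p}$, (ii) bounds the resulting series over $N$ by a supremum over dyadic $N$ times a convergent geometric series (the exponent $e=1+(1+\tfrac qp)\epsilon-\tfrac qp(1+a)$ is negative exactly when $q>\tfrac p{1+a}$ and $\epsilon$ is small), and (iii) separately proves $\E\bigl[\sup_N N^{a-\epsilon}\sum_k\|X_{t_k^Nt_{k+1}^N}\|_E^p\1_{G(M)}\bigr]\lesssim C_M$ by replacing the sup with the sum. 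So the paper's route also reaches the full range $q>\tfrac p{1+a}$ — the dichotomy you sketch at the end (Minkowski versus a lossy H\"older expansion of $\bigl(\sum_N(\cdot)_N\bigr)^r$) does not describe the paper's actual argument, which sidesteps raising the $N$-sum to the power $r$ altogether by going through the supremum. What the paper's version buys is the slightly stronger intermediate estimate on $\E[\sup_N\cdots]$, which is reusable; what yours buys is brevity. Either proof is acceptable.
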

\begin{proof}
  By the assumption, for $\epsilon>0$,
  \begin{equation}\label{e:fromlemma}
    \begin{aligned}
      \E\Bigl[\sup_{N\text{\ dyadic}}\Bigl(N^{a-\epsilon}
          \sum_{k=0}^{N-1}\|X_{t^N_k t_{k+1}^N}\|_E^p\Bigr)\1_{G(M)}\Bigr]
        &\leq \E\Bigl[\1_{G(M)}\sum_{N\text{\ dyadic}}N^{a-\epsilon}
          \sum_{k=0}^{N-1}\|X_{t^N_k t_{k+1}^N}\|_E^p\Bigr]\\
        &\leq C_M\sum_{N\text{\ dyadic}}N^{a-\epsilon}
          \sum_{k=0}^{N-1} |t_{k+1}^N-t_k^N|^{1+a}\\
        &\leq C_M\sum_{N\text{\ dyadic}} N^{-\epsilon}
         \lesssim C_M
    \end{aligned}
  \end{equation}
  By \autoref{l:magic} and the H\"older inequality,
  \[
    \begin{aligned}
      \|X\|_{V^q}^q
        &\leq \sum_{N\text{ dyadic}} N^\epsilon
          \sum_{k=0}^{N-1}\|X_{t^N_k t_{k+1}^N}\|_E^q\\
        &\leq \sum_{N\text{ dyadic}} N^e
          \Bigl(N^{a-\epsilon}\sum_{k=0}^{N-1}\|X_{t^N_k t_{k+1}^N}\|_E^p\Bigr)^{\frac{q}{p}}\\
        &\lesssim \Bigl(\sup_{N\text{ dyadic}}N^{a-\epsilon}
          \sum_{k=0}^{N-1}\|X_{t^N_k t_{k+1}^N}\|_E^p\Bigr)^{\frac{q}{p}},
    \end{aligned}
  \]
  with $e=1 + (1+\tfrac{q}{p})\epsilon - \tfrac{q}{p}(1+a)<0$ by assumption,
  for $\epsilon$ small enough.
  By multiplying by $\1_{G(M)}$ the above inequality and
  by using \eqref{e:fromlemma}, the conclusion follows.
\end{proof}
We are ready to turn to the proof
of \autoref{thm:spSBE}. We start
with the following proposition,
which yields the same conclusions
under an assumption on the $2$-points
joint density of the path.
\begin{proposition} \label{prop:spSBE}
  Given $\beta>0$, let $\omega:[a,b]\to\R^d$ be a stochastic
  process with continuous paths. Suppose that
  \begin{itemize}
    \item for {a.\,e.} $(s,t)$, the joint density
      $\rho_{s,t}:=\Law(\omega_s,\omega_t)$ belongs
      to $C^{\beta}(B_R\times B_R)$ for every ball
      $B_R = \{ x\in\R^d:|x|\le R\}$,
    \item for every $R>0$ there is $C_\rho(R)>0$ such
      that for every $J\subseteq[a,b]$, 
      \begin{equation} \label{eqn:Crho}
        \iint_{J \times J} \|\rho_{s,t}\|_{C^\beta(B_R \times B_R)}\,ds\,dt
          \leq C_{\rho}(R)|J|.
      \end{equation} 
  \end{itemize}
  Then for every $\alpha<\frac\beta2$ with
  $2\alpha\not\in\N$, the occupation measure 
  satisfies
  \[
    \mu_{a,\cdot}
      \in V^2(\sbe^{\alpha,2}_{2}).
  \]
  More precisely, there is $\delta_0(\alpha,\beta)\in(0,1)$ such
  that for every $0<\delta<\delta_0(\alpha,\beta)$,
  if
  \[
    E(M)
      = \bigl\{ \sup_{a \le s \le b} |\omega_{s}|\le M \bigr\},
      \qquad
    F(M_0)
      = \bigl\{ \sup_{a \le s,t \le b} |\omega_{st}|\le M_0 \bigr\},
  \]
  then
  \begin{equation} \label{muinV2SBE}
    \E \big[ \1_{E(M)}\1_{F(M_0)}  \| \mu_{a,\cdot}\|_{V^2(\sbe^{\alpha,2}_{2})}^2 \big]
      \les M_0^{d\delta} (M^d C_\rho(2M))^{1- \delta} + M_0^d.
  \end{equation}
\end{proposition}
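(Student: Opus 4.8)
The plan is to decompose the occupation-measure increment $\mu_{s,t}$ into dyadic time pieces, exploit the assumption~\eqref{eqn:Crho} together with \autoref{cor:multideltak} to get an $L^2$-in-space bound on $\Delta_k F_\mu$ in terms of the joint density, and then feed the resulting moment estimate into \autoref{p:tolomeov}. Concretely, fix $\alpha<\tfrac\beta2$, set $k=\lceil\alpha+d-1\rceil$, and for an interval $J=[s,t]$ work on the event $E(M)\cap F(M_0)$, so that all relevant balls live in $B_{2M}$ and the increments $\omega_{st}$ live in $B_{M_0}$. First I would compute $\E[\|\mu_{s,t}\|_{\sbe^{\alpha,2}_2}^2\1_{E(M)}\1_{F(M_0)}]$ directly. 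By definition this is
\[
  \E\Big[\1_{E(M)}\1_{F(M_0)}\int_{\R^d}\Big(\int_0^\infty r^{-2\alpha-2d}\,|\Delta_k F_{\mu_{s,t}}(r,y)|^2\,\frac{dr}{r}\Big)\,dy\Big],
\]
and the key point is to square the $\Delta_k$ acting on $F_{\mu_{s,t}}(r,y)=\int_s^t\1_{|\omega_u-y|\le r}\,du$, yielding a double time integral $\int_s^t\int_s^t$ whose integrand, after taking expectation, involves $\Prob(|\omega_u-y|\le r,\ |\omega_v-y|\le r)$. Writing this probability through the joint density $\rho_{u,v}$ and applying $(\Delta_k)_r$ in each of the two radial variables, \autoref{cor:multideltak} (in the form~\eqref{e:multideltakweak}, using $2\alpha\notin\N$ so that $\beta\notin\N$ is available) converts the two $\Delta_k$'s into a gain of $r^{2\beta}$ at the cost of a $C^\beta\times C^\beta$ norm of $\rho_{u,v}$ on $B_{2M}\times B_{2M}$. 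Since $2\alpha<\beta$, the resulting $r$-integral $\int_0^{\sim M}r^{-2\alpha-2d}r^{2\beta}\frac{dr}{r}$ converges at $0$; the integration in $y$ over $B_{2M}$ produces a factor $\les M^d$, and the time double integral is controlled by $\iint_{J\times J}\|\rho_{u,v}\|_{C^\beta(B_{2M}\times B_{2M})}\,du\,dv\le C_\rho(2M)|J|$ thanks to~\eqref{eqn:Crho}.

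This gives, roughly, $\E[\|\mu_{s,t}\|_{\sbe^{\alpha,2}_2}^2\1_{E(M)}\1_{F(M_0)}]\les M^d C_\rho(2M)|t-s|$, i.e.\ an estimate of the shape $\les C_M|t-s|^{1+a}$ with $a=0$ — which is not quite enough for \autoref{p:tolomeov}. To create the missing room in the exponent I would interpolate: on $F(M_0)$ one also has the crude deterministic bound $\|\mu_{s,t}\|_{\sbe^{\alpha,2}_2}\les M_0^{?}|t-s|$ coming from the fact that $\mu_{s,t}$ is a measure of total mass $|t-s|$ supported in a ball of radius $M_0$ around a fixed point (its \sbe-norm of such a bump being controlled, e.g.\ via \autoref{lem:schwartzinsbe}-type scaling, by a power of $M_0$ times $|t-s|$). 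Interpolating the $L^2$-moment bound of exponent $(1,1)$ in $(\,|t-s|\text{-power},C_M)$ against this trivial bound of exponent $(2,0)$ with weights $(1-\delta,\delta)$ yields
\[
  \E[\|\mu_{s,t}\|_{\sbe^{\alpha,2}_2}^2\1_{E(M)}\1_{F(M_0)}]\les M_0^{d\delta}\,(M^dC_\rho(2M))^{1-\delta}\,|t-s|^{1+\delta},
\]
which is exactly the hypothesis of \autoref{p:tolomeov} with $p=2$, $a=\delta>0$, and $q=2$ (admissible once $\tfrac{p}{1+a}=\tfrac{2}{1+\delta}<2$, i.e.\ for any $\delta>0$). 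The threshold $\delta_0(\alpha,\beta)$ is whatever is needed so that the $r$-integral still converges after the interpolation eats into the radial exponent, i.e.\ any $\delta<\delta_0$ with $\delta_0$ determined by the gap $\beta-2\alpha$ (and $d$).

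Applying \autoref{p:tolomeov} on $[0,1]$ (after rescaling $[a,b]$), with $G(M)=E(M)\cap F(M_0)$, gives $\E[\1_{E(M)}\1_{F(M_0)}\|\mu_{a,\cdot}\|_{V^2(\sbe^{\alpha,2}_2)}^2]\les M_0^{d\delta}(M^dC_\rho(2M))^{1-\delta}$, and adding the $M_0^d$ term to absorb the contribution of the trivial part of the interpolation yields~\eqref{muinV2SBE}; in particular $\mu_{a,\cdot}\in V^2(\sbe^{\alpha,2}_2)$ a.s.\ since $E(M)\cap F(M_0)$ exhausts the probability space as $M,M_0\to\infty$ by path-continuity. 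I expect the main obstacle to be the bookkeeping in the step where the two $\Delta_k$ operators are moved onto $\rho_{u,v}$: one must be careful that the Taylor-subtraction defining $\Delta_k$ is performed consistently in $r$ for each fixed $y$ (so that the polynomial-in-$r$ part of $F_{\mu_{s,t}}(\cdot,y)$ near $r=0$, which behaves like $r^d$ times a smooth function of $r^2$, is genuinely annihilated), and that the $C^\beta$-regularity in the two spatial arguments of $\rho_{u,v}$ really does transfer to the desired $r^{2\beta}$ gain uniformly in $y\in B_{2M}$; this is precisely what \autoref{lem:singledeltak} and \autoref{cor:multideltak} are designed to furnish, via the representation $F_\rho(r,y)=\int_0^r\!\int_{\Sb^{d-1}}\rho(y+r\theta)\,d\sigma(\theta)\,r^{d-1}\,dr$ used in \autoref{lem:schwartzinsbe}. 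The remaining steps — the convergence of the radial and spatial integrals, and the interpolation — are routine.
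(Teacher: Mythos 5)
Your overall architecture is the paper's: square $\Delta_k F_{\mu_{s,t}}$, pass to the joint density, gain powers of $r$ from the $\Delta_k$'s, interpolate against a crude bound on $F(M_0)$ to upgrade $|t-s|$ to $|t-s|^{1+\delta}$, and finish with \autoref{p:tolomeov}. But two of your steps would fail as written. First, the claimed gain $r^{2\beta}$ from the two radial $\Delta_k$'s is not available under hypothesis \eqref{eqn:Crho}, which controls only the \emph{joint} H\"older norm $\|\rho_{u,v}\|_{C^\beta(B_R\times B_R)}$ and not a tensorized norm of the type $\|(1-\Delta_{x_1})^{\beta/2}(1-\Delta_{x_2})^{\beta/2}\rho\|_{L^\infty}$. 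With joint $C^\beta$ regularity, \eqref{e:multideltakweak} yields only a factor of the form $r_1^{\beta}+r_2^{\beta}$, i.e.\ a total gain of $r^{\beta}$, so that after the $r_1^d r_2^d$ volume factors one gets $\E[|\Delta_k F_{\mu_{s,t}}(\cdot,y)(r)|^2]\les C_\rho\, r^{\beta+2d}\,|t-s|$; this is exactly why the threshold is $\alpha<\beta/2$ rather than $\alpha<\beta$ (a genuine $r^{2\beta}$ gain is what the stronger mixed-derivative hypothesis \eqref{eqn:Crho-m} of \autoref{prop:spSBE-m} buys). Your own convergence condition $2\alpha<\beta$ matches the correct $r^{\beta}$ gain, so the exponent is self-correcting, but the intermediate claim, and the displayed integral $\int r^{-2\alpha-2d}r^{2\beta}\,\frac{dr}{r}$ (which also drops the $r^{2d}$ volume factor), do not hold up as stated.

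Second, and more seriously, the interpolation endpoint you propose --- a deterministic bound $\|\mu_{s,t}\|_{\sbe^{\alpha,2}_2}\les M_0^{?}\,|t-s|$ --- does not exist: as $t\downarrow s$ the normalized measure $\mu_{s,t}/|t-s|$ approaches $\delta_{\omega_s}$, whose $\sbe^{\alpha,2}_2$ norm is infinite (the integral of $r^{-2\alpha-2d}|\Delta_k^\ast\1_{[0,r]}(|y-\omega_s|)|^2$ in $\frac{dr}{r}$ produces $|y-\omega_s|^{-2\alpha-2d}$, which is not integrable in $y$ near $\omega_s$). The support information on $F(M_0)$ only gives the fixed-$r$ bound $\E[\1_{F(M_0)}\int|\Delta_k F_{\mu_{s,t}}(\cdot,y)(r)|^2\,dy]\les(M_0^d+r^d)|t-s|^2$, whose $r$-integral against $r^{-2\alpha-2d}\frac{dr}{r}$ diverges at $r=0$. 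The interpolation must therefore be performed pointwise in $r$, combining this bound with the density bound $C_\rho M^d r^{\beta+2d}|t-s|$ with weights $\delta$ and $1-\delta$, and only then integrating in $r$; convergence of the resulting radial integral forces $(\beta+2d)(1-\delta)>2(\alpha+d)$, which is precisely where $\delta_0(\alpha,\beta)$ comes from. With these two corrections your plan coincides with the paper's proof.
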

\begin{proof}
Notice that up to making $\beta$ smaller, we can
assume that $ 2 \lceil \alpha \rceil \ge \beta$, and that $\beta \not \in \N$.

Fix $s,t \in [a,b]$ and $M,M_0>0$. We start by estimating 
\[
  \E\bigl[\1_{E(M)}\1_{F(M_0)}\|\mu_{s,t}\|_{\sbe_2^{\alpha,2}}^2\bigr].
\]
Notice that
$F_{\mu_{s,t}}(\cdot,\cdot)=\int_s^t F_{\delta_{\omega_\tau}}(\cdot,\cdot)\,d\tau$.
Moreover, for $r \ge 0$, by \autoref{lem:deltakmove}
\[
  (\Delta_k)_r F_{\delta_{y_0}} (r,y)
    = (\Delta_k)_r \1_{[0,r]}(|y-y_0|)
    = (\Delta_k)_r \1_{[|y-y_0|, \infty)}(r)
    = (\Delta_k^\ast \1_{[0,r]})(|y-y_0|).
\]
Let $k = \lceil \alpha + d - 1 \rceil = \lceil \frac \beta 2 + d - 1 \rceil$,
and let $\chi \in C^\infty_c(\R)$ be such that $\chi(r) \equiv 1$
for $r \le 1$, $\supp(\chi) \subseteq [0,2]$,
and $\|\chi\|_{C^\beta} \les 1$. For $r \le 1$ and
$|y| \le M + 1$, we have that
\begin{align*}
  \lefteqn{\E \Big[ |\Delta_k F_{\mu_{s,t}}(\cdot,y)(r)|^2\Big]}\quad& \\
    &=\E\Big[\iint_{[s,t]^2} \Delta_k F_{\delta_{\omega_{s'}}}(\cdot,y)(r)
     \Delta_k F_{\delta_{\omega_{t'}}}(\cdot,y)(r)\,ds'\,dt'\Big]\\
    &=\iint_{[s,t]^2}  \E \big[(\Delta_{k}^\ast \1_{[0,r]})(|y-\omega_{s'}|)
      (\Delta_{k}^\ast \1_{[0,r]})(|y-\omega_{t'}|)\big]\,ds'\,dt' \\
    &=\iint_{[s,t]^2}\iint (\Delta_{k}^\ast \1_{[0,r]})(|y-y_1|)
      (\Delta_{k}^\ast \1_{[0,r]})(|y-y_2|) \rho_{s',t'}(y_1, y_2) dy_1dy_2 ds'dt'\\
    &=\iint_{[s,t]^2}\iint (\Delta_{k}^\ast \1_{[0,r]})(|y_1|)
      (\Delta_{k}^\ast \1_{[0,r]})(|y_2|) \rho_{s',t'}(y + y_1, y +y_2) dy_1dy_2 ds'dt',
    \intertext{and since $\Delta^*_k\1_{[0,r]}(|y|)=0$ if $|y|\geq1$,}
    &\begin{aligned}=\iint_{[s,t]^2}\iint (\Delta_{k}^\ast \1_{[0,r]})(|y_1|)
      (\Delta_{k}^\ast \1_{[0,r]})(|y_2|) \chi(|y_1|)\chi(|y_2|)\\
      \times \rho_{s',t'}(y + y_1, y +y_2) dy_1dy_2 ds'dt'
    \end{aligned}
\\
    &=\iint_{[s,t]^2}\iint (\Delta_{k}^\ast \1_{[0,r]})(r_1)
      (\Delta_{k}^\ast \1_{[0,r]})(r_2) r_1^d r_2^d
      g_{s',t'}(r_1,r_2)\frac{dr_1}{r_1}\frac{dr_2}{r_2}\,ds'\,dt',
\end{align*}
where
\[
  g_{s',t'}(r_1,r_2)
    := \iint_{(\Sb^{d-1})^2} \chi(r_1)\chi(r_2)
      \rho_{s',t'}(y + r_1\theta_1, y +r_2\theta_2)
      \,d\sigma(\theta_1) ,d\sigma(\theta_2).
\]      
      
We clearly have that $\|g_{s',t'}\|_{C^\beta}\les\|\rho_{s',t'}\|_{C^\beta(2M)}$.
Therefore, by \eqref{deltakleib} and \eqref{e:multideltakweak},
 \begin{align*}
    \lefteqn{\E[|\Delta_k F_{\mu_{s,t}}(\cdot,y)(r)|^2]}\quad& \\
     &= \iint_{[s,t]^2}\iint \1_{[0,r]}(r_1) \1_{[0,r]}(r_2)
       (\Delta_{k})_{r_1} (\Delta_{k})_{r_2} \big( r_1^d r_2^d g_{s',t'}\big)(r_1,r_2)
       \frac{dr_1}{r_1} \frac{dr_2}{r_2} ds'dt'  \\
    &\les \iint_{[s,t]^2} \iint_{\R^2} \1_{[0,r]}(r_1) \1_{[0,r]}(r_2)
      r_1^{d}r_2^{d}(r_1^{\beta} + 
        r_2^{\beta})
      \| g_{s',t'} \|_{C^\beta}  \frac{dr_1}{r_1} \frac{dr_2}{r_2} ds'dt'  \\
    &\les \iint_{[s,t]^2} r^{\beta + 2d} \| \rho_{s',t'} \|_{C^\beta(2M)}\,ds'\,dt'\\
    &\les C_\rho(2M) (r^{2d} \wedge r^{\beta+2d}) |t-s|, 
\end{align*}
from which we obtain, for $r\le1$,
\begin{equation}\label{spSBE1}
\E\Big[\1_{E(M)} \int |\Delta_k F_{\mu_{s,t}}(\cdot,y)(r)|^2 dy \Big] \les C_{\rho}(2M)M^d r^{\beta+2d} |t-s|.
\end{equation}
In the following, for simplicity of notation, we write $C_\rho = C_\rho(2M)$.
We have the simple estimate 
\begin{equation}\label{e:simple}
  \begin{aligned}
    \1_{F(M_0)}|\Delta_k F_{\mu_{s,t}}(\cdot,y)(r)|
      &\les \|F_{\mu_{s,t}}\|_{L^\infty} \1_{\{(|y-\omega_a| - M_0))_+
        \le r\le2^k(|y-\omega_a|+M_0) \}} \\
      & \les \|\mu_{s,t}\|_{TV}  \1_{\{(|y-\omega_a| - M_0))_+
        \le r\le2^k(|y-\omega_a|+M_0) \}} \\
      & = |t-s| \1_{\{(|y-\omega_a| - M_0))_+ \le r\le2^k(|y-\omega_a|+M_0) \}},
  \end{aligned}
\end{equation}
so
\begin{align}\label{spSBE3}
  \E \Big[\1_{F(M_0)} \int |\Delta_k F_{\mu_{s,t}}(\cdot,y)(r)|^2 dy \Big]
    \les (M_0^d+r^d)|t-s|^2. 
\end{align}
Therefore, from \eqref{spSBE1}, \eqref{spSBE3}, we have for $\delta\in[0,1)$,
\begin{equation}\label{spSBE5}
  \begin{multlined}[.9\linewidth]
    \E\bigl[\|\Delta_k  F_{\mu_{s,t}}(\cdot,y)(r))\|_{L^2_y}^2\1_{E(M)}\1_{F(M_0)}\bigr]\les\\
      \les (C_\rho M^d)^{1-\delta} M_0^{d\delta} r^{(\beta +2d)(1-\delta)} |t-s|^{1 + \delta} \1_{\{r\le1\}}+(M_0^d+r^d)|t-s|^2 \1_{\{r>1\}}.
  \end{multlined}
\end{equation}
Therefore, if $\delta < \delta_0$, and $\delta_0$ is such that
\begin{equation}\label{e:deltazero}
  (\beta + 2d)(1-\delta_0)
    = 2(\alpha+d),
\end{equation}
then
\begin{equation}\label{spSBE6}
  \E\bigl[\|\mu_{s,t}\|_{\sbe^{\alpha, 2}_{2}}^2\1_{E(M)}\1_{F(M_0)}\bigr]
    \les (C_\rho M^d)^{1-\delta} M_0^{\delta d} |t-s|^{1 + \delta} + M_0^d |t-s|^2.
\end{equation}
Inequality \eqref{muinV2SBE} now follows from  formula
above and \autoref{p:tolomeov}.
\end{proof}
\begin{proof}[Proof of \autoref{thm:spSBE}]\phantomsection\label{pf:spSBE}
We first consider the case in which, for the
joint density $\rho_{s,t}$ of $(\omega_s,\omega_t)$,
for every interval $J \subseteq [a,b]$, 
\begin{equation} \label{dnrhodecay}
\iint_{J \times J} \sup_{(x,y)\in \R^d\times\R^d} (1 + |x|^n + |y|^n) (\|d^{n} \rho_{s,t}(x,y) \|_\infty + \|\rho_{s,t}(x,y)\|_\infty)
   \le C'_{\rho} |J|,
\end{equation}
where $d^n$ is the total differential, and $n$ is an integer with $n > \beta + d$.
Let $X$ be a $d$-dimensional standard Gaussian random variable,
independent from $\omega$, and for $\lambda \in \R$,
consider the path $\omega^\lambda_{s} := \omega_s + \lambda X$,
and let $\mu^\lambda$ its associated occupation measure. We have that 
\begin{equation}\label{e:mulambda}
\mu^\lambda_{s,t} = \tau_{\lambda X} \mu_{s,t},
\end{equation}
so in particular, for every choice of the parameters $\alpha, p, q$,  
\begin{equation}\label{e:mulambdanorm}
\| \mu^\lambda_{s,t} \|_{\SBE} = \|\mu_{s,t}\|_{\SBE}.
\end{equation}
Moreover, since $X$ is independent from $\omega$, if $\rho^\lambda_{s,t} = \Law(\omega^\lambda_s, \omega^\lambda_t)$, we have 
\begin{align*}
\rho_{s,t}^\lambda(x,y) =  \frac{\lambda^{-d}}{ (2 \pi)^\frac d2} \int_{\R^d} \rho_{s,t}(x - z, y - z) \exp\big(-\frac{|z|^2}{2\lambda^2}\big) d z.
\end{align*}
From the identity 
\[
\nu_{s,t}(x-y) :=  \int_{\R^d} \rho_{s,t}(x - z, y - z) d z,
\]
and \eqref{dnrhodecay}, we obtain for $\lambda \ge 1$,
\[
  \begin{aligned}
     & \Big\|(2\pi\lambda^2)^\frac d2\rho_{s,t}^\lambda(x,y)-\exp\big(-\frac{|x|^2}{2\lambda^2}\big) \nu_{s,t}(x-y)\Big\|_{C^\beta((R^d)^2)}\\
      \les &~ \Big\| \int_{\R^d} \rho_{s,t}(x-z,y-z)\Big[\exp\big(-\frac{|z|^2}{2\lambda^2}\big)-\exp\big(-\frac{|x|^2}{2\lambda^2}\big)\Big]\,dz \Big\|_{C^\beta((R^d)^2)}\\
      \les &~  \Big\|d^n \int_{\R^d} \rho_{s,t}(x-z,y-z)\Big[\exp\big(-\frac{|z|^2}{2\lambda^2}\big)-\exp\big(-\frac{|x|^2}{2\lambda^2}\big)\Big]\,dz\Big\|_{L^\infty((R^d)^2)} \\
      &+ \Big\| \int_{\R^d} \rho_{s,t}(x-z,y-z)\Big[\exp\big(-\frac{|z|^2}{2\lambda^2}\big)-\exp\big(-\frac{|x|^2}{2\lambda^2}\big)\Big]\,dz \Big\|_{L^\infty((R^d)^2)}\\
      &\les \Big\|\Big(\frac{|x-z|}{\lambda} + \frac{1}{\lambda^{2}}\Big)(\|d^{n} \rho_{s,t}(x-z,y-z) \| + \|\rho_{s,t}(x-z,y-z)\|) \Big\|_{L^\infty((R^d)^2)}
\end{aligned}
\]
Therefore, for $J \subseteq [a,b]$, by \eqref{eqn:Cnu} and \eqref{dnrhodecay},
we have
\begin{equation}\label{spSBE8}
  \begin{aligned}
  &\iint_{J \times J} \| \rho_{s,t}^\lambda \|_{C^\beta(B_R \times B_R)}\,ds\,dt\\
    &\les \frac1{\lambda^d}\Big(\iint_{J \times J}
      \|(2 \pi \lambda^2)^\frac d2\rho_{s,t}^\lambda(x,y) - \exp\big(-\frac{|x|^2}{2\lambda^2}\big)\nu_{s,t}(x-y)
      \|_{C^\beta(B_R \times B_R)}\,ds\,dt\\
    &\quad + \iint_{J \times J} \|\exp\big(-\frac{|x|^2}{2\lambda^2}\big) \nu_{s,t}(x-y) \|_{C^\beta} \,ds\,dt\Big)\\
    &\les \lambda^{-d-1} C_{\rho}'|J| + \lambda^{-d} C_\nu|J|\\
    &\les \lambda^{-d} C_{\nu}|J| 
  \end{aligned}
\end{equation}
for $\lambda = \lambda(C_{\rho'}, C_\nu)$ big enough. 
Now, consider the events $E_\lambda(M) = \{ \sup_{a \le t \le b} |\omega^\lambda_{t}| \le M \}$
and  $F_\lambda(M_0) = \{ \sup_{a \le s,t \le b} |\omega^\lambda_{st}| \le M_0\}= F(M_0)$.
We have that 
\begin{align*}
  E_\lambda(M)
    &= \bigl\{\sup_{a\le s\le b} |\omega^\lambda_s - \lambda X| \le M \bigr\} \\
    &\subseteq \bigl\{\sup_{a\le s\le b} |\omega^\lambda_s|\le M + \lambda |X|\bigr\}
      \cap F_{\lambda}(2M)\\
    &\subseteq F_\lambda(2M) \cap
      \bigcup_{N\text{ dyadic}}\bigl\{\sup_{a\le s\le b} |\omega^\lambda_s|\le M + 4\lambda N \}
      \cap \{|X|\sim N \}\\
    &= F_\lambda(2M)\cap
      \bigcup_{N\text{ dyadic}} E_\lambda(M + 4 \lambda N)\cap\{|X|\sim N \}.
\end{align*}
Therefore, by applying \autoref{prop:spSBE} to the path $\omega^\lambda$,
and by \eqref{spSBE8}, we have that, for any $q < 2$, 
\begin{align*}
  \lefteqn{\E\bigl[\| \mu_{a,\cdot} \|_{V^2(\sbe^{\alpha, 2}_{2})}^q \1_{E(M)}\bigr]\leq}\qquad&\\
    &\leq \sum_{N\text{ dyadic}} \E\bigl[\|\mu^\lambda_{a,\cdot}\|_{V^2(\sbe^{\alpha, 2}_{2})}^q
      \1_{F_\lambda(2M)}\1_{E_\lambda(M + 4 \lambda N)}\1_{|X|\sim N}\bigr]\\
    &\leq \sum_{N \text{ dyadic}} \big(\E\bigl[
      \|\mu^\lambda_{a,\cdot}\|_{V^2(\sbe^{\alpha, 2}_{2})}^2
      \1_{F_\lambda(2M)}\1_{E_\lambda(M + 4 \lambda N)}\bigr]\big)^\frac q2
      \Prob[|X| \sim N]^\frac {2-q}2\\
    &\les \sum_{N \text{ dyadic}} \big((\lambda^{-d} C_\nu
      (M + 4 \lambda N)^d)^{1 - \delta} M^{d\delta} + M^d\big)^{\frac q 2}
      \exp\big(-\frac{2-q}2 cN^2\big),
\end{align*}
where we have used the simple fact that
$\|\mu_{a,\cdot}\|_{V^2(\sbe^{\alpha,2}_2)}=\|\mu^\lambda_{a,\cdot}\|_{V^2(\sbe^{\alpha,2}_2)}$.
By taking limits as $\lambda \to \infty$, we obtain that 
\[ 
  \E\bigl[\|\mu_{a,\cdot} \|_{V^2(\sbe^{\alpha, 2}_{2})}^q \1_{E(M)}\bigr]
    \le C_{q,\delta} C_\nu^\frac{(1-\delta)q} 2 M^{d\frac{\delta q}2} + M^{d\frac{ q}2} ,
\]
so in particular, we obtain
\begin{equation}\label{Cnuestimate}
\E \Big[{\| \mu_{a,\cdot} \|_{V^2(\sbe^{\alpha, 2}_{2})}^q}(1 + \sup_{s} | \omega_{s} |)^{-d}\Big] \les 1 + C_\nu^{(1-\delta)\frac q2}.
\end{equation}

We now move to the case in which $\rho_{s,t}$ does not
necessarily satisfy \eqref{dnrhodecay}. Let $\eps \ll 1$,
$M_1 \gg 1$ to be determined later. Let $Y_s :[a,b]\to \R$
be a process, independent from $\omega$, such that its density
satisfies \eqref{dnrhodecay},\footnote{For instance, one can
take a fractional Brownian motion with Hurst index $H \ll \frac1\beta$,
starting at $Y_0 \sim N(0,1)$.} and let $\sigma_{s,t}$ be the
joint density of $(Y_s, Y_t)$. Finally, let 
\begin{equation}\label{e:wepsilonMuno}
  \omega^{\eps, M_1}_{s} := H_{M_1}(\omega_s) + \eps Y_s,
\end{equation}
where $H_{M_1}$ is defined as 
\begin{equation}\label{e:HM1}
H_{M_1}(x) =
\begin{cases}
x & \text{ if } |x| \le M_1 \\
\frac{M_1x}{|x|} & \text{ if } x \ge M_1.
\end{cases}
\end{equation}
In particular, if $\rho_{s,t}^{M_1} = \Law(H_{M_1}(\omega_s), H_{M_1}(\omega_t))$, one has that 
\begin{equation} \label{spSBE10}
\rho_{s,t}^{M_1}(x,y) = \rho_{s,t}(x,y) \1_{|x|,|y| < M_1} + \widetilde{\rho}_{s,t}^{M_1}(x,y) = \rho_{s,t}(x,y) + \overline{\rho}_{s,t}^{M_1}(x,y),
\end{equation}
where $\widetilde{\rho}_{s,t}^{M_1}$ is a measure supported on 
\begin{equation} \label{spSBE11}
\supp(\widetilde{\rho}_{s,t}^{M_1}) \subseteq \{ |x| = M_1 \} \cup \{ |y| = M_1 \},
\end{equation}
and 
\[
  \|\widetilde{\rho}_{s,t}^{M_1} \|_{TV}
    = \Prob[\max (|\omega_s|, |\omega_t|) \ge M_1]
    \le \Prob[E(M_1/2)^c],
\]
and similarly,
\begin{equation} \label{spSBE13}
  \|\overline{\rho}_{s,t}^{M_1} \|_{TV}
    \leq 2\Prob[E(M_1/2)^c].
\end{equation}
By definition of $\omega^{\eps, M_1}_{s}$, recalling
that $Y_s$ is independent from the path $\omega$,
for $\rho_{s,t}^{\eps,M_1} = \Law(\omega^{\eps, M_1}_{s},
\omega^{\eps, M_1}_{t})$, by \eqref{spSBE10}, we have that
\begin{equation}  \label{spSBE14}
  \rho_{s,t}^{\eps,M_1}
    = \rho_{s,t}^{M_1} \star \eps^{-2}\sigma_{s,t}(\eps^{-1} \cdot)
    = \rho_{s,t} \star \eps^{-2}\sigma_{s,t}(\eps^{-1} \cdot)
      + \overline{\rho}_{s,t}^{M_1} \star \eps^{-2}\sigma_{s,t}(\eps^{-1} \cdot).
\end{equation}
Let 
\[
  \nu_{s,t}^{\eps,M_1}(x-y)
    := d\Law( \omega^{\eps, M_1}_{st})(x-y) = \int \rho_{s,t}^{\eps,M_1}(x-z,y-z) d z,
\]
and 
\[
  \nu^{\sigma,\eps}_{s,t}(x-y) :=  \eps^{-2} \int \sigma_{s,t}(\eps^{-1}(x-z),\eps^{-1}(y-z)) d z.
\]
From \eqref{spSBE14}, the properties of convolutions, and \eqref{spSBE13}, we have
\begin{align*}
\|\nu_{s,t}^{\eps,M_1} \|_{C_\beta} & \le \| \nu_{s,t} \star \eps^{-2}\sigma_{s,t}(\eps^{-1} \cdot) \|_{C^\beta} + \|\overline{\rho}_{s,t}^{M_1} \star \nu^{\sigma,\eps}_{s,t}\|_{C^\beta} \\
&\le \|\nu_{s,t}\|_{C^\beta} +  \|\overline{\rho}_{s,t}^{M_1}\|_{TV} \| \nu^{\sigma,\eps} \|_{C^\beta}\\
&\le  \|\nu_{s,t}\|_{C^\beta} + 2 \Prob[E(M_1/2)^c]\eps^{-\beta - d} \| \nu^{\sigma,1} \|_{C^\beta}
\end{align*}
Therefore, if we choose $M_1 = M_1(\eps)$ such that $\Prob[E(M_1/2)^c]\ll \eps^{\beta + d + 1}$, we obtain 
\[
\iint_{J\times J} \|\nu_{s,t}^{\eps,M_1} \|_{C_\beta} \le (C_\nu + \eps)|J|.
\]
By \eqref{spSBE10}, \eqref{spSBE11}, $\rho_{s,t}^{M_1}$ has compact support. Recalling that $\sigma_{s,t}$ satisfies \eqref{dnrhodecay}, then $\rho_{s,t}^{\eps,M_1}$ satisfies  \eqref{dnrhodecay} as well. Therefore, we can apply \eqref{Cnuestimate} to $\omega^{\eps,M_1}_{s}$, and we obtain
\[
  \E\Big[{\| \mu^{\eps,M_1(\eps)}_{a,\cdot} \|_{V^2(\sbe^{\alpha, 2}_{2})}^q}
      (1 + \sup_{s} | \omega^{\eps,M_1(\eps)}_{s} |)^{-d}\Big]
    \les 1 + (C_\nu + \eps)^{(1-\delta)\frac q2}.
\]
From the definition of $\Delta_k$ and $F_\mu$, it is not hard to check
that $\Delta_k F_\mu$ is a continuous function of $\omega \in C([a,b];\R^d)$,
see \autoref{l:nothard} below. As a consequence,
we have that $\|\mu\|_{V^2(\sbe^{\alpha, 2}_{2})}$ is a {l.\,s.\,c.} function
of $\omega \in C([a,b];\R^d)$. Noticing that $\omega^{\eps,M_1(\eps)} \to \omega$
in $C([a,b];\R^d)$ {a.\,s.} as $\eps \to 0$,
\[
  \begin{multlined}[.9\linewidth]
  \E \Big[{\| \mu_{a,\cdot} \|_{V^2(\sbe^{\alpha, 2}_{2})}^q}
      (1 + \sup_{s} | \omega_{s} |)^{-d}\Big]\leq\\
    \leq \liminf_{\eps \to 0} \E \Big[{\| \mu^{\eps,M_1(\eps)}_{a,\cdot}
      \|_{V^2(\sbe^{\alpha, 2}_{2})}^q}
      (1 + \sup_{s} | \omega^{\eps,M_1(\eps)}_{s} |)^{-d}\Big]\les\\
    \les C_\nu^{(1-\delta)\frac q2},
  \end{multlined}
\]
by Fatou's lemma.
\end{proof}
\begin{lemma}\label{l:nothard}
  Let $\omega^1,\omega^2:[a,b]\to\R^d$ be two continuous paths,
  and set $F_i(t,r,y)=F_{\mu_{a,t}^{\omega^i}}(r,y)$. Then for every
  $y\in\R^d$ and $1\leq p<\infty$,
  \[
    \bigl\|\sup_{t\in[a,b]}|F_1(t,\cdot,y)-F_2(t,\cdot,y)|\,\bigr\|_{L^p(\R^+)}
      \leq (b-a)\|\omega^1-\omega^2\|_\infty^{\frac1p}
  \] 
  In particular, the map $\omega\mapsto\mu^\omega$ is
  lower semi-continuous from $C([a,b])$ to $\sbe^{\alpha,p}_q$,
  for all $\alpha>0$, $1\leq p,q<\infty$.
  
  Finally, the map that, given $\omega\in C([a,b];\R^d)$,
  returns $t\mapsto\mu_{a,t}^\omega$
  is lower semi-continuous with values in $V^e(\sbe^{\alpha,p}_q)$,
  for all $e>0$.
\end{lemma}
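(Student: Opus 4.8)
The plan is to prove the quantitative bound first and then obtain the two lower semicontinuity statements from it by soft arguments. For the bound, with $y\in\R^d$ fixed, I would start from $F_{\mu_{a,t}^{\omega^i}}(r,y)=\int_a^t\1_{\{|\omega^i_s-y|\le r\}}\,ds$, which shows that $|F_1(t,r,y)-F_2(t,r,y)|$ is bounded, uniformly over $t\in[a,b]$, by $G(r):=\int_a^b\bigl|\1_{\{|\omega^1_s-y|\le r\}}-\1_{\{|\omega^2_s-y|\le r\}}\bigr|\,ds$; hence $\sup_t|F_1(t,\cdot,y)-F_2(t,\cdot,y)|\le G$. Minkowski's integral inequality then moves the $L^p_r$-norm inside the $ds$-integral, and what remains is the elementary identity $\|\1_{\{\rho_1\le\cdot\}}-\1_{\{\rho_2\le\cdot\}}\|_{L^p(\R^+,dr)}=|\rho_1-\rho_2|^{1/p}$ (valid since $p<\infty$): taking $\rho_i=|\omega^i_s-y|$, using $|\rho_1-\rho_2|\le|\omega^1_s-\omega^2_s|\le\|\omega^1-\omega^2\|_\infty$, and integrating over $[a,b]$ gives the stated estimate.

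For lower semicontinuity into $\SBE$, let $\omega^n\to\omega$ uniformly and set $\mu^n=\mu^{\omega^n}_{a,b}$, $\mu=\mu^\omega_{a,b}$. By the estimate above with $t=b$, $F_{\mu^n}(\cdot,y)\to F_\mu(\cdot,y)$ in $L^p(\R^+,dr)$ for every $y$. Fix $y$, and put $a_n=\|r^{-\alpha-d}(\Delta_k)_rF_{\mu^n}(r,y)\|_{L^p_r}$, $L=\|r^{-\alpha-d}(\Delta_k)_rF_{\mu}(r,y)\|_{L^p_r}$. I would show $L\le\liminf_n a_n$ by the subsequence characterisation of $\liminf$: any subsequence of $(a_n)$ admits a further subsequence along which $F_{\mu^n}(\cdot,y)\to F_\mu(\cdot,y)$ a.e.\ in $r$, hence, $\Delta_k$ being a finite linear combination of the dilations $r\mapsto r/2^j$ (which preserve Lebesgue-null sets), also $(\Delta_k)_rF_{\mu^n}(r,y)\to(\Delta_k)_rF_\mu(r,y)$ a.e., so Fatou's lemma against the measure $r^{-(\alpha+d)p}\,\tfrac{dr}{r}$ forces $L\le\liminf$ along that sub-subsequence. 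Therefore $L\le\liminf_n a_n$ for every $y$, and a last application of Fatou in the $L^q_y$-integral (here $q<\infty$ is used) yields $\|\mu\|_{\SBE}\le\liminf_n\|\mu^n\|_{\SBE}$. Since the argument uses nothing about the interval $[a,b]$ specifically, $\omega\mapsto\|\mu^\omega_{s,t}\|_{\SBE}$ is lower semicontinuous for every $[s,t]\subseteq[a,b]$.

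For the $V^e$-valued statement, note that $\mu^\omega_{a,t_h}-\mu^\omega_{a,t_{h-1}}=\mu^\omega_{t_{h-1},t_h}$, so for each fixed partition $a=t_0<\dots<t_n=b$ the map $\omega\mapsto\sum_{h=1}^n\|\mu^\omega_{t_{h-1},t_h}\|_{\SBE}^{e}$ is a finite sum of lower semicontinuous functions of $\omega$ (by the previous paragraph, composed with the continuous nondecreasing map $x\mapsto x^e$ on $[0,+\infty]$), hence lower semicontinuous; taking the supremum over all partitions realises $\omega\mapsto\|t\mapsto\mu^\omega_{a,t}\|_{V^e(\SBE)}$ as a supremum of lower semicontinuous functions, which is again lower semicontinuous.

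The one delicate point is the second step: the first step yields $L^p$-convergence only with respect to Lebesgue measure, whereas the $\SBE$ norm involves the weighted measure $r^{-(\alpha+d)p}\,\tfrac{dr}{r}$, which degenerates at $r=0$ and $r=+\infty$. The sub-subsequence plus Fatou device bridges this gap without requiring any quantitative control of $(\Delta_k)_rF_\mu$ near the endpoints; everything else is routine.
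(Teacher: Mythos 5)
Your proof is correct and follows essentially the same route as the paper: the quantitative bound is the same computation (you use Minkowski's integral inequality together with $\|\1_{\{\rho_1\le\cdot\}}-\1_{\{\rho_2\le\cdot\}}\|_{L^p(dr)}^p=|\rho_1-\rho_2|$ where the paper uses H\"older and Fubini, yielding the identical constant), and the two semicontinuity claims, which the paper dispatches in one sentence, are handled by the natural subsequence-plus-Fatou argument and the fact that a supremum of lower semicontinuous functions is lower semicontinuous. Your filled-in details (a.e.\ convergence along a sub-subsequence, stability of null sets under the finitely many dilations composing $\Delta_k$, Fatou in $r$ and then in $y$ using $p,q<\infty$) are all sound.
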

\begin{proof}
  We have
  \[
    \begin{aligned}
    |F_1(t,r,y) - F_2(t,r,y)|
      &\leq \int_a^b |\1_{[0,r]}(|y-\omega_s^1|) - \1_{[0,r]}(|y-\omega_s^2|)|\,ds\\
      &= \int_a^b (\1_{\{|y-\omega^1_s|\leq r<|y-\omega^2_s|\}}
        + \1_{\{|y-\omega^2_s|\leq r<|y-\omega^1_s|\}})\,ds,
    \end{aligned}
  \]
  thus, by the H\"older inequality,
  \[
    \begin{aligned}
      \lefteqn{\int_0^\infty \|F_1(\cdot,r,y)-F_2(\cdot,r,y)\|_\infty^p\,dr\leq}\qquad&\\
        &\leq\int_0^\infty\Big|\int_a^b (\1_{\{|y-\omega^1_s|\leq r<|y-\omega^2_s|\}}
          + \1_{\{|y-\omega^2_s|\leq r<|y-\omega^1_s|\}})\,ds\Big|^p\,dr\\
        &\leq (b-a)^{p-1}\int_a^b\int_0^\infty(\1_{\{|y-\omega^1_s|\leq r<|y-\omega^2_s|\}}
          + \1_{\{|y-\omega^2_s|\leq r<|y-\omega^1_s|\}})\,dr\,ds\\
        &\leq (b-a)^{p-1}\int_a^b \big|\,|y-\omega_s^1| - |y-\omega_s^2|\,\big|\,ds\\
        &\leq (b-a)^p\|\omega^1-\omega^2\|_\infty.
    \end{aligned}
  \]
  This proves the first claim. The second and third claims follow
  by semi-continuity of the integral, and the fact that the estimate
  proved above is uniform in time.
\end{proof}
\begin{proof}[Proof of \autoref{c:holder}]\phantomsection\label{pf:holder}
  Under
  the assumptions of
  \autoref{prop:spSBE},
  \eqref{spSBE6} holds.
  Since $E(M)\subset F(2M)$, \eqref{spSBE6} reads, for every $M>0$
  as
  \[
    \E[\|\mu_{a,t}-\mu_{a,s}\|_{\sbe^{\alpha,2}_2}^2\1_{E(M)}]
      \leq C(M)|t-s|^{1+\delta},
  \]
  with $C(M)$ a number depending on $M$, and
  $\delta<\delta_0:=\frac{\beta-2\alpha}{\beta+2d}$,
  where $\delta_0$ is defined in \eqref{e:deltazero}.
  From here,
  it is not difficult to prove that $\mu_{a,\cdot}$
  is {a.\,s.} $\gamma$-H\"older continuous,
  for every $\gamma<\frac12\delta_0$ with
  values in $\sbe^{\alpha,2}_2$ on
  dyadic times. For instance, if
  \[
    U_\gamma
      = \sum_{n=0}^\infty\sum_{\substack{{s,t\text{ dyadic}}\\{0<|t-s|\leq 2^{-un}}}}
        \frac{\|\mu_{a,t}-\mu_{a,s}\|_{\sbe^{\alpha,2}_2}^2}{|t-s|^\gamma},
  \]
  then $\E[U_\gamma\1_{E(M)}]<\infty$, and the events $(E(M))_{M\geq1}$
  fill in the probability space. Then any standard proof of
  Kolmogorov's continuity theorem (for instance, see \cite{Dur2010})
  yields H\"older continuity on dyadic times with probability $1$.
  Finally, by \autoref{l:nothard},
  the map $t\mapsto\mu_{a,t}$ is lower semi-continuous in
  $\sbe^{\alpha,2}_2$ and thus, by density of dyadic times,
  it is H\"older continuous on all times. 
  In order to extend the result to the assumptions of \autoref{thm:spSBE}, 
  notice that from \eqref{e:mulambdanorm} and \autoref{prop:spSBE}
  applied to the path $\omega^\lambda$,
  we know that \eqref{spSBE6} holds for the occupation
  measure $\mu_{a,\cdot}^\lambda$ defined in \eqref{e:mulambda},
  if \eqref{dnrhodecay} holds. On the other hand,
  \eqref{dnrhodecay} holds for the density $\rho^{\epsilon,M_1}$
  of \eqref{e:wepsilonMuno}, thus \eqref{spSBE6} holds
  for $\mu_{a,\cdot}^{\epsilon,M_1}$. By \autoref{l:nothard},
  \eqref{spSBE6} holds for $\mu_{a,\cdot}$, and therefore
  the same proof given above yields H\"older regularity
  under the assumptions of \autoref{thm:spSBE}.
\end{proof}
\section{Space-time regularity trade-off}\label{s:catgub}

In view of \autoref{r:preextend} and the fact that there is
a gap between the regularity in variation given by \autoref{thm:spSBE}
and the lower H\"older regularity of \autoref{c:holder}, we discuss
how to fill in this gap, under a stronger assumption over regularity of joint laws
of increments.
\begin{theorem} \label{thm:spSBE-m}
  Given $\beta>0$, with $\beta \not \in \N$,
  and an integer $m\geq1$, let $\eta>0$
  be such that $\frac1{2m}\leq\eta\leq 1$.
  Let $\omega:[a,b]\to\R^d$ be a stochastic
  process with continuous paths, and denote by
  $\nu_{s_{1:2m}}$ the joint density of
  $(\omega_{s_2}-\omega_{s_1},\dots,\omega_{s_{2m}}-\omega_{s_{2m-1}})$,
  with $s_1,\dots,s_{2m}\in[a,b]$.

  Assume that there is $C_\nu>0$ such that for every
  interval $J\subseteq [a,b]$ and for every family
  $\beta_1,\dots, \beta_{2m-1}\in\R$ satisfying
  \begin{equation}\label{e:betas}
    \beta_j \in \{ 0, \beta , 2\beta \},
      \qquad
    \beta_1 + \dots + \beta_{2m-1} = 2m\beta,
  \end{equation}
  we have that
  \begin{equation} \label{eqn:Cnu-m}
    \int_{s_1 < s_2 < \dots < s_{2m},\, s_j \in J}\|(1-\Delta)_{x_1}^{\frac {\beta_1}2} \dotsb (1-\Delta)_{x_{2m-1}}^{\frac {\beta_{2m-1}}2} \nu_{s_{1:2m}}\|_{L^\infty}\,ds_{1:2m}
      \leq C_{\nu}|J|^{2m\eta}.
  \end{equation} 
  Then for every $\alpha<\beta$, with
  $\alpha\not\in\N$, the occupation measure satisfies
  \[
    \mu_{a,\cdot}^\omega
      \in V^{1/\eta}(\sbe^{\alpha,2m}_{2m}).
  \]
  More precisely, there is $\delta_0=\delta_0(\alpha,\beta)\in(0,1)$
  such that for every $\delta<\delta_0$,
  and for every $1\leq q<2m$,
  \[
    \E \Bigl[{\| \mu_{a,\cdot}^\omega \|_{V^{1/\eta}(\sbe^{\alpha,2m}_{2m})}^q}
        (1 + \sup_s|\omega_s|)^{-d}\Bigr]
      \les 1 + C_\nu^{(1-\delta)\frac{q}{2m}}.
  \]
\end{theorem}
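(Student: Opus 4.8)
The plan is to follow the scheme of the proof of \autoref{thm:spSBE}. First I would establish an auxiliary proposition, parallel to \autoref{prop:spSBE}, producing the per-increment moment bound
\[
  \E\big[\1_{E(M)}\1_{F(M_0)}\|\mu_{s,t}\|_{\sbe^{\alpha,2m}_{2m}}^{2m}\big]
    \les (C_\nu M^d)^{1-\delta}M_0^{d\delta}\,|t-s|^{2m\eta(1-\delta)+2m\delta}+(1+M_0^d)\,|t-s|^{2m}
\]
under the extra hypothesis that the joint density of $(\omega_{s_1},\dots,\omega_{s_{2m}})$ is compactly supported and satisfies a bound of the shape \eqref{eqn:Cnu-m}, read in the consecutive increment coordinates $(\omega_{s_1};\,\omega_{s_2}-\omega_{s_1},\dots,\omega_{s_{2m}}-\omega_{s_{2m-1}})$ and uniformly in the $\omega_{s_1}$-marginal. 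Feeding this into \autoref{p:tolomeov} with $p=2m$ and variation exponent $1/\eta$ — admissible since $\tfrac{2m}{1+a}<\tfrac1\eta\le2m$ for $0<\delta$ small, where $1+a:=2m\eta(1-\delta)+2m\delta\in(1,2m]$ — gives the conclusion on $E(M)\cap F(M_0)$, and the general hypothesis is then recovered exactly as in the proof of \autoref{thm:spSBE}. As there, one may assume $\beta\not\in\N$ and shrink $\beta$ so that $2m\lceil\alpha\rceil\ge\beta$. The new ingredient is the $2m$-th moment estimate, which I now sketch.

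Fix $s,t$, $|y|\le M+1$, $r\le1$ and $k=\lceil\alpha+d-1\rceil$. Since $F_{\mu_{s,t}}=\int_s^tF_{\delta_{\omega_\tau}}\,d\tau$, \autoref{lem:deltakmove} gives $(\Delta_k)_rF_{\mu_{s,t}}(\cdot,y)(r)=\int_s^t(\Delta_k^\ast\1_{[0,r]})(|y-\omega_\tau|)\,d\tau$. Expanding the $2m$-th power, symmetrising over the simplex $\{s<\tau_1<\dots<\tau_{2m}<t\}$ (a factor $(2m)!$) and integrating against the joint density — after the Gaussian shift, up to errors handled as in \eqref{spSBE8} — one is led to estimate $\int_{(\R^d)^{2m}}\prod_{i=1}^{2m}(\Delta_k^\ast\1_{[0,r]})(|u_i|)\,\nu_{\tau_{1:2m}}(u_1-u_2,\dots,u_{2m-1}-u_{2m})\,du_{1:2m}$, with $\nu_{\tau_{1:2m}}$ the joint increment density of the statement. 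Passing to polar coordinates $u_i=\rho_i\theta_i$, inserting a cutoff $\chi$ with $\chi\equiv1$ on $[0,2]$, $\supp\chi\subseteq[0,3]$, $\|\chi\|_{C^\beta}\les1$ (legitimate since $\Delta_k^\ast\1_{[0,r]}$ is supported in a bounded interval for $r\le1$), and setting
\[
  g_{\tau_{1:2m}}(\rho_{1:2m}):=\int_{(\Sb^{d-1})^{2m}}\Big(\prod_i\chi(\rho_i)\Big)\,\nu_{\tau_{1:2m}}\big(\rho_1\theta_1-\rho_2\theta_2,\dots,\rho_{2m-1}\theta_{2m-1}-\rho_{2m}\theta_{2m}\big)\,d\sigma(\theta_1)\dotsm d\sigma(\theta_{2m}),
\]
one moves the adjoints ($\Delta_k^\ast$ is adjoint to $\Delta_k$ on $L^2(\R^+,\tfrac{d\rho}\rho)$) onto the remaining factor, turning the integral into $\int_{(\R^+)^{2m}}\big(\prod_i\1_{[0,r]}(\rho_i)\big)\,(\Delta_k)_{\rho_1}\dotsm(\Delta_k)_{\rho_{2m}}\big(\prod_i\rho_i^d\cdot g_{\tau_{1:2m}}\big)(\rho_{1:2m})\,\prod_i\tfrac{d\rho_i}{\rho_i}$; then \eqref{deltakleib} applied in each variable (recall $k\ge d$) peels off $\prod_i\rho_i^d$, leaving a finite sum of terms $\prod_i\rho_i^d\cdot(\Delta_{k-d})_{\rho_1}\dotsm(\Delta_{k-d})_{\rho_{2m}}g_{\tau_{1:2m}}(\rho_1/2^{h_1},\dots,\rho_{2m}/2^{h_{2m}})$.

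The key point — and the step I expect to be the main obstacle — is the pointwise bound, for $0\le\rho_i\le r\le1$,
\[
  \big|(\Delta_{k-d})_{\rho_1}\dotsm(\Delta_{k-d})_{\rho_{2m}}g_{\tau_{1:2m}}(\rho_{1:2m})\big|\les r^{2m\beta'}\max\big\|(1-\Delta)_{x_1}^{\beta_1/2}\dotsm(1-\Delta)_{x_{2m-1}}^{\beta_{2m-1}/2}\nu_{\tau_{1:2m}}\big\|_{L^\infty},
\]
where $\beta'\in(\alpha,\min(\beta,\lceil\alpha\rceil))$ — nonempty since $\alpha\not\in\N$ and $\alpha<\beta$ — is chosen with $\lfloor\beta'\rfloor=k-d$ and $\beta'\not\in\N$, and the maximum runs over the finitely many families $\beta_j\in\{0,\beta,2\beta\}$, $\sum_j\beta_j=2m\beta$ of \eqref{e:betas}. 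Here the $2m$-variable form of \autoref{cor:multideltak} (with its ``$\beta$'' equal to $\beta'$) reduces the claim to controlling $g_{\tau_{1:2m}}$ in a space of regularity $2m\beta'$, and differentiating under the angular integral $\partial_{\rho_i}$ acts on $\nu_{\tau_{1:2m}}$ only through the $(i-1)$-th and $i$-th increment variables (the vector $\rho_i\theta_i$ enters the $(i-1)$-th increment with sign $-$, the $i$-th with sign $+$). Expanding the $2m$-fold difference operator, each of the $2m$ radial directions thus spends at most $\beta'<\beta$ worth of regularity of $\nu$, and since each increment variable $x_j$ is reached only by $\rho_j$ and $\rho_{j+1}$, one needs regularity of order at most $2\beta$ in each $x_j$ with total $2m\beta'\le2m\beta$ distributed over $x_1,\dots,x_{2m-1}$; the triple $\{0,\beta,2\beta\}$ with sum $2m\beta$ interpolates all intermediate anisotropic norms that occur, so \eqref{eqn:Cnu-m} suffices. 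Making this fractional chain/Leibniz rule for the composition with the angle-dependent linear increment map precise, together with the angular integration, is the technical crux; the remainder is multi-index bookkeeping parallel to the case $m=1$.

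Granting the bound, integrating in $\rho_i\le r$ yields $r^{2md}$ from $\prod_i\rho_i^d$ and $r^{2m\beta'}$ from the estimate; integrating in $y$ over $\{|y|\le M+r\}$ (outside which $\Delta_kF_{\mu_{s,t}}(r,\cdot)$ vanishes on $E(M)$) gives $\les M^d$; and integrating in $\tau_{1:2m}$ over the simplex, via \eqref{eqn:Cnu-m} with $J=[s,t]$, gives $\les C_\nu|t-s|^{2m\eta}$, so $\E[\1_{E(M)}\int|\Delta_kF_{\mu_{s,t}}(r,y)|^{2m}dy]\les C_\nu M^dr^{2m(\beta'+d)}|t-s|^{2m\eta}$ for $r\le1$; the trivial bound $|\Delta_kF_{\mu_{s,t}}|\les\|\mu_{s,t}\|_{TV}=|t-s|$ with the $F(M_0)$-localisation of the $y$-support (as in \autoref{prop:spSBE}) gives $\les(M_0^d+r^d)|t-s|^{2m}$. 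Interpolating with weight $\delta$, integrating against $r^{-(\alpha+d)2m}\tfrac{dr}r$ over $r\le1$ — convergent when $(\beta'+d)(1-\delta)>\alpha+d$, i.e.\ for $\delta<\delta_0(\alpha,\beta):=1-\tfrac{\alpha+d}{\beta'+d}$ with $\beta'$ close to $\min(\beta,\lceil\alpha\rceil)$ — and adding the $r>1$ contribution $\les(1+M_0^d)|t-s|^{2m}$, one gets the per-increment bound of the first paragraph, hence $\E[\1_{E(M)}\1_{F(M_0)}\|\mu_{a,\cdot}\|_{V^{1/\eta}(\sbe^{\alpha,2m}_{2m})}^{2m}]\les(C_\nu M^d)^{1-\delta}M_0^{d\delta}+1+M_0^d$ by \autoref{p:tolomeov}. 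To pass to the general hypothesis I would argue verbatim as in \autoref{thm:spSBE}: under a decay hypothesis in the spirit of \eqref{dnrhodecay} the Gaussian shift $\omega^\lambda:=\omega+\lambda X$ turns the joint density, in increment coordinates, into a Gaussian envelope in the $\omega^\lambda_{s_1}$-variable times the increment density, with effective constant $\sim\lambda^{-d}C_\nu$ and errors absorbed as in \eqref{spSBE8}; decomposing over $\{|X|\sim N\}$, using H\"older with exponents $\tfrac{2m}q$ and $\tfrac{2m}{2m-q}$, summing the Gaussian tails and letting $\lambda\to\infty$ gives $\E[\|\mu_{a,\cdot}\|_{V^{1/\eta}(\sbe^{\alpha,2m}_{2m})}^q\1_{E(M)}]\les1+C_\nu^{(1-\delta)q/2m}M^{d\delta q/2m}+M^{dq/2m}$ for every $q<2m$, and summing this against $M^{-d}$ over dyadic $M$ (convergent since $q<2m$, $\delta<1$) yields the weighted moment bound; the decay hypothesis is finally removed by replacing $\omega$ with $H_{M_1}(\omega)+\eps Y$ as in \eqref{e:HM1}, \eqref{e:wepsilonMuno} with $Y$ an auxiliary process whose density satisfies \eqref{dnrhodecay}, choosing $M_1=M_1(\eps)$ large, and letting $\eps\to0$ using the lower semicontinuity of $\omega\mapsto\|\mu^\omega_{a,\cdot}\|_{V^{1/\eta}(\sbe^{\alpha,2m}_{2m})}$ from \autoref{l:nothard} and Fatou's lemma.
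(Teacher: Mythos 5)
Your proposal is correct and follows essentially the same route as the paper: Gaussian shift $\omega^\lambda=\omega+\lambda X$, symmetrised $2m$-th moment over the simplex, polar coordinates with \autoref{lem:deltakmove}, \eqref{deltakleib} and \autoref{cor:multideltak}, interpolation with the trivial bound and \autoref{p:tolomeov}, then the truncation $H_{M_1}(\omega)+\eps Y$ and lower semicontinuity via \autoref{l:nothard} and Fatou. The one step you flag as the "technical crux" — distributing the radial fractional derivatives over the two consecutive increment slots touched by each $\rho_i$ — is exactly where the paper performs the same combinatorics, only one step earlier and in position coordinates: it uses the $L^\infty$-boundedness of $|\partial_\theta|^{\beta-\eps}(1-\Delta)^{-\beta/2}$ together with the operator bound behind \eqref{eqn:Cnu-m2} to convert \eqref{eqn:Cnu-m} into the isotropic hypothesis \eqref{eqn:Crho-m} for the shifted joint density, so that in \autoref{prop:spSBE-m} each radial derivative acts on a single variable and no fractional Leibniz rule for the increment map is needed.
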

These results immediately yield H\"older regularity
in time of the occupation measure.
\begin{corollary}
  Given a path $\omega:[a,b]\to\R^d$, under the same assumptions
  of \autoref{thm:spSBE-m}, the occupation measure $\mu_{a,}$
  of $\omega$ is $\gamma$-H\"older
  continuous with values in $\sbe^{\alpha,{2m}}_{2m}$, for
  every $\gamma<\eta+(1-\eta)\delta_0-\frac1{2m}$.
\end{corollary}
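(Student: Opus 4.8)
The plan is to follow the proof of \autoref{c:holder} essentially verbatim, with \autoref{thm:spSBE-m} in place of \autoref{thm:spSBE} and \autoref{prop:spSBE}. The only input required is the pointwise-in-time moment bound that the proof of \autoref{thm:spSBE-m} produces on the way to applying \autoref{p:tolomeov}: there is $\delta_0=\delta_0(\alpha,\beta)\in(0,1)$ such that for every $\delta\in(0,\delta_0)$ and every $M>0$, writing $E(M)=\{\sup_{a\le s\le b}|\omega_s|\le M\}$,
\[
  \E\bigl[\,\|\mu_{s,t}\|_{\sbe^{\alpha,2m}_{2m}}^{2m}\,\1_{E(M)}\bigr]
    \les C(M,\delta,C_\nu)\,|t-s|^{2m\eta+2m(1-\eta)\delta},
    \qquad a\le s\le t\le b,
\]
with $C(M,\delta,C_\nu)$ depending polynomially on $M$. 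As in \eqref{spSBE5}, the exponent $2m\eta+2m(1-\eta)\delta$ is obtained by interpolating, with weights $1-\delta$ and $\delta$, between a ``local'' bound carrying the time factor $|t-s|^{2m\eta}$ --- produced by \eqref{eqn:Cnu-m} applied on the interval $J=[s,t]$, together with the $r$-decay coming from the combinatorial splitting \eqref{e:betas} of $2m\beta$ derivatives --- and the trivial bound $\|\mu_{s,t}\|_{TV}^{2m}=|t-s|^{2m}$; the condition $\delta<\delta_0$ is exactly what keeps the resulting $r$-decay integrable against $r^{-\alpha-d}$ in $L^{2m}(\tfrac{dr}{r})$.

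Granting this, Kolmogorov's continuity theorem, applied with $2m$-th moments and time exponent $1+a=2m\eta+2m(1-\eta)\delta$ (note $a>0$ because $\eta\ge\frac1{2m}$), run as the summation argument in the proof of \autoref{c:holder} --- bounding $\E[U_\gamma\1_{E(M)}]<\infty$ on dyadic times and exhausting the probability space by the events $E(M)$, $M\ge1$ --- gives almost surely $\gamma$-H\"older continuity of $t\mapsto\mu_{a,t}$ with values in $\sbe^{\alpha,2m}_{2m}$, on dyadic times, for every $\gamma<\frac{a}{2m}=\eta-\frac1{2m}+(1-\eta)\delta$. Since $t\mapsto\mu_{a,t}$ is lower semi-continuous with values in $\sbe^{\alpha,2m}_{2m}$, and indeed in $V^e(\sbe^{\alpha,2m}_{2m})$, by \autoref{l:nothard}, this H\"older bound extends from dyadic times to all of $[a,b]$; letting $\delta\uparrow\delta_0$ yields the claimed range $\gamma<\eta+(1-\eta)\delta_0-\frac1{2m}$. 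If the joint densities $\nu_{s_{1:2m}}$ are not regular enough to justify the pointwise bound directly, one first regularizes and truncates the path, replacing $\omega$ by $\omega^{\eps,M_1}$ as in \eqref{e:wepsilonMuno} in the proof of \autoref{thm:spSBE}, and passes to the limit by Fatou together with the lower semi-continuity in \autoref{l:nothard}.

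There is no essentially new difficulty here: the whole argument is a line-by-line adaptation of \autoref{c:holder}, and beyond Kolmogorov's theorem the only genuinely needed ingredient is the lower semi-continuity of \autoref{l:nothard}. The one point deserving care is the bookkeeping behind the pointwise estimate, namely that the single parameter $\delta$ simultaneously governs the time-regularity gain $2m(1-\eta)\delta$ and the spatial integrability of $r^{-\alpha-d}\,\Delta_kF_{\mu_{s,t}}$, so that $\delta$ can be pushed up to the same threshold $\delta_0(\alpha,\beta)$ appearing in \autoref{thm:spSBE-m}; this is where the constraint \eqref{e:betas}, distributing precisely $2m\beta$ derivatives among the $2m-1$ increments, enters, and where one must keep track of the polynomial dependence of the constants on $M$ so that the exhaustion by $E(M)$ goes through.
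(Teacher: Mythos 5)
Your proposal is correct and follows exactly the paper's route: the paper's proof is literally ``the same argument as \autoref{c:holder}, using \eqref{e:forholdernu}'', i.e.\ the pointwise moment bound $\E[\|\mu_{s,t}\|_{\sbe^{\alpha,2m}_{2m}}^q\1_{E(M)}]\les |t-s|^{q(\delta+\eta(1-\delta))}$ fed into the Kolmogorov-type dyadic summation, followed by the lower semi-continuity of \autoref{l:nothard} and $\delta\uparrow\delta_0$. The only cosmetic difference is that under the general hypothesis \eqref{eqn:Cnu-m} the paper's Gaussian-shift argument only delivers moments of order $q<2m$ (not $q=2m$ as you state), but taking the supremum over $q<2m$ yields the same range of $\gamma$, so nothing is lost.
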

\begin{proof}
  The result follows by the same argument
  of \autoref{c:holder}, using \eqref{e:forholdernu}.
\end{proof}
We finally consider the case of Gaussian
processes.
\begin{corollary} \label{cor:fbHSBE-m}
  Let $0 < H < \frac {1}{d}$, and let $\omega_t : [a,b] \to\R^d$
  be a Gaussian process with continuous paths. Suppose moreover
  that $\omega$ is \emph{locally non-deterministic} with parameter
  $H$, that is, for every integer $n \ge 2$, there exist constants
  $c_n$ and $\delta_n$ such that for every sequence of times
  $a \le s_1 < s_2 < \dotsb < s_n \le b$, and every vector
  $(x_1, \dotsc, x_{n-1}) \in (\R^d)^{n-1}$,
  \begin{equation}\label{e:lnd}
    \Var\Big(\sum_{k=1}^{n-1} x_k \cdot (\omega_{s_ks_{k+1}}) \Big)
      \ge 2c_n \sum_{k=1}^{n-1} \|x_k\|^2 |s_{k+1}-s_k|^{2H}
  \end{equation}
  for every $s_1,\dotsc, s_n$ with $|s_n - s_1| \le \delta_n$.
  Let $\alpha \ge 0$, $1 \le p \le q \le \infty$ be such that 
  \begin{equation}\label{e:lndparams}
    \frac 1 p + \Big(\alpha-\frac dq\Big) H 
      < 1 - dH,
        \qquad
    \alpha 
      < \Bigl(\frac 1H - d\Bigr)\min\Bigl( \frac {1}{2},\frac {1}{q'}\Bigr),
  \end{equation}
  where $q'$ is the H\"older conjugate of $q$, {i.\,e.}
  $\frac 1{q'} = 1 - \frac 1 q$. Then we have that 
  \[
    \mu_{a,\cdot}^\omega \in C^\var{p}([a,b];B^{\alpha}_{q,\infty}).
  \]
\end{corollary}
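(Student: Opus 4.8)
The plan is to deduce \autoref{cor:fbHSBE-m} from \autoref{thm:spSBE-m}. I would fix a regularity parameter $\beta\in(\alpha,\infty)\setminus\N$, to be sent to $\alpha$ at the end, and an even integrability index $2m$ to be chosen comparable to $q$; setting $\eta:=1-Hd+\tfrac{Hd}{2m}-H\beta$, I would verify the hypothesis \eqref{eqn:Cnu-m} with exponent $2m\eta$, so that \autoref{thm:spSBE-m} yields $\mu^\omega_{a,\cdot}\in V^{1/\eta}(\sbe^{\alpha,2m}_{2m})$, and then convert this into membership in $C^\var{p}([a,b];B^{\alpha}_{q,\infty})$ while reading off \eqref{e:lndparams} from the admissibility of the chosen parameters.

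The core of the argument is the verification of \eqref{eqn:Cnu-m}. As $\omega$ is Gaussian, $\nu_{s_{1:2m}}$ is the density of a Gaussian vector on $(\R^d)^{2m-1}$ with covariance $\Sigma=\Sigma(s_{1:2m})$; after splitting $[a,b]$ into finitely many intervals of length $<\min(1,\delta_{2m})$, the local non-determinism assumption \eqref{e:lnd} with $n=2m$ gives $\Sigma\succeq 2c_{2m}\,\mathrm{diag}\bigl(|s_2-s_1|^{2H}\mathrm{Id},\dots,|s_{2m}-s_{2m-1}|^{2H}\mathrm{Id}\bigr)$. On the Fourier side $|\widehat{\nu_{s_{1:2m}}}(\xi)|=\exp(-\tfrac12\xi^{\mathsf T}\Sigma\xi)$, so the Bessel potentials in \eqref{eqn:Cnu-m} obey
\begin{align*}
  \bigl\|(1-\Delta)_{x_1}^{\frac{\beta_1}{2}}\!\cdots(1-\Delta)_{x_{2m-1}}^{\frac{\beta_{2m-1}}{2}}\nu_{s_{1:2m}}\bigr\|_{L^\infty}
    &\les \prod_{j=1}^{2m-1}\int_{\R^d}(1+|\xi_j|^2)^{\frac{\beta_j}{2}}\,e^{-c_{2m}|s_{j+1}-s_j|^{2H}|\xi_j|^2}\,d\xi_j\\
    &\les \prod_{j=1}^{2m-1}|s_{j+1}-s_j|^{-H(d+\beta_j)},
\end{align*}
using the lower bound on $\Sigma$ to decouple the Gaussian and rescaling each factor. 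Integrating over $\{s_1<\dots<s_{2m}\}\cap J^{2m}$ with $u_j:=s_{j+1}-s_j$ and using $\beta_1+\dots+\beta_{2m-1}=2m\beta$,
\[
  \int_{\substack{s_1<\dots<s_{2m}\\ s_j\in J}}\ \prod_{j=1}^{2m-1}|s_{j+1}-s_j|^{-H(d+\beta_j)}\,ds_{1:2m}
    \les |J|^{\,2m-H(2m-1)d-2mH\beta}=|J|^{2m\eta},
\]
the implied (Dirichlet-type) constant being finite exactly because $H(d+2\beta)<1$; since this is the only place the individual values $\beta_j\in\{0,\beta,2\beta\}$ enter, the bound is uniform over the admissible families of \eqref{e:betas}, and \eqref{eqn:Cnu-m} follows.

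It then remains to turn $\mu^\omega_{a,\cdot}\in V^{1/\eta}(\sbe^{\alpha,2m}_{2m})$ into the stated conclusion. All the measures $\mu^\omega_{a,t}$, $t\in[a,b]$, sit in the common ball $B(0,\sup_s|\omega_s|)$, whose size is harmless because of the $(1+\sup_s|\omega_s|)^{-d}$ weight in \autoref{thm:spSBE-m}; by \autoref{besovregularity}, $\sbe^{\alpha,2m}_{2m}\hookrightarrow B^{\alpha}_{2m,\infty}$, and for compactly supported measures $B^{\alpha}_{p_1,\infty}\hookrightarrow B^{\alpha}_{p_2,\infty}$ whenever $p_2\le p_1$. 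When $q\ge2$ I would take $2m$ equal to (or, for $q$ not an even integer, the two even integers straddling) $q$ and interpolate in the integrability index, with the $V^{1/\eta}$-exponents interpolated accordingly; this gives $\mu^\omega_{a,\cdot}\in V^{1/\eta}(B^{\alpha}_{q,\infty})$ with $\eta=1-Hd+\tfrac{Hd}q-H\beta$, and, as $\beta\downarrow\alpha$, the requirements $\tfrac1\eta\le p$ and $[\,H(d+2\beta)<1$ with $\beta>\alpha\,]$ become exactly the two inequalities of \eqref{e:lndparams} (the second with $\min(\tfrac12,\tfrac1{q'})=\tfrac12$). When $q<2$ one is forced to $2m=2$; to still reach \eqref{e:lndparams} I would combine $\mu^\omega_{a,\cdot}\in V^{1/\eta_0}(B^{\alpha'}_{2,\infty})$, $\eta_0=1-\tfrac{Hd}2-H\beta$, $\alpha'<\beta$, with the trivial $\|\mu^\omega_{s,t}\|_{B^{0}_{1,\infty}}\les\|\mu^\omega_{s,t}\|_{TV}=|t-s|$, and interpolate the pair $(B^0_{1,\infty},B^{\alpha'}_{2,\infty})$: this lowers the integrability index to any $q<2$ at the cost of lowering the smoothness to $\tfrac2{q'}\alpha'$, so choosing $\alpha'=\tfrac{q'}2\alpha$ and $\beta\downarrow\alpha'$ yields $\alpha<(1-\tfrac1q)(\tfrac1H-d)$ together with $\tfrac1p+(\alpha-\tfrac dq)H<1-dH$. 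In either regime the $V^p$ bound upgrades to a $C^\var{p}$ bound by the H\"older continuity in time of $t\mapsto\mu^\omega_{a,t}$ (as in \autoref{c:holder} and the corollary following \autoref{thm:spSBE-m}).

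The step I expect to be the main obstacle is this last one, in particular the interpolation arguments — passing from the even indices $2m$ supplied by \autoref{thm:spSBE-m} to an arbitrary $q$, and for $q<2$ the interpolation against the total-variation bound — carried out so as to keep track of the endpoint estimates and of the $(1+\sup_s|\omega_s|)^{-d}$ weight. A concrete way to avoid abstract interpolation of variation spaces is to run the Kolmogorov-type criteria \autoref{l:magic} and \autoref{p:tolomeov} directly on the pointwise inequality $\|\mu^\omega_{s,t}\|_{B^{\alpha}_{q,\infty}}\les\|\mu^\omega_{s,t}\|_{\sbe^{\alpha',2m}_{2m}}^{\theta}\,|t-s|^{1-\theta}$ together with the per-increment $2m$-th moment bound underlying \autoref{thm:spSBE-m}; by contrast the Gaussian Fourier estimate and the simplex integration are routine once the lower bound on $\Sigma$ is in hand.
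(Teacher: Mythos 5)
Your proposal follows essentially the same route as the paper's proof: the hypothesis \eqref{eqn:Cnu-m} is verified exactly as in the paper (Fourier transform of the Gaussian density, the local non-determinism bound \eqref{e:lnd} to lower-bound the covariance and decouple the variables, then rescaling and simplex integration yielding $|J|^{2m\eta}$ with the same $\eta$), and the passage to general $q$ uses the same interpolation scheme — even integers and Sobolev embedding for $q\ge 2$, and interpolation against the trivial bound $\|\mu^\omega_{s,t}\|_{B^0_{1,\infty}}\les|t-s|$ for $q<2$, recovering both inequalities of \eqref{e:lndparams}. The preliminary reduction to $\delta_n=|b-a|$ by splitting $[a,b]$ also matches the paper.
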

\begin{remark}[Local non determinism]\label{r:lnd}
  Condition \eqref{e:lnd} says that the process is
  \emph{locally $\phi$-non deterministic}, with
  $\phi(r)=r^{2H}$, according to \cite{Cuz1978}.
  Local non determinism was first introduced
  by Berman in \cite{Ber1973}. A recent review
  on the subject can be found in \cite{Xia2006}.
  
  A notion of local non determinism for general
  processes (that is to say, non Gaussian),
  was given in \cite{Ber1983}. A process
  $\omega:[a,b]\to\R^d$ is \emph{locally
  $g$ non deterministic} if for every $n\geq2$
  there is $c_n0$ such that for all
  $s_1<s_2<\dots<s_n$,
  \[
    \nu_{s_{1:n}}(0,\dots,0)
      \geq c_n g(s_2-s_1)g(s_3-s_2)\dots g(s_n-s_{n-1}),
  \]
  where $\nu_{s_{1:n}}$ is the joint density
  of the increments $\omega_{s_2s_1},\dots,\omega_{s_{n-1}s_n}$.
\end{remark}
\begin{remark}
  Condition \eqref{e:betas} could be slightly weakened by adding an extra combinatorial
  condition to the sequence of $\beta_j$. Indeed, the requirement is that, given the sequence of
  $\beta_j$, then between each pair of values $2\beta$ there must be at least one
  $0$. Likewise, between each pair of values $0$ there must be at least
  a value $2\beta$. A short way to write the condition is that for
  all $1 \le n_1 \le n_2 \le 2m-1$,
  \[
    \sum_{j=n_1}^{n_2} \beta_j
      \leq (n_2-n_1+2)\beta.
  \]
  This condition though does not seem to be more useful or easier
  to check than \eqref{e:betas}.
\end{remark}
\subsection{Proof of \autoref{thm:spSBE-m} and \autoref{cor:fbHSBE-m}}

As in \autoref{prop:spSBE}, we first prove the final
result under assumptions on the joint density of the
path, instead of the joint density of increments.
\begin{proposition} \label{prop:spSBE-m}
  Given $\beta>0$, an integer $m\geq1$,
  $\frac 1{2m} \le \eta \le 1$,
  let $\omega:[a,b]\to\R^d$ be a stochastic
  process with continuous paths.
  For $(s_1,s_2,\dots,s_{2m}) \in [a,b]$,
  let $\rho_{s_{1:2m}}$ be the joint density
  of $(\omega_{s_1},\dots,\omega_{s_{2m}})$.
  Suppose that for every $J\subseteq[a,b]$,
  \begin{equation} \label{eqn:Crho-m}
    \int_{\substack{s_1<s_2<\dots<s_{2m}\\ s_j \in J}} \|(1-\Delta)^{\frac{\beta}2}_{x_1} \dots (1-\Delta)^{\frac {\beta}2}_{x_{2m}}\rho_{s_{1:2m}}\|_{L^\infty}\,ds_{1:2m}
          \leq C_{\rho}|J|^{2m \eta}.
      \end{equation} 
  Then for every $\alpha<\beta$, with
  $\alpha\not\in\N$, the occupation measure 
  satisfies
  \[
    \mu_{a,\cdot}
      \in V^{1/\eta}([a,b];\sbe^{\alpha,2m}_{2m}).
  \]
  More precisely, there is $\delta_0=\delta_0(\alpha,\beta)\in(0,1)$ such
  that for every $\delta<\delta_0$, 
  \begin{equation} \label{muinV2SBE-m}
    \E\big[\1_{E(M)}\1_{F(M_0)}\|\mu_{a,\cdot}\|_{V^{1/\eta}(\sbe^{\alpha,2m}_{2m})}^{2m} \big]
      \les M_0^{d\delta} (M^d C_\rho)^{1- \delta} + M_0^d.
  \end{equation}
\end{proposition}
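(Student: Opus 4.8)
The plan is to mimic the proof of \autoref{prop:spSBE} line by line, replacing the second moment by the $(2m)$-th moment and the two-point density by the $2m$-point density $\rho_{s_{1:2m}}$. Everything reduces to a single-interval estimate: for fixed $s,t\in[a,b]$ and $M,M_0>0$ one wants
\[
  \E\bigl[\1_{E(M)}\1_{F(M_0)}\,\|\mu_{s,t}\|_{\sbe^{\alpha,2m}_{2m}}^{2m}\bigr]
    \les (M^dC_\rho)^{1-\delta}M_0^{d\delta}\,|t-s|^{2m\eta+2m\delta(1-\eta)}+M_0^d\,|t-s|^{2m},
\]
after which \autoref{p:tolomeov}, applied with $E=\sbe^{\alpha,2m}_{2m}$, $p=2m$, the event $G=E(M)\cap F(M_0)$ and $1+a=2m\eta+2m\delta(1-\eta)$, yields the moment bound \eqref{muinV2SBE-m}; the exponent $q=1/\eta$ is admissible because $\tfrac1{2m}\le\eta\le1$ (for $\eta<1$, $\delta>0$ one has $1/\eta>2m/(1+a)$, so $q=1/\eta$ lies in the allowed range, and the endpoint $\eta=1$ is borderline). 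As a first step I would shrink $\beta$ towards $\alpha$ — legitimate since $(1-\Delta)^{-s}$, $s>0$, is bounded on $L^\infty$, so \eqref{eqn:Crho-m} is only weakened — so that $\alpha<\beta<\lceil\alpha\rceil$; then $\beta\notin\N$ and, with $k:=\lceil\alpha+d-1\rceil=\lceil\alpha\rceil+d-1$, one has $k-d=\lfloor\beta\rfloor$.

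For the single-interval bound I write $\|\mu_{s,t}\|_{\sbe^{\alpha,2m}_{2m}}^{2m}=\int_{\R^d}\int_0^\infty r^{-2m(\alpha+d)}|\Delta_kF_{\mu_{s,t}}(r,y)|^{2m}\,\frac{dr}{r}\,dy$ and use $\Delta_kF_{\mu_{s,t}}(r,y)=\int_s^t(\Delta_k^\ast\1_{[0,r]})(|y-\omega_\tau|)\,d\tau$ from \autoref{lem:deltakmove}. Raising to the power $2m$, taking expectations, inserting the joint density and symmetrising in the times gives
\[
  \E\bigl[|\Delta_kF_{\mu_{s,t}}(r,y)|^{2m}\bigr]
    =(2m)!\int_{s<\tau_1<\dots<\tau_{2m}<t}\int_{(\R^d)^{2m}}\prod_{j=1}^{2m}(\Delta_k^\ast\1_{[0,r]})(|y-y_j|)\,\rho_{\tau_{1:2m}}(y_{1:2m})\,dy_{1:2m}\,d\tau_{1:2m}.
\]
Then, exactly as in \autoref{prop:spSBE}, for $r\le1$ I translate $y_j\mapsto y+y_j$, insert a radial cut-off $\chi(|y_j|)$ (licit since $\Delta_k^\ast\1_{[0,r]}(|y_j|)=0$ for $|y_j|\ge1$), pass to polar coordinates in each $y_j$, transfer the $\Delta_k^\ast$'s onto $\Delta_k$'s by $L^2(\tfrac{dr}{r})$-duality, peel off the factors $r_j^d$ with the Leibniz identity \eqref{deltakleib}, and apply \eqref{e:multideltak} of \autoref{cor:multideltak} to bound the remaining $2m$-fold $\Delta_{k-d}$, on $\{r_j\le r\}$, by $r^{2m\beta}\,\||\partial_{r_1}|^{\beta}\cdots|\partial_{r_{2m}}|^{\beta}g_{\tau_{1:2m}}\|_{L^\infty}$, where $g_{\tau_{1:2m}}$ is the spherical average of $\prod_j\chi(r_j)\rho_{\tau_{1:2m}}(y+r_1\theta_1,\dots,y+r_{2m}\theta_{2m})$. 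Since on $E(M)$ one has $\||\partial_{r_1}|^{\beta}\cdots|\partial_{r_{2m}}|^{\beta}g_{\tau_{1:2m}}\|_{L^\infty}\les\|(1-\Delta)_{x_1}^{\beta/2}\cdots(1-\Delta)_{x_{2m}}^{\beta/2}\rho_{\tau_{1:2m}}\|_{L^\infty(B_{2M}^{2m})}$, integrating in $r_{1:2m}\in[0,r]^{2m}$ and in $|y|\les M$ and using \eqref{eqn:Crho-m} yields, for $r\le1$,
\[
  \E\bigl[\1_{E(M)}\,\|\Delta_kF_{\mu_{s,t}}(\cdot,y)(r)\|_{L^{2m}_y}^{2m}\bigr]
    \les M^dC_\rho\,r^{2m(\beta+d)}\,|t-s|^{2m\eta}.
\]

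To close the estimate I combine this with the crude bound $\1_{F(M_0)}|\Delta_kF_{\mu_{s,t}}(\cdot,y)(r)|\les|t-s|\,\1_{\{(|y-\omega_a|-M_0)_+\le r\le 2^k(|y-\omega_a|+M_0)\}}$ — the analogue of \eqref{e:simple} — which gives $\E[\1_{F(M_0)}\|\Delta_kF_{\mu_{s,t}}(\cdot,y)(r)\|_{L^{2m}_y}^{2m}]\les(M_0^d+r^d)|t-s|^{2m}$ for all $r$. Interpolating the two bounds with weight $\delta$ on $r\le1$, using the crude one for $r>1$, and integrating $r^{-2m(\alpha+d)}\tfrac{dr}{r}$: the range $r\le1$ converges as soon as $(\beta+d)(1-\delta)>\alpha+d$, which holds for every $\delta<\delta_0:=\delta_0(\alpha,\beta)=\tfrac{\beta-\alpha}{\beta+d}$, while the range $r>1$ is integrable because $2m(\alpha+d)>d$; this produces the displayed single-interval estimate, and \autoref{p:tolomeov} then concludes.

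The step I expect to be the main obstacle is the mixed-regularity bookkeeping of the second paragraph: one has to verify (i) that the spherical average $g_{\tau_{1:2m}}$ inherits the mixed fractional smoothness $|\partial_{r_1}|^{\beta}\cdots|\partial_{r_{2m}}|^{\beta}$ from $\rho_{\tau_{1:2m}}$ — via the chain rule along the rays $r_j\mapsto y+r_j\theta_j$, the cut-off $\chi$, and $L^\infty$-boundedness of the relevant (Bessel) multipliers — and (ii) that \eqref{e:multideltak} of \autoref{cor:multideltak}, with its $2m$-fold mixed fractional derivative, is the right tool once \eqref{deltakleib} has removed the $r_j^d$ weights, each of the $2m$ radial variables then contributing one factor $r^\beta$. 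It is worth stressing that, unlike \autoref{prop:spSBE} (whose hypothesis is isotropic $C^\beta$ on $\R^{2d}$ and only reaches $\alpha<\beta/2$), the $2m$-fold mixed hypothesis \eqref{eqn:Crho-m} is genuinely stronger, and this is what delivers the full range $\alpha<\beta$ together with the threshold $\delta_0=\tfrac{\beta-\alpha}{\beta+d}$; the borderline case $\eta=1$ requires a separate (elementary) check.
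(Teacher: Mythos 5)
Your proposal is correct and follows essentially the same route as the paper's proof: symmetrise the $2m$-th moment over the ordered simplex, insert $\rho_{s_{1:2m}}$, pass to polar coordinates, remove the $r_j^d$ weights via \eqref{deltakleib}, apply \eqref{e:multideltak}, interpolate with the crude $F(M_0)$ bound, and conclude with \autoref{p:tolomeov}; the threshold $\delta_0=\frac{\beta-\alpha}{\beta+d}$ matches \eqref{e:delta-m}. The one place the paper is slightly more careful is exactly the step you flag: it only claims $L^\infty$-boundedness of $|\partial_\theta|^{\beta-\eps}(1-\Delta)^{-\beta/2}$ (the unregularised $\eps=0$ multiplier is not bounded on $L^\infty$), so the radial gain is $r^{2m(\beta-\eps)}$ rather than $r^{2m\beta}$ — harmless here since $\delta<\delta_0$ is an open condition and $\eps$ can be taken with $\alpha+\eps<\beta$.
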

\begin{proof}
  We follow the lines of the proof of \autoref{prop:spSBE}.
  It is not restrictive to assume that $\lceil\alpha\rceil\geq\beta$.
  First, for $s,t\in[a,b]$ and $M,M_0>0$, we estimate
  \[
    \E\bigl[\1_{E(M)}\1_{F(M_0)}\|\mu_{s,t}\|_{\sbe_{2m}^{\alpha,2m}}^{2m}\bigr].
  \]
  Let $k = \lceil\alpha+d-1\rceil=\lceil\beta+d-1\rceil$,
  and, for simplicity of notation, set 
  \begin{equation} \label{domaindef}
    D_{2m}(s,t)
      := \{ s_1 < \dots < s_{2m}: s_j \in [s,t]\text{ for all }1\le j\le 2m\}.  
  \end{equation}
  For $r\leq 1$ and $|y| \le M + 1$, by \autoref{lem:deltakmove},
  \[
    \begin{aligned}
      \lefteqn{\E \Big[ |\Delta_k F_{\mu_{s,t}}(\cdot,y)(r)|^{2m}\Big]}\quad& \\
    &=\E\Big[\int_{[s,t]^{2m}} \Delta_k F_{\delta_{\omega_{s_1}}}(\cdot,y)(r)
     \dots\Delta_k F_{\delta_{\omega_{s_{2m}}}}(\cdot,y)(r)\,ds_{1:2m}\Big]\\
    &=(2m)!\E\Big[\int_{D_{2m}(s,t)}\Delta_k F_{\delta_{\omega_{s_1}}}(\cdot,y)(r)
     \dots\Delta_k F_{\delta_{\omega_{s_{2m}}}}(\cdot,y)(r)\,ds_{1:2m}\Big]\\
    &\begin{multlined}
    =(2m)!\int_{D_{2m}(s,t)}\int_{(\R^d)^{2m}} (\Delta_{k}^\ast \1_{[0,r]})(|y-y_1|)
      \dots(\Delta_{k}^\ast \1_{[0,r]})(|y-y_{2m}|)\\
      \rho_{s_{1:2m}}(y_{1:2m})\,dy_{1:2m}\,ds_{1:2m}
    \end{multlined}
      \\
      &\begin{multlined}
      =(2m)!\int_{D_{2m}(s,t)}\int_{(\R^+)^{2m}} (\Delta_{k}^\ast \1_{[0,r]})(r_1)
      \dots(\Delta_{k}^\ast \1_{[0,r]})(r_{2m})\\
      \times r_1^d\dots r_{2m}^d
      g_{s_{1:2m}}(r_{1:2m})\frac{dr_{1:2m}}{r_1\dots r_{2m}}\,ds_{1:2m},
     \end{multlined}
   \end{aligned}
  \]
  where
  \[
    g_{s_{1:2m}}(r_{1:2m})
      := \int_{(\Sb^{d-1})^{2m}}
        \rho_{s_{1:2m}}(y + r_1\theta_1, \dots,y +r_{2m}\theta_{2m})
        \,d\sigma(\theta_{1:2m}).
  \]
We have that for every $\eps >0$, the operator $|\partial_{\theta}|^{\beta - \eps}  (1- \Delta)^{-\frac \beta 2}$ is bounded on $L^\infty(\R^d)$, with its norm independent from $\theta \in S^{d-1}$. 
Therefore, 
  \[
  \begin{aligned}
    \|\partial_{r_{1:2m}}^{\beta - \eps} g_{s_{1:2m}}\|_{L^\infty}
      &\le \Big\|\int_{(\Sb^{d-1})^{2m}} \prod_{j=1}^{2m} \partial_{\theta_{j}}^{\beta - \eps}
        \rho_{s_{1:2m}}(y + x_1, \dots,y +x_{2m}) \,d\sigma(\theta_{1:2m}) \Big\|_{L^\infty}\\
      &\les \|(1- \Delta)^{\frac \beta 2}_{x_{1:2m}}  \rho_{s_{1:2m}}\|_{L^\infty}.
  \end{aligned}
  \]
Therefore, by \eqref{deltakleib} and \eqref{e:multideltak},
  \[
    \begin{aligned}
      \lefteqn{\E[|\Delta_k F_{\mu_{s,t}}(\cdot,y)(r)|^{2m}]}\quad& \\
        &= (2m)!\int_{D_{2m}(s,t)}\int_{[0,r]^{2m}}
          (\Delta_{k})_{r_1}\dots(\Delta_{k})_{r_{2m}}
          \big(r_1^d\dots r_{2m}^d g_{s_{1:2m}}\big)(r_{1:2m})
          \frac{dr_{1:2m}}{r_1\dots r_{2m}}\,ds_{1:2m}\\
        &\les C_\rho r^{2m(\beta-\eps)+2md} |t-s|^{2m\eta},
    \end{aligned}
  \]
  and thus,
  \[
    \E\Bigl[\1_{E(M)} \int_{\R^d} |\Delta_k F_{\mu_{s,t}}(\cdot,y)(r)|^{2m}\,dy\Bigr]
      \les C_{\rho}M^d r^{2m(\beta-\eps)+2md} |t-s|^{2m\eta}.
  \]
  By interpolating the above estimate with the simple
  estimate \eqref{e:simple}, and by choosing $\eps$
  so that $\alpha + \eps < \beta$,
  \begin{equation}\label{e:forholder}
    \begin{multlined}[.9\linewidth]
      \E\bigl[\1_{E(M)}\1_{F(M_0)}\|\mu_{s,t}\|_{\sbe^{\alpha,2m}_{2m}}^{2m}\bigr]\les\\
        \les (C_\rho M^d)^{1-\delta} M_0^{d\delta} |t-s|^{2m(\eta(1-\delta) + \delta)}
          + M_0^d |t-s|^{2m},
    \end{multlined}
  \end{equation}
  for $\delta\in(0,\delta_0)$, with
  \begin{equation}\label{e:delta-m}
    (\beta+d)(1-\delta_0)
      = (\alpha+d).
  \end{equation}
  Inequality \eqref{muinV2SBE-m} follows from
  \autoref{p:tolomeov}, since
  $\eta(1-\delta) + \delta>\eta$.
\end{proof}
\begin{proof}[Proof of \autoref{thm:spSBE-m}]\phantomsection\label{pf:spSBE-m}
  For $J = [s,t]$, let $D_{2m}(J):=D_{2m}(s,t)$ be the
  set defined in \eqref{domaindef}.
  Assume first that for every interval $J\subset[a,b]$,
  \begin{equation}\label{e:dnrhodecay-m}
    \begin{multlined}
      \int_{D_{2m}(J)} \sup_{x_i\in\R^d}
          (1 + |x_1|^n + \dots +|x_{2m}|^n)\bigl(\|\rho_{s_{1:2m}}(x_{1:2m})\| + \|d^n
          \rho_{s_{1:2m}}(x_{1:2m})\|\bigr)\,ds_{1:2m}\\
        \leq C'_{\rho} |J|^{2m\eta},
    \end{multlined}
  \end{equation}
  where $\rho_{s_{1:2m}}$ is the density of
  $(\omega_{s_1},\dots,\omega_{s_{2m}})$,
  $d^n$ is the total differential,
  and $n$ is an integer with $n > 2m\beta + d$.

  For a $d$-dimensional standard Gaussian random variable $X$,
  independent from $\omega$, and for $\lambda \in \R$, define
  the path $\omega^\lambda_{s} := \omega_s + \lambda X$,
  and let $\mu^\lambda$ be its associated occupation measure, so
  that $\|\mu^\lambda_{s,t}\|_{\SBE}=\|\mu_{s,t}\|_{\SBE}$ for
  all $\alpha,p,q$. We have the identity
  \begin{equation}\label{e:identity}
    \begin{multlined}[.9\linewidth]
      \nu_{s_{1:2m}}(x_2-x_1,x_3-x_2,\dots,x_{2m}-x_{2m-1}) = {}\\
        = \int_{\R^d}\rho_{s_{1:2m}}(x_1-z,\dots,x_{2m}-z)\,dz.
    \end{multlined}
  \end{equation}
  By \eqref{e:identity} and \eqref{e:dnrhodecay-m}, for $\lambda\geq1$,
  by Sobolev embeddings we have 
  \begin{equation} \label{e:rhosplitm}
    \begin{aligned}
      \lefteqn{\Big\|\Dc_\beta
          \Big((2\pi\lambda^2)^\frac{d}2\rho_{s_{1:2m}}^\lambda(x_{1:2m})
          - e^{-\frac{|x_1|^2}{2\lambda}}
          \nu_{s_{1:2m}}(x_2-x_1,\dots,x_{2m}-x_{2m-1})\Big)\Big\|_{L^\infty(\R^{2md})}}\quad&\\   
        &= \Big\|\Dc_\beta\int_{\R^d}\big(
          e^{-\frac{|x_1|^2}{2\lambda^2}}
          - e^{-\frac{|z|^2}{2\lambda^2}}\big)
          \rho_{s_{1:2m}}(x_1 - z, \dotsc, x_{2m}-z)\,dz\Big\|_{L^\infty(\R^{2md})}\\
        &\les \Big\|d^n\int_{\R^d}\big(
          e^{-\frac{|x_1|^2}{2\lambda^2}}
          - e^{-\frac{|z|^2}{2\lambda^2}}
          \big)\rho_{s_{1:2m}}(x_1 - z, \dotsc, x_{2m}-z)\,dz\Big\|_{L^\infty(\R^{2md})}\\
        &\quad + \Big\|\int_{\R^d}\big(
          e^{-\frac{|x_1|^2}{2\lambda^2}}
          - e^{-\frac{|z|^2}{2\lambda^2}}
          \big)\rho_{s_{1:2m}}(x_1 - z, \dotsc, x_{2m}-z)dz\Big\|_{L^\infty(\R^{2md})}\\
        &\les \Big\|\bigl(\tfrac{|x_1-z|}{\lambda} + \tfrac{1}{\lambda^{2}}\bigr)
          (\|d^{n} \rho_{s,t}(x_1-z,\dotsc,x_{2m}-z) \| + |\rho_{s,t}(x_1-z,\dotsc,x_{2m}-z)\|)
          \Big\|_{L^\infty(\R^{2md})}
    \end{aligned}
  \end{equation}
  where $\Dc_\beta=(1-\Delta)^{\frac{\beta}2}_{x_1}\dots(1-\Delta)^{\frac {\beta}2}_{x_{2m}}$
  and $\rho_{s_{1:2m}}^\lambda$ is the density of
  $(\omega_{s_1}^\lambda,\dots,\omega_{s_{2m}}^\lambda)$.
  Moreover, we notice that for $\lambda \ge 1$, for every $\eps > 0$, by \eqref{eqn:Cnu-m}
  \begin{equation} \label{eqn:Cnu-m2}
    \begin{aligned}
      \lefteqn{\int_{D_{2m}(J)} \|\Dc_{\beta-\epsilon}\bigl(
          e^{-\frac{|x_1|^2}{2}}
          \nu_{s_{1:2m}}(x_1-x_2, \dotsc,x_{2m-1}-x_{2m})\bigr)
          \|_{L^\infty(\R^{2md})}\,ds_{1:2m}}\quad&\\
        &\les \int_{D_{2m}(J)} \|\Dc_{\beta-\epsilon}
          \nu_{s_{1:2m}}(x_1-x_2, \dotsc,x_{2m-1}-x_{2m})
          \|_{L^\infty(\R^{2md})}\,ds_{1:2m}\\
        &\begin{multlined}
        \les \int_{D_{2m}(J)}\Big\|
          \Big(\prod_{j=1}^{2m-1} \big((1-\Delta_{x_j})^{\frac \beta 2}
            + (1-\Delta_{x_{j+1}})^{\frac \beta 2}\big)
          \nu_{s_{1:2m}}\Big)\\
          (x_1-x_2, \dotsc,x_{2m-1}-x_{2m})
          \Big\|_{L^\infty(\R^{2md})}\,ds_{1:2m}
        \end{multlined}\\
        &\les C_\nu |J|^{2m\eta}.
  \end{aligned}
  \end{equation}
  Therefore, combining \eqref{e:rhosplitm} and \eqref{eqn:Cnu-m2}, we obtain
  \[
    \begin{aligned}
      \lefteqn{\int_{D_{2m}(J)} \|\Dc_{\beta-\epsilon}
          \rho_{s_{1:2m}}^\lambda(x_{1:2m})\|_{L^\infty(\R^{2md})}\,ds_{1:2m}}\quad&\\
        &\les \frac{1}{\lambda^d} \Bigl(\int_{D_{2m}(J)}
          \|(2\pi\lambda^2)^\frac d2 \rho_{s_{1:2m}}^\lambda(x_{1:2m})\\
        &\qquad - e^{-\frac{|x_1|^2}{2}}
          \nu_{s_{1:2m}}(x_2-x_1,\dots,x_{2m}-x_{2m-1})\|_{C^{2m\beta}(\R^{2md})} ds_{1:2m}\\
        &\quad +\int_{D_{2m}(J)} \big\|\Dc_{\beta-\epsilon}
          e^{-\frac{|x_1|^2}{2}}
          \nu_{s_{1:2m}}(x_2-x_1,\dots,x_{2m}-x_{2m-1})\|_{L^\infty(\R^{2md})} ds_{1:2m}\Bigr)\\
        &\les \lambda^{-d} C_{\nu}|J|^{2m\eta}
  \end{aligned}
  \]
  for $\lambda = \lambda(C_{\rho}', C_\nu)$ big enough. 

  Fixing $2 \eps < \beta - \alpha$, we can use the above estimate, \eqref{e:forholder} applied
  to $\omega^\lambda$, and the fact $\|\mu_{s,t}\|_{\sbe^{\alpha,2m}_{2m}}
  =\|\mu^\lambda_{s,t}\|_{\sbe^{\alpha,2m}_{2m}}$,
  to obtain that for every $1\leq q<2m$ and $\delta<\delta_0$
  (defined as in \eqref{e:delta-m}),
  \[
    \begin{aligned}
      \lefteqn{\E\bigl[\|\mu_{s,t}\|_{\sbe^{\alpha, 2m}_{2m}}^q\1_{E(M)}\bigr]\leq}\qquad&\\
    &\leq \sum_{N\text{ dyadic}} \E\bigl[\|\mu^\lambda_{s,t}\|_{\sbe^{\alpha,2m}_{2m}}^q
      \1_{F_\lambda(2M)}\1_{E_\lambda(M + 4 \lambda N)}\1_{|X|\sim N}\bigr]\\
    &\leq \sum_{N \text{ dyadic}} \big(\E\bigl[
      \|\mu^\lambda_{s,t}\|_{\sbe^{\alpha,2m}_{2m}}^{2m}
      \1_{F_\lambda(2M)}\1_{E_\lambda(M + 4 \lambda N)}\bigr]\big)^\frac q{2m}
      \Prob[|X| \sim N]^\frac {2m-q}{2m}\\
    &\les \sum_{N \text{ dyadic}}
      \bigl((\lambda^{-d} C_\nu(M + 4 \lambda N)^d)^{1 - \delta} M^{d\delta}
        + M^d\bigr)^{\frac{q}{2m}}
        |t-s|^{q(\delta+\eta(1-\delta))}
      e^{-\tfrac{cN^2}{2m}(2m-q)}
    \end{aligned}
  \]
  and by taking the limit $\lambda\to\infty$,
  \begin{equation}\label{e:forholdernu}
    \E\bigl[\|\mu_{s,t}\|_{\sbe^{\alpha, 2m}_{2m}}^q\1_{E(M)}\bigr]
      \lesssim \bigl(C_\nu^{(1-\delta)} M^{d\delta} + M^d\bigr)^{\frac{q}{2m}}
        |t-s|^{q(\delta+\eta(1-\delta))}.
  \end{equation}
  Likewise, by \eqref{muinV2SBE-m} for $\omega^\lambda$
  and the fact $\|\mu_{a,\cdot}\|_{V^p(\sbe^{\alpha,2m}_{2m})}
  =\|\mu^\lambda_{a,\cdot}\|_{V^p(\sbe^{\alpha,2m}_{2m})}$,
  we obtain for $1\leq q<2m$, $\delta<\delta_0$,
  and $p(\delta+\eta(1-\delta))>1$,
  \[
    \E\bigl[\|\mu_{a,\cdot} \|_{V^p(\sbe^{\alpha,2m}_{2m})}^q \1_{E(M)}\bigr]
      \lesssim\bigl(C_\nu^{(1-\delta)} M^{d\delta} + M^d\bigr)^{\frac{q}{2m}},
  \]
  and in particular,
  \begin{equation}\label{e:Cnuestimate-m}
    \E \Big[{\|\mu_{a,\cdot}\|_{V^p(\sbe^{\alpha,2m}_{2m})}^q}
        (1 + \sup_{s}|\omega_{s}|)^{-d}\Big]
      \les 1 + C_\nu^{(1-\delta)\frac{q}{2m}}.
  \end{equation}
  To get rid of assumption \eqref{e:dnrhodecay-m}, we proceed
  as in the proof of \autoref{thm:spSBE}. Define
  $\omega^{\eps, M_1} := H_{M_1}(\omega) + \eps Y$,
  where $H_{M_1}$ is given in \eqref{e:HM1}, and
  $Y$ is a process, independent from $\omega$,
  that satisfies \eqref{e:dnrhodecay-m}.
  Given $s_1,s_2,\dots,s_{2m}$, we can write
  (as in the proof of \autoref{thm:spSBE})
  $\rho_{s_{1:2m}}^{M_1}=\rho_{s_{1:2m}}+\overline\rho_{s_{1:2m}}^{M_1}$,
  where $\rho_{s_{1:2m}}^{M_1}$ is the density of
  $(H_{M_1}(\omega_{s_1}),\dots,H_{M_1}(\omega_{s_{2m}}))$
  and
  \begin{equation}\label{e:rhobarTV}
    \|\overline\rho_{s_{1:2m}}^{M_1}\|_{TV}
      \leq 2\Prob[E(M_1/2)^c].
  \end{equation}
  Let now $\rho^{\epsilon,M_1}_{s_{1:2m}}$ be the density
  of $(\omega_{s_1}^{\epsilon,M_1},\dots,\omega_{s_{2m}}^{\epsilon,M_1})$,
  let $\sigma_{s_{1:2m}}$ the density of
  $(Y_{s_1},\dots,Y_{s_{2m}})$ and
  $\sigma^\epsilon_{s_{1:2m}}=\frac1{\epsilon^{2m}}\sigma_{s_{1:2m}}(\cdot/\epsilon)$
  the density of $\epsilon(Y_{s_1},\dots,Y_{s_{2m}})$, then
  \begin{equation}\label{e:rhorhobar}
    \rho_{s_{1:2m}}^{\epsilon,M_1}
      = \rho_{s_{1:2m}}^{M_1}\star \sigma_{s_{1:2m}}^\epsilon
        + \overline\rho_{s_{1:2m}}^{M_1}\star\sigma_{s_{1:2m}}^\epsilon.
  \end{equation}
  We wish to apply the conclusions of the first part
  of this proof to $\omega^{\epsilon,M_1}$. First,
  since the joint density of $(Y_{s_1},\dots,Y_{s_{2m}})$
  satisfies \eqref{e:dnrhodecay-m}, the same is true
  for $\rho^{\epsilon,M_1}_{s_{1:2m}}$.
  It remains to verify that the joint density
  $\nu_{s_{1:2m}}^{\epsilon,M_1}$
  of $(\omega_{s_2}^{\epsilon,M_1}-\omega_{s_1}^{\epsilon,M_1},
  \dots,\linebreak \omega_{s_{2m}}^{\epsilon,M_1}-\omega_{s_{2m-1}}^{\epsilon,M_1})$
  satisfies \eqref{eqn:Cnu-m}. Let $\nu^{\sigma,\epsilon}_{s_{1:2m}}$
  be the joint density of
  $\epsilon(Y_{s_2}-Y_{s_1},\dots,\linebreak Y_{s_{2m}}-Y_{s_{2m-1}})$,
  then by \eqref{e:identity}, \eqref{e:rhobarTV}
  and \eqref{e:rhorhobar},
  \[
    \begin{aligned}
      \|\Dc\nu_{s_{1:2m}}^{\epsilon,M_1}\|_{L^\infty}
        &\leq \|(\Dc\nu_{s_{1:2m}})\star\sigma_{s_{1:2m}}^\epsilon\|_{L^\infty}
          + \|\overline{\rho}_{s_{1:2m}}^{M_1}*\nu^{\sigma,\epsilon}_{s_{1:2m}}\|_{C^\beta}\\
        &\leq \|\Dc\nu_{s_{1:2m}}\|_{L^\infty}
          + \|\overline{\rho}_{s_{1:2m}}^{M_1}\|_{TV}
            \|\nu^{\sigma,\epsilon}_{s_{1:2m}}\|_{C^\beta}\\
        &\les \|\Dc\nu_{s_{1:2m}}\|_{L^\infty}
          + \epsilon^{-(2m\beta+d)}\|\nu^{\sigma,1}_{s_{1:2m}}\|_{C^\beta}\Prob[E(M_1/2)^c],
    \end{aligned}
  \]
  with $\Dc=(1-\Delta_{x_1})^{\beta_1/2}\dots(1-\Delta_{x_{2m-1}})^{\beta_{2m-1}/2}$
  for brevity.
  If we choose $M_1=M_1(\epsilon)$ so that
  $\Prob[E(M_1/2)^c]\leq \epsilon^{2m\beta+d}C_\nu$,
  in conclusion $\nu^{\epsilon,M_1}_{s_{1:2m}}$
  satisfies \eqref{eqn:Cnu-m}, thus
  \eqref{e:Cnuestimate-m} yields,
  \[  
    \E\Bigl[{\| \mu^{\epsilon,M_1(\epsilon)}_{a,\cdot} \|_{V^p(\sbe^{\alpha,2m}_{2m})}^q}
      (1 + \sup_s | \omega^{\epsilon,M_1(\epsilon)}_s|)^{-d}\Bigr]
    \les 1 + (C_\nu)^{(1-\delta)\frac{q}{2m}},
  \]
  for $p(\delta+\eta(1-\delta))>1$, $1\leq q<2m$ and $\delta<\delta_0 $,
  as well as \eqref{e:forholdernu}.
  The conclusion now follows again
  by semi-continuity, thanks to \autoref{l:nothard}.
\end{proof}
\begingroup
\allowdisplaybreaks
\begin{proof}[Proof of \autoref{cor:fbHSBE-m}]
  First of all, we can assume without loss of generality
  that $\delta_n = |b-a|$. If this is not the case, we
  can apply the result with $\delta_n = |b-a|$ to the
  intervals $[a, a+\delta_n]$, $[a+\delta_n, a+2\delta_n]$,
  $[a+2\delta_n, a+3\delta_n]$ and so on, and obtain that
  $\mu_{a+ k\delta_n,a+(k+1)\delta_n} \in C^\var{q}B^{\alpha}_{p,\infty}$.
  By patching the intervals back together, we obtain that
  $\mu_{a,b} \in C^\var{q}B^{\alpha}_{p,\infty}$ as well.
  
  We first check the statement in the case $q=2m$, with $m\in \N$.
  By Sobolev embeddings and \eqref{besovestimate}, it is enough
  to prove that for every $\alpha < \frac 1{2H} - \frac d 2$
  and $p$ such that $ \frac 1p <1 -  dH  -\frac{dH}{2m}+\alpha H$,  
  \[
    \mu_{a,\cdot}^\omega
      \in C^\var{p}([a,b];\sbe^{\alpha,2m}_{2m}).
  \]
  By \autoref{thm:spSBE-m}, it is enough to check that
  \eqref{eqn:Cnu-m} holds for $\beta < \frac 1{2H} - \frac d 2$
  and $\eta = 1 - (d+\beta)H -\frac{dH}{2m}$.
  Indeed, if this holds, we can choose $\beta$ in such a way that 
  \[
    \alpha
      < \beta
      < \frac 1{2H} - \frac d 2,
        \qquad
    \frac 1p
      < \eta
      = 1 - (d+\beta)H + \frac{dH}{2m}.
  \]
  Recall that if a Gaussian random variable $X$ has
  covariance matrix $C$ and mean $\bar x$, then 
  \[
    \E[e^{-i\xi\cdot X}]
      = \exp\Big(-i\bar x \cdot \xi - \frac12 \langle C\xi,\xi\rangle\Big).
  \]
  For simplicity of notation, let
  $D_{2m}(J) := \{s_1 < s_2 < \dots < s_{2m}: \forall j, \, s_j \in J\}$.
  We have that  
    \begin{align*}
       \lefteqn{\int_{D_{2m}(J)}\|
           (1-\Delta_{x_1})^{\frac {\beta_1}2} \dotsb
           (1-\Delta_{x_{2m-1}})^{\frac {\beta_{2m-1}}2}
           \nu_{s_{1:2m}}\|_{L^\infty}\,ds_{1:2m}}\quad&\\
         &\le \int_{D_{2m}(J)} \Big\|
           \prod_{j=1}^{2m-1}(1 + |\xi_j|^2)^{\frac{\beta_j}{2}}
           \widehat{\nu_{s_{1:2m}}}(\xi_{1:2m-1})
           \Big\|_{L^1((\R^d)^{2m-1})}\,ds_{1:2m}\\
         &=\int_{D_{2m}(J)} \Big\|
           \prod_{j=1}^{2m-1} (1 + |\xi_j|^2)^{\frac{\beta_j}{2}}
           \exp\Big[-\frac12\Var\Big(\sum_{k=1}^{n-1}
           \xi_k \cdot (\omega_{s_ks_{k+1}})\Big)\Big]
           \Big\|_{L^1((\R^d)^{2m-1})}\,ds_{1:2m}\\
         &\le\int_{D_{2m}(J)}\Big\|
           \prod_{j=1}^{2m-1} (1 + |\xi_j|^2)^{\frac{\beta_j}{2}}
           \exp\Big[-c_n \sum_{k=1}^{n-1} \|\xi_k\|^2 |s_{k+1}-s_k|^{2H} \Big]
           \Big\|_{L^1((\R^d)^{2m-1})}\,ds_{1:2m}\\
         &\les \int_{D_{2m}(J)} \prod
           |s_{k+1}-s_k|^{-(d+\beta_j)H}\,ds_{1:2m}\\
         &\les |J| \prod_{j=1}^{2m-1} |J|^{1- (d+\beta_j)H} \\
         &\les |J|^{2m(1- dH - \beta H +\frac d{2m})}\\
         &\les |J|^{2m\eta},
    \end{align*}
  where we used the condition $\beta_j \le 2 \beta < \frac 1 H - d$
  in order to integrate over $D_{2m}(J)$, and the condition
  $\sum_j \beta_j= 2m\beta$ in order to evaluate the last product.
  
  We now move to the general case. By the usual interpolation
  inequalities between variation spaces and Besov spaces, we
  have that for every $1 \le p_1,q_1, p_2,q_2 \le \infty,\alpha_2$
  and for every $0 \le \theta \le 1$, by defining 
  \[
    \frac{1}{p_\theta}
      := \frac{\theta}{p_1} + \frac{1-\theta}{p_2},
        \qquad
    \frac{1}{q_\theta}
      := \frac{\theta}{q_1} + \frac{1-\theta}{q_2},
        \qquad
    \alpha_\theta
      := \theta\alpha_1+ (1-\theta)\alpha_2,
  \]
  we have that 
  \[
    \| \mu \|_{C^{p_\theta-\rm{var}}B^{\alpha_\theta}_{q_\theta,\infty}}
      \les \|\mu\|_{C^\var{p_1}B^{\alpha_1}_{q_1,\infty}}^\theta
        \|\mu\|_{C^\var{p_2}B^{\alpha_2}_{q_2,\infty}}^{1-\theta}
  \]
  Therefore, by interpolating between even integers, we obtain
  that for every $ 2 \le q < \infty$,
  $\mu_{s,t} \in C^\var{p}B^{\alpha}_{q,\infty}$
  for every $2 \le q < \infty$, $\alpha < \frac{1}{2}\big(\frac 1H - d\big)$,
  $\frac 1p < 1 - (d+\alpha)H + \frac{dH}{q}$. By Sobolev embeddings, we can
  extend the result to $q = \infty$. Therefore, it only remains
  to check the result for $1 \le q < 2$. We observe that in the case
  $p=q=1$, $\alpha = 0$, we have that 
  \[
    \|\mu_{s,t}^\omega\|_{B^{0}_{1,\infty}}
      \le \int_{s}^t \| \delta_{\omega_{\tau}} \|_{B^{0}_{1,\infty}}\,d\tau
      \les |t-s|,
  \]
  therefore $\mu_{a,b} \in C^\var{1}B^{0}_{1,\infty}$. Finally, the case
  where $1 \le q < 2$ follows by interpolation between the case $q=2$,
  $\alpha < \frac12 \big(\frac 1H - d\big)$,
  $\frac 1p < 1- \frac{dH}{2} - \alpha H$,
  and the case $p=q=1, \alpha = 0$.
\end{proof}
\endgroup
\section{Integration and ODEs with \texorpdfstring{\sbe}{SBE} paths}\label{s:young}

We briefly show how to solve ODEs driven by $\sbe$ paths.
We first recall a few notions and results from
\cite{FriVic2010}. Given a normed space $E$
and $T>0$, we denote by $C^\var{r}([0,T];E)$
the space of continuous paths of finite
$r$-variation with values on $E$. Likewise,
if $\Delta_T=\{(s,t):0\leq s\leq t\leq T\}$
we define $C^\var{r}(\Delta_T;E)$.
We also recall a sewing lemma for
finite variation paths (see for instance
\cite[Theorem 2.2]{FriZha2018}),
\begin{lemma}\label{l:sewing}
  Let $\chi:\Delta_2\to E$ be continuous and assume
  there are two controls $\rho,\sigma$ and
  two numbers $a,b>0$ with $a+b>1$
  such that
  \[
    \|\delta\chi_{sut}\|_E
      \leq \rho_{su}^a\sigma_{ut}^b,
      \qquad
      s\leq u\leq t,
  \]
  where $\delta\chi_{sut} = \chi_{st} - \chi_{su} - \chi_{ut}$.
  Then there is a unique $\Is\chi\in C([0,T];E)$ such that
  $\Is\chi_0=0$ and
  \begin{equation}\label{e:characterize}
    \|\Is\chi_{st} - \chi_{st}\|_E
      \leq \rho_{st}^a\sigma_{st}^b.
  \end{equation}
  Moreover, if $\chi\in C^\var{r}(\Delta_T;E)$, then
  $\Is\chi\in C^\var{r}(0,T;E)$.
\end{lemma}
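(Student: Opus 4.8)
The plan is to realise $\Is\chi$ as the limit of Riemann-type sums. For a finite partition $\pi=\{s=t_0<\dots<t_n=t\}$ of $[s,t]$, set $\chi^\pi_{st}:=\sum_{i=0}^{n-1}\chi_{t_it_{i+1}}$. The only substantial step is a one-point-removal estimate. The intervals $[t_{i-1},t_{i+1}]$, $1\le i\le n-1$, split into an ``odd'' and an ``even'' subfamily, each consisting of pairwise non-overlapping intervals; superadditivity of the controls therefore gives $\sum_{i=1}^{n-1}\rho_{t_{i-1}t_{i+1}}\le 2\rho_{st}$ and $\sum_{i=1}^{n-1}\sigma_{t_{i-1}t_{i+1}}\le 2\sigma_{st}$, so a pigeonhole argument produces an index $1\le i\le n-1$ with simultaneously $\rho_{t_{i-1}t_{i+1}}\le\frac{4}{n-1}\rho_{st}$ and $\sigma_{t_{i-1}t_{i+1}}\le\frac{4}{n-1}\sigma_{st}$. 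Removing $t_i$ changes $\chi^\pi_{st}$ by $-\delta\chi_{t_{i-1}t_it_{i+1}}$, and by hypothesis
\[
  \|\delta\chi_{t_{i-1}t_it_{i+1}}\|_E
    \le \rho_{t_{i-1}t_i}^a\sigma_{t_it_{i+1}}^b
    \le \rho_{t_{i-1}t_{i+1}}^a\sigma_{t_{i-1}t_{i+1}}^b
    \le \Bigl(\tfrac{4}{n-1}\Bigr)^{a+b}\rho_{st}^a\sigma_{st}^b.
\]
Iterating down to the trivial partition $\{s,t\}$ and summing $\sum_{k\ge1}(4/k)^{a+b}$, which converges precisely because $a+b>1$, one gets
\[
  \|\chi^\pi_{st}-\chi_{st}\|_E\le C_{a,b}\,\rho_{st}^a\sigma_{st}^b
\]
for every partition $\pi$ of every $[s,t]\subseteq[0,T]$, with $C_{a,b}$ depending only on $a,b$.

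Next I would show $\chi^\pi_{st}$ converges as $|\pi|\to0$. Applying the last estimate on each interval of $\pi_1$ and comparing $\pi_1$ to its common refinement with $\pi_2$ (and symmetrically) gives $\|\chi^{\pi_1}_{st}-\chi^{\pi_2}_{st}\|_E\le C_{a,b}\sum_{[u,v]\in\pi_1}\rho_{uv}^a\sigma_{uv}^b+C_{a,b}\sum_{[u,v]\in\pi_2}\rho_{uv}^a\sigma_{uv}^b$. Writing $a+b=1+\kappa$ with $\kappa>0$, Hölder's inequality with exponents $\frac{a+b}{a},\frac{a+b}{b}$ and superadditivity of $\rho,\sigma$ over the (disjoint) intervals of $\pi$ yield
\[
  \sum_{[u,v]\in\pi}\rho_{uv}^a\sigma_{uv}^b
    \le \Bigl(\max_{[u,v]\in\pi}\rho_{uv}^a\sigma_{uv}^b\Bigr)^{\!\kappa/(a+b)}\rho_{st}^{a/(a+b)}\sigma_{st}^{b/(a+b)},
\]
and the maximum tends to $0$ as $|\pi|\to0$ by uniform continuity of the controls on the compact simplex $\Delta_T$ (note $\chi_{uu}=0$, since the hypothesis with $s=u$ forces $\|\chi_{uu}\|_E\le\sigma_{uu}^b=0$). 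Hence $(\chi^\pi_{st})$ is Cauchy; call its limit $\Is\chi_{st}$. Concatenating partitions of $[s,u]$ and of $[u,t]$ gives additivity $\Is\chi_{su}+\Is\chi_{ut}=\Is\chi_{st}$, so $t\mapsto\Is\chi_t:=\Is\chi_{0t}$ is a well-defined path with $\Is\chi_0=0$; passing to the limit yields \eqref{e:characterize} (with the harmless constant $C_{a,b}$), and $\|\Is\chi_{st}\|_E\le\|\chi_{st}\|_E+C_{a,b}\rho_{st}^a\sigma_{st}^b\to0$ as $t\to s$ shows $\Is\chi$ is continuous.

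For uniqueness, if paths $\psi,\psi'$ vanishing at $0$ both satisfy \eqref{e:characterize}, then $g_{st}:=(\psi_t-\psi'_t)-(\psi_s-\psi'_s)$ is additive with $\|g_{st}\|_E\le 2C_{a,b}\rho_{st}^a\sigma_{st}^b$; expanding $g_{st}=\sum_{[u,v]\in\pi}g_{uv}$ and using the same Hölder bound gives $\|g_{st}\|_E\le 2C_{a,b}\sum_{[u,v]\in\pi}\rho_{uv}^a\sigma_{uv}^b\to0$, so $\psi\equiv\psi'$. Finally, if $\chi\in C^\var{r}(\Delta_T;E)$ with $r\ge1$, then from \eqref{e:characterize}, the bound $(x+y)^r\le 2^{r-1}(x^r+y^r)$, and $\rho_{uv}^a\sigma_{uv}^b\le\omega_{uv}^{a+b}$ with $\omega:=\rho+\sigma$ again a control, one has for any partition
\[
  \sum_{i}\|\Is\chi_{t_it_{i+1}}\|_E^r
    \les \sum_i\|\chi_{t_it_{i+1}}\|_E^r+\sum_i\omega_{t_it_{i+1}}^{(a+b)r},
\]
and since $(a+b)r>1$ the last sum is $\le\omega_{st}^{(a+b)r}$ by superadditivity; taking the supremum over partitions shows $\Is\chi\in C^\var{r}(0,T;E)$.

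The delicate point is the bookkeeping in the two-control pigeonhole and in the Hölder estimate that simultaneously powers the convergence, the characterising bound, and uniqueness — this is exactly where the condition $a+b>1$ enters essentially. Everything else (additivity, continuity, and the finite-variation bound) is routine once that estimate is in hand.
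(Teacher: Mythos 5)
Your proof is correct. Note that the paper does not actually prove this lemma: it is recalled from the literature, with a citation to \cite[Theorem 2.2]{FriZha2018}, and your argument is precisely the standard sewing/Young construction used there --- the two-control pigeonhole for point removal, the H\"older-plus-superadditivity bound $\sum_{[u,v]\in\pi}\rho_{uv}^a\sigma_{uv}^b\le M^{(a+b-1)/(a+b)}\rho_{st}^{a/(a+b)}\sigma_{st}^{b/(a+b)}$ driving both convergence and uniqueness, and the elementary $V^r$ bound at the end. Your observation that the characterising estimate \eqref{e:characterize} really carries a constant $C_{a,b}$ (as in the cited reference) rather than constant $1$ is accurate; the paper's statement is slightly informal on this point.
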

We shall use the above lemma to define the integral
\[
  \int_0^T f(s,\theta_s-\omega_s)\,ds
\]
for suitable paths $(\theta_s)_{s\in[0,T]}$,
$(\omega_s)_{s\in[0,T]}$ on $\R^d$,
and non-smooth $f:\R^d\to\R^d$.
To this end we recall a standard result
for convolutions (see for instance
\cite{KhuSch2021}).
\begin{theorem}\label{t:convolution}
  Let $\alpha_1,\alpha_2\in\R$, $q,q_1,q_2\in(0,\infty]$,
  $p,p_1,p_2\in[1,\infty)$ be such that
  \[
    \frac1q
      \leq\frac1{q_1}+\frac1{q_2},
        \qquad
    1 + \frac1p
      = \frac1{p_1} + \frac1{p_2}.
  \]
  If $f\in B^{\alpha_1}_{p_1,q_1}$ and $g\in B^{\alpha_2}_{p_2,q_2}$,
  then $f\star g\in B^{\alpha_1+\alpha_2}_{p,q}$ and
  \[
    \|f\star g\|_{B^{\alpha_1+\alpha_2}_{p,q}}
      \lesssim \|f\|_{B^{\alpha_1}_{p_1,q_1}}
        \|g\|_{B^{\alpha_2}_{p_2,q_2}}.
  \]
\end{theorem}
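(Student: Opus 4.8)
The plan is to run the standard Littlewood--Paley argument. Denote by $P_N$, for $N$ a dyadic number, the Littlewood--Paley projector onto frequencies of size $\sim N$ (the same family used in \autoref{s:sbe}, with the usual convention that the lowest block collects all frequencies $\lesssim 1$), so that $f=\sum_N P_N f$ with convergence in $S'(\R^d)$ and $\|u\|_{B^s_{r,\sigma}}\sim\big\|\big(N^s\|P_Nu\|_{L^r}\big)_N\big\|_{\ell^\sigma}$. By density it is enough to prove the estimate for $f,g\in S(\R^d)$ and extend by continuity; since $p_1,p_2\ge1$ and $1+\tfrac1p=\tfrac1{p_1}+\tfrac1{p_2}$, Young's convolution inequality makes sense of $(f,g)\mapsto f\star g$ at the level of each frequency piece, which is all that is used. (When $\alpha_1$ or $\alpha_2$ is negative, one simply reads $f\star g$ through its Littlewood--Paley pieces from the outset; the computation below does this automatically.)

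The key point is frequency localization. Since $\widehat{P_Nf}$ is supported in an annulus $\{|\xi|\sim N\}$, the product $\widehat{P_Nf}\,\widehat{P_Mg}$, and hence $P_Nf\star P_Mg$, vanishes unless $M\sim N$, with an implicit constant depending only on the fixed profile. Consequently, for each dyadic $N_0$,
\[
  P_{N_0}(f\star g)=\sum_{N\sim M\sim N_0}P_{N_0}\big(P_Nf\star P_Mg\big),
\]
a sum over boundedly many pairs $(N,M)$. Using the $L^p$-boundedness of $P_{N_0}$ (uniform in $N_0$, again by Young with an $L^1$-normalised kernel) and then Young's inequality with exponents $(p_1,p_2,p)$, each summand is bounded by $\|P_Nf\|_{L^{p_1}}\|P_Mg\|_{L^{p_2}}$. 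Because $N\sim M\sim N_0$ we may replace $N_0^{\alpha_1+\alpha_2}$ by $N^{\alpha_1}M^{\alpha_2}$ up to a constant, so, writing $a_N:=N^{\alpha_1}\|P_Nf\|_{L^{p_1}}$ and $b_N:=N^{\alpha_2}\|P_Ng\|_{L^{p_2}}$, we obtain $N_0^{\alpha_1+\alpha_2}\|P_{N_0}(f\star g)\|_{L^p}\les\sum_{N\sim N_0}a_Nb_N$.

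It remains to take the $\ell^q$ norm in $N_0$. The finite shift $N\sim N_0$ is a convolution with a finitely supported kernel, hence costs only a constant, so $\|f\star g\|_{B^{\alpha_1+\alpha_2}_{p,q}}\les\|(a_Nb_N)_N\|_{\ell^q}$. Finally, since $\tfrac1q\le\tfrac1{q_1}+\tfrac1{q_2}$, setting $\tfrac1r=\tfrac1{q_1}+\tfrac1{q_2}$ gives the embedding $\ell^r\hookrightarrow\ell^q$, and by Hölder's inequality for sequences $\|(a_Nb_N)\|_{\ell^q}\le\|(a_Nb_N)\|_{\ell^r}\le\|(a_N)\|_{\ell^{q_1}}\|(b_N)\|_{\ell^{q_2}}=\|f\|_{B^{\alpha_1}_{p_1,q_1}}\|g\|_{B^{\alpha_2}_{p_2,q_2}}$, which is the claim.

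There is no genuine obstacle here: the argument is routine. The one spot deserving care is the frequency-support bookkeeping in the second step — it is precisely the constraint $M\sim N$, i.e.\ the \emph{absence} of any low-high ``paraproduct'' interaction (in contrast with the product of two distributions), that makes the regularities simply add; one should confirm that the constants there, and in the $L^p$-boundedness of $P_{N_0}$, depend only on the fixed Littlewood--Paley profile and not on $N_0$. Everything else is Young's inequality, applied once for functions and once for sequences.
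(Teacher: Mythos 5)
The paper does not prove \autoref{t:convolution} at all: it is quoted as a known result with a pointer to \cite{KhuSch2021}, so there is no internal proof to compare against. Your argument is the standard one (and essentially the one behind the cited reference), and it is correct. The key observation is correctly identified and correctly exploited: since $\widehat{P_Nf\star P_Mg}=\widehat{P_Nf}\,\widehat{P_Mg}$ is supported in the \emph{intersection} of the two annuli, only the diagonal $N\sim M\sim N_0$ survives, which is exactly why the regularities add with no paraproduct terms; the rest is Young's inequality on each block (uniform $L^p$-boundedness of $P_{N_0}$ follows from the $L^1$-normalised kernel), the finite-overlap summation in $N_0$, and H\"older plus the embedding $\ell^r\hookrightarrow\ell^q$ for $\tfrac1r=\tfrac1{q_1}+\tfrac1{q_2}\geq\tfrac1q$. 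Two small points of care, both of which you essentially flag: (i) when $q_1=\infty$ or $q_2=\infty$ Schwartz functions are not dense, so the ``extend by continuity'' phrasing should be dropped in favour of the block-level definition $f\star g:=\sum_{N\sim M}P_Nf\star P_Mg$, which your estimate shows converges in $B^{\alpha_1+\alpha_2}_{p,q}\hookrightarrow S'$; and (ii) the lowest block must be treated with the usual convention that it collects all frequencies $\lesssim1$, so that $P_1(f\star g)$ picks up only the finitely many pairs with $N,M\lesssim1$. Neither affects the validity of the argument.
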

By collecting the above results, we can prove
existence and properties of a Young integral.
\begin{proposition}\label{p:integral}
  Let $p_1,q_1,p_2\in[1,\infty)$, $q_2\in[1,\infty]$,
  $\alpha_1>0$, $\alpha_2\in\R$, and $r_1,r_2,r_3>0$
  be such that
  \[
    1 < \frac1{q_1} + \frac1{p_2} < 1 + \frac{\alpha_1+\alpha_2}{d},\qquad
    \frac1{r_1} + \frac1{r_2} > 1,\qquad
    \frac1{r_1} + \frac{\gamma}{r_3} > 1,
  \]
  for every $\gamma\in(0,1)$, $\gamma\leq\gamma_0$, with
  $\gamma_0=\alpha_1+\alpha_2-d\bigl(\frac1{q_1}+\frac1{p_2}-1\bigr)$.
  
  Let $\omega:[0,T]\to\R^d$ be a continuous path
  such that its occupation measure
  $\mu^\omega_{0,\cdot}\in C^\var{r_1}([0,T];\sbe^{\alpha_1,p_1}_{q_1})$,
  let $f\in C^\var{r_2}([0,T];B^{\alpha_2}_{p_2,q_2}(\R^d))$
  and $\theta\in C^\var{r_3}([0,T];\R^d)$.
  
  Then
  \[
    t\mapsto\int_0^t f(s,\theta_s-\omega_s)\,ds
  \]
  is well defined in $C^\var{r_1}([0,T];\R^d)$, and
  \[
    \begin{aligned}
      \lefteqn{\Bigl\|\int_0^\cdot f(s,\theta_s-\omega_s)\,ds\Bigr\|_{\var{r_1}}}\qquad&\\
        &\lesssim \bigl(\|f\|_{C^\var{r_2}(0,T;B^{\alpha_2}_{p_2,q_2})}
          +(1+\|\theta\|_{\var{r_3}}^\gamma)\|f\|_{L^\infty(0,T;B^{\alpha_2}_{p_2,q_2})}\bigr)
          \|\mu_{0,\cdot}^\omega\|_{C^\var{r_1}(0,T;\sbe^{\alpha_1,p_1}_{q_1})}.
    \end{aligned}
  \]
  
  Moreover, if $\theta^1,\theta^2\in C^\var{r_3}([0,T];\R^d)$, and if $\lambda\in(0,1)$
  is such that $\frac1{r_1}+\frac{\gamma(1-\lambda)}{r_3}>1$, then
  \begin{equation}\label{e:Icontinuous}
    \begin{aligned}
      \lefteqn{\Bigl\|\int_0^\cdot f(s,\theta^1_s-\omega_s)\,ds
          - \int_0^\cdot f(s,\theta^2_s-\omega_s)\,ds\Bigr\|_\var{r_1}\lesssim}\qquad&\\
        &\lesssim \|\mu_{0\cdot}^\omega\|_{C^\var{r_1}([0,T];\sbe^{\alpha_1,p_1}_{q_1})}
          \|f\|_{C^\var{r_2}([0,T];B^{\alpha_2}_{p_2,q_2})}
          \|\theta^1-\theta^2\|_\infty^{\lambda\gamma}\cdot\\
        &\quad\cdot\bigl(\|\theta^1-\theta^2\|_\infty + \|\theta^1\|_{C^\var{r_3}([0,T]:\R^d)}
           + \|\theta^2\|_{C^\var{r_3}([0,T];\R^d)}\bigr)^{\gamma(1-\lambda)}.
    \end{aligned}
  \end{equation}
  Finally, if $\gamma_0>1$, then the map
  \[
    \theta\to\Is(\theta)=\int_0^\cdot f(s,\theta_s-\omega_s)\,ds
  \]
  is Fréchet differentiable and, for $\gamma\in(0,1)$, with
  $\gamma\leq\gamma_0-1$,
  \begin{equation}\label{e:Idifferentiable}
    \begin{aligned}
      \|D\Is(\theta)\|
        &\lesssim \|\mu_{0\cdot}^\omega\|_{C^\var{r_1}([0,T]:\sbe^{\alpha_1,p_1}_{q_1})}\cdot\\
        &\qquad \cdot\bigl(\|f\|_{L^\infty([0,T];B^{\alpha_2}_{p_2,q_2})}
            + \|f\|_{C^\var{r_2}([0,T];B^{\alpha_2}_{p_2,q_2})}
            + \|\theta\|_{C^\var{r_3}([0,T];\R^d)}^{\gamma}\bigr).
    \end{aligned}
  \end{equation}
\end{proposition}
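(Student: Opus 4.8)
The plan is to realize the integral via the sewing lemma (\autoref{l:sewing}) applied to the germ
\[
  \chi_{st} := \int_s^t f(s,\theta_s-\omega_u)\,du = \bigl(f(s,\cdot)\star\mu^\omega_{s,t}\bigr)(\theta_s),
\]
the second equality being the usual rewriting of a time integral along $\omega$ as a convolution with the occupation measure $\mu^\omega_{s,t}=\int_s^t\delta_{\omega_u}\,du$. Since $\mu^\omega_{0,\cdot}\in C^\var{r_1}([0,T];\sbe^{\alpha_1,p_1}_{q_1})$, the increments $\mu^\omega_{s,t}=\mu^\omega_{0,t}-\mu^\omega_{0,s}$ are compactly supported and continuous in $(s,t)$ in $\sbe^{\alpha_1,p_1}_{q_1}$, hence by \autoref{besovregularity} in $B^{\alpha_1}_{q_1,\infty}$. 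Because $\frac1{q_1}+\frac1{p_2}>1$, \autoref{t:convolution} gives $g_{st}:=f(s,\cdot)\star\mu^\omega_{s,t}\in B^{\alpha_1+\alpha_2}_{p,\infty}$ with $\frac1p=\frac1{q_1}+\frac1{p_2}-1$, and since $\alpha_1+\alpha_2-\frac dp=\gamma_0>0$ the Besov embedding yields $g_{st}\in C^{\gamma_0}$ with $\|g_{st}\|_{C^{\gamma_0}}\lesssim\|f(s,\cdot)\|_{B^{\alpha_2}_{p_2,q_2}}\|\mu^\omega_{s,t}\|_{\sbe^{\alpha_1,p_1}_{q_1}}$. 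Hence $\chi_{st}=g_{st}(\theta_s)$ is a well-defined continuous $\R^d$-valued function on $\Delta_2$ with $|\chi_{st}|\lesssim\|f\|_{L^\infty(0,T;B^{\alpha_2}_{p_2,q_2})}\|\mu^\omega\|_{V^{r_1}(s,t;\sbe^{\alpha_1,p_1}_{q_1})}$, so $\chi\in C^\var{r_1}(\Delta_T;\R^d)$.

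Next I check the sewing hypothesis. From $\mu^\omega_{s,t}=\mu^\omega_{s,u}+\mu^\omega_{u,t}$,
\[
  \delta\chi_{sut} = \bigl[g^{(s)}_{ut}(\theta_s)-g^{(s)}_{ut}(\theta_u)\bigr] + \bigl(\bigl(f(s,\cdot)-f(u,\cdot)\bigr)\star\mu^\omega_{u,t}\bigr)(\theta_u), \qquad g^{(s)}_{ut}:=f(s,\cdot)\star\mu^\omega_{u,t}.
\]
Fixing $\gamma\in(0,1)$ with $\gamma\le\gamma_0$, the first term is $\le\|g^{(s)}_{ut}\|_{C^\gamma}|\theta_s-\theta_u|^\gamma\lesssim\|f\|_{L^\infty B^{\alpha_2}_{p_2,q_2}}\|\mu^\omega\|_{V^{r_1}(u,t)}\|\theta\|_{V^{r_3}(s,u)}^\gamma$ and the second is $\lesssim\|f\|_{V^{r_2}(s,u;B^{\alpha_2}_{p_2,q_2})}\|\mu^\omega\|_{V^{r_1}(u,t)}$. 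As $\|\theta\|_{V^{r_3}(\cdot)}^{r_3}$, $\|f\|_{V^{r_2}(\cdot)}^{r_2}$, $\|\mu^\omega\|_{V^{r_1}(\cdot)}^{r_1}$ are controls and powers $\ge1$ of controls are controls, each term has the form (global constant)$\cdot\rho_{su}^a\sigma_{ut}^b$ with $\sigma=\|\mu^\omega\|_{V^{r_1}(\cdot)}^{r_1}$, $b=\frac1{r_1}$, $a\in\{\frac\gamma{r_3},\frac1{r_2}\}$; the hypotheses $\frac\gamma{r_3}+\frac1{r_1}>1$ and $\frac1{r_2}+\frac1{r_1}>1$ give $a+b>1$. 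Picking a common $a_0\in(1-\frac1{r_1},\min(\frac\gamma{r_3},\frac1{r_2})]$ and adding the two controls puts $\|\delta\chi_{sut}\|$ in the form required by \autoref{l:sewing}, which produces a unique $\Is\chi\in C^\var{r_1}([0,T];\R^d)$ with $(\Is\chi)_0=0$; by definition this is $\int_0^\cdot f(s,\theta_s-\omega_s)\,ds$. The quantitative bound follows from \eqref{e:characterize}, which makes $\Is\chi-\chi$ of finite $1$-variation bounded by $\rho(0,T)^{a_0}\sigma(0,T)^b\lesssim\bigl(\|f\|_{C^\var{r_2}B^{\alpha_2}_{p_2,q_2}}+\|\theta\|_\var{r_3}^\gamma\|f\|_{L^\infty B^{\alpha_2}_{p_2,q_2}}\bigr)\|\mu^\omega\|_{C^\var{r_1}\sbe^{\alpha_1,p_1}_{q_1}}$, which combined with the bound on $\|\chi\|_{V^{r_1}}$ yields the claim.

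For \eqref{e:Icontinuous} I use that $\Is$ is linear, so $\Is\chi^1-\Is\chi^2=\Is(\chi^1-\chi^2)$ with $\chi^i_{st}=g_{st}(\theta^i_s)$, and it suffices to bound $\chi^1-\chi^2$ and $\delta(\chi^1-\chi^2)$. Every quantity that appears is a first or second difference of $g_{st}\in C^\gamma$ evaluated at $\theta^1_\bullet,\theta^2_\bullet$, and each such second difference admits both the bounds $\|g\|_{C^\gamma}\|\theta^1-\theta^2\|_\infty^\gamma$ and $\|g\|_{C^\gamma}\bigl(|\theta^1_s-\theta^1_u|^\gamma+|\theta^2_s-\theta^2_u|^\gamma\bigr)$; interpolating the two with weights $\lambda$ and $1-\lambda$ produces a factor $\|\theta^1-\theta^2\|_\infty^{\lambda\gamma}$ times a $V^{r_3}(s,u)$-control of $\theta^1,\theta^2$ raised to the power $\gamma(1-\lambda)$. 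The sewing exponent on the $[s,u]$ side becomes $\frac{\gamma(1-\lambda)}{r_3}$, which with $\frac1{r_1}$ exceeds $1$ by the hypothesis $\frac1{r_1}+\frac{\gamma(1-\lambda)}{r_3}>1$; \autoref{l:sewing} then gives \eqref{e:Icontinuous}. Finally, when $\gamma_0>1$ the candidate derivative $D\Is(\theta)$ is the sewing of $\chi^{(\psi)}_{st}:=\bigl(\nabla f(s,\cdot)\star\mu^\omega_{s,t}\bigr)(\theta_s)\,\psi_s$, where $\nabla f(s,\cdot)\in B^{\alpha_2-1}_{p_2,q_2}$, so $\nabla f(s,\cdot)\star\mu^\omega_{s,t}\in B^{\alpha_1+\alpha_2-1}_{p,\infty}\hookrightarrow C^{\gamma_0-1}$ with $\gamma_0-1>0$. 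Splitting $\delta\chi^{(\psi)}_{sut}$ into a term with an increment of $\theta$ (treated as above, exponent $\frac\gamma{r_3}$, $\gamma\in(0,1)$, $\gamma\le\gamma_0-1$), a term with an increment of $f$, and a term with an increment of $\psi$ — in the last of which one bounds $|\psi_s-\psi_u|\le\|\psi\|_\infty^{1-\gamma}\|\psi\|_{V^{r_3}(s,u)}^\gamma$, keeping the $[s,u]$-exponent equal to $\frac\gamma{r_3}$ and making the bound degree-one homogeneous, hence linear, in $\psi$ — \autoref{l:sewing} yields a bounded linear $D\Is(\theta)$ with \eqref{e:Idifferentiable}. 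That this is the Fréchet derivative follows by sewing the remainder germ $g(\theta_s+\psi_s)-g(\theta_s)-\nabla g(\theta_s)\psi_s$, with $g=f(s,\cdot)\star\mu^\omega_{s,t}\in C^{\gamma_0}$ (so, for $\gamma_0>1$, locally $C^1$ with $(\gamma_0-1)$-Hölder derivative): this germ and its increments are $O(\|\psi\|_\infty^{\gamma_0})$, hence $o(\|\psi\|)$ in $C^\var{r_1}$ after sewing. The main obstacle throughout is the bookkeeping that recasts the pointwise germ estimates into the exact product form $\rho_{su}^a\sigma_{ut}^b$, $a+b>1$, demanded by \autoref{l:sewing} — matching the convolution/embedding threshold $\gamma_0$ against the three $r$-variation conditions — together with the interpolation on the $\psi$-increment, which is what lets one avoid assuming $\frac1{r_1}+\frac1{r_3}>1$ in the differentiability statement.
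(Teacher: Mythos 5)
Your proof follows essentially the same route as the paper's: the same germ $\chi_{st}=f(s,\cdot)\star\mu^\omega_{st}(\theta_s)$, the same convolution-plus-embedding estimate giving $f(s,\cdot)\star\mu^\omega_{ut}\in C^{\gamma_0}$, the same control bookkeeping feeding into \autoref{l:sewing}, the same two-way estimate and $\lambda$-interpolation for \eqref{e:Icontinuous}, and the same candidate derivative germ with a Taylor-remainder germ for Fréchet differentiability. The only cosmetic differences are the grouping of $\delta\chi_{sut}$ and your extra interpolation on the $\psi$-increment in the derivative bound (the paper simply uses exponent $1/r_3\ge\gamma/r_3$ there), neither of which changes the argument.
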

\begin{proof}
  Set
  \begin{equation}\label{e:chi}
    \chi_{st}
      = \int_s^t f(s,\theta_s-\omega_r)\,dr
      = f(s,\cdot)\star\mu^\omega_{st}(\theta_s),
  \end{equation}
  thus for $s\leq u\leq t$,
  \[
    \delta\chi_{sut}
      = \bigl(f(s,\cdot)-f(u,\cdot)\bigr)\star\mu^\omega_{ut}(\theta_s)
        + f(u,\cdot)\star\mu^\omega_{ut}(\theta_s)
          - f(u,\cdot)\star\mu^\omega_{ut}(\theta_u).
  \]
  Notice preliminary that by \autoref{besovregularity},
  $\mu^\omega_{ut}\in B^{\alpha_1}_{q_1,\infty}$,
  therefore by \autoref{t:convolution}
  $f(u,\cdot)\star\mu^\omega_{ut}\in B^{\alpha_1+\alpha_2}_{p,q_2}$,
  with $1+\frac1p=\frac1{q_1}+\frac1{p_2}$. By Sobolev's
  embeddings, $f(u,\cdot)\star\mu^\omega_{ut}\in B^\gamma_{\infty,q_2}$
  with $\gamma_0=\alpha_1+\alpha_2-\frac{d}{p}$ and
  \[
    \begin{multlined}[.9\linewidth]
      \|f(u,\cdot)\star\mu^\omega_{ut}\|_{B^{\gamma_0}_{\infty,q_2}}
        \lesssim \|f(u,\cdot)\star\mu^\omega_{ut}\|_{B^{\alpha_1+\alpha_2}_{p,q_2}}\lesssim\\
        \lesssim \|f(u,\cdot)\|_{B^{\alpha_2}_{p_2,q_2}}
          \|\mu_{ut}^\omega\|_{B^{\alpha_1}_{q_1,\infty}}
        \lesssim \|f(u,\cdot)\|_{B^{\alpha_2}_{p_2,q_2}}
          \|\mu_{ut}^\omega\|_{\sbe^{\alpha_1,p_1}_{q_1}}.
    \end{multlined}
  \]
  A similar estimate clearly holds for $(f(s,\cdot)-f(u,\cdot))\star\mu^\omega_{ut}$.
  
  Fix $\gamma\in(0,1)$, with $\gamma\leq\gamma_0$.
  We use the above inequalities to estimate $|\delta\chi_{sut}|$,
  \[
    \begin{aligned}
      |\delta\chi_{sut}|
        &\leq |\bigl(f(s,\cdot)-f(u,\cdot)\bigr)\star\mu^\omega_{ut}(\theta_s)|
        + |f(u,\cdot)\star\mu^\omega_{ut}(\theta_s)
          - f(u,\cdot)\star\mu^\omega_{ut}(\theta_u)|\\
        &\leq \|(f(s,\cdot)-f(u,\cdot))\star\mu^\omega_{ut}\|_{B^\gamma_{\infty,q_2}}
          + \|f(u,\cdot)\star\mu^\omega_{ut}\|_{B^\gamma_{\infty,q_2}}
            |\theta_s-\theta_u|^\gamma\\
        &\lesssim \rho_{su}^{1/r_0}\sigma_{ut}^{1/r_1},
    \end{aligned}
  \]
  where $F_T=\|f\|_{L^\infty(0,T;B^{\alpha_2}_{p_2,q_2})}$,
  $r_0=\max(r_2,r_3/\gamma)$, and
  \[
    \begin{aligned}
      \sigma_{st}
        &= \|\mu^\omega_{0,\cdot}\|^{r_1}_{C^\var{r_1}([s,t];\sbe^{\alpha_1,p_1}_{q_1})},\\
      \rho_{st}
        &= F_T^{r_0}\|\theta\|_{\var{r_3},[s,t]}^{\gamma r_0}
          + \|f\|^{r_0}_{C^\var{r_2}([s,t];B^{\alpha_2}_{p_2,q_2})},
    \end{aligned}
  \]
  are controls, and by assumption $\frac1{r_1}+\frac1{r_0}>1$.
  Moreover,
  \[
    |\chi_{st}|
      \lesssim \|f\|_{L^\infty(0,T;B^{\alpha_2}_{p_2,q_2})}
        \|\mu^\omega_{st}\|_{\sbe^{\alpha_1,p_1}_{q_1}}
      \leq F_T\sigma_{st}^{1/r_1}
  \]
  therefore by \autoref{l:sewing} the conclusion follows.

  We turn to the proof of \eqref{e:Icontinuous}. Given
  $\theta^1,\theta^2\in C^\var{r_3}([0,T];\R^d)$,
  it is not difficult to prove through \autoref{l:sewing} that
  \[
    \int_0^t f(s,\theta_s^1-\omega_s)\,ds - \int_0^t f(s,\theta_s^2-\omega_s)\,ds
      = \int_0^t \bigl(f(s,\theta_s^1-\omega_s)-f(s,\theta_s^2-\omega_s)\bigr)\,ds,
  \]
  and that the integral on the right hand side can be obtained by
  \autoref{l:sewing} with the local description
  \[
    \chi^0_{st}
      = f(s,\cdot)\star\mu_{st}^\omega(\theta_s^1)
        - f(s,\cdot)\star\mu_{st}^\omega(\theta_s^2).
  \]
  Moreover,
  \[
    \begin{aligned}
      \delta\chi^0_{sut}
        &= (f(s,\cdot)-f(u,\cdot))\star\mu^\omega_{ut}(\theta_s^1)
          - (f(s,\cdot)-f(u,\cdot))\star\mu^\omega_{ut}(\theta_s^2)\\
        &\quad  + f(u,\cdot)\star\mu^\omega_{ut}(\theta_s^1) - f(u,\cdot)\star\mu^\omega_{ut}(\theta_s^2)
          - f(u,\cdot)\star\mu^\omega_{ut}(\theta_u^1) + f(u,\cdot)\star\mu^\omega_{ut}(\theta_u^2)
      \end{aligned}
  \]
  can be estimated as
  \[
    \begin{aligned}
      |\delta\chi^0_{sut}|
        &\leq \|(f(s,\cdot)-f(u,\cdot))\star\mu^\omega_{ut}\|_{B^\gamma_{\infty,q_2}}
          |\theta_s^1-\theta_s^2|^\gamma + {}\\
        &\quad  + \|f(u,\cdot)\star\mu^\omega_{ut}\|_{B^\gamma_{\infty,q_2}}
          (|\theta_s^1-\theta_s^2|^\gamma + |\theta_u^1-\theta_u^2|^\gamma)\\
        &\lesssim \sigma_{ut}^{1/r_1}\bigl((\rho^f_{su})^{1/r_2}\|\theta^1-\theta^2\|_\infty^\gamma
          + F_T\|\theta^1-\theta^2\|_\infty^\gamma\bigr),
    \end{aligned}
  \]
  and as
  \[
    \begin{aligned}
      |\delta\chi^0_{sut}|
        &\leq (\rho^f_{su})^{1/r_2}\sigma_{ut}^{1/r_1}\|\theta^1-\theta^2\|_\infty^\gamma
          + \|f(u,\cdot)\star\mu^\omega_{ut}\|_{B^\gamma_{\infty,q_2}}
          (|\theta_s^1-\theta_u^1|^\gamma + |\theta_s^2-\theta_u^2|^\gamma)\\
        &\lesssim \sigma_{ut}^{1/r_1}\bigl((\rho^f_{su})^{1/r_2}\|\theta^1-\theta^2\|_\infty^\gamma
          + F_T(\rho^{\theta^1}_{su}+\rho^{\theta^2}_{su})^{\gamma/r_3}\bigr),
    \end{aligned}
  \]
  where $\rho^f_{st}=\|f\|^{r_2}_{\var{r_2},[s,t]}$, and similarly for
  $\rho^{\theta^i}$, $i=1,2$ in $C^\var{r_3}$.
  So for $\lambda\in(0,1)$,
  \[
    |\delta\chi^0_{sut}|
      \leq \sigma_{ut}^{1/r_1}\bigl((\rho^f_{su})^{1/r_2}\|\theta^1-\theta^2\|_\infty^\gamma
        + F_T\|\theta^1-\theta^2\|_\infty^{\gamma\lambda}
          (\rho^{\theta^1}_{su}+\rho^{\theta^2}_{su})^{(1-\lambda)\gamma/r_3}\bigr).
  \]
  Take this time $r_0=\max(r_2,r_3/(\gamma(1-\lambda)))$, and $\lambda$ small enough
  that $1/r_1+1/r_0>1$ still holds. Since we also have
  \[
    |\chi^0_{st}|
      \leq \|f(s,\cdot)\star\mu_{st}^\omega\|_{B^\gamma_{\infty,q_2}}
        \|\theta^1-\theta^2\|_\infty^\gamma
      \lesssim F_T\sigma_{st}^{1/r_1}\|\theta^1-\theta^2\|_\infty^\gamma,
  \]
  \eqref{e:Icontinuous} follows.
  
  We finally prove differentiability of $\Is$. Assume $\gamma_0>1$,
  and let now $\gamma\in(0,1)$ with $\gamma\leq \gamma_0-1$.
  First notice that if $\theta,\eta\in C^\var{r_3}(0,T;\R^d)$,
  the function $\chi'_{st}=\eta_s\cdot\nabla(f(s,\cdot)\star\mu_{st})(\theta_s)$
  meets the assumption of \autoref{l:sewing}, and thus defines
  an integral $\Js(\theta)\eta$, which is moreover linear in $\eta$.
  This follows as for the local description defined in \eqref{e:chi}.
  Indeed, $|\chi'_{st}|\leq \|\eta\|_\infty\|f\|_{L^\infty(0,T;B^{\alpha_2}_{p_2,q_2})}
  \|\mu_{st}^\omega\|_{\sbe^{\alpha_1,p_1}_{q_1}}$, and
  \[
    \begin{aligned}
      \delta\chi_{sut}'
        &= \eta_s\cdot\nabla\bigl((f(s,\cdot)-f(u,\cdot))\star\mu_{ut}^\omega\bigr)(\theta_s)
          + (\eta_s-\eta_u)\cdot\nabla\bigl(f(u,\cdot)\star\mu_{ut}^\omega\bigr)(\theta_s)\\
        &\quad + \eta_u\cdot\bigl(\nabla\bigl(f(u,\cdot)\star\mu_{ut}^\omega\bigr)(\theta_s)
            - \nabla\bigl(f(u,\cdot)\star\mu_{ut}^\omega\bigr)(\theta_u)\bigr),
    \end{aligned}
  \]
  thus
  \[
    |\delta\chi_{sut}'|
      \leq\sigma_{ut}^{1/r_1}\bigl(\|\eta\|_\infty(\rho^f_{su})^{1/r_2}
        + F_T(\rho^\eta_{su})^{1/r_3}
        + \|\eta\|_\infty F_T (\rho^\theta_{su})^{\gamma/r_3}\bigr)
  \]
  where $\sigma$, $F_T$, $\rho^f$, $\rho^\theta$ are as before,
  and $\rho^\eta$ is defined as $\rho^\theta$.
  Let us prove that $\Is(\theta)$ is differentiable and
  $D\Is(\theta)=\Js(\theta)$. By our previous considerations,
  given $\theta,\eta\in C^\var{r_3}([0,T];\R^d)$ and
  $\epsilon>0$, the integral
  \[
    \tfrac1\epsilon\bigl(\Is(\theta+\epsilon\eta) - \Is(\theta)\bigr) - \Js(\theta)\eta
  \]
  has the local description
  \[
    \begin{aligned}
      \chi_{st}^\epsilon
        &= \tfrac1\epsilon\bigl( (f(s,\cdot)\star\mu_{st}^\omega)(\theta_s+\epsilon\eta_s)
          - (f(s,\cdot)\star\mu_{st}^\omega)(\theta_s)\bigr)
          - \eta_s\cdot\nabla(f(s,\cdot)\star\mu_{st}^\omega)(\theta_s)\\
        &= \frac1\epsilon\int_0^\epsilon\eta_s\cdot\bigl(
          \nabla(f(s,\cdot)\star\mu_{st}^\omega)(\theta_s+a\eta_s)
          - \nabla(f(s,\cdot)\star\mu_{st}^\omega)(\theta_s)\bigr)\,da
    \end{aligned}
  \]
  Now,
  \[
    |\chi_{st}^\epsilon|
      \leq\frac1\epsilon\int_0^\epsilon a^\gamma|\eta_s|^{\gamma+1}
        \|f(s,\cdot)\star\mu_{st}^\omega\|_{B^{\gamma+1}_{\infty,q_2}}\,da
      \lesssim\epsilon^\gamma\|\eta\|_\infty^{\gamma+1}
        F_T 
         \|\mu_{st}^\omega\|_{\sbe^{\alpha_1,p_1}_{q_1}}
  \]
  and $\delta\chi_{sut}^\epsilon=\cerchiato{\small a}+\cerchiato{\small b}
  +\cerchiato{\small c}$, where
  \[
    \begin{aligned}
      |\cerchiato{\small a}|
        &= \Big|\frac1\epsilon\int_0^\epsilon(\eta_s-\eta_u)\cdot\bigl(
          \nabla(f(s,\cdot)\star\mu_{ut}^\omega)(\theta_s+a\eta_s)
          - \nabla(f(s,\cdot)\star\mu_{ut}^\omega)(\theta_s)\bigr)\,da\Big|\\
        &\lesssim \epsilon^\gamma F_T\|\eta\|_\infty^\gamma
          \sigma_{ut}^{1/r_1}(\rho^\eta_{su})^{1/r_3},
    \end{aligned}
  \]
  \[
    \begin{aligned}
      \cerchiato{\small b}
        &=\Big|\frac1\epsilon\int_0^\epsilon\eta_u\cdot\bigl(
          \nabla((f(s,\cdot)-f(u,\cdot))\star\mu_{ut}^\omega)(\theta_s+a\eta_s)\\
        &\qquad - \nabla((f(s,\cdot)-f(u,\cdot))\star\mu_{ut}^\omega)(\theta_s)\bigr)\,da\Big|\\
        &\lesssim \epsilon^\gamma\|\eta\|_\infty^{\gamma+1}
          \sigma_{ut}^{1/r_1}(\rho^f_{su})^{1/r_2},
    \end{aligned}
  \]
  and finally,
  \[
    \begin{multlined}[.9\linewidth]
      \cerchiato{\small c}
        =\frac1\epsilon\int_0^\epsilon\eta_u\cdot\Bigl[\bigl(
          \nabla(f(u,\cdot)\star\mu_{ut}^\omega)(\theta_s+a\eta_s)
          - \nabla(f(u,\cdot)\star\mu_{ut}^\omega)(\theta_s)\bigr)\\
          - \bigl(\nabla(f(u,\cdot)\star\mu_{ut}^\omega)(\theta_u+a\eta_u)
          - \nabla(f(u,\cdot)\star\mu_{ut}^\omega)(\theta_u)\bigr)\Bigr]
          \,da
    \end{multlined}
  \]
  can be estimated in two different ways (as we have already done previously
  for similar terms) as
  $\cerchiato{\small c}\lesssim\epsilon^\gamma\|\eta\|_\infty^{\gamma+1}F_T\sigma_{ut}^{1/r_1}$
  and
  $\cerchiato{\small c}\lesssim F_T\|\eta\|_\infty\sigma_{ut}^{1/r_1}
  (\rho^\theta_{su}+\rho_{su}^\eta)^{\gamma/r_3}$, so that for $\lambda\in(0,1)$,
  \[
    \cerchiato{\small c}
      \lesssim \epsilon^{\lambda\gamma}F_T\|\eta\|_\infty^{1+\lambda\gamma}
        \sigma_{ut}^{1/r_1}(\rho^\theta_{su}+\rho_{su}^\eta)^{\frac\gamma{r_3}(1-\lambda)}.
  \]
  In conclusion,
  \[
    \bigl\|\tfrac1\epsilon\bigl(\Is(\theta+\epsilon\eta) - \Is(\theta)\bigr)
        - \Js(\theta)\eta\bigr\|_\var{r_3}
      \lesssim\epsilon^{\lambda\gamma}.    
  \]
  Moreover, the above estimate is uniform for $\eta$ in a bounded ball
  centred at $0$. This proves Fréchet differentiability of $\Is(\theta)$.
\end{proof}
We turn to ODEs driven by \sbe-paths.
Following \cite{GalGub2020a},
we shall use the
integral as given by the previous proposition
to give a meaning, for general drifts, to differential
equations.
\begin{definition}\label{d:ode}
  Given $x_0\in\R^d$, $r_2\geq1$, $\alpha_2\in\R$,
  $p_2\in[1,\infty)$, $q_2\in[1,\infty]$, and
  $f\in C^\var{r_2}([a,b];B^{\alpha_2}_{p_2,q_2}(\R^d))$,
  a map $x\in C([a,b];\R^d)$ is a solution of the ODE
  \[
    x_t
      = x_0 - \omega_t + \int_a^t f(s,x_s)\,ds,
      \qquad t\in[a,b],
  \]
  if there are $p_1,q_1\in[1,\infty)$, $\alpha_1>0$
  and $r_1\geq1$ such that 
  \begin{equation}\label{e:ode}
    1 < \frac1{q_1} + \frac1{p_2} < 1 + \frac{\alpha_1+\alpha_2}{d},\qquad
    \frac1{r_1} + \frac1{r_2} > 1,\qquad
    r_1 < 1 + \gamma,
  \end{equation}
  for some $\gamma\in(0,1)$ and $\gamma<\gamma_0$, with
  $\gamma_0=\alpha_1+\alpha_2-d\bigl(\frac1{q_1}+\frac1{p_2}-1\bigr)$,
  and $\theta_\cdot=x_\cdot+\omega_\cdot$ has occupation measure
  $\mu^\omega_{a,\cdot}\in C^\var{r_1}([a,b];\sbe^{\alpha_1,p_1}_{q_1})$,
  and if
  \[
    \theta_t
      = x_0 + \int_0^tf(s,\theta_s-\omega_s)\,ds,
      \qquad t\in[a,b].
  \]
\end{definition}
\begin{theorem}\label{t:ode}
  Let $p_1,q_1,p_2\in[1,\infty)$, $q_2\in[1,\infty]$,
  $\alpha_1>0$, $\alpha_2\in\R$, and $r_1,r_2>0$
  be such that \eqref{e:ode} holds
  for some $\gamma\in(0,1)$ and $\gamma\leq\gamma_0$,
  with $\gamma_0=\alpha_1+\alpha_2-d\bigl(\frac1{q_1}+\frac1{p_2}-1\bigr)$.
  
  Let $x_0\in\R^d$, $\omega:[0,T]\to\R^d$ be a continuous path
  such that its occupation measure
  $\mu^\omega_{0,\cdot}\in C^\var{r_1}([0,T];\sbe^{\alpha_1,p_1}_{q_1})$,
  and let $f\in C^\var{r_2}([0,T];B^{\alpha_2}_{p_2,q_2}(\R^d))$.

  Then there is $x\in C^\var{r_1}([0,T];\R^d)$ such that
  \[
    x_t
      = x_0 - \omega_t + \int_0^t f(s,x_s)\,ds,
  \]
  for $0\leq t\leq T$, in the sense of \autoref{d:ode}.
  
  Moreover, if $\gamma_0>1$ then the solution is unique.
\end{theorem}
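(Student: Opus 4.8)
The plan is to work in the reformulation of \autoref{d:ode}: writing $\theta_\cdot=x_\cdot+\omega_\cdot$, a solution is precisely a fixed point in $C^\var{r_1}([0,T];\R^d)$ of the map
\[
  \Tc(\theta)_t
    := x_0+\Is(\theta)_t
     = x_0+\int_0^t f(s,\theta_s-\omega_s)\,ds,
\]
where $\Is$ is the Young integral of \autoref{p:integral}, used with integration exponent $r_3=r_1$ (permitted by $r_1<1+\gamma$ in \eqref{e:ode}); once $\theta$ is found one sets $x:=\theta-\omega$. All three conclusions of \autoref{p:integral} enter: the a priori bound on $\|\Is(\theta)\|_\var{r_1}$, the continuity estimate \eqref{e:Icontinuous}, and --- only for uniqueness --- the Fréchet-differentiability bound \eqref{e:Idifferentiable}.

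\textbf{Existence.} I would invoke Schauder's fixed point theorem. Since $\gamma<1$, the a priori bound
\[
  \|\Is(\theta)\|_\var{r_1}
    \lesssim \bigl(\|f\|_{C^\var{r_2}}+(1+\|\theta\|_\var{r_1}^\gamma)\|f\|_{L^\infty}\bigr)\|\mu^\omega_{0,\cdot}\|_{C^\var{r_1}}
\]
is sublinear in $\|\theta\|_\var{r_1}$, so for $R$ large the closed convex set $K_R:=\{\theta\in C([0,T];\R^d):\theta_0=x_0,\ \|\theta\|_\var{r_1}\le R\}$ satisfies $\Tc(K_R)\subseteq K_R$. Bounded sets of $C^\var{r_1}$ are not precompact in themselves, so I would view $K_R$ inside the slightly weaker Banach space $C^\var{\bar r}$ with $\bar r>r_1$, where $K_R$ is still closed, bounded and convex, and show that $\Tc(K_R)$ is relatively compact there. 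Crucially this compactness comes from the equation, not from an embedding: reading off the sewing estimate of \autoref{p:integral}, every $\theta\in K_R$ obeys
\[
  |\Is(\theta)_t-\Is(\theta)_s|
    \lesssim_R \|\mu^\omega_{0,\cdot}\|_{C^\var{r_1}([s,t];\sbe^{\alpha_1,p_1}_{q_1})},
\]
whose right side is a continuous control vanishing on the diagonal; hence $\Tc(K_R)$ is equicontinuous and uniformly bounded, so relatively compact in $C^0$ by Arzelà--Ascoli, and then relatively compact in $C^\var{\bar r}$ by interpolating between $\|\cdot\|_\infty$ and the uniform $C^\var{r_1}$-bound. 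Continuity of $\Tc$ on $(K_R,\|\cdot\|_\var{\bar r})$ follows from \eqref{e:Icontinuous}: on $K_R$ the variation norm dominates $\|\cdot\|_\infty$, and \eqref{e:Icontinuous} with a small $\lambda>0$ gives $\|\Is\theta^1-\Is\theta^2\|_\var{r_1}\lesssim_R\|\theta^1-\theta^2\|_\infty^{\lambda\gamma}$. Schauder then yields a fixed point $\theta^\ast\in K_R$; as $\theta^\ast=\Tc(\theta^\ast)\in C^\var{r_1}$, the path $x=\theta^\ast-\omega$ solves the equation in the sense of \autoref{d:ode}.

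\textbf{Uniqueness.} Suppose $\gamma_0>1$ and pick $\gamma\in(0,1)$ with $\gamma\le\gamma_0-1$, still compatible with \eqref{e:ode}. Because $\gamma<1$, the a priori bound self-improves to $\|\theta\|_\var{r_1}\le R_0$ for every solution $\theta$, with $R_0$ depending only on the data. By \autoref{p:integral}, $\Is$ is Fréchet differentiable, and the proof localises the derivative bound: for $[a',b']\subseteq[0,T]$,
\[
  \|D\Is(\theta)\|_{\mathcal L(C^\var{r_1}([a',b'];\R^d))}
    \lesssim \|\mu^\omega_{0,\cdot}\|_{C^\var{r_1}([a',b'])}\bigl(\|f\|_{L^\infty}+\|f\|_{C^\var{r_2}}+\|\theta\|_\var{r_1}^\gamma\bigr).
\]
Since $t\mapsto\mu^\omega_{0,t}$ is a continuous path of finite $r_1$-variation, $(a',b')\mapsto\|\mu^\omega_{0,\cdot}\|_{C^\var{r_1}([a',b'])}$ is a continuous control vanishing on the diagonal, so one can fix a partition $0=t_0<t_1<\dots<t_n=T$ on each block of which the mean value theorem makes $\Tc$ a $\tfrac12$-contraction among paths of $r_1$-variation $\le R_0$. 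Uniqueness on $[t_0,t_1]$ follows, and propagating block by block --- on $[t_i,t_{i+1}]$ with starting value the common $\theta(t_i)$ and occupation measure $\mu^\omega_{t_i,\cdot}$, which inherit the needed regularity --- gives uniqueness on $[0,T]$.

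\textbf{Main obstacle.} The crux is that $\theta\mapsto\Is(\theta)$ is only Hölder (not Lipschitz) in $\theta$ when $\gamma<1$, so a direct contraction argument is unavailable for existence: one must pass through compactness, and the compactness cannot be supplied by an inclusion of variation spaces (which fails) but has to be extracted as equicontinuity of the candidate solutions straight from \autoref{p:integral}. Uniqueness is recovered only under $\gamma_0>1$, where \eqref{e:Idifferentiable} upgrades the Hölder bound to a local Lipschitz bound; the delicate point there is to keep the contraction constants on the short blocks controlled purely by the global data and by $R_0$, uniformly over the finitely many blocks.
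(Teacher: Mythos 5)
Your proposal is correct and its skeleton coincides with the paper's: reformulate via $\theta=x+\omega$, obtain the fixed point of $\Tc(\theta)=x_0+\Is(\theta)$ by Schauder (invariant convex set, compactness of $\Tc(K_R)$ from the equicontinuity built into the sewing bound of \autoref{p:integral} plus the interpolation compactness of variation spaces, continuity from \eqref{e:Icontinuous}), and prove uniqueness for $\gamma_0>1$ from the derivative bound \eqref{e:Idifferentiable} via a local contraction. The one genuine structural difference is in how you make $K_R$ invariant: the paper fixes $R$ and shrinks the time horizon so that $\|\mu^\omega_{0,\cdot}\|_{C^\var{r_1'}(0,T_\star;\sbe^{\alpha_1,p_1}_{q_1})}$ is small, which forces a subsequent gluing step whose termination is controlled by superadditivity of the $r_1'$-variation of $\mu^\omega$; you instead keep the full interval $[0,T]$ and exploit the sublinearity ($\gamma<1$) of the a priori bound to choose $R$ large with $A+BR^\gamma\le R$. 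Your variant removes the gluing argument entirely for existence, at no cost; the paper's localization is not wasted, though, since essentially the same partition-and-propagate mechanism is what you (and it) must still run for uniqueness, where the contraction constant is beaten down by making $\|\mu^\omega\|_{C^\var{r_1}([t_i,t_{i+1}])}$ small on each block. Two small points worth being explicit about if you write this up: the choice $r_3=r_1$ is indeed admissible because \eqref{e:ode} gives $\tfrac{1+\gamma}{r_1}>1$, whereas the paper keeps $r_3>r_1$ as a free parameter; and in the uniqueness step the smaller exponent $\gamma\le\gamma_0-1$ must still be checked against the sewing condition $\tfrac1{r_1}+\tfrac{\gamma}{r_3}>1$ (possibly after enlarging $r_3$), a compatibility the paper also leaves implicit.
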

\begin{proof}
  We look for a solution $x_t=\theta_t - \omega_t$, with
  $\theta$ a fixed point of the map
  $\Tc:K_R\to C^\var{r_3}([0,T_\star];\R^d)$, where
  \[
    (\Tc\theta)_t
      = x_0 + \int_0^t f(s,\theta_s-\omega_s)\,ds,
  \]
  $K_R=\{\theta\in C^\var{r_3}([0,T_\star];\R^d):\|\theta\|_\var{r_3}\leq R\}$,
  $r_3>r_1$ such that $\frac1{r_1}+\frac\gamma{r_3}>1$,
  and the integral is defined by \autoref{p:integral}.
  
  We use Schauder's fixed point. To this end, we shall prove that
  \begin{itemize}
    \item $\Tc$ maps $K_R$ onto and is continuous,
    \item $\Tc(K_R)$ is relatively sequentially compact.
  \end{itemize}
  First, from \autoref{p:integral}, for $r_3>r_1'>r_1$ but
  such that $\frac1{r_1'}+\frac\gamma{r_3}>1$
  and \eqref{e:ode} holds with $r_1$ replaced
  by $r_1'$, 
  \[
    \begin{aligned}
      \|\Tc\theta\|_\var{r_3}
        &\leq \|\Tc\theta\|_\var{r_1}\\
        &\begin{multlined}
 \leq (\|f\|_{C^\var{r_2}([0,T];B^{\alpha_2}_{p_2,q_2})}
          \\ + (1+\|\theta\|_\var{r_3})\|f\|_{L^\infty(0,T;B^{\alpha_2}_{p_2,q_2})})
          \|\mu_{0,\cdot}^\omega\|_{C^\var{r_1'}(0,T_\star;\sbe^{\alpha_1,p_1}_{q_1})}        
        \end{multlined}
    \end{aligned}
  \]
  and by assumption
  $\|\mu_{0,\cdot}^\omega\|_{C^\var{r_1'}(0,T_\star;\sbe^{\alpha_1,p_1}_{q_1})}\to 0$
  as $T_\star\to 0$. Therefore it is possible to find, given $R>0$,
  a time $T_\star\in(0,T]$ that depends only on $f$ and $\mu^\omega$,
  but not on $x_0$,
  such that $\Tc$ maps $K_R$ onto. Moreover, the same estimate proves
  that $\Tc K_R$ is a subset of $C^\var{r_1}([0,T_\star];\R^d)$

  To prove sequential compactness, it suffices by \cite[Proposition 5.28]{FriVic2010}
  to prove equicontinuity. Indeed, if $\theta\in K_R$ and $r_1'>r_1$ is as above,
  \[
    |(\Tc\theta)_t - (\Tc\theta)_s|
      = \Bigl|\int_s^t f(u,\theta_u-\omega_u)\,du\Bigr|
      \lesssim \|\mu_{0,\cdot}^\omega\|_{C^\var{r_1'}(s,t;\sbe^{\alpha_1,p_1}_{q_1})}
  \]
  where the constant depends only in $R$ and $f$. Therefore elements in $\Tc(K_R)$
  have the same modulus of continuity and equicontinuity holds. Finally,
  continuity of $\Tc$ follows from \eqref{e:Icontinuous}.

  Finally, we construct a solution on the whole interval $[0,T]$. As noticed before,
  the existence time $T_\star$ we have found depends on $R$,
  $\|f\|_{C^\var{r_2}([0,T];B^{\alpha_2}_{p_2,q_2})}$
  and $\|\mu_{0,T_\star}^\omega\|_{C^\var{r_1'}([0,T_\star],\sbe^{\alpha_1,p_1}_{q_1})}$,
  with $r_1'>r_1$, sufficiently close to $r_1$. So the global solution
  is obtained by gluing the solution $x^1$ in $[0,T_\star^1]$, with $T_\star^1=T_\star$,
  started at $x_0$, with the solution $x^2$ on $[T_\star^1,T_\star^2]$ with initial
  condition $x^2(T_\star^1)=x^1(T_\star^1)$, etc. If $T_\star^0=0$, the existence
  times $(T^k_\star)_{k\geq0}$ can be chosen so that
  \[
    (\|f\|_{C^\var{r_2}([0,T];B^{\alpha_2}_{p_2,q_2})}
        + (1+R_k)\|f\|_{L^\infty(0,T;B^{\alpha_2}_{p_2,q_2})})
        \|\mu_{0,\cdot}^\omega\|_{C^\var{r_1'}(T_\star^{k-1},T_\star^k;\sbe^{\alpha_1,p_1}_{q_1})}
      \geq \frac{R_k}2,
  \]
  for $R_k>0$ chosen in the proof. Let $R_k=1$, then in a finite number
  of steps the whole $[0,T]$ is covered (that is $T_\star^k=T$ for some $k$).
  Indeed, we have that 
  $\|\mu_{0,\cdot}^\omega\|_{C^\var{r_1'}(T_\star^{k-1},T_\star^k;\sbe^{\alpha_1,p_1}_{q_1})}
  \geq C_f$, for a constant depending only on $f$, thus
  \[
    \sum_{k=1}^N C_F^{r_1'}
      \leq \sum_{k=1}^N \|\mu_{0,\cdot}^\omega\|_
        {C^\var{r_1'}(T_\star^{k-1},T_\star^k;\sbe^{\alpha_1,p_1}_{q_1})}^{r_1'}
      \leq \|\mu_{0,\cdot}^\omega\|_
              {C^\var{r_1'}(0,T;\sbe^{\alpha_1,p_1}_{q_1})}^{r_1'},
  \]
  which concludes the proof of global existence.
  
  Finally, uniqueness holds by using the Banach fixed point theorem
  and the estimate on the differential of the integral given
  in \autoref{p:integral}.
\end{proof}
\begin{remark}
  The global control on $f$ yields a global solution.
  If on the other hand $f\in C^\var{r_2}([0,T];B^{\alpha_2}_{p_2,q_2}(D))$
  for a domain $D$, or $f\in C^\var{r_2}([0,T];B^{\alpha_2}_{p_2,q_2,\text{loc}})$,
  the previous theorem provides a local solution, namely there is
  a time $T_\star\in(0,T]$ and a solution $x$ defined on $[0,T]$.
  Indeed if $\eta\in C^\infty$ is a cut-off function such that
  $\eta\equiv1$ in a neighbourhood of $x_0$, then
  in both cases $\eta f\in C^\var{r_2}([0,T];B^{\alpha_2}_{p_2,q_2})$,
  and the previous theorem applies.
\end{remark}
Recall that $\phi:\{0\leq s\leq t\leq T\}\times\R^d\to\R^d$ is
a (continuous) flow if
\begin{itemize}
  \item $\phi(t,t,x_0)=x_0$, for all $t,x_0$,
  \item $\phi(u,t,\phi(s,u,x_0))=\phi(s,t,x_0)$ for all $s\leq u\leq t$ and $x_0$,
  \item $\phi(s,t,\cdot)$ is continuous with continuous inverse,
    for all $s,t$.
\end{itemize}
\begin{corollary}
  Under the same assumptions of \autoref{t:ode}, if $\gamma_0>1$,
  then there is a flow $\phi$ of diffeomorphisms (namely, $\phi(s,t,\cdot)$ is
  continuously differentiable) such that $\phi(s,\cdot,x_0)\in C^\var{r_1}(s,T;\R^d)$
  for all $s,x_0$, and
  \begin{equation}\label{e:flow}
    \phi(s,t,x_0)
      = x_0 - (\omega_t - \omega_s)
        + \int_s^t f(r,\phi(s,r,x_0))\,dr,
  \end{equation}
  for all $s\leq t,x_0$.
\end{corollary}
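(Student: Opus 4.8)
The plan is to read $\phi$ off directly from \autoref{t:ode} and \autoref{p:integral}, using the uniqueness available because $\gamma_0>1$ to produce the flow identities and the $C^1$ dependence on data. First, for each fixed $s$ I would apply \autoref{t:ode} on $[s,T]$ (an affine time-shift of the standard setting) to the driver $\omega^{(s)}_r:=\omega_r-\omega_s$, $r\in[s,T]$: its occupation measure is $\mu^{\omega^{(s)}}_{s,t}=\tau_{-\omega_s}(\mu^\omega_{0,t}-\mu^\omega_{0,s})$, and since subtracting a constant and translating do not change the $\sbe^{\alpha_1,p_1}_{q_1}$ norm, it again lies in $C^\var{r_1}([s,T];\sbe^{\alpha_1,p_1}_{q_1})$. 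I define $\phi(s,\cdot,x_0)$ to be the resulting (unique, since $\gamma_0>1$) solution, which is in $C^\var{r_1}([s,T];\R^d)$, satisfies $\phi(s,s,x_0)=x_0$, and, recognising $\theta:=\phi(s,\cdot,x_0)+\omega^{(s)}$ as the auxiliary path of \autoref{d:ode}, solves \eqref{e:flow}. The cocycle identity $\phi(u,t,\phi(s,u,x_0))=\phi(s,t,x_0)$ for $s\le u\le t$ then follows by restricting $r\mapsto\phi(s,r,x_0)$ to $[u,T]$ — the restriction is still $C^\var{r_1}$ and its associated occupation measure restricts accordingly, and by additivity of the Young integral (the uniqueness in \eqref{e:characterize}) it solves on $[u,T]$ exactly the equation characterising $\phi(u,\cdot,\phi(s,u,x_0))$ — and then invoking uniqueness.

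For the differentiable dependence on $x_0$, I would go back to the Picard scheme of \autoref{t:ode}: on a short interval $[s,T_\star]$ the solution $\theta^{x_0}$ is the fixed point of $\theta\mapsto x_0+\Is(\theta)$ with $\Is$ the integral map of \autoref{p:integral} for the driver $\omega^{(s)}$. Since $\gamma_0>1$, $\Is$ is Fréchet differentiable with the bound \eqref{e:Idifferentiable}, and by shrinking $T_\star$ so that $\|\mu^{\omega^{(s)}}_{s,\cdot}\|_{C^\var{r_1'}([s,T_\star];\sbe^{\alpha_1,p_1}_{q_1})}$ is small (exactly as in the existence proof) one gets $\|D\Is\|<1$, hence $\mathrm{Id}-D\Is(\theta^{x_0})$ is invertible. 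The implicit function theorem in Banach spaces then yields $x_0\mapsto\theta^{x_0}\in C^\var{r_3}$ of class $C^1$, so $x_0\mapsto\phi(s,t,x_0)=\theta^{x_0}_t-(\omega_t-\omega_s)$ is $C^1$ on $[s,T_\star]$; the cocycle identity, being a composition of such maps, propagates this to all of $[s,T]$. Writing $J(s,t,\cdot):=D_{x_0}\phi(s,t,\cdot)$, one has $J(t,t,\cdot)=\mathrm{Id}$ and $J(s,t,\cdot)=J(u,t,\phi(s,u,\cdot))\,J(s,u,\cdot)$.

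Invertibility of $\phi(s,t,\cdot)$ I would obtain by time reversal. The map $r\mapsto s+t-r$ is affine, hence in every $V^p$, so by \autoref{prop:reparametrisation} (together with the obvious invariance of the $\sbe$ norm under $x\mapsto-x$) the path $\hat\omega_r:=-\omega_{s+t-r}$ has occupation measure in $C^\var{r_1}([s,t];\sbe^{\alpha_1,p_1}_{q_1})$, while $\hat f(r,\cdot):=f(s+t-r,\cdot)$ stays in $C^\var{r_2}([s,t];B^{\alpha_2}_{p_2,q_2})$. A change of variables shows that $y_r:=\phi(s,s+t-r,x_0)$ solves, on $[s,t]$, the equation of the same type driven by $\hat\omega$ with drift $\hat f$ and datum $\phi(s,t,x_0)$ at time $s$; applying \autoref{t:ode} and uniqueness to this backward equation produces a map $\psi(t,s,\cdot)$ with $\psi(t,s,\phi(s,t,x_0))=x_0$, and symmetrically $\phi(s,t,\psi(t,s,y))=y$. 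Thus $\phi(s,t,\cdot)$ is a bijection with inverse $\psi(t,s,\cdot)$, and $\psi(t,s,\cdot)$ is itself $C^1$ by the argument of the previous paragraph applied to the backward equation (equivalently $J(s,t,x_0)$ is invertible, so one may conclude via the inverse function theorem). Hence $\phi(s,t,\cdot)$ is a $C^1$ diffeomorphism and $\phi$ is a flow of diffeomorphisms solving \eqref{e:flow}.

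The hard part will be the upgrade from the Picard/Schauder existence–uniqueness of \autoref{t:ode} to a genuine implicit-function-theorem statement: one must verify that the fixed-point map is $C^1$ jointly in $(x_0,\theta)$ with invertible partial differential in $\theta$ — this is precisely where \eqref{e:Idifferentiable} and the interval-shrinking are used — keeping all estimates uniform in $x_0$, and then reassemble the local-in-time $C^1$ statements, and likewise the backward ones, through the cocycle identity; the time-reversal step additionally relies on the fact that reparametrisation and reflection preserve $C^\var{r_1}(\sbe)$ regularity, which is exactly what \autoref{prop:reparametrisation} supplies.
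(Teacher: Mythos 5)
Your proposal is correct and follows essentially the same route as the paper: define $\phi$ via \autoref{t:ode}, get the cocycle property from uniqueness (available since $\gamma_0>1$), obtain invertibility by time reversal of the driver and the drift, and derive the $C^1$ dependence on $x_0$ from the differentiability bound \eqref{e:Idifferentiable}. The only cosmetic difference is that you package the differentiability step as a Banach-space implicit function theorem with $\|D\Is\|<1$ on a short interval, whereas the paper runs the corresponding fixed-point estimate directly (as it does for continuity via \eqref{e:Icontinuous}); both hinge on the same bound and on shrinking the interval so that $\|\mu^\omega\|_{C^\var{r_1'}}$ is small for some $r_1'>r_1$.
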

\begin{proof}
  Define $\phi$ according to \eqref{e:flow}, using the previous theorem.
  The first property of flows is obvious, the second follows by uniqueness.
  The fact that $\phi(s,\cdot,x_0)\in C^\var{r_1}(s,T;\R^d)$ follows by
  definition.

  To prove continuity, recall that if
  $\theta(s,t,x_0)=\phi(s,t,x_0) + (\omega_t - \omega_s)$, then
  \[
    \theta(s,t,x_0)
      = x_0
        + \int_s^t f(r,\theta(s,r,x_0)-(\omega_r-\omega_s))\,dr,
  \]
  therefore, by \eqref{e:Icontinuous}, for $u\in[s,t]$,
  \[
    \begin{multlined}[.9\linewidth]
      \|\theta(s,\cdot,x_0) - \theta(s,\cdot,x_0')\|_{C^\var{r_1}([s,u])}\leq\\
        \leq |x_0-x_0'|
          + c_u\|\theta(s,\cdot,x_0) - \theta(s,\cdot,x_0')\|_{C^\var{r_1}([s,u])}^a,
    \end{multlined}
  \]
  where $a\in(0,1)$ and
  \[
    \begin{multlined}[.9\linewidth]
      c_u
        \lesssim \|\mu^\omega_{s,\cdot}\|_{C^\var{r_1'}([s,u])}
          \|f\|_{C^\var{r_2}([s,t])}\cdot\\
          \cdot(\|\theta(s,\cdot,x_0)\|_{C^\var{r_1}([s,t])} + \|\theta(s,\cdot,x_0')\|_{C^\var{r_1}([s,t])})^b
    \end{multlined}
  \]
  with $r_1'>r_1$ and $b<1$. Since $c_u\to0$ as $u\to s$, we can find a value
  $u_1$ such that $\|\theta(s,\cdot,x_0) - \theta(s,\cdot,x_0')\|_{C^\var{r_1}([s,u_1])}
  \leq 2|x_0-x_0'|$. Since the estimate on $c_u$ depends only on $\mu^\omega$,
  we can replicate the estimate on a at most finite number of intervals
  (as in the proof of \autoref{t:ode}) and get continuity on $[s,t]$.
  
  To prove that $\phi$ is invertible with continuous inverse, notice that,
  for fixed $s,t$, if we define the time-reversed flow
  \[
    \psi(s,r,y_0)
      = y_0 + (\omega_{t+s-r} - \omega_t)
        - \int_s^r f(t+s-u,\psi(s,u,y_0))\,du,
  \]
  with $r\in[s,t]$, then $\psi(s,t,\phi(s,t,x_0))=x_0$.
  The flow $\psi$ is also defined according to
  \autoref{t:ode}, and it is immediate to check
  that the reversed perturbation $r\mapsto\omega_{t+s-r}$
  is in $C^\var{r_1}([s,t];\sbe^{\alpha_1,p_1}_{q_1})$.
  Finally, differentiability follows by an argument similar to continuity,
  using this time \eqref{e:Idifferentiable}.
\end{proof}
\section{Examples}\label{s:examples}

Finally, we discuss a few examples of processes with occupation measure
in \sbe-type spaces. First, using either (especially) \autoref{cor:fbHSBE}
or \autoref{cor:fbHSBE-m}, it is not difficult, in general, to
establish that Gaussian processes have occupation measure in
\sbe spaces. We provide a pair of examples of non-Gaussian
processes with occupation measure in \sbe spaces. The two
examples are solutions of stochastic differential equations
driven either by Brownian motion or fractional Brownian
motion. Albeit this is not surprising, since it is expected
that short time asymptotics of densities of solutions of
SDEs look like the rough driving process, it remains
non obvious to prove similar statements (namely that
solutions of equation with \sbe-driven input have \sbe
occupation measures) in the general framework
of Young differential equations (\autoref{s:young}).
\subsection{Markov processes and one dimensional SDEs}

Consider a Markov process $(X_t)_{t\in[0,T]}$ on $R^d$
with transition density function $p(s,t,x,y)$, and
assume $X_0=x_0\in\R^d$. Given $n\geq1$ and
$s_0=0<s_1<s_2<\dots<s_n$, the joint density of
$(X_{s_1},X_{s_2},\dots,X_{S_n})$ is
\begin{equation}\label{e:markovdensity}
  \rho_{s_{1:n}}(x_{1:n})
    = \prod_{j=1}^n p(s_{j-1},s_j,x_{j-1},x_j)
\end{equation}
and the density of an increment is
\[
  \nu_{s_1s_2}(y)
    = \int_{\R^d}\rho_{s_1s_2}(x,x+y)\,dx
    = \int_{\R^d} p(0,s_1,x_0,x)p(s_1,s_2,x,x+y)\,dx.
\]
Therefore,
\[
  \|\nu_{s_1s_2}\|_{C^\beta}
    \leq \int_{\R^d}\|p(s_1,s_2,x,\cdot)\|_{C^\beta}p(0,s_1,x_0,x)\,dx,
\]
and, for instance, \autoref{thm:spSBE-m} applies with $m=1$
if there
are $C_\nu>0$ and $\eta\in(0,1)$ such that for all
$0\leq a<b\leq T$,
\[
  \int_a^b\int_{s_1}^b\sup_{x\in\R^d}\|p(s_1,s_2,x,\cdot)\|_{C^\beta}\,ds_2
    \leq C_\nu(b-a)^{2\eta}.
\]
If on the other hand we wish to use \autoref{thm:spSBE-m} with $m\geq2$,
we notice that, to use for instance \eqref{eqn:Crho-m},
in view of \eqref{e:markovdensity} it is
sufficient to get a bound of
\begin{equation}\label{e:sde-m}
  \|(1-\Delta_x)^{\frac\beta2}(1-\Delta_y)^{\frac\beta2}p(s,t,x,y)\|_{L^\infty}.
\end{equation}

As an example for this, consider a stochastic equation
in dimension $1$,
\[
  \begin{cases}
    dX_t
      = b(t,X_t)\,dt + \sigma(t,X_t)\,dB_t,\\
    X_0
      = x_0,
  \end{cases}
\]
and let $p(s,t,x,\cdot)$ be the density of $X_t$ subject to $X_s=x$.
A simple method for the existence and regularity of the density
of the solution can be found in
\cite{Rom2018}. Short time asymptotics are classical, see
for instance \cite{Var1967a,Var1967b,Mol1975}. Assume for
simplicity $\sigma\equiv1$ (otherwise, one gets the
same asymptotics by \cite{Var1967b}) and $b$ bounded
measurable, then
\[
  p(s,t,x,y)
    = q_{t-s}(x-y) + \int_s^t \partial_y q_{t-r}\star\bigl(b(r,\cdot)p(s,r,x,\cdot)\bigr)\,dr,
\]
where $q_t$ is the heat kernel. Then
\[
  \|p(s,t,x,\cdot)\|_{C^\beta}
    \leq \|q_{t-s}\|_{C^\beta}
      + \|b\|_{L^\infty}\int_s^t \|q_{t-r}\|_{W^{1+\beta,1}}\|p(s,r,x,\cdot)\|_{C^\beta}\,dr,
\]
so that by the heat kernel asymptotics
$\|q_t\|_{C^\beta}\sim t^{-\frac12(1+\beta)}$, $\|q_t\|_{W^{\beta,1}}\sim t^{-\frac\beta2}$
and a simple Gronwall's argument
(using for instance \cite[Lemma 3.3]{MouWeb2017})
yields
\[
  \|p(s,t,x,\cdot)\|_{C^\beta}
    \lesssim (t-s)^{-\frac12(1+\beta)},
\]
with a pre-factor independent from $x$. Thus \autoref{thm:spSBE-m}
applies with $m=1$ and $\eta=\frac12(1-\beta)$.

Likewise, if we wish to use \autoref{thm:spSBE-m} with $m\geq 2$, we
get an estimate of \eqref{e:sde-m} notice
that to use, for instance, \eqref{eqn:Crho-m}, in view of
\eqref{e:markovdensity} it is sufficient to bound
\[
  \|(1-\Delta_x)^{\frac\beta2}(1-\Delta_y)^{\frac\beta2}p(s,t,\cdot,\cdot)\|_\infty
\]
and with similar computations and Schauder estimates
we get
\[
  \|(1-\Delta_x)^{\frac\beta2}(1-\Delta_y)^{\frac\beta2}p(s,t,\cdot,\cdot)\|_\infty
    \lesssim (t-s)^{-\frac12(1+2\beta)}
\]
for $\beta$, so that
\[
  \|(1-\Delta_{x_1})^{\frac\beta2}\dots(1-\Delta_{x_{2m}})^{\frac\beta2}
      \rho{s_{1:2m}}\|_\infty
    \lesssim s_1^{-\frac12(1+\beta)}\prod_{i=1}^{2m-1}(s_{i+1}-s_i)^{-\frac12(1+2\beta)},
\]
and the occupation measure of $(X_t)_{t\in[0,T]}$ is in
$C^\var{p}([0,T];\sbe^{\alpha,2m}_{2m})$ for $\alpha<1$ and
$p>\frac{4m}{(2m-1)(1-\alpha)}$. A smaller value of $p$
can be obtained on intervals away from zero, or using
\autoref{thm:spSBE-m} instead of
\autoref{prop:spSBE-m}.
\subsection{Fractional Brownian motion}

From \autoref{cor:fbHSBE} and \autoref{t:ode} we can immediately deduce
the following result.
\begin{corollary}
  Let $(B^H_t)_{t\geq0}$ be a $d$-dimensional fractional Brownian motion
  with Hurst index $H\in(0,1)$, with $H<\frac1d$, and $x_0\in\R^d$.
  Then if $\alpha_1<\frac1{2H}-\frac{d}{2}$ and
  $f\in C^\var{r_2}([0,T];B^{\alpha_2}_{p_2,q_2})$, with $r_2<2$,
  $p_2<2$, and $\alpha_1+\alpha_2>1+\frac{d}{p_2}-\frac{d}{2}$,
  then there is $x\in C([0,T];\R^d)$ such that
  \[
    x_t
      = x_0 + B_t^H + \int_0^t f(s,x_s)\,ds,
      \qquad t\leq T.
  \]
\end{corollary}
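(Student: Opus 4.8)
The plan is to deduce the corollary from \autoref{t:ode} applied to the driving path $\omega:=-B^H$, which is again a $d$-dimensional fractional Brownian motion of Hurst index $H$; in particular $\omega$ has the same law, hence the same occupation-measure regularity, as $B^H$, and the equation $x_t=x_0+B^H_t+\int_0^t f(s,x_s)\,ds$ coincides with the equation $x_t=x_0-\omega_t+\int_0^t f(s,x_s)\,ds$ of \autoref{t:ode}. It therefore suffices to exhibit exponents $p_1,q_1,r_1$ and a number $\gamma\in(0,1)$ such that $\mu^\omega_{0,\cdot}\in C^\var{r_1}([0,T];\sbe^{\alpha_1,p_1}_{q_1})$ and such that condition \eqref{e:ode} holds; we will take $p_1=q_1=2$, the values produced by the case $m=1$ of \autoref{thm:spSBE-m}.

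For the regularity of the occupation measure, recall that fractional Brownian motion is locally non-deterministic with parameter $H$ (Pitt, Berman). In the case $m=1$ the hypothesis \eqref{eqn:Cnu-m} of \autoref{thm:spSBE-m} involves only the single-increment density $\nu_{s_1s_2}=\Law(\omega_{s_2}-\omega_{s_1})$, which is centred Gaussian with covariance $\sim|s_2-s_1|^{2H}\mathrm{Id}$, and the same elementary Gaussian estimate as in the proof of \autoref{cor:fbHSBE} shows that \eqref{eqn:Cnu-m} holds with $m=1$, $\beta_1=2\beta$, and $\eta=1-\tfrac12 H(d+2\beta)$, which satisfies $\tfrac12<\eta\le1$ whenever $\beta<\frac1{2H}-\frac d2$. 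Consequently, for every non-integer $\beta<\frac1{2H}-\frac d2$ and every non-integer $\alpha_1<\beta$, \autoref{thm:spSBE-m} gives $\mu^\omega_{0,\cdot}\in V^{1/\eta}(\sbe^{\alpha_1,2}_2)$; since the H\"older-continuity corollary following \autoref{thm:spSBE-m} (via \autoref{l:nothard}) makes $t\mapsto\mu^\omega_{0,t}$ continuous with values in $\sbe^{\alpha_1,2}_2$, we obtain $\mu^\omega_{0,\cdot}\in C^\var{r_1}([0,T];\sbe^{\alpha_1,2}_2)$ with $r_1:=1/\eta<2$. It is essential here to invoke \autoref{thm:spSBE-m}, and not \autoref{cor:fbHSBE}, since the latter only yields finite $2$-variation, whereas \eqref{e:ode} forces $r_1<2$ strictly.

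It remains to verify \eqref{e:ode}. Since $\alpha_1<\frac1{2H}-\frac d2$ by hypothesis, we may pick a non-integer $\beta$ with $\alpha_1<\beta<\frac1{2H}-\frac d2$ (if $\alpha_1\in\N$ decrease it slightly beforehand, which is harmless because $\alpha_1+\alpha_2>1+\frac d{p_2}-\frac d2$ is a strict inequality). With $q_1=p_1=2$ we have
\[
  \gamma_0=\alpha_1+\alpha_2-d\Bigl(\tfrac1{q_1}+\tfrac1{p_2}-1\Bigr)
    =\alpha_1+\alpha_2-\tfrac d{p_2}+\tfrac d2>1,
\]
so we may choose $\gamma\in(r_1-1,1)$, a non-empty interval because $r_1<2$; then $r_1<1+\gamma$ and $\gamma<1<\gamma_0$. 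Moreover $1<\tfrac12+\tfrac1{p_2}=\tfrac1{q_1}+\tfrac1{p_2}$ since $p_2<2$, the inequality $\tfrac1{q_1}+\tfrac1{p_2}<1+\tfrac1d(\alpha_1+\alpha_2)$ is equivalent to $\tfrac d{p_2}-\tfrac d2<\alpha_1+\alpha_2$ and hence weaker than the hypothesis, and $\tfrac1{r_1}+\tfrac1{r_2}>1$ because $r_1<2$ and $r_2<2$ force $\tfrac1{r_1},\tfrac1{r_2}>\tfrac12$. All hypotheses of \autoref{t:ode} thus hold, and it supplies $x\in C^\var{r_1}([0,T];\R^d)\subset C([0,T];\R^d)$ with $x_t=x_0-\omega_t+\int_0^t f(s,x_s)\,ds=x_0+B^H_t+\int_0^t f(s,x_s)\,ds$.

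The main obstacle is precisely the constraint $r_1<2$: because \eqref{e:ode} requires $r_1<1+\gamma$ with $\gamma\in(0,1)$, the finite $2$-variation control of the occupation measure that comes out of the plain small-ball/Kolmogorov estimate of \autoref{thm:spSBE} is not enough, and one is compelled to use the space--time trade-off of \autoref{thm:spSBE-m}, whose input \eqref{eqn:Cnu-m} with $\eta>\tfrac12$ is exactly the local non-determinism of fractional Brownian motion. Once this regularity is in hand, the remaining verification of \eqref{e:ode} is routine arithmetic with the exponents.
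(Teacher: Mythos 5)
Your proof is correct. The paper itself disposes of this corollary in a single line, citing \autoref{cor:fbHSBE} together with \autoref{t:ode}; you instead derive the occupation-measure regularity from \autoref{thm:spSBE-m} with $m=1$, and your reason for doing so is well taken. \autoref{cor:fbHSBE} only yields $V^2(\sbe^{\alpha_1,2}_2)$, whereas condition \eqref{e:ode} in \autoref{t:ode} forces $r_1<1+\gamma<2$ strictly, and $V^2$ does not embed into $V^{r_1}$ for $r_1<2$; so the literal citation leaves a small gap. Taking $m=1$ in \autoref{thm:spSBE-m}, the hypothesis \eqref{eqn:Cnu-m} reduces to an estimate on the single Gaussian increment density (no issue with the near-$0$ restriction that affects the full local non-determinism condition \eqref{e:lnd} for $n\geq 3$), and gives $\eta=1-\tfrac12H(d+2\beta)>\tfrac12$ for any non-integer $\beta<\tfrac1{2H}-\tfrac d2$, hence $r_1=1/\eta<2$, with continuity supplied by the H\"older corollary following \autoref{thm:spSBE-m}. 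The remaining verification of \eqref{e:ode} with $p_1=q_1=2$ ($\gamma_0>1$ from $\alpha_1+\alpha_2>1+\tfrac d{p_2}-\tfrac d2$, the choice $\gamma\in(r_1-1,1)$, and $\tfrac1{r_1}+\tfrac1{r_2}>1$ from $r_1,r_2<2$) is exactly the arithmetic the hypotheses are designed for. In short, you follow the paper's intended strategy but make explicit the space--time trade-off step that the one-line deduction glosses over.
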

From \cite[Lemma 7.1]{Pit1978} we see that fractional Brownian motion
with Hurst index $H$ is strongly local $\phi$ non deterministic,
with $\phi(r)=r^{2H}$. As noticed by \cite{Xia2006}, by modifying
the proof of \cite[Proposition 7.2]{Pit1978}, this yields
\eqref{e:lnd} on time intervals that are bounded away
from $0$. Thus the occupation measure of fractional
Brownian motion is in $C^\var{p}([a,b];B^{\alpha}_{q,\infty})$
for $0<a<b$ and $\alpha,p,q$ as in \eqref{e:lndparams}.

We turn to regularity of the occupation measure of solutions
of equations driven by a fractional Brownian motion. We
consider the setting of \cite{BauNuaOuyTin2016}, namely
we consider
\begin{equation}\label{e:fsde}
  X_t
    = x + \int_0^t V_0(X_s)\,ds
      + \sum_{i=1}^d\int_0^t V_i(X_s)\,dB_s^i,
\end{equation}
where $x\in\R^m$, $(B^1,\dots,B^d)$ is a $d$-dimensional
fractional Brownian motion with Hurst parameter
$H>\frac14$, and $V_0,V_1,\dots,V_d$ are smooth vector fields
on $\R^m$, bounded with all derivatives bounded.
Moreover, $(V_1,\dots,V_d)$ are elliptic and
non-degenerate (see \cite[Hypotheses 1.2,1.3]{BauNuaOuyTin2016}).
Existence of the above equation has been established
for instance in \cite{FriVic2010}, while existence and
regularity of the density has been proved for instance
in \cite{BauHai2007,HuNua2007,NuaSau2009} ($H>\frac12$)
and \cite{CasLitLyo2013,CasHaiLitTin2015} ($H<\frac12$).
\begin{proposition}
  Let $d\leq 3$ and $H\in(\frac14,1)$ with $H<\frac1d$.
  Under the above assumptions, let $(X_t)_{t\in[0,T]}$
  be a solution of \eqref{e:fsde}, and
  $\mu^X$ its occupation measure.
  Then for every $a>0$, $\alpha>0$,
  with $\alpha<\frac1{2H}-\frac{d}{2}$,
  $\mu^X_{a,\cdot}\in V^p([a,T];\sbe^{\alpha,2}_2)$
  for all $p>1-\frac12H(d+2\alpha)$.
\end{proposition}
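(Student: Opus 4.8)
The plan is to deduce the statement from \autoref{thm:spSBE-m} with $m=1$, applied to the solution $X$ of \eqref{e:fsde} (taken to be $\R^d$-valued, $d$ being the dimension in the statement) on the interval $[a,T]$; write $\nu^X_{st}:=\Law(X_t-X_s)$. For $m=1$ the only family admissible in \eqref{e:betas} is $\beta_1=2\beta$, so \eqref{eqn:Cnu-m} reduces to the requirement that, for some $\beta$ with $\alpha<\beta<\tfrac1{2H}-\tfrac d2$, $\beta\notin\N$, and some small $\eps>0$, one has for every interval $J\subseteq[a,T]$
\begin{equation}\label{e:fsdetarget}
  \int_{\{s_1<s_2,\ s_1,s_2\in J\}}\bigl\|(1-\Delta)^{\beta}\nu^X_{s_1s_2}\bigr\|_{L^\infty}\,ds_1\,ds_2
    \les C_\nu\,|J|^{2\eta},\qquad\eta:=1-\tfrac12H(d+2\beta+\eps).
\end{equation}
After possibly shrinking $\beta,\eps$ so that $2\beta+\eps<\tfrac1H-d$, one has $\eta\in(\tfrac12,1)$, so the constraint $\tfrac1{2m}\le\eta\le1$ of \autoref{thm:spSBE-m} holds; the theorem then yields $\mu^X_{a,\cdot}\in V^{1/\eta}(\sbe^{\alpha,2}_2)$, hence $\mu^X_{a,\cdot}\in V^p(\sbe^{\alpha,2}_2)$ for every $p>1/\eta$. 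Since $1/\eta$ decreases to $\bigl(1-\tfrac12H(d+2\alpha)\bigr)^{-1}$ as $\beta\downarrow\alpha$ and $\eps\downarrow0$, this proves $\mu^X_{a,\cdot}\in V^p(\sbe^{\alpha,2}_2)$ for every $p$ with $\tfrac1p<1-\tfrac12H(d+2\alpha)$, as claimed. (We may assume $\alpha\notin\N$, the remaining values of $\alpha$ being covered by approximation, as for \autoref{cor:fbHSBE}.)

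To obtain \eqref{e:fsdetarget} I would pass to the transition density $p(s,t,x,\cdot)$ of $X$. Conditioning on $\mathscr{F}_s=\sigma(X_u:a\le u\le s)$ and using time-homogeneity of the coefficients of \eqref{e:fsde}, one has $\nu^X_{st}(z)=\int_{\R^d}p(s,t,x,x+z)\,\Law(X_s)(dx)$, whence
\[
  \bigl\|(1-\Delta)^{\beta}\nu^X_{st}\bigr\|_{L^\infty}
    \le\sup_{x\in\R^d}\bigl\|(1-\Delta)^{\beta}p(s,t,x,\cdot)\bigr\|_{L^\infty}
    \les\sup_{x\in\R^d}\bigl\|p(s,t,x,\cdot)\bigr\|_{C^{2\beta+\eps}},
\]
the last inequality being the mapping property of $(1-\Delta)^\beta$ on Hölder spaces, handled as in the proof of \autoref{prop:spSBE-m}. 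So \eqref{e:fsdetarget} follows from the short-time density bound
\begin{equation}\label{e:densitybound}
  \sup_{x\in\R^d}\bigl\|p(s,t,x,\cdot)\bigr\|_{C^{\kappa}}\les(t-s)^{-H(d+\kappa)},
    \qquad a\le s<t\le T,\ \ 0<\kappa<\tfrac1H-d,
\end{equation}
together with the elementary estimate $\int_{\{s_1<s_2,\ s_j\in J\}}(s_2-s_1)^{-H(d+2\beta+\eps)}\,ds_1\,ds_2\les|J|^{2-H(d+2\beta+\eps)}=|J|^{2\eta}$, the integral converging precisely because $2\beta+\eps<\tfrac1H-d$.

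The only substantial point is \eqref{e:densitybound}. Under the stated assumptions --- $V_0,\dots,V_d$ smooth and bounded together with all their derivatives, $(V_1,\dots,V_d)$ uniformly elliptic, $H\in(\tfrac14,1)$ and $d\le3$ --- this is exactly the regime in which Gaussian-type upper bounds for the density of a Gaussian rough differential equation, and for its spatial derivatives, are available. I would invoke the density estimates of \cite{BauNuaOuyTin2016} together with the smoothness of the density from \cite{CasHaiLitTin2015} (for $H<\tfrac12$) and \cite{BauHai2007,HuNua2007,NuaSau2009} (for $H>\tfrac12$), to get $|\partial_y^k p(s,t,x,y)|\les(t-s)^{-H(d+k)}\exp\bigl(-c|x-y|^2/(t-s)^{2H}\bigr)$ for every integer $k$, uniformly in $x\in\R^d$ and in $a\le s<t\le T$; interpolating in $k$ then gives \eqref{e:densitybound}. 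I expect this step --- locating in the literature the derivative estimate with the correct short-time scaling and uniformity in the starting point, in the rough range $H\in(\tfrac14,\tfrac12)$ --- to be the main obstacle; it is here that $H>\tfrac14$ and $d\le3$ are used, and $d\le3$ is in any case forced by $H<\tfrac1d$ together with $H>\tfrac14$. The hypothesis $a>0$ reflects the fact that, through their Malliavin-calculus proof, these density estimates rest on the local non-determinism of the driving fractional Brownian motion on time intervals bounded away from $0$ (cf. \cite{Pit1978}). Once \eqref{e:densitybound} is in hand, the remaining steps are the bookkeeping above.
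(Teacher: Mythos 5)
Your outer reduction is the right one and matches the paper's intent: apply \autoref{thm:spSBE-m} with $m=1$, note that \eqref{e:betas} forces $\beta_1=2\beta$, and feed in a short-time bound $\|\nu^X_{st}\|_{C^{2\beta+\eps}}\les (t-s)^{-H(2\beta+\eps+d)}$ to get $\eta=1-\tfrac12H(2\beta+\eps+d)$ and hence $\tfrac1p<1-\tfrac12H(d+2\alpha)$ after letting $\beta\downarrow\alpha$. The gap is in how you produce that density bound, and it is twofold. First, your disintegration $\nu^X_{st}(z)=\int p(s,t,x,x+z)\,\Law(X_s)(dx)$ through a transition density depending only on $x=X_s$ presupposes the Markov property, which fails for \eqref{e:fsde} when $H\neq\tfrac12$: the conditional law of $X_t$ given $\mathscr{F}_s$ depends on the whole past of the driving fractional Brownian motion, not on $X_s$ alone, so there is no such kernel $p(s,t,x,\cdot)$ and "uniformity in the starting point" is not the right quantity to control. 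Second, even granting some conditional version of this reduction, you defer the essential estimate --- derivatives of the density of every order with the scaling $(t-s)^{-H(d+k)}$, uniformly over the conditioning, in the rough regime $H\in(\tfrac14,\tfrac12)$ --- to the literature, and you yourself flag that locating it there is the main obstacle; \cite{BauNuaOuyTin2016} gives Gaussian-type bounds for the density itself, not the full family of derivative bounds with this short-time scaling.

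The paper closes exactly this gap by working directly with the (unconditional) law of the increment $X_t-X_s$ on Wiener space: Malliavin integration by parts gives $\E[\partial_\alpha\varphi(X_t-X_s)]=\E[\varphi(X_t-X_s)H_{\alpha,s,t}]$ with $\E[H_{\alpha,s,t}^2]^{1/2}\les\|\Gamma_{s,t}^{-1}D(X_t-X_s)\|_{k+d,2^{k+d+1}}^{k+d}$, and then the two inputs $\|D(X_t-X_s)\|_{k+d,2^{k+d+2}}\les(t-s)^{H}$ and $\|\Gamma_{s,t}^{-1}\|_{k+d,2^{k+d+2}}\les(t-s)^{-2H}$ (from \cite[Proposition 5.9]{BauNuaOuyTin2016} and its extension to higher Malliavin--Sobolev norms) yield $\|\nu_{s,t}\|_{C^k}\les(t-s)^{-H(k+d)}$ after approximating Heaviside functions by smooth ones. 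This sidesteps both the Markovian framing and the missing literature reference; it is the step your proposal would need to supply to be complete. (Also note the proposition's stated condition on $p$ should be read as $\tfrac1p<1-\tfrac12H(d+2\alpha)$, as your computation and \autoref{cor:fbHSBE-m} confirm.)
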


\begin{proof}
  Fix $\epsilon\leq s<t$, and let $\nu_{s,t}$ be the density of
  $X_t-X_s$. For an integer $k\geq1$, let
  $\alpha'\in\{1,\dots,d\}^k$ be a multi-index,
  and we write $\partial_{\alpha'}
  =\partial_{x_{\alpha'_1}}\dots\partial_{x_{\alpha'_k}}$.
  Let $\alpha=\alpha'\cup(1,2,\dots,k)$, so that
  $\partial_{\alpha}$ computes $k+d$ derivatives.
  Let $\varphi:\R^d\to\R$ be a smooth function,
  bounded by $1$,
  then by \cite[Proposition 2.1.4]{Nua2006},
  integration by parts holds and there is
  a random variable $H_{\alpha,s,t}$ such that
  \[
    \E[\partial_{\alpha}\varphi(X_t-X_s)]
      = \E[\varphi(X_t-X_s)H_{\alpha,s,t}].
  \]
  Moreover,
  \[
    \E[H_{\alpha,s,t}^2]^\frac12
      \lesssim \|\Gamma_{s,t}^{-1}D(X_t-X_s)\|_{k+d,2^{k+d+1}}^{k+d},
  \]
  where $D(X_t-X_s)$ is the Malliavin derivative,
  $\Gamma_{s,t}$ is the Malliavin covariance matrix
  of $X_t-X_s$, and $\|\cdot\|_{k,p}$ is the
  Malliavin-Sobolev norm (see \cite[Section 2.1]{Nua2006}).
  We thus have
  \[
    |\E[\partial_{\alpha}\varphi(X_t-X_s)]|
      \lesssim \|\Gamma_{s,t}^{-1}\|_{k+d,2^{k+d+2}}^{k+d}
        \|D(X_t-X_s)\|_{k+d,2^{k+d+2}}^{k+d}.
  \]
  As in \cite[Proposition 5.9]{BauNuaOuyTin2016} (the proposition
  proves only the case $k=0$, conditional to $s$,
  but the case $k>0$ can be carried on as in Lemma 4.1, Lemma 4.2
  of the same paper),
  \[
    \|D(X_t-X_s)\|_{k+d,2^{k+d+2}}
      \lesssim (t-s)^H,
        \qquad
    \|\Gamma_{s,t}^{-1}\|_{k+d,2^{k+d+2}}
      \lesssim (t-s)^{-2H}
  \]
  with constants depending on $k$ and $\epsilon$.
  In conclusion,
  $|\E[\partial_{\alpha}\varphi(X_t-X_s)]|\lesssim(t-s)^{-H(k+d)}$.
  To prove an estimate of the $k^\text{th}$ derivatives
  of $\nu_{s,t}$, we use a sequence of smooth functions
  that converge, monotonically increasing, to suitable
  Heaviside functions. This yields
  $\|\nu_{s,t}\|_{C^k}\lesssim(t-s)^{-H(k+d)}$.
\end{proof}
\bibliographystyle{amsalpha}

\end{document}